\newtheorem{theorem}{Theorem}[subsection]
\newtheorem{lemma}[theorem]{Lemma}
\newtheorem{fact}[theorem]{Fact}
\newtheorem{proposition}[theorem]{Proposition}
\theoremstyle{definition}
\newtheorem{definition}[theorem]{Definition}
\newtheorem{notation}[theorem]{Notation}
\theoremstyle{remark}
\newtheorem{remark}[theorem]{Remark}
\newtheorem{example}[theorem]{Example}
\newcommand{\twon}{2^{\omega} }
\newcommand{\zeroone}{[0,1] } 
\newcommand{\prefix}[2]{{{#1}\reflectbox{1}{#2}}}
\newcommand{\arpi}[2]{\Pi_{#2}^{#1}}
\newcommand{\arsigma}[2]{\Sigma_{#2}^{#1}}
\newcommand{\ardelta}[2]{\Delta_{#2}^{#1}}
\newcommand{\barpi}[2]{\bold \Pi_{#2}^{#1}}
\newcommand{\barsigma}[2]{\bold \Sigma_{#2}^{#1}}
\newcommand{\bardelta}[2]{\bold \Delta_{#2}^{#1}}
\newcommand{\baire}{\omega^\omega}
\newcommand{\fbaire}{\omega^{<\omega}}
\newcommand{\fcantor}{2^{<\omega}}
\newcommand{\Tr}{\text{Tr}}
\newcommand{\PTr}{\text{PTr}}
\newcommand{\cylinder}[1]{\left[ #1 \right]^\prec}
\newcommand{\seqf}[3]{ \left({#1}_{#2}\right)_{{#2}\in{#3}} }
\newcommand{\scheme}[2]{\seqf {#1}{#2}{\baire} }
\newcommand{\opa}[1]{\mathcal A \left\{{#1}\right\}}
\newcommand{\seq}[2]{ \left({#1}_{#2}\right)_{{#2}\in\omega} }
\newcommand{\real}{\mathbb{R}}
\newcommand{\tseq}[2]{\left\{{#1}_\alpha: \alpha<{#2} \right\}}
\newcommand{\cseq}[1]{\tseq{#1}{\mathfrak{c}}}
\newcommand{\gdelta}{G_\delta}
\newcommand{\fsigma}{F_\sigma}
\newcommand{\binw}[1]{\text{bin}_\omega\left({#1}\right)}
\newcommand{\binr}[1]{\text{bin}_{[0,1)}\left({#1}\right)}
\newcommand{\binww}{\text{bin}_\omega}
\newcommand{\binrr}{\text{bin}_{[0,1)}}
\begin{document}

\title{Aspects of Classical Descriptive Set Theory}
\author{Alex Galicki\\Supervisors: Prof. David Gauld, Prof. Andr\'e Nies}
\date{\today}
\maketitle

\begin{abstract}
    This report consists of two parts. The first part is a brief exposition of classical descriptive set theory. This part introduces some fundamental concepts, motivations and results from the classical theory and ends with a section on the important result of Addison that established the correspondence between classical and effective notions.

The second part is about invariant descriptive set theory. It consists of a brief introduction to notions relevant to this branch of descriptive set theory and then describes in details some of the relatively recent results concerning equivalence relations between Polish metric spaces.

\end{abstract}

\tableofcontents

\break

\subsection{What is Descriptive Set Theory?}

The most succinct and insightful explanation of what descriptive set theory is known to me is found in Kanamori \cite{Kan95}: ``Descriptive set theory is the definability theory of the continuum''. This theory studies subsets of real numbers, or, more generally, subsets of Polish spaces, that are ``simple'' in some sense: sets with simple topological structure, sets with simple logical description or sets that are simple with respect to some other notion of definability. The main underlying idea is that some difficult questions asked about arbitrary sets, might become easier if asked about sets with simple descriptions.

\subsection{Historical background}
The early history of descriptive set theory is characterised by preoccupation with real numbers. It began, at the turn of the 20-th century, with works by three French analysts - Borel, Lebesgue and Baire - as an investigation into regularity properties of sets of reals. That was the first systematic study of sets of reals. At that time, it wasn't considered a separate discipline, rather it was seen as a natural outgrowth of Cantor's own work. All three of them had reservations regarding what objects are permissible in mathematics. In particular, they were studying the abstract notion of a function, as an arbitrary correspondence between objects, introduced by Riemann and Dirichlet. This led them to investigate some classes of well-behaved functions and well-behaved sets of real numbers. Borel, while working on his theory of measure, introduced the hierarchy of particularly well-behaved sets which are now known as Borel sets. Baire was studying a hierarchy of well-behaved functions, now known as Baire functions. A classical regularity property, the Baire property, is one of consequences of his work. Lebesgue, inspired in part by the work of Borel. introduced what is now known as Lebesgue measurability and this concept, to a degree, subsumed both the Borel hierarchy (through the concept of Lebesgue measurable sets) and the Baire functions (through his concept of a measurable function). Furthermore, Lebesgue established two major results concerning the Borel hierarchy: that the Borel hierarchy is proper and that there exists a Lebesgue measurable subset which is not Borel. A significant part of this paper is devoted to concepts developed in that early part of history of descriptive set theory.

It was the Soviet mathematician Luzin, and two of his early collaborators, Suslin and Sierpinski, who established descriptive set theory as a separate research area. Luzin had studied in Paris where he had become acquainted with the works of the French analysts.
In 1914, at the University of Moscow, Luzin started a seminar, through which he was to establish a prominent school in the theory of functions of a real variable. A major topic of this seminar was the ``descriptive theory of functions''. In 1916 Pavel Aleksandrov, one of the participants in the seminar, established the first important result: all Borel sets have the perfect set property. Soon afterwards, another student of Luzin, Mikhail Suslin, while reading the memoir of Lebesgue, found an error in one of Lebesgue's proofs. Lebesgue claimed that the projection of a Borel subset of the plane is also Borel. Suslin found a counterexample to this assertion. This led to an investigation of what are now known as analytic sets. Suslin initially formulated analytic sets as those resulting from a specific operation (the $\mathcal A-$operation, in Aleksandrov's honor) and later characterised them as projections of Borel sets. Soon after, he established three major results concerning analytic sets:
\begin{enumerate}
\item[i)] every Borel set is analytic;
\item[i)] there exists an analytic set which is not Borel; and
\item[ii)] a set is Borel iff both the set and its complement are analytic.
\end{enumerate}

Arguably, these results began the subject of descriptive set theory. Suslin managed to publish only one paper - he died from typhus in 1919.
In the ensuing years Luzin and Sierpi\'nski formulated the projective sets and the corresponding projective hierarchy. They, and their collaborators in Moscow and Warsaw, managed to prove several important results concerning this new hierarchy. However, this investigation of projective sets, especially with respect to the regularity properties, soon afterwards ran into what Kanamori (\cite{Kan_HI}) describes as ``the total impasse''. In 1938, Kurt G\"odel announced several results related to his work on the constructible hierarchy, which explained the nature of problems descriptive set theorists were facing. It turned out that ZFC is not strong enough to prove the regularity properties for the higher pointclasses in the projective hierarchy. 

A major development in mathematics during 1930's and 1940's was the emergence of recursion theory. One of the early recursion theory pioneers was Stephen Kleene. His work on recursion theory in the 1930's led him to develop a general theory of definability for relations on the integers. In early 1940's he studied the arithmetical relations, obtainable from the recursive relations by application of number quantifiers. Subsequently, he formulated the arithmetical hierarchy and showed some fundamental properties of that hierarchy. Later, he studied the analytical relations, obtainable from the arithmetical ones by application of functions quantifiers, and then formulated and studied the analytical hierarchy that classified these relations. Kleene was developing what would later become an effective descriptive set theory. His student, Addison, in late 1950's discovered a striking correspondence between the hierarchies studied by Kleene and those studied in descriptive set theory: the analytical hierarchy is analogous to the projective hierarchy, and the arithmetical hierarchy is analogous to the first $\omega$ levels of the Borel hierarchy.      

\subsection{Acknowledgements}

The section on historical background is a summary of what can be found in \cite{Kan95} and \cite{Kan_HI}. Most of the historical facts mentioned in this paper are taken from there as well.

With respect to Polish spaces, Borel and projective hierarchies and most of the other classical notions, the paper follows the notation and developments found in \cite{Ker94}. On several occasions, \cite{Jec00},\cite{Kan_HI}, \cite{Mos94} and \cite{Sri98} were used as well.

The section on correspondence between effective and classical descriptive set theories is a brief summary of what can be found in \cite{Kan_HI} and \cite{Mos94}.  

The section on invariant descriptive set theory follows \cite{Gao09} with some results taken from \cite{Bec96},\cite{Mel07} and \cite{Cle01}. The proof of Proposition \ref{t_full} is a slightly modified version of the proof from \cite{Gao09}, which, in turn, is a restatement of the proof from the PhD thesis of J.D. Clemens.

\subsection{Notation index}
\begin{enumerate}
\item[] $int(A)$ - interior of $A$,
\item[] $cl(A)$ - closure of $A$,
\item[] $\sigma(A)$ - the $\sigma-$algebra generated by $A$,
\item[] $\mathcal{N}$ -  the Baire space, that is $\baire$,
\item[] $\mathcal{C}$ -  the Cantor space, that is $\twon$,
\item[] $\cylinder a$ - a cylinder (see Definition \ref{def_trees}),
\item[] $[T]$ - the set of branches of $T$ (see definition \ref{def_trees}),
\item[] $s^\frown t$ - string concatenation (see section \ref{trees}),
\item[] $A\bigtriangleup B$ - symmetric difference, i.e. $(A-B)\cup (B-A)$,
\item[] $\gdelta-$set - a countable intersection of open sets (see (\ref{delta_notation})),
\item[] $\fsigma-$set - a countable union of closed sets (see (\ref{sigma_notation})),
\item[] $F(X)$ - the set of closed subsets of $X$,
\item[] $\barpi0\alpha, \barsigma0\alpha,\bardelta0\alpha$ - Borel pointclasses (see definition \ref{borel_hierarchy}),
\item[] $\barpi1\alpha, \barsigma1\alpha,\bardelta1\alpha$ - projective pointclasses (see definition \ref{projective_hd}),
\item[] $\arpi0\alpha, \arsigma0\alpha,\ardelta0\alpha$ - arithmetical pointclasses (see definition \ref{arithmetical_hd}),
\item[] $\arpi1\alpha, \arsigma1\alpha,\ardelta1\alpha$ - analytical pointclasses (see definition \ref{analytical_hd}),
\item[] $\mathcal A^2$ - second-order arithmetic (see subsection $\ref{effective_subsection}$),
\item[] $\opa{\scheme As}$ - Suslin operation (see definition \ref{suslin_op_def}),
\item[] $\le_W$ - Wadge reducibility (see definition \ref{wadge_def}),
\item[] $\le_B,\sim_B$ - Borel reducibility (see definition \ref{borel_reducibility_d}),
\item[] $G_x$ - stabilizer of $x$ (see definition \ref{orbits_and_stabilizers_d}),

\item[] $\mathbb U$ - the Urysohn space (see subsection \ref{urysohn}),
\item[] $\mathbb X$ - a Borel space of all Polish metric spaces (see subsection \ref{encoding1}),

\item[] $\mathcal X$ - a space of all Polish metric spaces (see subsection \ref{space_of_polish}),

\item[] $\mathfrak c$ - the cardinality of the continuum,
\item[] $\omega$ - the set of natural numbers, 
\item[] $\omega_0,\omega_1\dots$ - order-types of infinite cardinals,

\item[] $d_{X,\omega}$ - metrics used to establish a correspondence between $\mathcal X$ and $\mathbb X$ (see subsection \ref{encoding1}),

\end{enumerate}

\section{Regularity properties of sets of reals}

A \textit{regularity property} is a property indicative of well-behaved sets of reals \cite{Kan95}.   Historically, there are three classical regularity properties:
\begin{enumerate}
\item the Baire property,
\item Lebesgue measurability and 
\item the perfect set property.
\end{enumerate}

For all the above properties it is relatively easy (using a version of the Axiom of Choice) to find a counterexample - a set of reals that does not satisfy some or all of them. Below we define two of the most known such counterexamples.

\paragraph{Bernstein sets} ~

We call a set of real numbers $B$ a \textit{Bernstein set} if for any closed uncountable $C\subseteq\real$ both $C\cap B$ and $C\backslash B$ are non-empty. Let's show that there is such a set.

The cardinality of the set of uncountable closed subsets of $\real$ is $\mathfrak{c}$. Let $\left\{F_\alpha: \alpha<\mathfrak{c} \right\}$ be an enumeration of uncountable closed subsets of $\real$. 

By transfinite induction we define two sequences: $\cseq{x}$ and $\cseq{y}$.
For $\beta< \mathfrak{c}$ define $D_\beta=\tseq{x}{\beta}\cup\tseq{y}{\beta}$. Note that $|D_\beta|<|F_\beta|=\mathfrak c$ and choose $x_\beta, y_\beta\in F_\beta-D_\beta$ with $x_\beta\neq y_\beta$. 

Define $B=\cseq{x}$, it is easy to see that it is a Bernstein set.

\paragraph{Vitali sets} ~

Define an equivalence relation on the real numbers in the interval $[0,1]$: $x\backsim y \iff |x-y|\in\mathbb Q $. Using the Axiom of Choice we can define a set of representatives of all equivalence classes. Such a set is called a \textit{Vitali set}.

\paragraph{} Bernstein sets do not have any of the mentioned regularity properties. Vitali sets do not have the Baire property and are not Lebesgue measurable, but it is possible to find a Vitali set with the perfect set property.

The above mentioned facts will be proven in the subsequent parts of the paper.

\subsection{The Baire property}
                                                                                            
\begin{definition} Let $A$ be a subset of some topological space.

We say that $A$ is \textit{nowhere dense} $\iff$ $int(cl(A))=\emptyset$.

We say that $A$ is \textit{meager} $\iff$ A is a countable union of nowhere dense sets.

We say that $A$ has the \textit{Baire property} $\iff$ $A \bigtriangleup O$ is meager for some open set $O$. 

\end{definition}

The Baire property (from now on - the BP) evolved from the work of \mbox{Ren\'e-Louis Baire}. In particular, from considerations related to the Baire Category Theorem. Intuitively, it says that the set  is ``almost open'' (in fact, \textit{almost open} is another name for the BP). And meagerness is one of the accepted notions of ``smallness'' for sets.

Here is a couple of basic consequences of the above definitions.

\begin{proposition} Let $X$ be a topological space.
\begin{enumerate}
\item The collection of all meager subsets of $X$ forms a $\sigma-$ideal.

\item The collection of all subsets of X having the BP forms the smallest $\sigma-$algebra containing all open sets and all meager sets.
\end{enumerate}
\end{proposition}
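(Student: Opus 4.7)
The plan is to verify the two parts separately, using only the definitions of nowhere dense, meager and BP together with elementary Boolean algebra of the symmetric difference.

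For part (1), I would first establish monotonicity: a subset of a nowhere dense set is nowhere dense, since $B\subseteq A$ implies $cl(B)\subseteq cl(A)$ and hence $int(cl(B))\subseteq int(cl(A))=\emptyset$. Combined with the identity $B=\bigcup_n(B\cap N_n)$ when $A=\bigcup_n N_n$ is meager and $B\subseteq A$, this gives closure of meager sets under subsets. Closure under countable unions is immediate, since a countable union of countable families of nowhere dense sets is still a countable family of nowhere dense sets, and a $\sigma$-ideal needs nothing more.

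For part (2), I would check in turn that the family $\mathcal{B}$ of sets with the BP contains every open set (via $O\bigtriangleup O=\emptyset$), contains every meager set (via $M\bigtriangleup\emptyset=M$), is closed under countable unions, and is closed under complements. Countable unions follow from the inclusion
\[
\left(\bigcup_n A_n\right)\bigtriangleup\left(\bigcup_n O_n\right)\subseteq\bigcup_n (A_n\bigtriangleup O_n)
\]
combined with part (1). Complements are the one place where a small observation is required. If $A\bigtriangleup O$ is meager with $O$ open, then $A^c\bigtriangleup O^c=A\bigtriangleup O$ is meager, but $O^c$ is only closed. The remedy is to replace $O^c$ by $int(O^c)$; the correction $O^c\setminus int(O^c)=cl(O)\setminus O$ is the boundary $\partial O$, a closed set whose interior is empty (any open subset of $cl(O)$ must meet $O$). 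Hence $\partial O$ is nowhere dense, and $A^c\bigtriangleup int(O^c)$ equals the symmetric difference of a meager set with a nowhere dense set, so is meager.

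Finally, for minimality, I would observe that any $\sigma$-algebra containing all open sets and all meager sets must already contain every BP set: if $A\bigtriangleup O=M$ then $A=O\bigtriangleup M$, and a $\sigma$-algebra is closed under symmetric differences (being closed under complements and finite unions). The only step with genuine content is the boundary observation in the complementation argument; everything else is bookkeeping with symmetric differences.
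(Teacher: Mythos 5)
Your argument is correct and complete: the paper states this proposition without any proof, and what you have written is the standard argument that it leaves implicit. The one step with real content --- replacing $O^c$ by $int(O^c)$ when taking complements and absorbing the nowhere dense boundary $cl(O)\setminus O$ into the meager error --- is handled correctly, and the minimality direction via $A=O\bigtriangleup M$ is exactly what is needed.
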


\begin{proposition}
Let $X$ be a topological space and $A\subseteq X$, then the following are equivalent:
\begin{enumerate}\label{bp_prop_1}
\item $A$ has the BP;
\item $A=G\cup M$, where $G$ is $G_\delta$ and $M$ is meager;
\item $A=F\backslash M$, where $F$ is $F_\sigma$ and $M$ is meager.
\end{enumerate}
\end{proposition}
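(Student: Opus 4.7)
The plan is to prove $(1) \Leftrightarrow (2)$ and then reduce $(3)$ to these by passing to complements. Observe that $A$ satisfies $(2)$ iff $A^c$ satisfies $(3)$: if $A = G \cup M$ with $G$ a $G_\delta$-set and $M$ meager, then $A^c = G^c \backslash M$ with $G^c$ an $F_\sigma$-set, and the converse direction is symmetric. Since the class of sets with the BP is closed under complementation by the preceding proposition, this duality means $(1) \Rightarrow (3)$ and $(3) \Rightarrow (1)$ both follow from $(1) \Leftrightarrow (2)$ applied to $A^c$.

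For $(2) \Rightarrow (1)$, every $G_\delta$-set is Borel, hence has the BP by the preceding proposition, so there is an open $O$ with $G \bigtriangleup O$ meager. A direct computation gives $A \bigtriangleup O = (G \cup M) \bigtriangleup O \subseteq (G \bigtriangleup O) \cup M$, which is meager.

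For the main direction $(1) \Rightarrow (2)$, suppose $A \bigtriangleup O$ is meager for some open $O$. The key step is to enlarge $A \bigtriangleup O$ to a meager $F_\sigma$-set $N$; this is possible because a meager set is by definition a countable union of nowhere dense sets and $\overline{S}$ is nowhere dense whenever $S$ is. Set $G = O \backslash N = O \cap N^c$; since $N^c$ is $G_\delta$, so is $G$. Any $x \in O \backslash A$ lies in $A \bigtriangleup O \subseteq N$, hence $G \subseteq A$; and $A \backslash G \subseteq (A \backslash O) \cup (O \cap N) \subseteq N$ is meager. Thus $A = G \cup (A \backslash G)$ has the required form.

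The only real obstacle in the argument is the observation that every meager set sits inside a meager $F_\sigma$-set, but this is immediate from the stability of nowhere-denseness under closure.
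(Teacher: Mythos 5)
Your proof is correct and complete. The paper states this proposition without proof, so there is nothing to compare against; your argument is the standard one, and you correctly isolate the one nontrivial ingredient, namely that every meager set is contained in a meager $F_\sigma$ set (via $\overline{S}$ being nowhere dense whenever $S$ is), with the complementation duality between (2) and (3) and the $\sigma$-ideal/$\sigma$-algebra facts from the preceding proposition handling the rest.
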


\begin{notation}
Let $A\subseteq\real^n$ and $x\in\real.$ The \emph{translation of }$A$ \emph{by} $x$ is defined to be the set
\begin{align*}
 x\oplus A =\left\{ x+a:a\in A \right\}.
\end{align*}

\end{notation}

The following proposition establishes an important fact about meager sets and the BP: their \emph{translation invariance}.

\begin{proposition}\label{bp_translation_invariance} Let $x\in\real$. 
\begin{enumerate}
\item If $M\subseteq\real$ is meager, then $x\oplus M$ is meager too.
\item If $B\subseteq\real$ has the BP, then $x\oplus B$ has the BP too.
\end{enumerate}
\end{proposition}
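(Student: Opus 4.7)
The plan is to exploit the fact that for any fixed $x \in \real$, the map $T_x : \real \to \real$ defined by $T_x(y) = x + y$ is a homeomorphism, with inverse $T_{-x}$. Since a homeomorphism commutes with interior, closure, complement, union and intersection, essentially every topological property is preserved under it, and both claims will reduce to this observation.

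For part (1), I would first reduce to the nowhere dense case: if $M = \bigcup_{n} N_n$ with each $N_n$ nowhere dense, then $x \oplus M = \bigcup_n (x \oplus N_n)$, so it suffices to show that $x \oplus N$ is nowhere dense whenever $N$ is. For this, using that $T_x$ is a homeomorphism, I would write
\begin{align*}
int(cl(x \oplus N)) = int(T_x(cl(N))) = T_x(int(cl(N))) = T_x(\emptyset) = \emptyset.
\end{align*}

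For part (2), I would take an open $O$ and a meager $M$ with $B \triangle O = M$, i.e. $B = O \triangle M'$ for some meager $M'$ (using that meagerness is preserved under the symmetric difference manipulations, which follows from the $\sigma$-ideal property already noted). Since $T_x$ is a bijection, it distributes over the boolean operations, so
\begin{align*}
x \oplus B = (x \oplus O) \triangle (x \oplus M').
\end{align*}
Now $x \oplus O$ is open because $T_x$ is an open map, and $x \oplus M'$ is meager by part (1). Hence $x \oplus B$ differs from an open set by a meager set and therefore has the BP.

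No step here looks genuinely hard; the only thing to be a little careful about is the routine algebraic identity that $T_x(A \triangle B) = T_x(A) \triangle T_x(B)$ (true because $T_x$ is a bijection) and the fact that translation is simultaneously open, continuous and a bijection. Once those are in place, the proof is a direct application of the definitions, and no induction or additional machinery is needed.
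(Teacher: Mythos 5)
Your proof is correct: reducing meagerness to the nowhere dense case and using that translation is a homeomorphism (so it commutes with closure, interior, unions, and symmetric differences) is exactly the standard argument, and the symmetric-difference bookkeeping in part (2) works since $O\bigtriangleup(B\bigtriangleup O)=B$. The paper states this proposition without proof, so there is nothing to compare against, but your argument is the expected one and has no gaps.
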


\noindent Let's show that both Vitali sets and Bernstein sets do not have the BP.

\begin{example}A Bernstein set does not have the BP.
\begin{proof}

Let $A\subseteq\real$ be a Bernstein set. For the sake of finding a contradiction assume that $A$ has the BP. Either $A$ or its complement is not meager. Since the complement of $A$ is a Bernstein set too, WLOG assume that $A$ is not meager. Then by \ref{bp_prop_1}(2) there exists $\gdelta$ set $X\subseteq A$ such that $A\backslash X$ is meager. Furthermore, $X$ must be uncountable and hence by \ref{borel_fact_1} must contain a perfect subset $P$, which gives us a contradiction since $P$ must be an uncountable closed set and a Bernstein set does not contain any uncountable closed subsets.

\end{proof}
\end{example}

\begin{example} A Vitali set does not have the BP.
\begin{proof}
Let $\mathcal V$ be a Vitali set and suppose it has the BP. 

Then there exists an open interval $(a,b)$ such that $(a,b)\backslash\mathcal V$ is meager. This follows from the definition of the BP and from the fact that every open subset of $\real$ is a countable union of disjoint open intervals. 
Furthermore, for any $q\in\mathbb Q$, we have
$$(a,b)\cap(q\oplus \mathcal V)\subseteq((a,b)\backslash\mathcal V)\cap (q\oplus\mathcal V))\subseteq (a,b)\backslash\mathcal V$$ and we get that $(a,b)\cap(q\oplus \mathcal V)$ is meager too. Applying \ref{bp_translation_invariance}(1) we get that $$-q\oplus\left((a,b)\cap(q\oplus \mathcal V)\right)=(a-q,b-q)\cap\mathcal V$$ is meager for any $q\in\mathbb Q$ too. It follows that $\mathcal V=\bigcup_{q\in\mathbb Q}\left[(a-q,b-q)\cap\mathcal V\right]$ is meager and $\real=\bigcup_{q\in\mathbb Q}\left[q\oplus\mathcal V\right]$ is meager too. This is a contradiction.

\end{proof}
\end{example}

\subsection{Lebesgue measurability}

One of the possible ways to define the Lebesgue measure is via the \textit{outer measure} $\mu^*$. Let $X\subseteq\real^n$ and define 
\begin{align*}
\mu^*(X)=\inf\left\{\sum_{i\in\omega}v(I_i):\text{ all $I_i$ are n-dimensional intervals and } X\subseteq \bigcup_{i\in \omega} I_i\right\}
\end{align*}
where $v(I)$ denotes the volume of $I$. A set $X$ is a \textit{null-set} if $\mu^*(X)=0$.

\begin{definition}
A set $A\subseteq \real^n$ is said to be Lebesgue measurable if for every $X\subseteq \real^n$
\begin{align}
\mu^*(X)=\mu^*(X\cap A)+\mu^*(X\backslash A)
\end{align}
\end{definition}

\noindent If $X$ is Lebesgue measurable, then its Lebesgue measure is equal to its outer measure:
\begin{align}
\mu(X)=\mu^*(X).
\end{align}

\begin{proposition}[Basic properties of Lebesgue measurability]
\noindent \begin{enumerate}
\item Every interval is Lebesgue measurable, and its measure is equal to its volume.
\item The Lebesgue measurable sets form a $\sigma-$algebra.
\item Every null set is Lebesgue measurable; null sets form a $\sigma-$ideal and contain all singletons.
\item $\mu$ is $\sigma-$additive and $\sigma-$finite.
\item If $A$ is measurable, then there is a $\fsigma$ set $F$ and a $\gdelta$ set $G$ such that $F\subseteq A \subseteq G$ and $G\backslash F$ is a null set.
\end{enumerate}
\end{proposition}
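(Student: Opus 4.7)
The plan is to work through the five assertions using the Carath\'eodory criterion throughout, after first recording two elementary consequences of the definition of $\mu^*$: monotonicity ($A\subseteq B \Rightarrow \mu^*(A)\le\mu^*(B)$) and countable subadditivity, both immediate from manipulating covers by intervals. Subadditivity already gives the $\le$ direction of the Carath\'eodory criterion, so in every verification only the $\ge$ direction is nontrivial. I would tackle (3) first: if $\mu^*(A)=0$, then $\mu^*(X\cap A)\le\mu^*(A)=0$ and $\mu^*(X\setminus A)\le\mu^*(X)$ by monotonicity, so the criterion follows at once. The $\sigma$-ideal property of null sets is immediate from countable subadditivity, and every singleton has outer measure $0$ since it is contained in arbitrarily small intervals.

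For (2), closure under complements is automatic since the Carath\'eodory criterion is symmetric in $A$ and $\real^n\setminus A$. For a union of two measurable sets $A$, $B$, I would apply the criterion for $A$ to a test set $X$, then the criterion for $B$ to each of the pieces $X\cap A$ and $X\setminus A$, and reassemble; a short manipulation yields the criterion for $A\cup B$. I would then extend to countable unions by disjointifying $B_n = A_n\setminus\bigcup_{k<n}A_k$, showing inductively that on pairwise disjoint measurable sets the outer measure is finitely additive, and passing to the countable limit using monotonicity and subadditivity.

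The main obstacle is (1): every interval is measurable. For any interval $I$ and test set $X$, one needs $\mu^*(X)\ge\mu^*(X\cap I)+\mu^*(X\setminus I)$. Picking a near-optimal interval cover $\{I_i\}$ of $X$ with $\sum_i v(I_i)\le\mu^*(X)+\varepsilon$ reduces this, via subadditivity, to showing that each $I_i$ admits a geometric decomposition into $I_i\cap I$ together with up to $2n$ additional disjoint sub-intervals of $I_i\setminus I$ whose volumes sum to $v(I_i)$; this is elementary but fussy bookkeeping about axis-aligned boxes in $\real^n$, and letting $\varepsilon\to 0$ finishes the step. To then show $\mu(I)=v(I)$, the inequality $\mu^*(I)\le v(I)$ is trivial, while $\mu^*(I)\ge v(I)$ is the classical Heine-Borel argument: slightly enlarge a near-optimal cover of a closed bounded sub-interval to open intervals, extract a finite subcover by compactness, and bound its total volume below by an elementary telescoping estimate.

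Statement (4) then follows at once: $\sigma$-additivity is exactly what was proved for disjoint sequences in the course of (2), and $\sigma$-finiteness comes from (1) applied to a countable exhaustion of $\real^n$ by bounded cubes. Finally, for (5), for each $n$ I would choose an open set $U_n\supseteq A$ (a countable union of open intervals) with $\mu^*(U_n)\le\mu^*(A)+1/n$, and set $G=\bigcap_n U_n$; this is a $\gdelta$ containing $A$ with $\mu^*(G)=\mu^*(A)$. Applying the same construction to $A^c\cap C_N$ inside each bounded cube $C_N$ of an exhaustion of $\real^n$ produces an $\fsigma$ set $F_N\subseteq A\cap C_N$ whose complement in $A\cap C_N$ is null; the union $F=\bigcup_N F_N$ is then the required $\fsigma$ subset of $A$, and subtractivity on each $\sigma$-finite piece yields $\mu^*(G\setminus F)=0$.
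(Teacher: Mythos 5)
The paper states this proposition without proof, as standard background from measure theory, so there is no argument of the author's to compare yours against. Your outline is the classical Carath\'eodory development and the overall architecture is sound: subadditivity reducing the criterion to one inequality, null sets first, complements for free by symmetry, finite unions by applying the criterion twice and reassembling, disjointification plus finite additivity for countable unions (which simultaneously delivers the $\sigma$-additivity in (4)), and the box-decomposition plus Heine--Borel argument for (1). All of these steps go through as you describe.

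There is one genuine, though easily repaired, gap in your treatment of (5). For the $\gdelta$ hull you choose open $U_n\supseteq A$ with $\mu^*(U_n)\le\mu^*(A)+1/n$ and conclude that $G=\bigcap_n U_n$ satisfies $\mu^*(G)=\mu^*(A)$; but when $\mu^*(A)=\infty$ this condition is vacuous and does not force $G\setminus A$ to be null. Take $A$ a closed half-space: $U_n=\real^n$ satisfies your inequality, yet $G\setminus A$ is the complementary open half-space. Since the conclusion of (5) is that $G\backslash F$ is \emph{null}, not merely that outer measures agree, you must localize the $\gdelta$ construction exactly as you already do for the $\fsigma$ kernel: on each bounded cube $C_N$ choose open $U_{n,N}\supseteq A\cap C_N$ with $\mu(U_{n,N})\le\mu(A\cap C_N)+2^{-N}/n$, so that measurability and finiteness give $\mu(U_{n,N}\setminus A)\le 2^{-N}/n$, and set $U_n=\bigcup_N U_{n,N}$, $G=\bigcap_n U_n$; then $\mu(G\setminus A)=0$ genuinely. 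With that adjustment the whole outline is complete.
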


\noindent An equivalent characterisation of Lebesgue measurability is following:
\begin{fact} 
$X\subseteq \real^n$ is Lebesgue measurable $\iff$ $X\bigtriangleup B$ is a null-set for some Borel set $B$.
\end{fact}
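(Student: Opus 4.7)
The plan is to prove both implications as short consequences of the basic properties of Lebesgue measurability stated in the preceding proposition.

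For the forward direction, I would start from a Lebesgue measurable $X\subseteq\real^n$ and invoke property (5) of the proposition on basic properties: there exist an $\fsigma$ set $F$ and a $\gdelta$ set $G$ with $F\subseteq X\subseteq G$ and $G\setminus F$ a null set. Since $\fsigma$ sets are Borel, I would set $B=F$. Then $X\triangle B = X\setminus F \subseteq G\setminus F$, so $X\triangle B$ is contained in a null set, and by property (3) (null sets form a $\sigma$-ideal, in particular are closed under taking subsets) it is itself a null set.

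For the reverse direction, I would assume $X\triangle B$ is null for some Borel $B$. Two observations suffice. First, every Borel set is Lebesgue measurable: by property (1) every interval is measurable, by property (2) the measurable sets form a $\sigma$-algebra, hence this $\sigma$-algebra contains the Borel $\sigma$-algebra. Second, $X\triangle B$ is Lebesgue measurable by property (3) (null sets are measurable). Finally, one uses the identity $X = B\triangle(X\triangle B)$, so that $X$ is the symmetric difference of two measurable sets and hence measurable by property (2).

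The argument is essentially bookkeeping on top of the already listed basic properties; there is no real technical obstacle. The only step that requires a brief justification is the inclusion of the Borel $\sigma$-algebra in the Lebesgue $\sigma$-algebra, since the basic properties list only guarantees measurability of intervals; but this is immediate from the $\sigma$-algebra structure in property (2). If one wished to avoid the identity $X=B\triangle(X\triangle B)$, one could equivalently write $X=(B\cup N_1)\setminus N_2$ with $N_1=X\setminus B$ and $N_2=B\setminus X$, both null as subsets of $X\triangle B$, and conclude measurability from the $\sigma$-algebra properties again.
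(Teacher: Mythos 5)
Your proof is correct, and since the paper states this fact without giving any proof, your argument is exactly the standard one it implicitly relies on: the forward direction via the inner $F_\sigma$ approximation from property (5), and the converse via measurability of Borel sets and of null sets together with the identity $X=B\bigtriangleup(X\bigtriangleup B)$. All the properties you invoke are available from the preceding proposition, so nothing further is needed.
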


At this point it is clear that Lebesgue measurability resembles the Baire property: where the Baire property indicates that a set is almost-open, the Lebesgue measurability indicates that a set is almost-Borel. A nullset is another commonly accepted notion of ``smallness'' for sets.

There are quite a few similarities between the class of meagre sets and the class of nullsets. Both are $\sigma-$ideals. Both include all countable sets.
Both include some sets of the size of continuum. Both classes have the same cardinality, that of $2^{\mathfrak c}$. Neither class includes an interval. Both classes are translation invariant. The complement of any set of either class is dense. Any set belonging to either class is contained in a Borel set belonging to the same class. 
The following striking result highlights the similarity of those notions.

\begin{theorem}[The Erd\"os-Sierpi\'nski Duality Principle] Assume that the continuum hypothesis holds. Then there exists an involution $f : \mathbb R\rightarrow\mathbb R$ such that for every subset $A$ of $\mathbb R$
\begin{enumerate}
\item  $f(A)$ is meagre if and only if $A$ is a nullset, and 
\item $f(A)$ is null if and only if $A$ is meagre.
\end{enumerate}

\end{theorem}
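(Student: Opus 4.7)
The plan is a transfinite construction of length $\omega_1$, enabled by CH (which gives $\mathfrak{c}=\aleph_1$ and so only $\aleph_1$ Borel sets to handle).

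\emph{Setup.} Enumerate the meagre $F_\sigma$ subsets of $\real$ as $\{M_\alpha:\alpha<\omega_1\}$ and the null $G_\delta$ subsets as $\{N_\alpha:\alpha<\omega_1\}$. By Proposition \ref{bp_prop_1}(3) and its measure-theoretic analogue, these families are cofinal in the ideals of meagre and null sets respectively. Invoke the classical decomposition $\real=A\sqcup B$ with $A$ meagre $F_\sigma$ and $B$ null $G_\delta$: take $B=\bigcap_n U_n$, each $U_n\supseteq\mathbb{Q}$ open with $\mu(U_n)<2^{-n}$; then $A=\real\setminus B$ is $F_\sigma$ and, being the complement of a dense $G_\delta$, meagre.

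\emph{Construction.} I aim to build $f$ so that for every $\alpha$, $f(M_\alpha)\subseteq N_{\pi(\alpha)}$ and $f(N_\alpha)\subseteq M_{\sigma(\alpha)}$ for some bookkeeping functions $\pi,\sigma\colon\omega_1\to\omega_1$. Granted this, any meagre $E$ lies in some $M_\alpha$ and therefore $f(E)\subseteq N_{\pi(\alpha)}$ is null; the converse direction follows by the involution property. Proceed by recursion on $\alpha<\omega_1$, extending an increasing chain of partial involutions $f_\alpha$, starting from $f_0$ given by any bijection $A\to B$ together with its inverse (so $\mathrm{dom}(f_0)=\real$ already). At each successor stage $\alpha+1$, modify $f_\alpha$ on a set of size $<\mathfrak{c}$ so as to direct $M_{\alpha+1}$ into some null $G_\delta$ target and $N_{\alpha+1}$ into some meagre $F_\sigma$ target, preserving the involution structure; at limits take unions.

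\emph{Main obstacle.} The delicate point is the modification at successor stages: a point $x\in M_{\alpha+1}$ may already lie in some earlier $N_\beta$, so $f_\alpha(x)$ is already committed to the meagre target $M_{\sigma(\beta)}$ rather than a null one, and those image points must be re-routed into a null $G_\delta$ without breaking injectivity or the involution. The remedy is to choose the fresh targets $N_{\pi(\alpha+1)}$ and $M_{\sigma(\alpha+1)}$ large enough to swallow the pre-committed images, and to perform only $<\mathfrak{c}$ reassignments per stage by a back-and-forth argument in the style of Cantor's theorem on dense linear orders. Under CH the per-stage budget $\mathfrak{c}=\aleph_1$ and the $\aleph_1$ total stages match exactly, so the construction closes. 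Verifying continuity at limit stages---that the accumulated partial involution is single-valued and that the accumulated targets remain Borel of the correct category---is the remaining technical item, but is routine once the successor step is in place.
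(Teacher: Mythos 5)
The paper states this theorem without proof, so your proposal has to stand on its own. Your setup is sound: under CH the meagre $F_\sigma$ sets and the null $G_\delta$ sets form $\aleph_1$-enumerable cofinal families in their respective $\sigma$-ideals, the decomposition $\real=A\sqcup B$ with $A$ meagre $F_\sigma$ and $B$ null $G_\delta$ is the right starting point, and your logical skeleton (the two forward implications plus the involution property yield both biconditionals) is correct. The gap is in the construction. You start from a \emph{total} involution $f_0=g_0\cup g_0^{-1}$ for an arbitrary bijection $g_0:A\rightarrow B$ and then propose to repair it at $\omega_1$ successor stages. A chain of modifications of a total function is not an increasing chain of partial functions, so ``take unions at limits'' is undefined unless each point's value changes only boundedly often, and nothing in your scheme guarantees that. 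The budget is also inconsistent: you first allow only $<\mathfrak{c}$ (under CH, countably many) reassignments per stage, yet redirecting $f_\alpha(M_{\alpha+1})$ into a null set can require moving $\mathfrak{c}$-many points, since $M_{\alpha+1}\cap B$ may have cardinality $\mathfrak{c}$ and the arbitrary $g_0$ gives no control over where its image sits; later you set the budget at $\mathfrak{c}$, at which point the stabilization problem at limits becomes fatal. The ``main obstacle'' you identify is real, but the back-and-forth remedy is a gesture, not an argument --- it is precisely the step that needs proving, and as set up it does not close.

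The missing idea that dissolves the obstacle is this: once $\real=A\sqcup B$, \emph{every} subset of $A$ is meagre and \emph{every} subset of $B$ is null, so two of the four requirements on $f=g\cup g^{-1}$ hold for free. The only nontrivial condition is that the bijection $g:A\rightarrow B$ carry the null subsets of $A$ exactly onto the meagre subsets of $B$. This is built in a single pass, with no modifications: choose increasing sequences $\left(K_\alpha\right)_{\alpha<\omega_1}$, cofinal in the ideal of null subsets of $A$ with $\bigcup_\alpha K_\alpha=A$, and $\left(L_\alpha\right)_{\alpha<\omega_1}$, cofinal in the ideal of meagre subsets of $B$ with $\bigcup_\alpha L_\alpha=B$, taking unions at limits and arranging every increment $K_{\alpha+1}\setminus K_\alpha$ and $L_{\alpha+1}\setminus L_\alpha$ to have cardinality exactly $\aleph_1$. (This is possible because the complement in $A$ of any null set is co-null in $\real$, hence contains a closed set of positive measure and therefore a perfect null set of power $\mathfrak{c}$; dually, the complement in $B$ of any meagre set is comeagre and contains a perfect nowhere dense set of power $\mathfrak{c}$.) Let $g$ be any bijection matching increments pairwise. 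Then a null $E\subseteq A$ lies in some $K_\alpha$, so $g(E)\subseteq L_\alpha$ is meagre, and conversely via $g^{-1}$; each point is assigned exactly once, so there is nothing to verify at limits. Your point-by-point re-routing is trying to recover by force what this partition into matched increments gives for free.
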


Just like the BP, Lebesgue measurability is translation invariant. However, the result is a bit stronger: the measure itself is preserved under translation.

\begin{proposition}
Let $A\subseteq\real^n$ be Lebesgue measurable and let $x\in\real$. Then $x\oplus A$ is Lebesgue measurable too and
\begin{align*}
\mu(x\oplus A)=\mu(A).
\end{align*}
\end{proposition}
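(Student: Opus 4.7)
The plan is to reduce everything to a single fact about the outer measure: that $\mu^*$ is itself translation invariant. Once this is in hand, both the measurability of $x\oplus A$ and the equality $\mu(x\oplus A)=\mu(A)$ fall out almost immediately from the Carath\'eodory definition.

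\textbf{Step 1: Translation invariance of $\mu^*$.} For any $Y\subseteq\real^n$ and any $x\in\real^n$, I would show $\mu^*(x\oplus Y)=\mu^*(Y)$. The key observation is that if $I$ is an $n$-dimensional interval, then $x\oplus I$ is again an $n$-dimensional interval with $v(x\oplus I)=v(I)$. Hence every countable cover $\{I_i\}_{i\in\omega}$ of $Y$ by intervals yields a countable cover $\{x\oplus I_i\}_{i\in\omega}$ of $x\oplus Y$ with the same total volume, and vice versa (applying the same reasoning with $-x$ to go back). Taking infima gives the claimed equality.

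\textbf{Step 2: Measurability of $x\oplus A$.} I need to verify the Carath\'eodory criterion for $x\oplus A$: for every test set $Y\subseteq\real^n$,
\begin{align*}
\mu^*(Y)=\mu^*(Y\cap(x\oplus A))+\mu^*(Y\setminus(x\oplus A)).
\end{align*}
The trick is to shift the test set back by $-x$. Set $Y'=-x\oplus Y$. A direct check shows
\begin{align*}
Y'\cap A = -x\oplus(Y\cap(x\oplus A)), \qquad Y'\setminus A = -x\oplus(Y\setminus(x\oplus A)).
\end{align*}
Since $A$ is measurable, $\mu^*(Y')=\mu^*(Y'\cap A)+\mu^*(Y'\setminus A)$. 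Applying Step 1 to each of the three sets appearing in this identity (using that translation by $x$ is the inverse of translation by $-x$) converts it into the required equation for $Y$ and $x\oplus A$.

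\textbf{Step 3: Equality of measures.} Once $x\oplus A$ is known to be Lebesgue measurable, its Lebesgue measure agrees with its outer measure, so
\begin{align*}
\mu(x\oplus A)=\mu^*(x\oplus A)=\mu^*(A)=\mu(A),
\end{align*}
again by Step 1. The only mildly delicate point in the whole argument is Step 1, since it is the place where the geometric content (intervals are preserved by translation and retain their volume) actually enters; everything else is purely formal manipulation of the Carath\'eodory condition.
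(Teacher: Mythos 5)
Your argument is correct and complete: translation invariance of the outer measure (Step 1) is the only substantive input, and Steps 2 and 3 are the standard formal manipulations of the Carath\'eodory criterion. The paper states this proposition without proof, so there is nothing to compare against; your route is the canonical one and fills the gap cleanly.
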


\begin{remark}
Actually, an even stronger result holds: Lebesgue measure is invariant under isometries.
\end{remark}

There is a number of results, some of which are listed below, relating to ``approximating'' arbitrary subsets of reals with Lebesgue measurable ones and ``approximating'' Lebesgue measurable sets with some ``simple'' sets  (open, closed, compact, cells, et cetera). One way of interpreting those results is that in some precise sense Lebesgue measurable sets are the ones that can be approximated by open, closed and compact sets.

\paragraph{Approximation by open sets}

\begin{proposition} Let $A\subseteq \real^n$ and let $\epsilon>0$. Then there exists an open $O\subseteq \real$ such that $A\subseteq O$ and $\mu(O) \le \mu^\star(A)+\epsilon$.

Hence \begin{align*} \mu^\star(A)=\inf\left\{ \mu(O):A\subseteq O,O \text{ is open} \right\}. \end{align*}
\end{proposition}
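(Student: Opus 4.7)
The plan is to unpack the definition of $\mu^*$ and then slightly enlarge a near-optimal interval cover into an open cover, losing only an extra $\epsilon/2$ of total volume.

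First I would fix $\epsilon>0$ and, directly from the definition of $\mu^*(A)$ as an infimum, choose a countable cover $\{I_i\}_{i\in\omega}$ of $A$ by $n$-dimensional intervals with
\begin{align*}
\sum_{i\in\omega} v(I_i) \;\le\; \mu^*(A)+\tfrac{\epsilon}{2}.
\end{align*}
(If $\mu^*(A)=\infty$ the statement is trivial, so assume it is finite.) The intervals $I_i$ need not be open, which is the only obstacle to simply taking their union as the desired open set.

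Next I would replace each $I_i$ by a slightly larger \emph{open} $n$-dimensional interval $J_i \supseteq I_i$ with
\begin{align*}
v(J_i) \;\le\; v(I_i)+\tfrac{\epsilon}{2^{i+2}}.
\end{align*}
This is possible because the volume of an $n$-dimensional interval depends continuously on its endpoints, so we can widen each coordinate factor by a sufficiently small amount. Setting $O = \bigcup_{i\in\omega} J_i$ gives an open set with $A \subseteq O$, and by countable subadditivity of $\mu$ on open sets together with the fact that each $J_i$ is Lebesgue measurable with $\mu(J_i)=v(J_i)$ (by part (1) of the basic properties proposition), I would conclude
\begin{align*}
\mu(O) \;\le\; \sum_{i\in\omega} v(J_i) \;\le\; \sum_{i\in\omega} v(I_i)+\tfrac{\epsilon}{2} \;\le\; \mu^*(A)+\epsilon.
\end{align*}

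For the ``hence'' part, monotonicity of $\mu^*$ gives $\mu^*(A) \le \mu(O)$ for any open $O \supseteq A$, so $\mu^*(A)$ is a lower bound for the set in the infimum; the construction above shows it is attained in the limit, yielding the displayed equality. The only genuinely nontrivial step is the enlargement of the intervals $I_i$ into open $J_i$ with a controlled volume increase, and that is purely a routine estimate on products of real intervals.
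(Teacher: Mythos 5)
Your argument is correct and is the standard proof: the paper states this proposition without supplying any proof, so there is nothing to diverge from. The interval-fattening step with the geometric series $\sum_i \epsilon/2^{i+2} = \epsilon/2$, the reduction to the finite-outer-measure case, and the use of countable subadditivity together with $\mu(J_i)=v(J_i)$ are all exactly what is needed, and the ``hence'' clause follows from monotonicity of $\mu^*$ as you say.
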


\begin{proposition}
A set $A\subseteq \real^n$ is Lebesgue measurable if and only if for every $\epsilon >0$ there is an open set $O$ such that $A\subseteq O$ and $\mu^*(O\backslash A)<\epsilon$.
\end{proposition}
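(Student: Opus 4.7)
The plan is to prove the two directions separately, using the previous approximation proposition (open sets approximate outer measure from above) for one direction and the measurability of $G_\delta$ sets together with null-set-as-measurable for the other.

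For the forward direction, assume $A$ is Lebesgue measurable and fix $\epsilon>0$. I would first handle the case $\mu(A)<\infty$: apply the previous proposition to obtain an open $O\supseteq A$ with $\mu(O)\le\mu^*(A)+\epsilon=\mu(A)+\epsilon$. Since $A$ is measurable, the defining identity applied to $X=O$ gives $\mu(O)=\mu(O\cap A)+\mu(O\setminus A)=\mu(A)+\mu^*(O\setminus A)$, so $\mu^*(O\setminus A)\le\epsilon$ (and by starting with $\epsilon/2$ I get strict inequality). The main obstacle is the case $\mu(A)=\infty$, where the inequality $\mu(O)\le\mu^*(A)+\epsilon$ is vacuous. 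To get around this, I would partition $\real^n$ into a countable collection $\{B_k\}_{k\in\omega}$ of bounded measurable pieces (e.g.\ half-open cubes of side $1$), set $A_k=A\cap B_k$, apply the finite-measure case to each $A_k$ with tolerance $\epsilon/2^{k+1}$ to obtain open $O_k\supseteq A_k$ with $\mu^*(O_k\setminus A_k)<\epsilon/2^{k+1}$, and take $O=\bigcup_k O_k$. Then $O$ is open, $A\subseteq O$, and since $O\setminus A\subseteq\bigcup_k(O_k\setminus A_k)$, countable subadditivity of $\mu^*$ gives $\mu^*(O\setminus A)<\epsilon$.

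For the backward direction, suppose for each $n\in\omega$ there is an open $O_n\supseteq A$ with $\mu^*(O_n\setminus A)<1/(n+1)$. Set $G=\bigcap_{n\in\omega}O_n$. Then $G$ is a $G_\delta$ set, hence Borel, hence Lebesgue measurable. Also $A\subseteq G$ and $G\setminus A\subseteq O_n\setminus A$ for every $n$, so $\mu^*(G\setminus A)\le 1/(n+1)$ for all $n$, giving $\mu^*(G\setminus A)=0$. Thus $G\setminus A$ is a null set and therefore measurable. Finally $A=G\setminus(G\setminus A)$ is the difference of two measurable sets, hence measurable.

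The only genuinely delicate step is the reduction to finite measure in the forward direction; everything else is a direct application of the previously listed basic properties ($\sigma$-algebra closure, null sets are measurable, $G_\delta$ sets are Borel) together with countable subadditivity of $\mu^*$.
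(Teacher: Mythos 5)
The paper states this proposition without proof, so there is nothing to compare against; your argument is the standard one and is correct. Both directions are sound: the finite-measure case correctly exploits $\mu(O)-\mu(A)=\mu^*(O\setminus A)$ (valid precisely because $\mu(A)<\infty$), the reduction of the infinite-measure case via a countable partition into bounded cubes and tolerances $\epsilon/2^{k+1}$ works, and the converse via the $G_\delta$ set $G=\bigcap_n O_n$ with $\mu^*(G\setminus A)=0$ uses only facts the paper has already listed (Borel sets and null sets are measurable, measurable sets form a $\sigma$-algebra).
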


\paragraph{Approximation by closed sets}

\begin{proposition}Let $A\subseteq \real^n$ and let $\epsilon>0$. Then there exists a closed set $F\subseteq \real^n$ such that $F\subseteq A$ and $\mu(A) \le \mu (F)+\epsilon$.

Hence \begin{align*} \mu(A)=\sup\left\{ \mu(F):F\subseteq A,F \text{ is closed} \right\}. \end{align*}
\end{proposition}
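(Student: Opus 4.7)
My plan is to derive this from the preceding open-set approximation proposition by passing to complements. Assume implicitly that $A$ is Lebesgue measurable (otherwise $\mu(A)$ is undefined in the statement). Since the Lebesgue measurable sets form a $\sigma$-algebra, the complement $A^c$ is measurable, so the open approximation proposition yields an open set $O\supseteq A^c$ with $\mu^*(O\setminus A^c)<\epsilon$. Setting $F=O^c$ gives a closed set contained in $A$, and a short set-theoretic calculation shows $A\setminus F = A\cap O = O\setminus A^c$, so $\mu(A\setminus F)<\epsilon$.

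Next I would combine these. Since $A=F\sqcup(A\setminus F)$ is a disjoint union of measurable sets, $\sigma$-additivity gives
\begin{align*}
\mu(A)=\mu(F)+\mu(A\setminus F)\le \mu(F)+\epsilon,
\end{align*}
which is the required inequality. The ``hence'' clause is then immediate: monotonicity gives $\mu(F)\le \mu(A)$ for every closed $F\subseteq A$, so the supremum is bounded above by $\mu(A)$; conversely, the displayed inequality shows that closed subsets of $A$ realize values within $\epsilon$ of $\mu(A)$ for every $\epsilon>0$.

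The only point requiring a moment's care is the case $\mu(A)=+\infty$, where the inequality $\mu(A)\le \mu(F)+\epsilon$ forces $\mu(F)=+\infty$. But this comes for free from the same calculation: $\mu(F)=\mu(A)-\mu(A\setminus F)$ and $\mu(A\setminus F)<\epsilon<\infty$ together with $\mu(A)=\infty$ force $\mu(F)=\infty$, so the argument does not actually need a separate $\sigma$-finite reduction. I expect this is where a first-draft proof would stumble — one is tempted to invoke a decomposition $\mathbb{R}^n=\bigcup_k\overline{B(0,k)}$ because a countable union of closed sets is only $F_\sigma$, not closed — but the complement trick sidesteps this issue entirely, since $F=O^c$ is closed without any union being taken.
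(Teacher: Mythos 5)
Your proof is correct: the paper states this proposition without proof, and your complementation argument (apply the open-set approximation to $A^c$, take $F=O^c$, and use additivity on $A=F\sqcup(A\setminus F)$) is the standard route, including the correct observation that the infinite-measure case needs no separate treatment and that passing to complements is precisely what avoids the $F_\sigma$ obstruction. The only point worth flagging is that your reading of the hypothesis is right but should be made explicit: unlike the open-set analogue, which is stated for arbitrary $A$ via $\mu^*$, this statement writes $\mu(A)$ and so tacitly assumes $A$ measurable, exactly as you note.
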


\begin{proposition}\label{closed_approx}
A set $A\subseteq \real^n$ is Lebesgue measurable if and only if for every $\epsilon >0$ there is a closed set $F$ such that $F\subseteq A$ and $\mu^*(A\backslash F)<\epsilon$.
\end{proposition}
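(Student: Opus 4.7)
The plan is to deduce this proposition from the analogous open-set approximation result proven just above (the last unnumbered proposition in the ``Approximation by open sets'' paragraph), using complementation. The forward direction is pure duality, while the converse direction requires building a measurable ``core'' of $A$ and then absorbing the remainder as a null set.

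For the forward direction, suppose $A$ is Lebesgue measurable. Since the measurable sets form a $\sigma$-algebra, $A^c$ is also measurable. Fix $\epsilon > 0$. By the open-set characterisation, there exists an open set $O \supseteq A^c$ with $\mu^*(O \setminus A^c) < \epsilon$. Setting $F := O^c$ gives a closed set with $F \subseteq A$, and since $O \supseteq A^c$ one checks $A \setminus F = A \cap O = O \setminus A^c$, so $\mu^*(A \setminus F) < \epsilon$. This step is essentially bookkeeping; no use is made of any finiteness of $\mu(A)$.

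For the converse, assume for each $n \in \omega$ we can choose a closed $F_n \subseteq A$ with $\mu^*(A \setminus F_n) < 1/n$. Let $F := \bigcup_{n \in \omega} F_n$. Then $F$ is an $F_\sigma$, hence Borel, hence Lebesgue measurable; and $F \subseteq A$. Since $A \setminus F \subseteq A \setminus F_n$ for every $n$, monotonicity of $\mu^*$ gives $\mu^*(A \setminus F) \le 1/n$ for all $n$, hence $\mu^*(A \setminus F) = 0$. Thus $A \setminus F$ is a null set, and null sets are Lebesgue measurable. Finally $A = F \cup (A \setminus F)$ is a union of two measurable sets, so $A$ itself is measurable.

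The main conceptual obstacle is recognising that the converse does not require any direct manipulation of outer measure beyond monotonicity: the clever move is to pass to the $F_\sigma$-hull $\bigcup_n F_n$ inside $A$, which reduces the problem to showing that the residue has outer measure zero, and then invoking the $\sigma$-algebra structure together with the fact that null sets are measurable. All of this is already in the toolbox provided by the Basic properties of Lebesgue measurability listed earlier in the section.
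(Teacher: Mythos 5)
Your proof is correct. Note that the paper states this proposition without proof (as it does most of the background results in this section), so there is nothing to compare against; your argument is the standard one, and both halves check out: the forward direction is exact duality with the open-set approximation applied to $A^c$ (the identity $A\setminus O^c = O\setminus A^c$ is right), and the converse correctly reduces to the $F_\sigma$-core $\bigcup_n F_n$ plus a null residue, using only monotonicity of $\mu^*$ and the $\sigma$-algebra structure already listed among the basic properties. The only cosmetic blemish is writing $1/n$ for $n\in\omega$, which includes $n=0$; index from $n\ge 1$.
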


\paragraph{Approximation by compact sets}

\begin{proposition}\label{approx_compact}
Let $A\subseteq \real^n$ with $\mu^*(A)<\infty$. $A$ is Lebesgue measurable if and only if, for every $\epsilon>0$ there exists a compact set $C$ with $C\subseteq A$ and $\mu^*(A\backslash C)<\epsilon$.
\end{proposition}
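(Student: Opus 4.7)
The plan is to reduce this to Proposition \ref{closed_approx} in both directions, using that compact sets in $\real^n$ are exactly the closed and bounded ones.

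For the easier direction ($\Leftarrow$), suppose that for every $\epsilon>0$ there is a compact $C\subseteq A$ with $\mu^*(A\setminus C)<\epsilon$. Since every compact subset of $\real^n$ is in particular closed, the hypothesis of Proposition \ref{closed_approx} is immediately verified, so $A$ is Lebesgue measurable. No use is made of the finiteness of $\mu^*(A)$ here.

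For the forward direction ($\Rightarrow$), assume $A$ is measurable with $\mu^*(A)<\infty$ and fix $\epsilon>0$. First I would apply Proposition \ref{closed_approx} with threshold $\epsilon/2$ to obtain a closed $F\subseteq A$ satisfying $\mu^*(A\setminus F)<\epsilon/2$. The set $F$ need not be compact, so the key step is to trim it down to a bounded (hence compact) subset without losing much measure. Writing $B_k$ for the closed ball of radius $k$ around the origin, let $C_k=F\cap B_k$; each $C_k$ is closed and bounded in $\real^n$, hence compact, and $C_k\nearrow F$. Using $\sigma$-additivity of $\mu$ together with the crucial bound $\mu(F)\le\mu^*(A)<\infty$, continuity of measure from below gives $\mu(C_k)\to\mu(F)$, so I can pick $k$ large enough that $\mu(F\setminus C_k)<\epsilon/2$. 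Setting $C:=C_k$, subadditivity yields
\begin{align*}
\mu^*(A\setminus C)\le \mu^*(A\setminus F)+\mu^*(F\setminus C)<\tfrac{\epsilon}{2}+\tfrac{\epsilon}{2}=\epsilon,
\end{align*}
which is what was required.

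The only delicate point is the localization step, and this is precisely where the finiteness hypothesis is essential: without $\mu^*(A)<\infty$, the tail $\mu(F\setminus C_k)$ might not tend to zero (consider $A=\real^n$ itself). Everything else is a direct invocation of previously established approximation results, so I expect no significant obstacle beyond clearly isolating where $\mu^*(A)<\infty$ enters.
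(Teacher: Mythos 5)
Your argument is correct: the backward direction follows since compact sets are closed, and the forward direction correctly combines Proposition \ref{closed_approx} with truncation by closed balls, continuity of $\mu$ from below, and the finiteness of $\mu^*(A)$ to control the tail $\mu(F\setminus C_k)$. The paper states this proposition without proof, and what you give is exactly the standard argument one would supply, including the right identification of where the hypothesis $\mu^*(A)<\infty$ is indispensable.
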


\paragraph{Bernstein sets and Vitali sets}~

\noindent To show that these sets are not Lebesgue measurable, first, let's introduce a bit of useful notation.

\begin{notation}
Let $A\subseteq\real$. Its \emph{difference set} is defined to be
\begin{align*}
 A\ominus A =\left\{a-b :a,b\in A \right\}.
\end{align*}

\end{notation}

We will need the following basic fact about difference sets.
\begin{proposition}\label{diff_set_1}
Let $A\subseteq \real$ be Lebesgue measurable with $\mu(A)>0$. Then $A\ominus A$ contains an open neighbourhood of $0$.
\end{proposition}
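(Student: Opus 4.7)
The plan is to use the approximation results for Lebesgue measurable sets (Propositions \ref{closed_approx} and \ref{approx_compact}) to reduce to a tight pair of a compact set inside an open set, and then exploit compactness to find a uniform translation tolerance. This is the standard Steinhaus-style argument.

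First, I would reduce to the case where $A$ has finite positive measure. Since $\mathbb{R}=\bigcup_{n\in\omega}[-n,n]$ and $\mu$ is $\sigma$-finite, there must be some $n$ with $\mu(A\cap[-n,n])>0$, and $(A\cap[-n,n])\ominus(A\cap[-n,n])\subseteq A\ominus A$; so WLOG assume $0<\mu(A)<\infty$. By Proposition \ref{approx_compact} applied with, say, $\varepsilon=\mu(A)/3$, pick a compact $C\subseteq A$ with $\mu(C)>\tfrac{2}{3}\mu(A)$, so in particular $\mu(C)>0$. Then applying the open approximation result, choose an open set $O\supseteq C$ with $\mu(O)<\tfrac{3}{2}\mu(C)$; in particular $\mu(O)<2\mu(C)$.

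Next I would use compactness of $C$ and openness of $O$ to produce a $\delta>0$ such that for every $x\in\mathbb{R}$ with $|x|<\delta$, the translate $x\oplus C$ is still contained in $O$. The idea is that for each $c\in C$ there is some $\delta_c>0$ with $(c-\delta_c,c+\delta_c)\subseteq O$; the open cover $\{(c-\delta_c/2,c+\delta_c/2):c\in C\}$ of $C$ admits a finite subcover, and taking $\delta$ to be half the minimum of the finitely many chosen radii does the job.

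Finally, fix $|x|<\delta$ and suppose for contradiction that $C\cap(x\oplus C)=\emptyset$. By translation invariance of $\mu$, $\mu(x\oplus C)=\mu(C)$, and since both sets are disjoint subsets of $O$,
\begin{align*}
2\mu(C)=\mu(C)+\mu(x\oplus C)=\mu\bigl(C\cup(x\oplus C)\bigr)\le\mu(O)<2\mu(C),
\end{align*}
a contradiction. Hence $C\cap(x\oplus C)\ne\emptyset$, i.e.\ there exist $c_1,c_2\in C\subseteq A$ with $c_1=x+c_2$, so $x=c_1-c_2\in A\ominus A$. This shows $(-\delta,\delta)\subseteq A\ominus A$, completing the proof.

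The main obstacle is producing the uniform $\delta$; everything else is essentially bookkeeping once the compact-in-open approximation is set up. The compactness step is what forces the initial reduction through Proposition \ref{approx_compact} rather than merely the closed-set approximation Proposition \ref{closed_approx}.
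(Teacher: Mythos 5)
Your proof is correct. The paper states Proposition \ref{diff_set_1} without any proof, so there is nothing to compare against; what you have supplied is the standard Steinhaus argument (compact set squeezed inside an open set of measure less than twice as large, plus a uniform translation tolerance from compactness), and every step checks out, including the preliminary reduction to finite measure needed to invoke Proposition \ref{approx_compact}.
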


\noindent Let's show that neither Vitali sets nor Bernstein sets are Lebesgue measurable.

\begin{example}
A Vitali set is not Lebesgue measurable.
\begin{proof} Let $\mathcal V\subseteq\real$ be a Vitali set. 
To find a contradiction, suppose that $\mathcal V$ is Lebesgue measurable.
We need to consider two possibilities:
\begin{enumerate}
\item[(i)] $\mu(\mathcal V)=0$. 
First, note that $$\real=\bigcup_{q\in\mathbb Q}\left(q\oplus\mathcal V\right).$$
To see this, let $x\in\real$. There exists $v_x\in\mathcal V$, such that $x=v_x+q_x$, where $q_x\in \mathbb Q$. Thus $x\in q_x\oplus\mathcal V$. 

Having the above in mind, we get:
$$\mu(\real)=\sum_{q\in\mathbb Q}\mu\left(q\oplus\mathcal V\right)=\sum_{q\in\mathbb Q}\mu\left(\mathcal V\right)=0.$$ Which is not possible.

\item[(ii)] $\mu(\mathcal V)>0.$ From the Proposition \ref{diff_set_1} we know that $\mathcal V\ominus\mathcal V$ contains an open neighbourhood of $0$. Therefore, we can pick a rational non-zero $x\in\mathcal V\ominus\mathcal V$. Then there must be such $v_1,v_2\in\mathcal V$ that $x=v_1-v_2$. But then $v_1\backsim v_2$ and hence $x=0$. A contradiction.
\end{enumerate}
\end{proof}
\end{example}

\begin{example}
A Bernstein set is not Lebesgue measurable.
\begin{proof} Let $A\subseteq\real$ be a Bernstein set. Suppose $A$ is Lebesgue measurable. Then either $\mu(A)>0$ or $\mu(\real\backslash A)>0.$ Without loss of generality, assume $\mu(A)>0$. By \ref{closed_approx} there exists a closed set $C\subseteq A$ such that $\mu(C)>0$, and by \ref{card_of_perfect_sets}(1) there exists a perfect set $P\subseteq C\subseteq A$ such that $\mu(P)>0$. Thus we get a contradiction ($P$ is a closed uncountable set and $C$ by the definition does not contain a closed uncountable set).
\end{proof}
\end{example}

\subsection{The perfect set property}

\begin{definition}
A subset of a topological space is said to be \textit{perfect} if and only if it is nonempty, closed, and has no isolated points.

A subset of a topological space has the \textit{perfect set property} if and only if  it is countable or else has a perfect subset.

\end{definition}

\noindent Both of these concepts originated from Cantor's investigation into the topology of the real line while attempting to prove the Continuum Hypothesis (from now on - CH).
The relevance of these concepts to the CH is seen from the following result proven by Cantor.

\begin{proposition}[Cantor-Bendixson]\label{card_of_perfect_sets}~
\begin{enumerate}
\item For any uncountable closed set $C$ of real numbers, there is a perfect set $P\subseteq C$ with $C\backslash P$ being at most countable.
\item Let $P\subseteq \real$ be perfect. Then $|P|=\mathfrak{c}$.
\end{enumerate}
\end{proposition}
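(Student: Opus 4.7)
The plan is to prove the two parts separately, using condensation points for part (1) and a Cantor-scheme construction for part (2).

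For part (1), I would introduce the notion of a \emph{condensation point} of $C$: a point $x \in \real$ such that every open neighbourhood of $x$ meets $C$ in uncountably many points. Let $P$ be the set of condensation points of $C$ that lie in $C$. I claim $P$ is the desired perfect subset, and $C \setminus P$ is countable. First, $C \setminus P$ is countable: by definition, each $x \in C \setminus P$ has an open neighbourhood $U_x$ with $U_x \cap C$ countable. Since $\real$ has a countable basis, we can cover $C \setminus P$ by countably many such neighbourhoods (pick a basic open set inside each $U_x$), so $C \setminus P$ is a countable union of countable sets. Second, $P$ is closed: if $x_n \to x$ with each $x_n \in P$, then every neighbourhood of $x$ contains some $x_n$ and hence contains an uncountable piece of $C$, so $x$ is a condensation point; since $C$ is closed and $P \subseteq C$ with $x$ a limit of points in $C$, we have $x \in C$, hence $x \in P$. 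Third, $P$ has no isolated points: if $x \in P$ were isolated in $P$ via a neighbourhood $U$, then $U \cap C$ would be uncountable while $U \cap C \setminus \{x\} \subseteq C \setminus P$ would be countable, a contradiction. Finally, $P$ is nonempty because $C$ is uncountable and $C \setminus P$ is countable.

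For part (2), I would embed the Cantor space $\twon$ into $P$ by constructing a Cantor scheme of closed intervals $\{I_s : s \in \fcantor\}$ satisfying: (a) $I_s \cap P \neq \emptyset$; (b) $I_{s^\frown 0}$ and $I_{s^\frown 1}$ are disjoint and contained in $I_s$; (c) the diameter of $I_s$ is at most $2^{-|s|}$. The construction proceeds by induction on $|s|$: given $I_s$ meeting $P$, pick any $x \in I_s \cap P$; since $x$ is not isolated in $P$, there exists $y \in P$ with $y \neq x$ and $y \in \mathrm{int}(I_s)$, and then one can choose disjoint closed intervals around $x$ and $y$, both inside $I_s$ and of diameter $\le 2^{-|s|-1}$, to serve as $I_{s^\frown 0}$ and $I_{s^\frown 1}$. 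Then for each $\alpha \in \twon$, the intersection $\bigcap_n I_{\alpha \upharpoonright n}$ is a singleton $\{f(\alpha)\}$ by completeness of $\real$ together with the diameter condition, and $f(\alpha) \in P$ because $P$ is closed and each $I_{\alpha \upharpoonright n}$ contains a point of $P$. Distinct $\alpha$ yield distinct $f(\alpha)$ by the disjointness clause (b), so $f : \twon \to P$ is injective, giving $|P| \ge \mathfrak c$; the reverse inequality is immediate since $P \subseteq \real$.

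The main obstacle is the inductive step of the Cantor-scheme construction, where one must ensure two \emph{distinct} points of $P$ can be found inside the interior of $I_s$. This is exactly where the ``no isolated points'' clause of perfectness is used, and it is the only nontrivial topological ingredient; everything else is bookkeeping. Part (1) is conceptually straightforward once one has the right definition (condensation points), and the second-countability of $\real$ is what makes the complement countable rather than merely meager.
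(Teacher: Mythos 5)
The paper states this Cantor--Bendixson proposition without proof, so there is no in-text argument to compare against; judged on its own, your proof is the standard one and is essentially correct. Part (1) via condensation points is complete: countability of $C\setminus P$ from second countability, closedness of $P$, the no-isolated-points argument (an uncountable set minus a point is still uncountable), and nonemptiness are all in order. Part (2) via a Cantor scheme is also the right argument, but one step is stated too loosely: from ``pick \emph{any} $x\in I_s\cap P$'' and ``$x$ is not isolated in $P$'' you conclude that some $y\in P$ with $y\neq x$ lies in $\mathrm{int}(I_s)$. This does not follow if $x$ happens to be an endpoint of $I_s$ and the points of $P$ accumulating at $x$ all lie outside $I_s$; indeed $I_s\cap P$ could then be a singleton and the splitting step would fail. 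The repair is immediate and costs one line: strengthen the inductive invariant from (a) $I_s\cap P\neq\emptyset$ to $\mathrm{int}(I_s)\cap P\neq\emptyset$. The base case is arranged by centring $I_\emptyset$ at a point of $P$, and in the inductive step you choose $I_{s^\frown 0}$, $I_{s^\frown 1}$ with $x$ and $y$ in their respective interiors, so the stronger invariant propagates. With that adjustment the scheme yields the injection $\twon\to P$ and $|P|=\mathfrak c$ as you describe.
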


\begin{example}A Bernstein set does not have the perfect set property.
\begin{proof}
Trivial: a Bernstein set is uncountable and contains no closed uncountable subsets.
\end{proof}
\end{example}

On the other hand, it is possible to  find a Vitali set with the perfect set property. The proof, however, is too involved to be included here as an example.

\begin{fact}
There is a Vitali set with the perfect set property.
\begin{proof}
Follows from Theorem 5 in \cite{Mil00}.
\end{proof}
\end{fact}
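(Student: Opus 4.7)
The plan is to construct a perfect set $P \subseteq [0,1]$ in which no two distinct points are $\backsim$-equivalent, and then extend $P$ to a full set of representatives using the Axiom of Choice.

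First I would build a Cantor scheme $(I_s)_{s \in \fcantor}$ of closed subintervals of $[0,1]$ with $I_{s0}, I_{s1} \subseteq I_s$ disjoint and $\mathrm{diam}(I_s) \to 0$ as $|s| \to \infty$. Enumerating $\mathbb{Q} = \{q_k : k \in \omega\}$, at the stage that defines intervals at level $n+1$ I would additionally require that for every pair of distinct strings $s, t$ of length $n+1$ and every $k \le n$ the difference set $\{a - b : a \in I_s, b \in I_t\}$ misses $q_k$. This is always achievable: at each stage only finitely many pairs and finitely many rationals need to be avoided, and any fixed rational $q$ can be excluded from the difference of two closed intervals by a small translation of one of them (shrinking the parent intervals in advance to create room).

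Setting $P = \bigcap_n \bigcup_{|s|=n} I_s$ yields a nonempty closed set without isolated points, hence a perfect set. For distinct $x, y \in P$, the two associated branches of $2^\omega$ split at some level $m$; for every rational $q_k$ and every $n \ge \max(m-1, k)$ the construction forces $x - y \ne q_k$, so $x - y$ is irrational and $x \not\backsim y$. Hence $P$ meets each $\backsim$-equivalence class in at most one point.

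To finish, for each $\backsim$-equivalence class of $[0,1]$ that is disjoint from $P$, invoke the Axiom of Choice to select a representative, and let $R$ be the set of these representatives. Then $\mathcal{V} := P \cup R$ is a set of representatives of all equivalence classes, i.e.\ a Vitali set, and since $P \subseteq \mathcal{V}$ is perfect, $\mathcal{V}$ has the perfect set property. The main obstacle is the first step: bookkeeping the Cantor scheme so that the countable family of rational-distance constraints is simultaneously met while keeping the intervals nondegenerate, which is handled by the finite-constraint argument described above.
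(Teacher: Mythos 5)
Your argument is correct, and it is genuinely different in character from what the paper does: the paper simply defers to Theorem 5 of Miller and Popvassiliev, whereas you give a complete, elementary, self-contained construction. Your key idea --- a Cantor scheme of closed intervals arranged so that at level $n+1$ every difference set $I_s-I_t$ ($s\neq t$) avoids $q_0,\dots,q_n$ --- produces a perfect partial transversal $P$ for the Vitali relation, and the finite-constraint bookkeeping does go through (at each stage one places finitely many small intervals while avoiding, for each already-placed interval and each of finitely many rationals, a short forbidden interval of positions; shrinking diameters leaves room). This is in effect a hands-on proof of the relevant instance of Mycielski's theorem: the relation $\{(x,y): x-y\in\mathbb Q,\ x\neq y\}$ is a countable union of lines, hence meager in $[0,1]^2$, so a perfect set meeting each equivalence class at most once exists; your scheme realizes this directly. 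Completing $P$ to a full transversal by choosing one representative from each class disjoint from $P$ then yields a Vitali set containing the perfect set $P$, which by the paper's definition has the perfect set property. What your route buys is transparency and independence from the literature; what the citation buys is a stronger conclusion (Miller and Popvassiliev obtain Vitali sets with additional Marczewski-measurability properties, of which the existence of a perfect subset is only one consequence). The only places to tighten are cosmetic: state explicitly that the level-$n$ intervals are pairwise disjoint so that each point of $P$ determines a unique branch, and fix the minor off-by-one in ``$n\ge\max(m-1,k)$''; neither affects correctness.
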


\section{Polish spaces}
\subsection{Why Polish spaces?}
From now on we will follow the usual development and instead of working with real numbers, we will formulate all the results in the (wider) context of perfect Polish spaces. A Polish space is a separable, completely metrizable space. A perfect Polish space is a Polish space with no isolated points. Of course, $\real^n$ is a perfect Polish space for any $n>0$. 

There is a number of good reasons for switching to Polish spaces. One of them is that the resulting theory is more general and more widely applicable.
Hopefully, this will become more apparent later as the paper progresses. For a start we mention two other reasons.

Descriptive set theory studies sets whose descriptions are ``simple''. Arguably, it is the single most important focus of the theory, and Polish spaces are particularly well suited to study such sets.

It should be noted that Proposition \ref{card_of_perfect_sets} can be extended to Polish spaces: 
\begin{proposition}[Cantor-Bendixson]~
\begin{enumerate}
\item Let $X$ be a Polish space. Then $X$ can be uniquely presented as $X=P\cup C$, where $P$ is a perfect subset of $X$ and $C$ is a countable open set.
\item If $X$ is a perfect Polish space, then $|X|=\mathfrak c$.
\end{enumerate}
\end{proposition}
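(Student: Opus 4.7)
The plan is to prove both parts via the classical notion of \emph{condensation points}. Call $x \in X$ a condensation point if every open neighbourhood of $x$ is uncountable. Let $P$ be the set of condensation points and $C = X \setminus P$. First I would show $C$ is open and countable: if $x \in C$, then by definition $x$ lies in some countable open set $U_x$, and $U_x \subseteq C$ because every point of $U_x$ inherits $U_x$ as a countable neighbourhood. Picking a countable basis $\{V_n\}$ for $X$ (which exists by separability of Polish spaces) and noting $C = \bigcup\{V_n : V_n \text{ is countable}\}$ shows that $C$ is a countable union of countable sets, hence countable.

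Next I would verify that $P$ is perfect. It is closed since $C$ is open. To rule out isolated points, suppose $x \in P$ is isolated in $P$, so there is an open set $U$ with $U \cap P = \{x\}$. Then $U \setminus \{x\} \subseteq C$, and since $C$ is countable this forces $U$ to be countable, contradicting $x \in P$. For uniqueness of the decomposition, suppose $X = P' \sqcup C'$ is another such splitting with $P'$ perfect and $C'$ countable open. Every point of $P'$ is a condensation point (any of its neighbourhoods meets $P'$ in a nonempty open subset of the perfect Polish space $P'$, which has size $\mathfrak c$ by part (2), applied to $P'$), so $P' \subseteq P$; conversely $C \subseteq C'$ because condensation points cannot lie in a countable open set. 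So $P = P'$ and $C = C'$.

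For part (2), I would run the standard Cantor scheme. Working in a compatible complete metric $d$, I would build nonempty open sets $U_s$ for each $s \in \fcantor$ satisfying $\overline{U_{s^\frown 0}} \cup \overline{U_{s^\frown 1}} \subseteq U_s$, $\overline{U_{s^\frown 0}} \cap \overline{U_{s^\frown 1}} = \emptyset$, and $\operatorname{diam}(U_s) < 2^{-|s|}$. The key use of the no-isolated-points hypothesis is at each step: inside the nonempty open set $U_s$ we can pick two distinct points (because $U_s$ is not a singleton, since every point is a limit of other points) and then separate them by two small disjoint open balls. For each $\alpha \in \twon$, completeness of $d$ together with diameters shrinking to $0$ forces $\bigcap_n \overline{U_{\alpha \restriction n}}$ to be a single point $f(\alpha)$, and the disjointness clause makes $f : \twon \to X$ injective. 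Hence $|X| \geq \mathfrak c$. The reverse inequality is standard for separable metric spaces: fixing a countable dense $D \subseteq X$, every point of $X$ is the limit of some sequence in $D$, so $|X| \leq |D|^\omega = \mathfrak c$.

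The main obstacle is the Cantor scheme in part (2)—specifically verifying that the absence of isolated points suffices to always split a small open set into two disjoint smaller ones. The part (1) argument is essentially bookkeeping once the right definition (condensation points) is identified; care is only needed to confirm that the countable-basis argument really yields countability of $C$, and that the uniqueness step can legitimately invoke part (2) applied to the hypothetical perfect set $P'$ (this is fine because part (2) is proved independently, directly from the perfect Polish hypothesis, with no appeal to part (1)).
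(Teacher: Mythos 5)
Your argument is correct and is the classical Cantor--Bendixson proof; the paper states this proposition without any proof (it is quoted from the standard development in Kechris), so there is nothing to compare against, but your route --- condensation points for part (1) and a Cantor scheme for part (2) --- is exactly the standard one. Two small points deserve attention. First, in the uniqueness step you invoke part (2) to conclude that a nonempty relatively open subset $U\cap P'$ of $P'$ has size $\mathfrak c$; strictly speaking you must apply part (2) to $U\cap P'$ itself, which requires knowing that a nonempty open subset of a perfect Polish space is again a perfect Polish space (open subsets of Polish spaces are Polish because they are $G_\delta$, and openness preserves the absence of isolated points) --- worth a sentence, since the paper only records that \emph{closed} subspaces of Polish spaces are Polish. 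Second, your ``conversely $C\subseteq C'$'' is mislabelled: the justification you give (condensation points cannot lie in a countable open set) proves $C'\subseteq C$, i.e.\ $P\subseteq P'$, which is indeed the containment you still need after establishing $P'\subseteq P$; as written the two clauses nominally assert the same inclusion twice, though the substance of both directions is present. Everything else, including the verification that the no-isolated-points hypothesis lets you split each $U_s$ into two disjoint closed-in-$U_s$ pieces and the use of completeness to make $\bigcap_n \overline{U_{\alpha\restriction n}}$ a singleton, is sound.
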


\noindent Furthermore, with a simple argument it can be shown that the set of distinct Polish topologies itself has the cardinality equal to $\mathfrak c$. This means such objects as relations between Polish spaces and collections of Polish spaces are small enough to be subject to methods of descriptive set theory.

The second reason is the unique relationship between Polish spaces and Borel spaces. An important result, which will be presented and commented upon later, states that all standard Polish spaces share the same Borel structure.

As for the results presented in the previous sections, most of them are easily transferable into the more general setting. The only part for which the translation  into the context of Polish spaces is not trivial is the one related to the Lebesgue measure. In the context of Polish spaces, Lebesgue measurability is subsumed by the notion of \emph{universal measurability}. A set $A\subseteq X$, where $X$ is a standard Borel space, is called \emph{universally measurable} if it is $\mu-$measurable for any $\sigma-$finite measure $\mu$ on $X$. Standard Borel spaces will be introduced later in this paper.

\subsection{Basic properties}
We now proceed with an overview of some of the basic properties of Polish spaces that are relevant in the context of descriptive set theory.

The following properties are simple consequences of the definition.
\begin{proposition}\label{polish_basic}~
\begin{enumerate}
\item The completion of a separable metric space is Polish.
\item A closed subspace of a Polish space is Polish.
\item The product/sum of a sequence of Polish spaces is Polish.
\end{enumerate}
\end{proposition}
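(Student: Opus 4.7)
The three statements are standard and admit short, independent proofs, so the plan is to address them one at a time, reusing a common ``remetrize'' technique whenever necessary.

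For (1), I would let $(X,d)$ be separable metric with completion $(\widehat X,\widehat d)$. Completeness of $\widehat d$ holds by construction of the completion, so the only thing to check is separability. I would pick a countable dense $D\subseteq X$ and verify that the image of $D$ under the canonical isometric embedding $X\hookrightarrow \widehat X$ is still dense: any point of $\widehat X$ is a limit of a Cauchy sequence from $X$, and each term of that sequence is approximated by a point of $D$, so a standard diagonal/triangle inequality argument shows $D$ is dense in $\widehat X$. No obstacle here; it is a direct unwinding of the definition of completion.

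For (2), I would let $(X,d)$ be Polish with $d$ a complete compatible metric and $F\subseteq X$ closed. The restriction $d\!\!\upharpoonright\!F$ metrizes the subspace topology. Completeness: any Cauchy sequence in $(F,d\!\!\upharpoonright\!F)$ is Cauchy in $(X,d)$, hence converges in $X$, and the limit lies in $F$ because $F$ is closed. Separability: a Polish space is second countable, second countability is hereditary, and a second countable metrizable space is separable, so $F$ is separable.

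For (3), I would first dispose of the countable sum $X=\bigsqcup_{n\in\omega} X_n$ of Polish spaces $(X_n,d_n)$ by declaring $d(x,y)=\min(d_n(x,y),1)$ when $x,y$ lie in the same summand $X_n$, and $d(x,y)=1$ otherwise; this is easily checked to be a complete metric inducing the disjoint-union topology, and the union of countable dense subsets of the $X_n$ is a countable dense subset of $X$. For the countable product $X=\prod_{n\in\omega}X_n$, I would choose complete compatible metrics $d_n\le 1$ on each $X_n$ (replacing $d_n$ with $\min(d_n,1)$ if necessary; this preserves both completeness and compatibility), and define
\begin{equation*}
d(x,y)=\sum_{n\in\omega} 2^{-n}\, d_n(x_n,y_n).
\end{equation*}
The standard verifications show that $d$ metrizes the product topology, is complete (a $d$-Cauchy sequence is coordinatewise $d_n$-Cauchy, hence converges in each factor, hence in the product), and admits a countable dense set built by taking finite sequences chosen from countable dense sets $D_n\subseteq X_n$ and extending them arbitrarily by a fixed base point. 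The only mild technical point, which I do not expect to be a real obstacle, is justifying that the construction works equally well for finite and infinite index sets and for both product and sum; once the bounded-metric trick is in place, the two constructions are essentially routine.
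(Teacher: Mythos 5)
Your proofs of all three parts are correct and are the standard arguments. The paper itself offers no proof of this proposition (it is stated as a ``simple consequence of the definition''), so there is nothing to compare against; your write-up --- density of the image of a countable dense set in the completion, completeness of a closed subspace under the restricted metric plus hereditary second countability for separability, and the capped-metric constructions $\min(d_n,1)$ for the sum and $\sum_n 2^{-n}d_n$ for the product --- is exactly the canonical textbook treatment and fills the gap correctly.
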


\begin{example}
Many topological spaces naturally arising in mathematics are Polish. In particular:
\begin{enumerate}
\item $\real$, $\real^n$, $\mathbb C$, $\mathbb C^n$, $\mathbb I=\zeroone$, $\mathbb I ^n$, $\mathbb I^\omega$ (the Hilbert cube), $\mathbb T=\{x\in\mathbb C: |x|=1\}$, $\mathbb T^n$, $\mathbb T^\omega$ are Polish;
\item Any countable set with the discrete topology, for example $\omega$, is Polish;
\item Any space $A^\omega$, where $A$ is countable set with the discrete topology, is Polish. Two examples of particular significance are $\mathcal N=\baire$ (the Baire space) and $\mathcal C=\twon$ (the Cantor space).
\end{enumerate}
\end{example}

\begin{example}
An open interval $(0,1)$ is also a Polish space, even if its usual metric is not a complete metric. One way to see this is to note that $(0,1)$ is homeomorphic to $\real$ which is Polish. Suppose $\phi:(0,1)\rightarrow \real$ is a homeomorphism. Then we can define the following metric on $(0,1)$:
\begin{align*}
d_\phi(x,y)=\left| \phi(x)-\phi(y)\right|
\end{align*}
It is easy to check that $d_\phi$ is a complete metric.
\end{example}

\subsection{The Baire space and the Cantor space}
\begin{definition}
A topological space is said to be \emph{zero-dimensional} if it is Hausdorff and has a basis consisting of clopen sets. 
\end{definition}

Zero-dimensional Polish spaces play an important role in descriptive set theory. Two most prominent zero-dimensional spaces are $\mathcal C$ and $\mathcal N$, which have nice topological characterisations.

\begin{proposition}[Topological characterisations of $\mathcal N$ and $\mathcal C$]~
\begin{enumerate}
\item The Baire space $\mathcal N$ is the unique, up to a homeomorphism, nonempty Polish zero-dimensional space for which all compact subsets have empty interior.
\item The Cantor space $\mathcal C$ is the unique, up to a homeomorphism, perfect nonempty, compact metrizable, zero-dimensional space.
\end{enumerate}
\end{proposition}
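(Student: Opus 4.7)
The plan is to prove both characterizations by constructing an explicit homeomorphism from the model space ($\mathcal C$ or $\mathcal N$) to $X$ built from a nested scheme of clopen sets whose diameters shrink to zero. In each case the model space is easily checked to satisfy the listed properties, so the work lies in the converse direction.

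For part (2): Let $X$ be nonempty, perfect, compact, metrizable, and zero-dimensional. I would recursively construct a Cantor scheme, i.e.\ a family $(U_s)_{s\in\fcantor}$ of nonempty clopen subsets of $X$ with $U_\emptyset=X$, $U_{s^\frown 0}\sqcup U_{s^\frown 1}=U_s$, and $\operatorname{diam}(U_s)\to 0$ as $|s|\to\infty$. At each splitting step the zero-dimensional basis furnishes small-diameter clopen refinements of $U_s$, and perfectness together with nonemptiness of $U_s$ ensures that $U_s$ contains two distinct points, so it can be partitioned into two disjoint nonempty clopens. Define $f:\mathcal C\to X$ by letting $f(x)$ be the unique element of $\bigcap_n U_{x\upharpoonright n}$; existence follows from compactness of the $U_{x\upharpoonright n}$, and uniqueness from the shrinking diameters. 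Continuity and injectivity follow directly from the scheme, and surjectivity from the fact that each level forms a clopen partition of $X$. A continuous bijection between compact Hausdorff spaces is automatically a homeomorphism.

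For part (1): Let $X$ be a nonempty Polish zero-dimensional space in which every compact subset has empty interior, and fix a complete compatible metric $d$. The approach is analogous, using a Lusin scheme $(U_s)_{s\in\fbaire}$ of nonempty clopens with $U_\emptyset=X$, $U_s=\bigsqcup_{n\in\omega}U_{s^\frown n}$, and $\operatorname{diam}(U_s)<2^{-|s|}$. The main obstacle, and the precise point where the no-compact-with-interior hypothesis is essential, is showing that each nonempty clopen $U_s$ admits a partition into countably infinitely many nonempty clopens of uniformly small diameter. If only finitely many small-diameter clopen pieces ever sufficed to cover $U_s$, then $U_s$ would be totally bounded, and as a closed subset of the complete space $X$ it would be compact, contradicting the hypothesis (since $U_s$ is open and nonempty). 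Given such a scheme, define $f:\mathcal N\to X$ by $f(x)=$ the unique element of $\bigcap_n U_{x\upharpoonright n}$, with existence obtained by applying completeness of $d$ to a Cauchy selector sequence and uniqueness from diameter shrinkage. Injectivity and continuity are immediate from the scheme; surjectivity follows because each level partitions $X$; and continuity of $f^{-1}$ follows because for each $s\in\fbaire$ the preimage under $f$ of the clopen $U_s$ is precisely the basic clopen set of extensions of $s$ in $\mathcal N$.
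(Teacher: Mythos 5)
The paper states this proposition without proof (it is the classical Brouwer and Alexandrov--Urysohn characterisations, Theorems 7.4 and 7.7 in Kechris's textbook), so there is no in-paper argument to compare against; your proposal is the standard scheme-based proof, and part (1) is essentially sound. Your use of the hypothesis there is the right one: a nonempty clopen $U_s$ is open with nonempty interior, hence not compact, hence (being closed in a complete space) not totally bounded, so some $\delta_0>0$ witnesses this. You should just make explicit that the disjointified countable clopen cover of $U_s$ must be taken with mesh below $\min\left(2^{-|s|-1},\delta_0\right)$: a finite clopen partition with mesh below your target $\epsilon$ alone would not contradict anything, so it is the choice of mesh below $\delta_0$ that forces the partition to be infinite.

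In part (2) one step would fail as literally stated: you cannot in general partition a nonempty clopen $U_s$ into \emph{exactly two} nonempty clopen pieces \emph{each of prescribed small diameter}. For instance the middle-thirds Cantor set contains the four points $0,\tfrac13,\tfrac23,1$, which are pairwise at distance at least $\tfrac13$, so every two-piece partition of it has a piece of diameter at least $\tfrac13$. Demanding both $U_{s^\frown 0}\sqcup U_{s^\frown 1}=U_s$ at every node and a diameter bound at every fixed level therefore needs the standard padding device: use compactness and zero-dimensionality to partition $U_s$ into finitely many, say $k$, nonempty clopen pieces of diameter less than $\epsilon$ (splitting once more via perfectness if $k=1$), and realise this $k$-fold partition as the set of leaves of a finite binary subtree of depth about $\log_2 k$ below $s$, the internal nodes being unions of the leaves beneath them. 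The internal nodes are not small, but along every infinite branch the diameters still tend to $0$, which is all that the definition of $f$, its injectivity and its continuity require; one then has $\max_{|s|=n}\operatorname{diam}(U_s)\to 0$ only along a subsequence of levels, which suffices. With that adjustment the rest of your argument --- compactness for nonemptiness of the intersections, the partition property for surjectivity, and the fact that a continuous bijection from a compact space onto a Hausdorff space is a homeomorphism --- goes through.
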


\begin{remark}
$\mathcal N$ is homeomorphic to the subset of irrational real numbers. One way to show this is via continued fractions. There is a one-to-one and onto correspondence between infinite continued fractions and irrational numbers in the $(0,1)$ interval: every infinite continued fraction evaluates to a unique irrational number between 0 and 1. And for every element of $(0,1)\backslash\mathbb Q$ there is a unique infinite continued fraction which evaluates to that number. Every infinite continued fraction can be uniquely represented by a (countably infinite) sequence of positive integers. Let's denote by $\mathcal N_+$ the subset of the Baire space where all coordinates are positive. Clearly, $\mathcal N_+$ represents a space of continued fractions. Let $\phi:(0,1)\backslash\mathbb Q\rightarrow\mathcal N_+$ be the bijective function that maps irrational numbers to (unique) continued fractions. It is not difficult to show that $\phi$ is a homeomorphism. Finally, note that $\mathcal N_+$ is homeomorphic to $\mathcal N$ and $(0,1)\backslash\mathbb Q$ is homeomorphic to the set of irrational numbers.
\end{remark}

\subsection{Trees}\label{trees}

The concept of a tree is a fundamental tool in descriptive set theory.

Let $A$ be a nonempty set and let $n\in\omega$. In what follows we assume that $A$ is given the discrete topology. $A^n$ can be seen either as a set of finite sequences of length $n$ from $A$, or, equivalently, as a set of functions $f:n\rightarrow A$. In this context $A^0=\{\emptyset\}$ and the empty sequence is denoted by $\emptyset$ (it is just an empty set).
 Finally, let $$A^{<\omega}=\bigcup_{n\in\omega}A^n,$$ and let $A^\omega$ be the set of all infinite (countable) sequences from $A$, or, equivalently, functions $f:\omega\rightarrow A$.

Since elements of the above sets are functions, it is possible to use the standard set-theoretic notation when dealing with them. In particular, let $s$ be an element of $A^\omega$ or $A^n$ and let $m\in\omega$ with $m\le n$. Then
\begin{align*}
\prefix sm&=\left(s(0),s(1),\dots s(m-1)  \right) \text{ and }\\
|s|&=\text{ length of }s.
\end{align*} 
Let $a\in A^{<\omega} $ and let $b$ be in either $A^{\omega}$ or $A^{<\omega}$, we say that $a$ is an \emph{initial segment} of $b$, or that $b$ \emph{extends} $a$ if $a\subseteq b$, or, equivalently, if $a=\prefix b{|a|}$. 

For $n\in A$, $t\in A^{<\omega}$ and $s\in A^{\omega} $ we define: 
\begin{align*}
a^\frown n=\left(a(0),\dots,a(|a|-1),n\right),\\
n^\frown a=\left(n,a(0),\dots,a(|a|-1)\right),\\
a^\frown t=\left(a(0),\dots,a(|a|-1),t(0),\dots,t(|t|-1)\right),\\
a^\frown s=\left(a(0),\dots,a(|a|-1),s(0),\dots\right).
\end{align*}

\begin{definition}\label{def_trees}
A \emph{tree} on a set $A$ is a subset $T\subseteq A^{<\omega}$ closed under taking initial segments. We call the elements of $T$ the \emph{nodes} of $T$.

A node $s$ in  $T$ is \emph{terminal} if $s$ has no proper extension in $T$. Otherwise it is \emph{nonterminal} or \emph{intermediate}. 

Let $T$ be a tree and let $s,t\in T$. Then $t$ is called a \emph{successor} of $s$ if $s\subseteq t$ and $|t|=|s|+1$. A tree is a \emph{finitely branching tree} if every node has finitely many successors. 

For a tree $T$ on $\omega$, the set of \emph{branches} of $T$, or the \emph{body} of $T$, is defined as $$[T]=\left\{x\in\baire:\forall n\in\omega~\prefix xn\in T\right\}.$$

$T$ is \emph{well-founded} if $[T]=\emptyset$ and it is \emph{ill-founded} otherwise.

For any $s\in A^{<\omega}$ we let $$\cylinder s=\left\{x\in A^{\omega}:s\subseteq x \right\}.$$
 The collection of all such \emph{cylinder} sets forms the \emph{standard basis} for the (usual product) topology of $A^{\omega}$.
Finally, we call a tree $T$ \emph{pruned} if there are no terminal nodes in it.
\end{definition}

The above definitions are particularly important with respect to $\mathcal N$ and $\mathcal C$, since subsets of $\mathcal N$ can be viewed as trees on $\omega$ and subsets of $\mathcal C$ can be viewed as binary trees.

\begin{proposition}\label{tf_map}
The map $T\mapsto [T]$ is a bijection between pruned trees on $A$ and closed subsets of $A^\omega.$ Its inverse is given by $$F\mapsto T_F=\left\{ \textup{\prefix xn}: x\in F,n\in \omega\right\}.$$
We call $T_F$ the \emph{tree} of $F$.
\end{proposition}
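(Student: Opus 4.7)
The plan is to verify four things: that $[T]$ is a closed subset of $A^\omega$ whenever $T$ is a pruned tree on $A$; that $T_F$ is a pruned tree whenever $F\subseteq A^\omega$ is closed; and that the two assignments $T\mapsto [T]$ and $F\mapsto T_F$ compose to the identity in both orders. The topology on $A^\omega$ is the product topology whose basis is the collection of cylinders $\cylinder s$ for $s\in A^{<\omega}$, and I will work with convergence of sequences through these cylinders throughout.

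First I would show $[T]$ is closed by exhibiting its complement as a union of cylinders. If $x\notin [T]$, then by definition some initial segment $\prefix{x}{n}$ fails to lie in $T$; since $T$ is closed under initial segments, every $y\in\cylinder{\prefix{x}{n}}$ shares this forbidden initial segment and so lies outside $[T]$. For $F$ closed, the set $T_F$ is a tree because the relation ``is an initial segment of an element of $F$'' is obviously closed under taking initial segments, and it is pruned because each $s=\prefix{x}{n}\in T_F$ with $x\in F$ admits the proper extension $\prefix{x}{n+1}\in T_F$.

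Next I would verify $[T_F]=F$. The inclusion $F\subseteq [T_F]$ is immediate from the definitions. For $[T_F]\subseteq F$, given $x\in [T_F]$, each $\prefix{x}{n}\in T_F$ provides a witness $y_n\in F$ with $\prefix{y_n}{n}=\prefix{x}{n}$; then $y_n\in\cylinder{\prefix{x}{n}}$, so $y_n\to x$ in $A^\omega$, and closedness of $F$ forces $x\in F$.

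The main obstacle, and the step where the pruned hypothesis earns its keep, is the identity $T_{[T]}=T$. The inclusion $T_{[T]}\subseteq T$ is automatic from the definition of $[T]$. For the reverse inclusion, given $s\in T$, I need to produce a branch $x\in [T]$ with $\prefix{x}{|s|}=s$, and I would construct it by recursion: set $\prefix{x}{|s|}=s$, and having chosen $\prefix{x}{n}\in T$ for $n\geq|s|$, use the pruned condition to pick a successor in $T$ as $\prefix{x}{n+1}$. The resulting $x$ lies in $[T]$ and witnesses $s\in T_{[T]}$. This recursion is the crux—without the pruned hypothesis a node $s\in T$ could be terminal and fail to extend to any branch, so $T\mapsto [T]$ would not be injective on arbitrary trees, which is precisely why the statement restricts attention to pruned trees.
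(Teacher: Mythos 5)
Your proof is correct and complete. The paper states this proposition without proof, so there is nothing to compare against; your argument --- closedness of $[T]$ via cylinders, $[T_F]=F$ via convergence of the witnesses $y_n$ to $x$ inside the closed set $F$, and $T_{[T]}=T$ via recursively extending a node to a branch --- is the standard one, and you correctly isolate the last step as the only place where the pruned hypothesis is needed.
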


\subsection{Binary expansions} It is often useful to identify natural numbers with some elements of $\fcantor$ and real numbers from the interval $[0,1)$ with some elements of the Cantor space. One common way of doing so is through the binary expansion. 

For the rest of the paper we fix $\binww:\omega\rightarrow\fcantor$ - the usual binary expansion of natural numbers. It is a continuous injection. 

Let $x\in[0,1)$. Define inductively
\begin{align*}
\binr x= 
\begin{cases}
 1^\frown\binr{x-0.5}&\text{when }x\in\left[\frac{1}{2},1\right),\\
 0^\frown\binr{2x}&\text{otherwise.} 
\end{cases}
\end{align*}

The set of \emph{dyadic rationals} is $$\mathbb Q_2=\left\{z2^{-n-1}:z\in\omega,n\in\omega\right\}.$$ 
Elements of the Cantor space can be identified with subsets of $\omega$ by treating (elements of the Cantor space) as characteristic functions of subsets of natural numbers. For $s\in \twon$ define $$N_1(s)=\left\{n\in\omega: s(n)=1\right\}.$$ It is clear that $N_1:\twon\rightarrow\mathcal P(\omega)$ is a bijection. Via this identification it is possible to discuss elements of the Cantor space as if they were subsets of natural numbers. 

\begin{fact}
$\binrr$ is a homeomorphism between $(0,1)\backslash\mathbb Q_2$ and (the subspace of) co-infinite subsets of natural numbers. 
\end{fact}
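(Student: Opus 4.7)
The plan is to prove the claim in four logical stages --- injectivity of $\binrr$, identification of its image under the composition $N_1\circ\binrr$, continuity of $\binrr$, and continuity of the inverse --- all anchored on one observation: the half-open dyadic interval $\left[\frac{k}{2^n},\frac{k+1}{2^n}\right)$ is precisely the set of $x\in[0,1)$ whose first $n$ bits of $\binr{x}$ agree with the $n$-digit binary representation of $k$. Injectivity then follows immediately, because $\binr{x}=\binr{y}$ forces $|x-y|<2^{-n}$ for every $n$.

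For the image, I plan to show separately that $\binr{x}$ is never eventually $1$, and that it is eventually $0$ precisely when $x\in\mathbb Q_2$. The first holds because a tail of $1$s would force the iterates $y_n$ (with $y_0=x$ and $y_{n+1}=2y_n-1$ on $[1/2,1)$) to satisfy $y_n\in[1-2^{-n},1)$ for all $n$, which is impossible for $y_0\in[0,1)$. The second is a direct computation from the recursive definition. Hence for $x\in(0,1)\setminus\mathbb Q_2$ the sequence $\binr{x}$ has infinitely many $0$s and infinitely many $1$s, so $N_1(\binr{x})$ is both infinite and co-infinite; surjectivity onto this subclass follows by recovering $x=\sum_n s(n)2^{-(n+1)}$ for any admissible $s$. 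Strictly this identifies the image with the \emph{infinite} co-infinite subsets, a mild sharpening of the stated ``co-infinite subsets'' since the co-finite case is exactly what has been ruled out.

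Continuity in both directions then reduces to the dyadic/cylinder correspondence. For each $s\in\fcantor$, the preimage $\binrr^{-1}(\cylinder s)$ is a half-open dyadic interval of length $2^{-|s|}$; intersecting with $(0,1)\setminus\mathbb Q_2$ deletes the dyadic endpoint and yields an open subset of the domain, so $\binrr$ is continuous. Conversely, for any open interval $(a,b)$ and any $x\in(a,b)\setminus\mathbb Q_2$, one picks $n$ large enough that $x$ lies in a dyadic sub-interval of length $2^{-n}$ entirely contained in $(a,b)$; the corresponding cylinder of length $n$ gives an open neighbourhood of $\binr{x}$ in the image lying inside $\binrr((a,b)\setminus\mathbb Q_2)$. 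The main subtlety I expect is not any deep step but the careful bookkeeping at dyadic boundaries: the exclusion of $\mathbb Q_2$ from the domain is exactly the adjustment that makes both pull-backs of cylinders and push-forwards of intervals open, and keeping this matching straight --- in particular reconciling it with the paper's slightly loose ``co-infinite subsets'' description of the codomain --- is where the proof most easily goes astray.
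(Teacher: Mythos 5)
The paper states this as a bare Fact with no proof, so there is nothing internal to compare against; judged on its own, your argument is correct and is the standard one (injectivity from nested dyadic intervals, image identification by ruling out eventually-$1$ and characterising eventually-$0$ tails, and continuity in both directions via the correspondence between cylinders $\cylinder s$ and half-open dyadic intervals). Two of your observations deserve emphasis. First, you are right that the codomain as stated is slightly off: the image of $(0,1)\backslash\mathbb Q_2$ is the collection of subsets of $\omega$ that are both infinite \emph{and} co-infinite, since the co-infinite but finite subsets (eventually-zero sequences) correspond exactly to the excluded dyadic rationals, and the eventually-one sequences are never attained at all. Second, you have tacitly used the recursion $y\mapsto 2y-1$ on $\left[\frac12,1\right)$, whereas the paper's displayed definition reads $1^\frown\binr{x-0.5}$; taken literally, that map sends every $x\in\left[\frac12,1\right)$ to a string beginning $10$, so its image omits every sequence containing two consecutive $1$s and the Fact would be false. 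The definition is evidently a typo for $1^\frown\binr{2x-1}$, and your proof goes through verbatim for the corrected map.
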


\begin{example}\label{ex1}
Fix some bijection $g:\omega\times\omega\rightarrow \omega$ and define a function\\ $F_g:\baire\rightarrow \twon$ in the following way. For $s\in\baire$ and $n_0,n_1\in\omega$ let
$$F_g(s)\left(g(n_0,n_1)\right)=\binw{s(n_0)}(n_1).$$

Let's show that $F_g$ is a continuous injection. To prove the continuity, it is sufficient to show that the inverse image of a cylinder set is open. Let $t\in\fcantor$. Then $$F_g^{-1}\left(\cylinder t \right)=\prod_{i\in\omega}T_i$$ where $T_i=\omega$ for all $i>\max\{n_0:\exists n_1\in \omega~g(n_0,n_1)\le n\}$. Thus $T_i=\omega$ for almost all $i$. In the product topology where all coordinate spaces have discrete topology such a set is open.

To show the injectivity, let $x,y\in\baire$ with $x\not= y$. Then for some $n_0\in\omega$ we have $x(n_0)\not=y(n_0)$. Then $F_g\left(x\right)(g(n_0,n_1))\not=F_g\left(y\right)(g(n_0,n_1))$ for some $n_1\in\omega$.
\end{example}

\subsection{Universal Polish spaces}
Several Polish spaces have important universality properties.

\begin{proposition}\label{universal_1}~
\begin{enumerate}
\item Every separable metrizable space is homeomorphic to a subspace of the Hilbert cube.
In particular, Polish spaces are, up to a homeomorphism, exactly the $\gdelta-$subspaces of the Hilbert cube.

\item Every Polish space is homeomorphic to a closed subspace of $\real^\omega$.

\item Every zero-dimensional Polish space is homeomorphic to a closed subspace of $\mathcal N$ and a $\gdelta-$subspace of $\mathcal C$.

\item For every Polish space $X$ there is a closed subset $F\subseteq\mathcal N$ and a continuous bijection $f:F\rightarrow X$. If $X$ is nonempty, there is a continuous surjection $g:\mathcal N\rightarrow X$ extending $f$.

\end{enumerate}
\end{proposition}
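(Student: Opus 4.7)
For (1), fix a compatible metric $d \le 1$ on the separable space $X$ and a countable dense set $\{x_n\}_{n\in\omega}$, and define $\phi: X \to \zeroone^\omega$ by $\phi(x) = (d(x,x_n))_{n\in\omega}$. Continuity is clear; injectivity follows from density, since $x \ne y$ forces $d(x,x_n) \ne d(y,x_n)$ for some $x_n$ close to $x$, and the same observation shows $\phi^{-1}$ is continuous on $\phi(X)$. For the ``in particular'' clause I would invoke Alexandrov's theorem in both directions: a Polish subspace $X$ of a Polish space $Y$ is $\gdelta$ by writing it as the intersection of the open-in-$\overline X$ sets on which the oscillation of a complete compatible metric on $X$ is less than $1/n$, and any $\gdelta$ subspace $\bigcap_n U_n$ of a Polish $(Y,d)$ is itself Polish under the enriched complete metric $d(x,y) + \sum_n 2^{-n}\min\bigl(1,\bigl|\tfrac{1}{d(x,Y\setminus U_n)} - \tfrac{1}{d(y,Y\setminus U_n)}\bigr|\bigr)$.

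For (2), start from the embedding in (1) realising $X$ as $\bigcap_n U_n$ inside $Y = \zeroone^\omega$ with each $U_n$ open. Define $\psi: X \to Y \times \real^\omega$ by $\psi(x) = \bigl(x,\,(1/d_Y(x, Y\setminus U_n))_{n\in\omega}\bigr)$. The extra coordinates blow up as $x$ approaches the boundary of any $U_n$, which forces $\psi(X)$ to be closed in $Y \times \real^\omega$; since $Y$ is closed in $\real^\omega$ and $\real^\omega \times \real^\omega \cong \real^\omega$, this exhibits $X$ as a closed subspace of $\real^\omega$.

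For (3), fix a complete compatible metric $d$ on the zero-dimensional Polish space $X$. Using zero-dimensionality and separability, I would build inductively a clopen scheme $(U_s)_{s \in \fbaire}$ with $U_\emptyset = X$, the family $(U_{s^\frown n})_{n\in\omega}$ a partition of $U_s$ into pairwise disjoint clopen sets (allowing some to be empty), and $\mathrm{diam}(U_s) < 2^{-|s|}$. Each $x\in X$ has a unique $\sigma_x\in\mathcal N$ with $x \in U_{\sigma_x\restriction n}$ for all $n$, and $x \mapsto \sigma_x$ is a homeomorphism onto $\{\sigma \in \mathcal N : U_{\sigma\restriction n} \ne \emptyset \text{ for all } n\}$, which is closed in $\mathcal N$ by completeness of $d$. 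The second clause then follows by composing with the standard homeomorphism of $\mathcal N$ onto the $\gdelta$ subspace of $\mathcal C$ consisting of sequences with infinitely many $1$'s (essentially the co-infinite subsets of $\omega$ appearing in the fact preceding Example \ref{ex1}).

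Part (4) is the technical heart of the proposition, and the obstacle I expect is genuine. Given a complete metric $d$ on $X$, I would build a Luzin scheme $(A_s)_{s\in\fbaire}$ of nonempty sets with $A_\emptyset = X$, $\mathrm{diam}(A_s) \le 2^{-|s|}$, $\overline{A_{s^\frown n}} \subseteq A_s$, and $(A_{s^\frown n})_{n\in\omega}$ a pairwise disjoint cover of $A_s$. Pairwise disjointness is the obstacle, since a general metric space is not a disjoint union of closed pieces; the standard workaround is to cover $A_s$ by countably many closed balls $\overline{B}(x_n, r_s)$ with $r_s < 2^{-|s|-1}$ and to disjointify via $A_{s^\frown n} = (\overline{B}(x_n, r_s)\cap A_s) \setminus \bigcup_{k<n}\overline{B}(x_k, r_s)$, each a difference of two closed sets. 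Set $F = \{\sigma \in \mathcal N : A_{\sigma\restriction n} \ne \emptyset \text{ for all } n\}$; then $F$ is closed in $\mathcal N$, and for $\sigma \in F$ completeness and the diameter bound single out a unique $f(\sigma) \in \bigcap_n \overline{A_{\sigma\restriction n}}$. Continuity of $f$ comes from the diameter condition, injectivity from disjointness, and surjectivity onto $X$ from the covering property. To extend to a continuous surjection on all of $\mathcal N$ when $X \ne \emptyset$, I would use zero-dimensionality of $\mathcal N$ to define a continuous retraction $r: \mathcal N \to F$, sending each maximal basic clopen piece of $\mathcal N \setminus F$ to a point of $F$ sharing the longest possible initial segment with it, and put $g = f \circ r$.
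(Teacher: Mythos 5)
Parts (1)--(3) of your proposal are correct and follow the standard arguments; the paper itself states this proposition without proof (it is imported from Kechris), so there is no in-paper argument to compare against. The genuine problem is in part (4), and it sits exactly where you say the technical heart is. Your disjointification $A_{s^\frown n}=(\overline{B}(x_n,r_s)\cap A_s)\setminus\bigcup_{k<n}\overline{B}(x_k,r_s)$ does produce pairwise disjoint pieces covering $A_s$, but it does not deliver the property $\overline{A_{s^\frown n}}\subseteq A_s$ that you list among the requirements: it only yields $\overline{A_{s^\frown n}}\subseteq\overline{B}(x_n,r_s)\cap\overline{A_s}$, and from the second stage onward the sets $A_s$ are no longer closed (they are differences of closed sets), so $\overline{A_s}\not\subseteq A_s$. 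That containment is not cosmetic --- it is what injectivity hinges on. Since you must define $f(\sigma)$ as the unique point of $\bigcap_n\overline{A_{\sigma\restriction n}}$, two distinct branches can be sent to a common boundary point. Already in $X=\real$ with $A_{(0)}=[0,1]$ and $A_{(1)}=(1,2]$, a branch through $A_{(1)}$ whose pieces are the intervals $(1,1+2^{-n}]$ and a branch through $A_{(0)}$ whose pieces shrink to $\{1\}$ are both mapped to $1$, so $f$ fails to be injective.

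The standard repair (Kechris, Theorem 7.9) is to keep every $A_s$ an $\fsigma$ set and to split in two steps. First write $A_s=\bigcup_k F_k$ with the $F_k$ closed and increasing (and $F_{-1}=\emptyset$), so that $A_s$ is the disjoint union of the pieces $F_k\setminus F_{k-1}$, each of which has closure contained in $F_k\subseteq A_s$. Only then chop each such piece by your ball-and-disjointify trick to force small diameters; the refined pieces have closures contained in $\overline{F_k\setminus F_{k-1}}\subseteq F_k\subseteq A_s$, so the crucial containment survives, and each piece is again a finite Boolean combination of closed sets, hence $\fsigma$, so the induction can continue. With that adjustment the rest of your argument --- $F$ is the body of the tree of nonempty nodes and hence closed, continuity from the diameter bound, injectivity from disjointness together with $f(\sigma)\in\bigcap_n A_{\sigma\restriction n}$, surjectivity from the covering property, and the retraction of $\mathcal N$ onto the nonempty closed set $F$ for the final clause --- goes through as you describe. (A small cosmetic point: you cannot insist that all $A_s$ be nonempty and simultaneously require each family $(A_{s^\frown n})_{n\in\omega}$ to partition $A_s$; empty pieces must be allowed, which is precisely why $F$ is a proper closed subset of $\mathcal N$ rather than all of it.)
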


One of the consequences of the above proposition and the proposition \ref{tf_map} is that we can view any zero-dimensional Polish space $X$ as $[T]$, where $T$ is  some nonempty pruned tree on $\omega$.

\subsection{Choquet spaces}

\begin{definition}[Choquet games]
Let $X$ be a nonempty topological space. The \emph{Choquet
game }$G_X$ of $X$\ is defined as follows. 

There are two players: player I and player II. Starting with I, both players take turns in choosing nonempty open subsets of $X$ so that their choices form two countable sequences of nonempty open subsets of $X$, $\seq Un$ for player $I$ and $\seq Vn$ for player $II$. Both sequences must satisfy the following condition:

\begin{align}
\text{for all }n\in\omega,~~~~~        U_n\supseteq\ V_n \supseteq U_{n+1}.
\end{align}

Both such defined sequences form a particular \emph{run} of the game. We say that $II$ wins a run of the game if $\bigcap _{n\in\omega}V_n\neq \emptyset$. Otherwise, we say that $I$ wins this run. A strategy for $I$(resp. $II$)\ is a ``rule'' that specifies for a player how to select $U_n$(resp. $V_n$) given all previous selections.
Formally, the tree $T_X$ of all \emph{legal positions} in $G_X$ consists of all finite sequences $(W_0,\dots,W_n)$ of open nonempty subsets of $X$ with $W_0\supseteq W_1\supseteq\dots \supseteq W_n$. A  \emph{strategy} for $I$ in $G_X$ is a subtree $\sigma\subseteq T_X$ such that
\begin{enumerate}
\item $\sigma$ is nonempty;
\item if $(U_0,V_0, \dots, U_n)\in \sigma$, then for all open nonempty $V\subseteq U_n, $ \mbox{$(U_0,V_0, \dots, U_n,V)\in \sigma$};
\item if $(U_0,V_0, \dots, U_n,V_n)\in \sigma$, then for a unique $U \subseteq V_n, $ \mbox{$(U_0,V_0, \dots, U_n,V_n,U)\in \sigma$}.
\end{enumerate} 

A position $W\in T_X$ is \emph{compatible} with $\sigma$ if $W\in \sigma$. A run of the game $\seq Un,\seq Vn$ is \emph{compatible} with $\sigma$ if $(U_0,V_0,\dots,U_n,V_n,\dots)\in[\sigma]$. The strategy $\sigma$ is a \emph{winning strategy} for $I$ if $I$ wins every run compatible with $\sigma$.

The corresponding notions for player $II$ are defined similarly.

\end{definition}

\begin{definition}[Choquet spaces]
A nonempty topological space is a \emph{Choquet space} if player $II$ has a winning strategy in $G_X$.
\end{definition}

\begin{definition}[Strong Chouqet games and spaces] Given a nonempty topological space $X$, the \emph{strong Chouqet game} $G^s_X$ is defined as follows.
Players $I$ and $II$ take turns in choosing nonempty open subsets of $X$ as in the Chouqet game, but additionally $I$ is required to select a point $x_n\in U_n$ and $II$ must choose $V_n\subseteq U_n$ with $x_n\in V_n.$ Conditions for winning are the same as for Chouqet games.

A nonempty space $X$ is a \emph{strong Chouqet space} if player $II$ has a winning strategy in $G^s_X$.
\end{definition}

\begin{fact}
Any strong Chouqet space is Chouqet.
\end{fact}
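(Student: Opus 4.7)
The plan is to convert a winning strategy for player II in the strong Choquet game $G^s_X$ into a winning strategy for player II in the ordinary Choquet game $G_X$ by simulation. The extra freedom that player I has in $G^s_X$ (namely, picking a distinguished point $x_n \in U_n$) is something player II can supply internally when playing $G_X$, because every open set chosen by I in $G_X$ is nonempty.

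Concretely, let $\tau$ be a winning strategy for II in $G^s_X$. I would define a strategy $\tau'$ for II in $G_X$ recursively as follows. Suppose in $G_X$ we have reached a position $(U_0, V_0, \dots, U_n)$ that is to be produced by $\tau'$. Inductively assume we have associated to it a legal position $(U_0, x_0, V_0, \dots, U_n, x_n)$ in $G^s_X$ that is compatible with $\tau$, by choosing (using some fixed choice of points) a point $x_k \in V_{k-1}$ at each stage before $U_k$ is played, and then noting $x_k \in U_k$ since $U_k \supseteq V_{k-1} \ni x_k$ (for $k=0$ we just pick any $x_0 \in U_0$). Let $\tau'$ respond with the open set $V_n$ that $\tau$ prescribes in the simulated strong game; by the rules of $G^s_X$ we have $x_n \in V_n \subseteq U_n$, so this is a legal move in $G_X$.

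To see that $\tau'$ is winning, consider any run $(U_0, V_0, U_1, V_1, \dots)$ of $G_X$ compatible with $\tau'$. By construction, the associated sequence $(U_0, x_0, V_0, U_1, x_1, V_1, \dots)$ is a run of $G^s_X$ compatible with $\tau$, since at each stage the points $x_n$ lie in $U_n$ and II's move $V_n$ contains $x_n$. Because $\tau$ is winning for II in the strong Choquet game, we have $\bigcap_{n\in\omega} V_n \neq \emptyset$. But this is the exact same sequence of $V_n$'s that appear in the original run of $G_X$, so II wins that run of $G_X$ too.

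The only subtle point is verifying that the prescription of $x_n$ can always be carried out consistently and that the simulated positions are genuinely legal in $G^s_X$; this reduces to observing $V_{n-1} \neq \emptyset$ (guaranteed, since $V_{n-1}$ is a nonempty open set in the Choquet game) together with the nesting $V_{n-1} \subseteq U_n$, which is exactly the inclusion condition required by both games. No real obstacle arises beyond this bookkeeping, and so the simulation yields the desired winning strategy for II in $G_X$.
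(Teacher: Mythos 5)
The paper records this as a bare Fact with no proof, so there is no in-paper argument to compare against; your simulation argument is the standard and correct way to prove it. Player II absorbs player I's extra obligation in $G^s_X$ (naming a distinguished point of $U_n$) by supplying such a point internally, feeds the augmented history to the winning strategy $\tau$ for the strong game, and plays its output; since the resulting play of $G^s_X$ is compatible with $\tau$ and has exactly the same sequence of sets $V_n$, the intersection $\bigcap_{n\in\omega}V_n$ is nonempty and II wins the run of $G_X$. The remark that the points must be chosen via a fixed choice function, so that the derived strategy is well defined, is also the right bookkeeping.

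There is, however, a concrete slip in how you schedule the choice of the auxiliary points, and as literally written the simulated position need not be legal. You choose $x_k\in V_{k-1}$ ``before $U_k$ is played'' and then deduce $x_k\in U_k$ from ``$U_k\supseteq V_{k-1}$''; but the nesting condition of the game is $U_k\subseteq V_{k-1}$ (the sets shrink), so membership in $V_{k-1}$ gives no information about membership in $U_k$. Indeed, player I may legally play a nonempty open $U_k\subseteq V_{k-1}$ omitting your prematurely chosen $x_k$, in which case $(U_k,x_k)$ is not a legal move for I in $G^s_X$ and the simulation breaks. The same reversal reappears in your final paragraph (``the nesting $V_{n-1}\subseteq U_n$''). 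The repair is immediate: choose $x_k$ only \emph{after} I reveals $U_k$, taking $x_k\in U_k$ directly via the fixed choice function (possible since $U_k$ is a nonempty open set). With that change the simulated position is legal, $\tau$ returns $V_k$ with $x_k\in V_k\subseteq U_k$, which is a legal reply in $G_X$, and the rest of your argument goes through unchanged.
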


We are interested in the above notions primarily because of the following result characterising Polish spaces.

\begin{proposition}\label{chouqet_polish}
Let $X$ be nonempty separable metrizable space and $\hat X$ a Polish space in which $X$\ is dense. Then
\begin{enumerate}
\item $X$\ is Chouqet $\iff$ $X$ is comeager in $\hat X$;
\item $X$ is strong Choquet $\iff$ $X$\ is $\gdelta$ in $\hat X$ $\iff$ $X$ is Polish.
\end{enumerate} 
\end{proposition}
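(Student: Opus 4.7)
The plan is to handle each equivalence by a pair of converse constructions: given a structural hypothesis (comeagerness or $G_\delta$-ness) build a winning strategy for II, and conversely, given a winning strategy for II, unfold it into a dense $G_\delta$ subset of $\hat X$ contained in $X$. All completeness arguments use a fixed compatible complete metric $\hat d$ on $\hat X$.

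For Part (1), the comeager-to-Choquet direction writes $X \supseteq \bigcap_n D_n$ with each $D_n$ dense open in $\hat X$, and has II at round $n$ choose $\tilde V_n \subseteq \hat X$ open with $\overline{\tilde V_n}^{\hat X} \subseteq \tilde U_n \cap D_n$ and $\hat d$-diameter $< 1/n$, where $\tilde U_n$ is an open lift of I's move $U_n$ (intersected with the previous $\tilde V_{n-1}$), and then play $V_n = \tilde V_n \cap X$. Completeness forces $\bigcap \overline{\tilde V_n}^{\hat X}$ to be a singleton in $\bigcap D_n \subseteq X$, so II wins. For the converse, given a winning strategy $\sigma$, I would build recursively a pairwise disjoint family $\mathcal{F}_n$ of open subsets of $\hat X$ indexed by $\sigma$-compatible positions of length $n$, each of diameter $< 1/n$, such that each element of $\mathcal{F}_{n+1}$ sits inside an element of $\mathcal{F}_n$ (corresponding to extending the position by one round). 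A Zorn-style maximality argument makes $W_n = \bigcup \mathcal{F}_n$ dense open in $\hat X$. Any $y \in \bigcap W_n$ lies, by pairwise disjointness at each level, in a unique decreasing chain, giving a single $\sigma$-compatible infinite play; the II-intersection of this play is nonempty in $X$ by winning, and by diameter shrinkage must equal $\{y\}$, forcing $y \in X$, whence $\bigcap W_n \subseteq X$.

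For Part (2), I invoke Alexandrov's classical theorem for $X$ Polish $\iff$ $X$ is $G_\delta$ in $\hat X$: the forward direction embeds $X$ as a closed subset of $\hat X \times \real^\omega$ using coordinates $1/\hat d(\cdot, \hat X \setminus O_k)$ with $X = \bigcap O_k$, and the reverse uses, for a complete metric $d$ on $X$, the open sets $\{y \in \hat X : \exists r > 0,\ \text{diam}_d(B_{\hat d}(y, r) \cap X) < 1/k\}$ whose intersection is $X$. For $G_\delta \Rightarrow$ strong Choquet, adapt the strategy above: given $X = \bigcap_n O_n$, II picks $\tilde V_n \ni x_n$ open in $\hat X$ with $\overline{\tilde V_n}^{\hat X} \subseteq \tilde U_n \cap O_n$ and diameter $< 1/n$, all three constraints being jointly achievable inside a small $\hat d$-ball around $x_n$, and the resulting singleton intersection lies in $\bigcap O_n = X$.

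The main obstacle will be the implication strong Choquet $\Rightarrow$ $G_\delta$ in $\hat X$. The skeleton parallels the Choquet-to-comeager construction, but now positions in $G^s_X$ carry I's designated points, so the recursive disjoint-refinement of $\mathcal{F}_{n+1}$ from $\mathcal{F}_n$ must be threaded through a countable enumeration of point-choices dense in $X$, arranging that each element of $\mathcal{F}_{n+1}$ arises as the canonical $\hat X$-extension of II's $\sigma$-response to some legal I-move whose specified point sits inside the parent element. Once this bookkeeping yields dense open $W_n = \bigcup \mathcal{F}_n$ with $X \subseteq W_n$, any $y \in \bigcap W_n$ is located in a unique coherent infinite play of $G^s_X$ compatible with $\sigma$; diameter shrinkage and the winning condition then pin the II-intersection in $X$ to $\{y\}$, forcing $y \in X$. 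The delicate point is harmonising the refinement with the point-selection requirement so that the play extracted at $y$ is genuinely legitimate.
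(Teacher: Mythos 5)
The paper states this proposition (Kechris's Theorem 8.17) without proof, so there is no in-paper argument to compare against; your proposal has to stand on its own. Part (1) in both directions, the Alexandrov--Lavrentiev equivalence, and the implication $G_\delta \Rightarrow$ strong Choquet are all the standard arguments and are correctly sketched: lifting I's moves to $\hat X$, shrinking closures inside the $D_n$ (resp.\ $O_n$), and, for the converse of (1), the maximal pairwise disjoint family of lifted II-responses whose union is dense, with disjointness guaranteeing a unique coherent branch through any $y\in\bigcap_n W_n$.

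The gap is in strong Choquet $\Rightarrow G_\delta$, and it is not merely a ``delicate point'': the plan as stated is impossible to carry out. You want each level $\mathcal F_n$ to be \emph{pairwise disjoint}, to have $X\subseteq W_n=\bigcup\mathcal F_n$, and to have diameters shrink along branches. Take $X=\hat X=\real$ (Polish, hence strong Choquet, and the theorem must cover this case): a pairwise disjoint family of open sets covering a connected space has exactly one nonempty member, namely the whole space, so every $\mathcal F_n$ collapses to $\{\real\}$ and the diameters never shrink; the step ``diameter shrinkage pins the II-intersection to $\{y\}$'' has nothing to work with. Disjointness must be abandoned in this direction: instead one covers each $V_p$ by II's responses to I-moves $(U,x)$ as $x$ ranges over $V_p$ (this is exactly what the designated point buys you --- every point of $V_p$ lies in some response), extracts a countable, and in fact \emph{locally finite}, subcover, and requires $\overline{\tilde V_{p'}}\subseteq\tilde V_p$. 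Then for $y\in\bigcap_n W_n$ the tree of positions $p$ with $y\in\tilde V_p$ has all levels nonempty and is finitely branching by local finiteness, so K\"onig's lemma yields an infinite $\sigma$-compatible run through $y$; uniqueness is neither available nor needed. Without some such device, nonemptiness of every level of an infinitely branching tree does not produce an infinite branch, and your extraction of ``a unique coherent infinite play'' at $y$ fails.
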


\section{Borel sets}

\subsection{Borel hierarchy}

\begin{definition}
Let $X$ be a topological space. A set $A\subseteq X$ is a \emph{Borel set} if it belongs to the smallest $\sigma-$algebra of subsets of $X$ containing all open sets. This $\sigma-$algebra is usually denoted by $\mathcal B(X)$, or by $\mathcal B$ when there is no confusion as to which topological space is assumed.
\end{definition}

Discovered by Borel, these sets form a  hierarchy of particularly well-behaved sets. The first hierarchy for Borel sets, which differs only in minor details to the one presented below, was introduced by Lebesgue.

The starting point is a Polish space - a set with some structure imposed by the topology. The idea is to use this structure and, starting from the ``simplest'' sets - open sets - and using some basic set operations, to form a hierarchy of progressively more complex sets.

\begin{definition}\label{borel_hierarchy}
Let $X$ be a Polish space. For each $\alpha<\omega_1$, let us define the collections $\barsigma{0}{\alpha}(X)$, $\barpi{0}{\alpha}(X)$ and $\bardelta{0}{\alpha}(X)$ of subsets of $X$:
\begin{align*}
\barsigma01(X)=&\text{the collection of all open sets};\\
\barpi01(X)=&\text{the collection of all closed sets};\\
\barsigma0\alpha(X)=&\text{the collection of all sets }A=\cup_{n\in\omega}A_n \text{, where each } A_n\\&\text{belongs to }\barpi0\beta(X) \text{ for some }\beta<\alpha;\\
\barpi0\alpha(X)=&\text{the collection of all sets }A=\cap_{n\in\omega}A_n \text{, where each } A_n\\&\text{belongs to }\barsigma0\beta(X) \text{ for some }\beta<\alpha;\\
\bardelta{0}{\alpha}(X)=&\barsigma{0}{\alpha}(X)\cap \barpi{0}{\alpha}(X).
\end{align*}

Elements of those sequences are called \emph{pointclasses}, and elements of pointclasses are called \emph{pointsets}. When the context is clear, we will write $\barpi{0}\alpha, \barsigma{0}\alpha, \bardelta{0}\alpha $ instead of $\barpi{0}\alpha(X), \barsigma{0}\alpha(X), \bardelta{0}\alpha(X)$.
\end{definition}

\paragraph{Relativity of the Borel hierarchy}~

It is easy to see that the Borel hierarchy is not absolute - it depends on the topology of the underlying Polish space. The following two examples will demonstrate this.

\begin{example}\label{borel_example1}
There are only two clopen sets in $\real$: the $\real$ itself and the empty set $\emptyset$. However, there are infinitely many clopen sets in any zero-dimensional infinite Polish space. Thus the very first level of hierarchy, $\bardelta01$, is very different for $\real$ and for, say, $\mathcal N$.  
\end{example}

\noindent For the second example we'll need the following fundamental fact about Borel sets in Polish spaces.
\begin{proposition}\label{borel_clopen}
Let $(X,\mathcal T)$ be a Polish space and $A\subseteq X$ a Borel set. Then there is a Polish topology $\mathcal T_A\supseteq \mathcal T$ such that $\mathcal B(\mathcal T)=\mathcal B(\mathcal T_A)$ and $A$ is a clopen in $\mathcal T_A$.
\end{proposition}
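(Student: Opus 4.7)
The plan is a transfinite induction on the Borel hierarchy, supported by two preparatory lemmas. The first lets us enlarge the topology so that a single closed set becomes clopen, and the second lets us amalgamate countably many such enlargements into a single Polish topology.

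\textbf{Lemma A (one-step refinement).} If $F$ is closed in the Polish space $(X,\mathcal T)$, then the topology $\mathcal T'$ on $X$ generated by $\mathcal T\cup\{F,X\setminus F\}$ is Polish, refines $\mathcal T$, makes $F$ clopen, and satisfies $\mathcal B(\mathcal T')=\mathcal B(\mathcal T)$. The Polishness comes from recognising $(X,\mathcal T')$ as the topological disjoint union of $F$ (Polish, as a closed subspace of a Polish space, by Proposition \ref{polish_basic}) and $X\setminus F$ (Polish, as an open subspace of a Polish space; this uses Proposition \ref{chouqet_polish}, since an open set is $\gdelta$ and hence strong Choquet, hence Polish). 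Every $\mathcal T'$-open set decomposes as the union of a $\mathcal T$-open subset of $F$ and a $\mathcal T$-open subset of $X\setminus F$, both of which are $\mathcal T$-Borel, so $\mathcal B(\mathcal T')=\mathcal B(\mathcal T)$.

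\textbf{Lemma B (countable amalgamation).} If $(\mathcal T_n)_{n\in\omega}$ is a sequence of Polish topologies on $X$, each refining $\mathcal T$ and each with $\mathcal B(\mathcal T_n)=\mathcal B(\mathcal T)$, then the topology $\mathcal T_\infty$ generated by $\bigcup_n \mathcal T_n$ is Polish and $\mathcal B(\mathcal T_\infty)=\mathcal B(\mathcal T)$. The argument passes through the diagonal embedding $\Delta\colon X\to\prod_n (X,\mathcal T_n)$, $\Delta(x)=(x,x,\dots)$: the image $\Delta(X)$ is closed in the product (each factor being Hausdorff), the product is Polish by Proposition \ref{polish_basic}, and the subspace topology that $\Delta(X)$ inherits pulls back to precisely $\mathcal T_\infty$. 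The Borel equality is immediate because each generator of $\mathcal T_\infty$ already lies in $\mathcal B(\mathcal T)$.

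\textbf{Main induction.} I show by transfinite induction on $1\le\alpha<\omega_1$ that every $A\in\barsigma0\alpha(X)\cup\barpi0\alpha(X)$ admits a Polish refinement $\mathcal T_A\supseteq\mathcal T$ with $\mathcal B(\mathcal T_A)=\mathcal B(\mathcal T)$ in which $A$ is clopen. The base case $\alpha=1$ is exactly Lemma A (for closed $A$; for open $A$, apply Lemma A to $X\setminus A$). For the successor/limit step with $A\in\barsigma0\alpha$, write $A=\bigcup_n A_n$ with $A_n\in\barpi0{\beta_n}$, $\beta_n<\alpha$. The inductive hypothesis produces a Polish topology $\mathcal T_n\supseteq\mathcal T$ with the same Borel $\sigma$-algebra in which $A_n$ is clopen. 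Lemma B amalgamates them to a Polish topology $\mathcal T_\infty$ where every $A_n$ is still clopen, so $A$ is $\mathcal T_\infty$-open. One final application of Lemma A to the $\mathcal T_\infty$-closed set $X\setminus A$ yields the desired $\mathcal T_A$. The $\barpi0\alpha$ case is symmetric by complementation, since ``clopen'' is preserved under taking complements.

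\textbf{Main obstacle.} The delicate point is Lemma B: one must simultaneously check that $\Delta(X)$ is closed in the countable product, and that the topology induced on $X$ via $\Delta$ genuinely equals the topology generated by $\bigcup_n \mathcal T_n$ (rather than being strictly finer or coarser). Everything else — the single-step refinement and the transfinite bookkeeping — is essentially routine once one has at hand that open subspaces of Polish spaces are Polish.
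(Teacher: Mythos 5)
Your argument is correct, and it is the standard proof of this result (it is the one given for Theorem 13.1 in Kechris's book); the paper itself states Proposition \ref{borel_clopen} without proof, so there is no in-paper argument to compare against. The structure — a one-step lemma turning a closed set into a clopen one via the disjoint sum $F \sqcup (X\setminus F)$, a countable amalgamation lemma via the diagonal embedding, and a transfinite induction through the Borel hierarchy — is exactly right. One small repair in Lemma B: the diagonal $\Delta(X)$ is not closed in $\prod_n(X,\mathcal T_n)$ merely because each factor is Hausdorff; what you actually use is that all the $\mathcal T_n$ refine the \emph{common} Hausdorff topology $\mathcal T$, so that two distinct points can be separated by $\mathcal T$-open sets which are simultaneously open in every coordinate. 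Likewise, the claim that $\mathcal B(\mathcal T_\infty)=\mathcal B(\mathcal T)$ needs second countability of $\mathcal T_\infty$ (so that every $\mathcal T_\infty$-open set is a countable union of finite intersections of generators); this holds since each $\mathcal T_n$ is second countable, but it is worth saying. Neither point is a genuine gap — both are routine to fill.
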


\begin{example}\label{borel_example2}
Using the previous proposition, it is easy to see that for a given Polish space $(X,\mathcal T)$ and a Borel set $A \in\mathcal B(X)$ there is a Polish space $(X,\mathcal T_A)$ in which $A$ is clopen. Hence $A\in\bardelta01(X,\mathcal T_A)$ irrespectively of where exactly $A$ was located in the Borel hierarchy for $(X,\mathcal T)$.
\end{example}

Similarly to turning Borel sets into clopen sets, we can turn any Borel function into a continuous one.
\begin{proposition}\label{borel_to_continuous}
Let $(X, \mathcal T)$ be a Polish space, $Y$ a second countable space, and $f:X\rightarrow Y$ a Borel function. Then there is a Polish topology $\mathcal T_f\supseteq\mathcal T$ with $\mathcal B(\mathcal T)=\mathcal B(\mathcal T_f)$ such that $f:(X,\mathcal T_f)\rightarrow Y$ is continuous.
\end{proposition}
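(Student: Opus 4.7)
The plan is to reduce this to Proposition \ref{borel_clopen} by exploiting the second countability of $Y$. Since $Y$ is second countable, fix a countable base $\{V_n : n \in \omega\}$ for its topology. Because $f$ is Borel, each preimage $A_n = f^{-1}(V_n)$ is Borel in $(X,\mathcal T)$. By Proposition \ref{borel_clopen}, for each $n$ there exists a Polish topology $\mathcal T_n \supseteq \mathcal T$ with $\mathcal B(\mathcal T_n) = \mathcal B(\mathcal T)$ in which $A_n$ becomes clopen. Let $\mathcal T_f$ denote the topology on $X$ generated by $\bigcup_{n\in\omega} \mathcal T_n$.

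Three things then have to be checked: (i) $\mathcal B(\mathcal T_f) = \mathcal B(\mathcal T)$, which is immediate since each sub-basic set of $\mathcal T_f$ already lies in $\mathcal B(\mathcal T_n) = \mathcal B(\mathcal T)$; (ii) $f : (X, \mathcal T_f) \to Y$ is continuous, which follows because each $A_n = f^{-1}(V_n)$ is open in $\mathcal T_n \subseteq \mathcal T_f$ and the $V_n$ form a base of $Y$; and (iii), the main obstacle, that $(X, \mathcal T_f)$ is Polish.

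To handle (iii) I would use the standard diagonal embedding trick. Consider $\phi : X \to \prod_{n\in\omega} (X, \mathcal T_n)$ defined by $\phi(x) = (x, x, x, \ldots)$. By Proposition \ref{polish_basic}(3) the product is Polish. The pullback topology on $X$ induced by $\phi$ is generated by finite intersections of sets of the form $W$ with $W \in \mathcal T_n$ for some $n$, which is exactly $\mathcal T_f$; hence $\phi$ is a homeomorphism of $(X, \mathcal T_f)$ onto its image. It remains to show that the image, i.e. the diagonal $\Delta = \phi(X)$, is closed in the product. If $(x_n)_n \notin \Delta$, pick indices $i,j$ with $x_i \neq x_j$; by the Hausdorff property of $(X, \mathcal T)$ there are disjoint $\mathcal T$-open sets $U \ni x_i$ and $V \ni x_j$, and since $\mathcal T \subseteq \mathcal T_i \cap \mathcal T_j$ these give a basic open neighbourhood of $(x_n)$ in the product disjoint from $\Delta$. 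So $\Delta$ is closed and by Proposition \ref{polish_basic}(2) the space $(X, \mathcal T_f)$ is Polish.

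The only delicate step is (iii); the real content is the diagonal-embedding lemma that a topology generated by countably many Polish refinements of a common Hausdorff topology remains Polish. Everything else is just assembling this with Proposition \ref{borel_clopen} applied to the countably many preimages of basic open sets in $Y$.
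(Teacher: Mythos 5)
Your proposal is correct and is the standard argument (the paper itself states this proposition without proof, but this is exactly the route taken in Kechris, which the paper follows): reduce to Proposition \ref{borel_clopen} applied to the preimages of a countable base of $Y$, and then invoke the lemma that the topology generated by countably many Polish refinements of $\mathcal T$, each with the same Borel sets, is again Polish --- proved, as you do, by identifying $(X,\mathcal T_f)$ with the diagonal in $\prod_n (X,\mathcal T_n)$ and using Hausdorffness of $\mathcal T$ to see the diagonal is closed. One small point of order in step (i): that every $\mathcal T_f$-open set (an arbitrary union of finite intersections of sub-basic sets) stays in $\mathcal B(\mathcal T)$ requires second countability of $\mathcal T_f$, which you only obtain in step (iii); this is harmless, since taking the union of countable bases of the $\mathcal T_n$ as a sub-base shows directly that $\mathcal T_f$ is second countable, but the dependence should be acknowledged.
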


\paragraph{Hausdorff notation}
Another way of presenting the same hierarchy is the following one. Denote by $G(X)$ the pointclass of open subsets of $X$ and by $F(X)$ the pointclass of closed subsets. For any collection $\mathcal{E}$ of subsets of $X$, define the following operations:
\begin{align}
\label{sigma_notation} \mathcal{E}_\sigma=\left\{\bigcup_{n\in\omega}A_n:A_n\in\mathcal E\right\},\\
\label{delta_notation} \mathcal{E}_\delta=\left\{\bigcap_{n\in\omega}A_n:A_n\in\mathcal E\right\}.
\end{align}

Then we have $\barpi01(X)=F(X)$, $\barsigma01(X)=G(X)$, $\barpi02(X)=G(X)_\delta$(or, in a more familiar way, $\gdelta(X)$), $\barsigma02(X)=F(X)_\sigma $(or $\fsigma(X)$), et cetera. This is an older notation, introduced by Hausdorff. The other one, the now standard $\barsigma0\alpha$ and $\barpi0\alpha$ notation, was introduced by Addison in 1958.

\paragraph{Basic properties of the Borel hierarchy }~

\noindent In the context of descriptive set theory, $X$ is usually assumed to be an uncountable Polish space.

\noindent Here are some basic consequences of the above definition.
\begin{proposition}\label{borel_prop_1} Let $X$ be an uncountable Polish space. Then
\begin{enumerate}
\item $\mathcal B=\bigcup_{\alpha<\omega_1}\barsigma0\alpha=\bigcup_{\alpha<\omega_1}\barpi0\alpha$,\\ hence the hierarchy is called the \emph{Borel hierarchy};
\item for $\alpha<\beta$, 
\begin{align*}
\barsigma0\alpha\subseteq\barsigma0\beta&,
& \barsigma0\alpha\subseteq\barpi0\beta,
\\ \barpi0\alpha\subseteq\barsigma0\beta&,
& \barpi0\alpha\subseteq\barpi0\beta;
\end{align*}
\item for $\alpha>0$, $$\barsigma0\alpha\not\subseteq\barpi0\alpha$$ and $$\barpi0\alpha\not\subseteq\barsigma0\alpha,$$
thus it is a proper hierarchy;
\item All Borel pointclasses are closed under continuous pre-images, finite unions and finite intersections. Moreover, for $\xi\ge 1$, $\barsigma0\xi$ is closed under countable unions, $\barpi0\xi$ is closed under countable intersections and $\bardelta0\xi$ is closed under complements.
\end{enumerate}
\end{proposition}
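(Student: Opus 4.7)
My plan is to prove the four parts in the order (2), (1), (4), (3), with (3) as the main challenge. For part (2), the engine is the classical observation that in any metrizable space every open set is $F_\sigma$: fixing a compatible metric $d$ and an open $U\subseteq X$, the closed sets $F_n=\{x:d(x,X\setminus U)\ge 1/n\}$ exhaust $U$. Dually, every closed set is $G_\delta$ via $\bigcap_n\{x:d(x,F)<1/n\}$, yielding the base inclusions $\barsigma01\subseteq\barsigma02$ and $\barpi01\subseteq\barpi02$; the cross-inclusions $\barsigma01\subseteq\barpi02$ and $\barpi01\subseteq\barsigma02$ are trivial by taking constant sequences in the definition. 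A joint transfinite induction on $\beta$ then propagates all four chains of inclusions for $\alpha<\beta$ directly from Definition \ref{borel_hierarchy}.

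Part (1) reduces to verifying that $\mathcal H=\bigcup_{\alpha<\omega_1}\barsigma0\alpha$ is a $\sigma$-algebra, since the reverse inclusion $\mathcal H\subseteq\mathcal B$ is an easy induction. Open sets lie in $\mathcal H$ by inspection; closure under complement follows from $(\barsigma0\alpha)^c=\barpi0\alpha\subseteq\barsigma0{\alpha+1}$ via part (2); closure under countable union uses that $\omega_1$ is regular and uncountable, so a countable family $\{A_n\}$ with $A_n\in\barsigma0{\alpha_n}$ has $\sup_n\alpha_n<\omega_1$, and part (2) gathers all $A_n$ into a single level $\barsigma0\alpha$, whose definition absorbs their union. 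The $\barpi$ identity follows by complementation. Part (4) is a further mechanical transfinite induction: the base case is the axioms of topology, and for the inductive step one distributes using $\bigcup_n A_n \cap \bigcup_m B_m=\bigcup_{n,m}(A_n\cap B_m)$ and applies the inductive hypothesis. Continuous preimage commutes with all Boolean operations, so it passes through the recursion. Closure of $\barsigma0\xi$ under countable unions is immediate from the definition, of $\barpi0\xi$ under countable intersections is dual, and $\bardelta0\xi$ is manifestly closed under complementation.

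Part (3) is the main obstacle, and I would attack it via universal sets and diagonalization. For each $1\le\alpha<\omega_1$ I would construct by transfinite recursion a $\barsigma0\alpha$-\emph{universal} set $U^\alpha\subseteq\mathcal N\times\mathcal N$: a set in $\barsigma0\alpha(\mathcal N\times\mathcal N)$ such that $\{(U^\alpha)_y : y\in\mathcal N\}=\barsigma0\alpha(\mathcal N)$, where $(U^\alpha)_y=\{x:(y,x)\in U^\alpha\}$. At level $1$ one parametrizes the countable basis of $\mathcal N$; at higher stages one uses a homeomorphism $\mathcal N\cong\mathcal N^\omega$ to pack countable sequences of parameters from lower-level universal sets. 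The hard part will be carrying out this recursion cleanly while maintaining the exact complexity bound at limit ordinals. Once $U^\alpha$ is in hand, the diagonal $D=\{y\in\mathcal N:(y,y)\in U^\alpha\}$ lies in $\barsigma0\alpha(\mathcal N)$ by part (4); if one had $\barpi0\alpha\subseteq\barsigma0\alpha$ then $\mathcal N\setminus D$ would equal $(U^\alpha)_{y_0}$ for some $y_0$, giving the Cantor-style contradiction $y_0\in D\iff y_0\notin D$. The symmetric argument with a $\barpi0\alpha$-universal set yields $\barsigma0\alpha\not\subseteq\barpi0\alpha$. To transfer properness from $\mathcal N$ to a general uncountable Polish $X$, I would use that $X$ contains a copy of $\mathcal C$ by Cantor-Bendixson and verify by a standard relative-complexity argument that a set witnessing non-inclusion in $\mathcal C$ witnesses non-inclusion in $X$ as well.
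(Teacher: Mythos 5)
The paper states this proposition without proof, presenting it as a collection of basic consequences of Definition \ref{borel_hierarchy} (with \cite{Ker94} as the background reference), so there is no in-paper argument to compare against; judged on its own, your outline is the standard and correct route. Parts (1), (2) and (4) are handled exactly right: the whole hierarchy structure rests on the fact that in a metrizable space every open set is $\fsigma$ and every closed set is $\gdelta$, the regularity of $\omega_1$ gives closure of the union under countable operations, and the distributivity computations for part (4) are routine. For part (3), universal sets plus diagonalization is the classical argument (Kechris 22.3--22.4), and your recursion via $\mathcal N\cong\mathcal N^\omega$ at successor and limit stages is the right mechanism. One boundary case deserves attention: your transfer step from $\mathcal C$ to a general uncountable Polish $X$ works for $\alpha\ge 2$, because a closed (resp.\ relatively $\barsigma0\alpha$, $\alpha\ge2$) subset of a closed copy of $\mathcal C$ is again closed (resp.\ $\barsigma0\alpha$) in $X$; but it fails for $\alpha=1$, since a relatively open subset of the embedded Cantor set is typically not open in $X$, so witnessing $\barsigma01\not\subseteq\barpi01$ inside $\mathcal C$ says nothing about $X$. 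The level $\alpha=1$ must be handled directly in $X$, e.g.\ by noting that an uncountable Polish space has a non-isolated point $x$, whence $X\setminus\{x\}$ is open, dense and proper, hence not closed; the two non-inclusions at each level are then equivalent by complementation. With that one-line repair the argument is complete.
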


\begin{remark}
\ref{borel_prop_1}(4) can be made stronger. Suppose $X,Y$ are Polish spaces. Then $\barpi{0}{\alpha}(X)\cup \barpi0\alpha(Y), \barsigma{0}{\alpha}(X)\cup \barsigma0\alpha(Y)$ and $\bardelta{0}{\alpha}(X)\cup \bardelta0\alpha(Y)$ are closed under continuous pre-images. 

This means, for example, that for every some $A\in\barsigma0\alpha(Y)$ and for any continuous function $f:X\rightarrow Y$, $f^{-1}(A)\in\barsigma0\alpha(X).$
\end{remark}

\begin{theorem}[Luzin-Suslin]\label{borel_injective}
Let $X,Y$ be Polish spaces and $f:X\rightarrow Y$ be injective and Borel. Then for any Borel $A$, $f(A)$ is Borel too.
\end{theorem}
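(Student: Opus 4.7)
The plan is a two-stage topological reduction followed by the construction of a Lusin-style scheme of open sets in $Y$ that pins $f(A)$ down as an $F_{\sigma\delta}$. First, Proposition~\ref{borel_to_continuous} replaces the topology $\mathcal T$ on $X$ by a finer Polish topology $\mathcal T_f$ with the same Borel $\sigma$-algebra in which $f$ is continuous; then Proposition~\ref{borel_clopen} applied inside $(X,\mathcal T_f)$ refines once more so that $A$ is clopen, without destroying continuity of $f$, since refining the source topology can only make more maps continuous. Restricting to $A$, which is Polish by Proposition~\ref{polish_basic}(2), reduces the problem to showing that a continuous injection between Polish spaces has Borel image. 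Finally, Proposition~\ref{universal_1}(4) supplies a continuous bijection from some closed $F = [T] \subseteq \mathcal N$ onto $X$; composing with $f$ lets me assume outright that $X = [T]$ for a pruned tree $T$ on $\omega$, so the clopen cylinders $F_s := [T] \cap \cylinder{s}$ form a shrinking disjoint scheme for $X$.

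The main step would be to construct, by recursion on $|s|$, open sets $U_s \subseteq Y$ for $s \in T$ such that
\begin{enumerate}
\item[(a)] $f(F_s) \subseteq U_s$,
\item[(b)] $\operatorname{diam}(U_s) < 2^{-|s|}$ when $|s|\ge 1$,
\item[(c)] $\overline{U_{s^\frown n}} \subseteq U_s$, and
\item[(d)] the closures $\overline{U_{s^\frown n}}$ are pairwise disjoint among the successors of a fixed $s$.
\end{enumerate}
Granting such a scheme, I would verify
\[
f(X) \;=\; \bigcap_{n\in\omega}\; \bigcup_{s\in T,\,|s|=n} \overline{U_s},
\]
which displays $f(X)$ as an $F_{\sigma\delta}$ set, hence Borel. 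The inclusion $\subseteq$ is immediate from (a). For $\supseteq$, given $y$ in the right-hand side, the subtree $T_y = \{s \in T : y \in \overline{U_s}\}$ of $T$ has nodes at every level by hypothesis, while (d) forces each node in $T_y$ to have at most one successor in $T_y$; hence $T_y$ determines a unique infinite branch $x \in [T]$, and (a) together with (b) then collapses $\bigcap_n \overline{U_{x \upharpoonright n}}$ to the single point $f(x) = y$.

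The genuine obstacle is property (d). The sets $f(F_{s^\frown n})$ are pairwise disjoint by injectivity of $f$, but their closures in $Y$ can easily intersect (think of $\{0\}$ versus $\{1/n : n \ge 1\}$ in $\real$), so one cannot naively enclose each in a small open neighbourhood and keep the closures disjoint. My approach would be to build the $U_s$ in tandem with an auxiliary refinement of the scheme in $X$: at each stage, further partition each $F_s$ into countably many smaller disjoint $F_\sigma$ pieces whose $f$-images can be thickened, using normality of the Polish metric space $Y$ together with the continuity and injectivity of $f$, into open sets of diameter below $2^{-|s|-1}$, contained in the previously chosen $U_s$, and with pairwise disjoint closures. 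The scheme is then reindexed so that the branching of the $U$-scheme mirrors the branching of a suitable coarsening of $T$, ensuring (a)--(d) hold simultaneously at every level.
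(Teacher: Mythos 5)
The reduction in your first paragraph is fine, but the main step cannot be carried out, and the obstacle is not merely technical. Whatever auxiliary repartitioning and reindexing you perform, your final formula
\[
f(X)\;=\;\bigcap_{n\in\omega}\;\bigcup_{s}\overline{U_s}
\]
exhibits $f(X)$ as a countable intersection of countable unions of closed sets, i.e.\ as a $\barpi03$ set. But, as quoted elsewhere in this paper, \emph{every} Borel subset of a Polish space is the injective continuous image of a closed subset of $\mathcal N$, and by Proposition \ref{borel_prop_1}(3) the Borel hierarchy on an uncountable Polish space is proper, so there are Borel sets $B\subseteq Y$ lying in no fixed level $\barpi03$. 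Applying your argument to a continuous injection $f:[T]\rightarrow Y$ with $f([T])=B$ would force $B\in\barpi03$, a contradiction. So no scheme of \emph{open} sets $U_s$ with properties (a)--(d) can exist in general; indeed already (a)+(b) fails at level $1$ (the countably many images $f(F_{(n)})$ need not have small diameter), and your proposed fix of partitioning $F_s$ further does not help with (d): if $f(P)$ is dense in $\overline{f(Q)}$ for disjoint pieces $P,Q$, then any open thickenings of $f(P)$ and $f(Q)$ have intersecting closures, no matter how small the pieces are.

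The missing ingredient is the Luzin separation theorem: any two disjoint analytic sets can be separated by a Borel set (equivalently, countably many pairwise disjoint analytic sets admit pairwise disjoint Borel envelopes). The sets $f(F_{s^\frown n})$, $n\in\omega$, are pairwise disjoint \emph{analytic} subsets of $Y$ (here is where injectivity of $f$ enters), so one can choose pairwise disjoint Borel sets $B_{s^\frown n}$ with
\[
f(F_{s^\frown n})\subseteq B_{s^\frown n}\subseteq \overline{f(F_{s^\frown n})}\cap B_s ,
\]
and then the identity $f(X)=\bigcap_{n}\bigcup_{|s|=n}B_s$ is verified exactly as in your second paragraph, with $\overline{U_s}$ replaced by $B_s$ and the diameter condition imposed on $F_s$ (refining $T$ so that $\operatorname{diam} f(F_s)<2^{-|s|}$, which is harmless since the $B_s$ are only required to be Borel, not open). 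The resulting set is Borel of unbounded rank, as it must be. Without the separation theorem, or some equivalent device producing disjoint \emph{Borel} (rather than open or closed) envelopes, the argument cannot be completed.
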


\begin{remark}
The above result does not hold for arbitrary Borel functions. 
\end{remark}

\paragraph{Exact location of a set in the Borel hierarchy; Wadge reduction}~
\begin{definition}\label{wadge_def}
Let $X,Y$ be sets and $A\subseteq X,B\subseteq Y$. A \emph{reduction} of $A$ to $B$ is a map $f:X\rightarrow Y$ with $f^{-1}(B)=A$.
If $X,Y$ are topological spaces, we say that $A$ is \emph{Wadge reducible} to $B$, symbolically $A\le_W B$, if there is a continuous reduction     of $A$ to $B$. 
\end{definition}

This can be seen as a notion of relative complexity of sets in topological spaces: if $A\le_W B$, then $A$ is ``simpler'' than $B$. $\le_W$ is called the \emph{Wadge ordering}. We are particularly interested in the Wadge ordering on Borel sets in Polish spaces.

\begin{proposition}     [Wadge's Lemma] Let $S,T$ be nonempty pruned trees on $\omega$, and let $A\subseteq[S]$ and $B\subseteq[T]$ be Borel sets. Then either $A\le_W B$ or $B\le_W [S]\backslash A$.
\end{proposition}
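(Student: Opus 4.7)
The plan is to play the Wadge game $G(A,B)$ and invoke Borel determinacy. In this game, Player I produces a branch $x\in[S]$ by playing one coordinate $x(n)\in\omega$ at a time (constrained so that each initial segment is a node of $S$), while Player II produces a branch $y\in[T]$ in the same manner, except that II is permitted to pass on any round; II automatically loses the run if her total output fails to be an infinite branch of $T$. Player II wins the run iff $x\in A \Leftrightarrow y\in B$. The allowance of passing is precisely what makes II's winning strategies correspond to arbitrary continuous reductions rather than only to Lipschitz ones.

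First I would verify that the payoff set for II is Borel. The space of legal runs, equipped with the product topology from the discrete moves, is Polish, and the winning condition for II is $(x\in A \wedge y\in B) \vee (x\notin A \wedge y\notin B)$, where $x\mapsto x$ and $\text{run}\mapsto y$ are continuous; thus the payoff set is a Boolean combination of preimages of the Borel sets $A$ and $B$, hence Borel. By Martin's Borel determinacy theorem, $G(A,B)$ is determined.

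Next, suppose II has a winning strategy $\sigma$. Then $\sigma$ defines a map $f:[S]\to[T]$ sending $x$ to II's output against I's play of $x$. Since the $n$-th coordinate of $f(x)$ depends only on finitely many coordinates of $x$, $f$ is continuous; and because $\sigma$ wins, $x\in A \Leftrightarrow f(x)\in B$, so $f^{-1}(B)=A$ and $A\le_W B$. Conversely, suppose I has a winning strategy $\tau$. Given any $y\in[T]$, let II play $y$ honestly (never passing), and let $g(y)\in[S]$ be I's output under $\tau$. Again $g$ is continuous by the same finite-dependence argument. Since I wins, $g(y)\in A \Leftrightarrow y\notin B$, so $g^{-1}([S]\setminus A)=B$ and hence $B\le_W [S]\setminus A$.

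The principal obstacle is the appeal to Borel determinacy, a very deep theorem of Martin whose proof requires substantial set-theoretic resources (essentially, the replacement schema up through the relevant level of the Borel hierarchy). Once this black box is granted, the remaining work — checking continuity of the strategy-induced maps and carefully setting up the game so that plays are forced to remain in $[S]$ and $[T]$ (e.g.\ by declaring any illegal move an immediate loss) — is straightforward. A minor technicality worth addressing is the case where II passes infinitely often under $\sigma$: since the pruned trees are nonempty, we may stipulate that $\sigma$ is modified to always play into $[T]$ eventually, or equivalently observe that such a $\sigma$ would not be winning, so this case does not actually arise.
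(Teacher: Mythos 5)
The paper states Wadge's Lemma without proof, so there is no in-text argument to compare against; your proposal is the standard proof (as in Kechris's \emph{Classical Descriptive Set Theory}, Theorem 21.14) via the Wadge game and Borel determinacy, and it is essentially correct: the unravelling of a winning strategy for II into a continuous reduction of $A$ to $B$, and of a winning strategy for I into a continuous reduction of $B$ to $[S]\backslash A$, is exactly right, as is the observation that a winning strategy for II can never produce a finite output. The only point you gloss over is the Borelness of II's payoff set: the map sending a run to II's output $y$ is only defined on the set of runs in which II makes infinitely many non-pass moves, so you should note that this set is $G_\delta$ (hence Borel) and that the payoff is the intersection of this set with the preimage of a Boolean combination of $A$ and $B$ under a map that is continuous on it. One further remark: since a Lipschitz reduction is in particular continuous, the passing rule is not actually needed for the statement as given --- the plain alternating game already yields the dichotomy --- though your version is the one that characterizes $\le_W$ exactly.
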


Since zero-dimensional Polish spaces can be seen as nonempty pruned trees, the above proposition can immediately be applied to subsets of zero-dimensional Polish spaces,

For any pointclass $\Gamma$, let its \emph{dual class}, $\widetilde\Gamma$, be the collection of complements of sets in $\Gamma$ (so $\widetilde{\barpi0\alpha}=\barsigma0\alpha$), and say $A$ is \emph{properly} $\Gamma$ if $A \in\Gamma\backslash\widetilde\Gamma$.
If a class is equal to its dual, we call it \emph{self-dual}.
For sets $A,B$ define

$$A\equiv_W\ B \iff A\le_W B~\land~ B\le_W\ A. $$

This is an equivalence relation, whose equivalence classes $[A]_W$ are called \emph{Wadge degrees}. The Wadge ordering imposes a hierarchy, called the \emph{Wadge hierarchy}, on the Borel sets. This hierarchy is much finer than the Borel hierarchy. 

It is easy to see that if $A\in \barsigma0\xi$ (resp., $\barpi0\xi$) and $A\le_W B$, then $B\in \barsigma0\xi$ (resp., $\barpi0\xi$). It follows that $\barpi0\xi$ and $\barsigma0\xi$ are initial segments of the Wadge hierarchy.

\begin{definition}
Let $\Gamma$ be a class of sets in Polish spaces. If $Y$ is a Polish space, we call $A\subseteq Y$ \emph{$\bold\Gamma-$hard} if $B\le_W A$ for any $B\in \Gamma(X),$ where $X$ is a zero-dimensional Polish space. Moreover, if $A\in\Gamma(Y)$, we call $A$ \emph{$\bold\Gamma-$complete}.
\end{definition}

A couple of simple observations regarding the above definition:
\begin{enumerate}
\item If $\Gamma$ is not self-dual on zero-dimensional Polish spaces and is closed under continuous preimages, no $\Gamma-$hard set is in $\widetilde\Gamma$.
\item If $A$ is $\Gamma-$hard ($\Gamma-$complete), then its complement is $\widetilde\Gamma-$hard ($\widetilde\Gamma-$complete).
\item If $A$ is $\Gamma-$hard ($\Gamma-$complete) and $A\le_W B$, then $B$ is $\Gamma-$hard ($\Gamma-$complete if $B$ is also in $\Gamma$).
\end{enumerate}

The above observations provide a basis for a popular method of proving that a given set $B$ is $\Gamma-$hard. To do so, choose some other $\Gamma-$hard set $A$ and show that $A\le_W B$. 

In order to determine the exact location of a set in the Borel hierarchy, one must produce an upper bound, or prove membership in some pointclass $\Gamma$,
and then a lower bound, showing the set is not in the $\widetilde\Gamma$. Usually, the lower bounds are more difficult to obtain, but the notion of a Wadge reduction yields a powerful technique for producing lower bounds. The idea is to find a set $B$ that is known not to belong to $\widetilde\Gamma$, and show that $B\le_W A$. Then $A$ cannot be in $\widetilde\Gamma$ either.

\begin{proposition}
Let $X$ be a zero-dimensional Polish space. Then $A\subseteq X$ is $\barsigma0\xi-$complete if and only if $A\in\barsigma0\xi\backslash\barpi0\xi$. Moreover, a Borel set $A\subseteq X$ is $\barsigma0\xi-$hard if and only if it is not $\barpi0\xi$. Analogous results hold after interchanging $\barsigma0\xi$ and $\barpi0\xi$.
\end{proposition}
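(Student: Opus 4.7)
The two equivalences are tightly linked: once we prove the hardness characterization, the completeness statement falls out by combining it with the definition of completeness. So the plan is to focus on showing that, for a Borel set $A\subseteq X$ with $X$ a zero-dimensional Polish space, $A$ is $\barsigma0\xi$-hard if and only if $A\notin \barpi0\xi(X)$, and then read off the completeness equivalence and the $\barpi0\xi$-analogue.

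For the easy direction, suppose $A$ is $\barsigma0\xi$-hard but $A\in\barpi0\xi(X)$. Picking any $B\in\barsigma0\xi(\mathcal N)$, hardness yields a continuous $f:\mathcal N\to X$ with $f^{-1}(A)=B$. By closure of $\barpi0\xi$ under continuous preimages (Proposition~\ref{borel_prop_1}(4)), $B\in\barpi0\xi(\mathcal N)$. Since $B$ was arbitrary, this would give $\barsigma0\xi(\mathcal N)\subseteq\barpi0\xi(\mathcal N)$, contradicting properness of the Borel hierarchy on $\mathcal N$ (Proposition~\ref{borel_prop_1}(3)).

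The real work is the converse. Assume $A\subseteq X$ is Borel and $A\notin\barpi0\xi(X)$. Given any $B\in\barsigma0\xi(Y)$ with $Y$ zero-dimensional Polish, I would use Proposition~\ref{universal_1}(3) together with Proposition~\ref{tf_map} to realize $X=[S]$ and $Y=[T]$ for nonempty pruned trees $S,T$ on $\omega$. Wadge's Lemma, applied to the Borel sets $B\subseteq[T]$ and $A\subseteq[S]$, gives the dichotomy $B\le_W A$ or $A\le_W [T]\setminus B$. In the second case $[T]\setminus B\in\barpi0\xi(Y)$, so the witnessing continuous reduction combined with closure of $\barpi0\xi$ under continuous preimages would place $A$ in $\barpi0\xi(X)$, contradicting the hypothesis on $A$. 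Therefore $B\le_W A$, so $A$ is $\barsigma0\xi$-hard.

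From here, the completeness equivalence is immediate: $\barsigma0\xi$-completeness is by definition membership in $\barsigma0\xi$ together with $\barsigma0\xi$-hardness, which by the above is equivalent to $A\in\barsigma0\xi\setminus\barpi0\xi$. The $\barpi0\xi$-analogues follow symmetrically, either by repeating the same argument with $\barsigma0\xi$ and $\barpi0\xi$ swapped throughout (the proof is self-dual: Wadge's Lemma and Proposition~\ref{borel_prop_1} are symmetric in $\barsigma0\xi$ and $\barpi0\xi$), or by observing that $A$ is $\barpi0\xi$-hard iff $X\setminus A$ is $\barsigma0\xi$-hard and transferring across. The main obstacle is extracting exactly the right conclusion from Wadge's dichotomy: eliminating the branch $A\le_W [T]\setminus B$ requires the initial-segment property of $\barpi0\xi$ in the Wadge ordering, which in turn rests on closure under continuous preimages. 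This is the key technical ingredient that makes the whole argument go through.
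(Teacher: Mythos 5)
Your proof is correct, and it is exactly the argument the paper intends: the paper states this proposition without proof, but the surrounding discussion (Wadge's Lemma for Borel subsets of bodies of pruned trees, the fact that $\barsigma0\xi$ and $\barpi0\xi$ are initial segments of the Wadge ordering via closure under continuous preimages, and the observation that no $\Gamma$-hard set lies in $\widetilde\Gamma$ for non-self-dual $\Gamma$) supplies precisely the ingredients you assemble. Both directions, the reduction of completeness to hardness, and the dualization are handled correctly.
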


For arbitrary Polish spaces the following result is known.

\begin{proposition}
Let $X$ be a Polish space, let $\xi\ge1$ and let $A\subseteq X$.
If $A\in\barsigma0\xi\backslash\barpi0\xi$ then $A$ is $\barsigma0\xi-$complete. Similarly, if $A\in\barpi0\xi\backslash\barsigma0\xi$ then $A$ is $\barpi0\xi-$complete.  
\end{proposition}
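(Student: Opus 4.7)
The plan is to transfer the question to the zero-dimensional setting where the previous proposition already applies, by pulling $A$ back along the continuous bijection provided by Proposition \ref{universal_1}(4). Fix a closed subset $F\subseteq\mathcal N$ together with a continuous bijection $f:F\to X$. As a closed subspace of the zero-dimensional Polish space $\mathcal N$, $F$ is itself a zero-dimensional Polish space (by Proposition \ref{polish_basic}(2) together with the fact that zero-dimensionality is inherited by subspaces). Put $A''=f^{-1}(A)\subseteq F$; continuity of $f$ and Proposition \ref{borel_prop_1}(4) immediately yield $A''\in\barsigma0\xi(F)$.

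Granting that $A''\notin\barpi0\xi(F)$, the preceding proposition applied inside $F$ gives that $A''$ is $\barsigma0\xi$-complete there. Hence for every zero-dimensional Polish space $Z$ and every $B\in\barsigma0\xi(Z)$ there exists a continuous $h:Z\to F$ with $h^{-1}(A'')=B$. The composition $f\circ h:Z\to X$ is then continuous and
\begin{align*}
(f\circ h)^{-1}(A)=h^{-1}\bigl(f^{-1}(A)\bigr)=h^{-1}(A'')=B,
\end{align*}
so $B\le_W A$. Combined with $A\in\barsigma0\xi(X)$, this shows $A$ is $\barsigma0\xi$-complete. The $\barpi0\xi$ case of the statement follows at once by complementation: if $A\in\barpi0\xi(X)\backslash\barsigma0\xi(X)$, apply the established $\barsigma0\xi$ case to $X\backslash A$ and observe that any continuous reduction of $Z\backslash B$ to $X\backslash A$ is simultaneously a continuous reduction of $B$ to $A$.

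The main obstacle is the step verifying that $A''\notin\barpi0\xi(F)$. If instead $A''\in\barpi0\xi(F)$, we would have to derive the contradictory conclusion $A\in\barpi0\xi(X)$. Although $f$ is a Borel isomorphism by the Luzin--Suslin theorem \ref{borel_injective}, a continuous bijection between Polish spaces need not preserve the individual levels of the Borel hierarchy on pushforward, so this step requires genuine work. The natural strategy is to proceed by induction on $\xi$, exploiting that injectivity of $f$ makes it commute with countable unions and intersections of subsets of $F$, invoking the closure properties of $\barsigma0\beta$ and $\barpi0\beta$ for $\beta<\xi$ from Proposition \ref{borel_prop_1}(4) to descend through the hierarchy, and using that $f$ arises as the bijective part of the continuous surjection $\mathcal N\to X$ afforded by Proposition \ref{universal_1}(4). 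This hierarchy-preserving transfer is the technically most demanding part of the argument.
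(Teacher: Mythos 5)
The paper states this proposition without proof (it is quoted as ``known''), so there is no internal proof to compare against; judged on its own terms, your argument has a genuine gap at exactly the point you flag, and the gap cannot be closed by the route you sketch. The overall architecture --- pull $A$ back to a zero-dimensional space, apply the preceding proposition there, and push reductions forward by composing with the map --- is the right shape. But the step ``$A''=f^{-1}(A)\notin\barpi0\xi(F)$'' is not merely technically demanding: it is false for some legitimate choices of the pair $(F,f)$ supplied by Proposition \ref{universal_1}(4). Concretely, take $X=\real$, $A=(0,1)$ and $\xi=1$, so $A\in\barsigma01\backslash\barpi01$. Both $(0,1)$ and $\real\backslash(0,1)$ are Polish, so each is a continuous bijective image of a closed subset of $\mathcal N$; placing these two closed sets inside the disjoint clopen cylinders $\cylinder{(0)}$ and $\cylinder{(1)}$ and letting $f$ act piecewise yields a closed $F\subseteq\mathcal N$ and a continuous bijection $f:F\rightarrow\real$ for which $f^{-1}(A)$ is clopen in $F$, hence lies in $\barpi01(F)$ and is certainly not $\barsigma01$-complete there. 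With this $f$ your argument yields nothing, even though $A$ itself is $\barsigma01$-complete.

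The repair you propose is also unavailable: a continuous injection between Polish spaces does not push the Borel hierarchy forward level by level, injectivity and commutation with unions and intersections notwithstanding. The homeomorphism of $\mathcal N$ with the irrationals already sends the closed set $\mathcal N$ to a subset of $\real$ that is not even $\barsigma02$ (a Baire category argument shows the irrationals are not $\fsigma$); the Luzin--Suslin theorem \ref{borel_injective} guarantees only that images of Borel sets are Borel, with an unavoidable jump in level in general, so no induction on $\xi$ of the kind you describe can establish ``$A''\in\barpi0\xi(F)\implies A\in\barpi0\xi(X)$''. What your argument actually needs is the substantive content of the proposition itself: that for every $A\in\barsigma0\xi(X)\backslash\barpi0\xi(X)$ there \emph{exists} a continuous map $\phi$ from some zero-dimensional Polish space into $X$ with $\phi^{-1}(A)$ still outside $\barpi0\xi$ --- that is, failure of $\barpi0\xi$-ness is always witnessed on a zero-dimensional subspace. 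Once such a $\phi$ is produced, your composition argument finishes the proof, and your complementation step for the $\barpi0\xi$ case is fine; but producing $\phi$ is the hard part (it is the Saint Raymond/Louveau--Saint Raymond ingredient), and it is precisely what your proof leaves open.
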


The following example demonstrates obtaining an upper bound for a particular set.

\begin{example}[Set of normal sequences is $\barpi{0}{3}$] Let $a\in\twon$. The \emph{density} of $a$ is defined in the following way:
$$\sigma(a)=\lim_{n\rightarrow \omega} \frac{\left|N_1(\prefix an)\right|}{n}.$$

Since we identify elements of $\twon$ with subsets of natural numbers, we can also write about densities of subsets of natural numbers.

A real number $x\in(0,1)$ is said to be \emph{normal in base 2} if the density of its binary expansion is equal to $\frac 12$, i.e. $\sigma(\binr x)=\frac 12.$ Similarly, an element of the Cantor space is \emph{normal} if its density is equal to $\frac12$.

Denote the set of all normal sequences by $N\subseteq\twon$. We claim that $N$ is $\barpi03$.
\begin{proof}

For every $n\in\omega$, $a\in\fcantor$ and $\epsilon\in\mathbb Q\cap(0,1)$, define 
\begin{align*}
\delta_1(a)=\frac{\left|N_1(a)\right|}{n},\\
A_{n,\epsilon}=\left\{y:\in\twon:\left|\delta_1(\prefix yn)-\frac12\right|\le\epsilon\right\}.
\end{align*}

$A_{n,\epsilon}$ is the set of all sequences whose initial segments of length $n$ have densities lying within $\left[\frac12-\epsilon,\frac12+\epsilon\right]$. Then 
$$N=\bigcap_{\epsilon\in \mathbb Q\cap(0,1)}\bigcup_{n\in\omega}\bigcap_{m\ge n}A_{m,\epsilon}.$$

It is easy to see that every $A_{n,\epsilon}$ is a clopen, a finite union of cylinders in fact. A countable intersection of clopens is closed and thus $N$ is $\barpi03$:
\begin{align*}
N=\underbrace{\bigcap_{\epsilon\in \mathbb Q\cap(0,1)}\overbrace{\bigcup_{n\in\omega}\underbrace{\bigcap_{m\ge n}A_{m,\epsilon}}_{\barpi01}}^{\barsigma02}}_{\barpi03}.
\end{align*}

\end{proof}
\end{example}

\begin{remark}
For a given set $X\subseteq\zeroone$, let $D_X$ denote the set of sequences whose densities lie in $X$.
The previous example showed that the set $D_{\left\{\frac12\right\}}$ is $\barpi03$. What is the complexity of $D_X$ for other $X$?  Haseo Ki and Tom Linton proved in \cite{Has94} several general results related to that question. In particular:
\begin{enumerate}
\item for any nonempty $X\subseteq\zeroone$, $D_X$ is $\barpi03-$hard, and
\item for each nonempty $\barpi02$ set $X\subseteq\zeroone$, $D_X$ is $\barpi03-$complete. 
\end{enumerate} 
\end{remark}

\begin{example} Let's show that the range of $F_g$ (see \ref{ex1}) is $\barpi03$.

First note that $s\in F_g\left(\baire\right)$ if and only if 
\begin{align}\label{l1}
s\in \bigcap_{g(n_0,n_1)\in\omega}\bigcup_{N\in\omega}\bigcap_{i\in C_{n_0}(N)}B_i.
\end{align}

Where 
\begin{align*}
C_{n_0}(N)=&\left\{ n\in\omega: n>N \text{ and }\exists n_1~g(n_0,n_1)=n\right\}\text{ and}\\
B_i=&\left\{y\in\twon:y(i)=0\right\}.
\end{align*}

(The idea behind the (\ref{l1}) is that any $n_0-$th element of $x$ can only affect values of a finite number of $F_g(x)(i)$ - those with $i\in C_{n_0}(N)$ for some $N$.)

Since every $B_i$ is a clopen set, it follows that $F_g\left(\baire\right)\in\barpi03.$

\end{example}

\begin{example} For $1\le p\le \infty$ the set $\ell_p $ is a $\barsigma02$ subset of $\real^\omega$. There are two cases to consider. First, suppose $1\le p<\infty$. Then we have
$$\ell_p=\bigcup_{q\in\mathbb Q}K_q$$ where $$K_q=\left\{\seq{x}n\in \real^w:\sum_{n\in\omega}|x_n|^p\le q\right\}.$$
Now all we need to show is that every $K_q$ is a closed set. 

Fix $q\in\mathbb Q$ and let $\seq yn\in \real^w\backslash\ K_q$. Then there exists $N\in\omega $ such that $\sum_{n\le N}|y_n|^p>q$. 

\noindent For an $\epsilon>0$, consider a basic open neighbourhood of $\seq yn$, $$O_{y,\epsilon}=\prod_{n\in\omega}Y_{y,\epsilon},$$ where 
\begin{align*}
Y_{y,\epsilon}=\begin{cases}
(y_n-\epsilon,y_n+\epsilon)& \text{ for }n\le N,\\
\real&\text{ otherwise.}
\end{cases}
\end{align*} 

It is clear, that for a sufficiently small $\epsilon>0$, $O_{y,\epsilon}\subseteq \real^\omega\backslash K_q$. Thus $K_q$ is a closed set.

Now consider the case of $\ell_\infty$. Similarly, we have
$$\ell_\infty=\bigcup_{q\in\mathbb Q}K_{q,\infty},$$ where $$K_{q,\infty}=\left\{\seq{x}n\in \real^w:\sup\{|x_n|\}\le q\right\}.$$
Again, we need to show that every $K_{q,\infty}$ is closed. 

Fix some $q\in\mathbb Q$ and let $y=\seq yn \in \real^\omega\backslash K_{q,\infty}$. We have $\sup_{n\in\omega}|y_n|>q$ and for some $N\in\omega$, $|y_N|>q$. For some $\epsilon>0$, there is an open ball $B(y;\epsilon)$, with $y$ in it, all points of which are greater then $q$. Consider the following basic open set
$$O_{y,\infty}=\prod_{n\in\omega}Y_{n,\infty},$$
where 
\begin{align*} 
Y_{n,\infty}=\begin{cases}
B(y,\epsilon)& \text{ for }n=N,\\
\real& \text{otherwise.}
\end{cases}
\end{align*}
It is clear that $O_{y,\infty}\subseteq \real^\omega\backslash K_{q,\infty}$. Thus $K_{q,\infty}$ is closed.

\end{example}

\begin{example}
Let's show that $c_0$ (the set of all real sequences whose limit is zero) is a $\barpi03$ subset of $\real^\omega$.
We have 
\begin{align*}
c_0=\bigcap_{q\in\mathbb Q}\bigcup_{N\in\omega} B_{q,N},
\end{align*}
where $$B_{q,N}=\left\{\seq yn\in\real^\omega: \forall i\ge N~~|y_i|\le q \right\}.$$

We need to show that every $B_{q,N}$ is closed. To this end, fix $q\in\mathbb Q,N\in \omega$ and let $y=\seq yn\in\real^\omega\backslash B_{q,N}$. Then for some $i\ge N$ we have $|y_i|>q$. Let $\epsilon =\frac{ |y_i|-q}2$ and consider the following open neighbourhood of $y$: $O_y=\prod_{n\in \omega} Y_n,$ where 
\begin{align*}
Y_n=\begin{cases}
B(y_i, \epsilon)& \text{where }n=i,\\
\real &\ \text{otherwise}.
\end{cases}
\end{align*}

It is clear that $O_y\subseteq \real^\omega\backslash B_{q,N}$ and therefore $B_{q,N}$ is closed.
\end{example}

\begin{example}\label{ex2}

It is possible to view a tree $T\subseteq\fbaire$ as an element of $2^{\fbaire}$ by identifying it with its characteristic function.
Let $\Tr\subseteq 2^{\fbaire}$ denote the set of trees and let $\PTr\subseteq2^{\fbaire}$  denote the set of pruned trees. Let's show that if $2^{\fbaire}$ is given the product topology (so that it is homeomorphic to the Cantor space), then $\Tr\in \barpi01$ and $\PTr\in\barpi02$.

First, let's consider the set of trees, $\Tr$. To show that it is closed, it is sufficient to demonstrate that its complement is open. Let $f\in 2^{\fbaire}\backslash \Tr$. Then there exists $s\in\fbaire$ and $t\subseteq s$, such that $f(s)=1$ but $f(t)=0$. The set $$O_{t,s}=\left\{g\in 2^{\fbaire}:g(s)=1 \land g(t)=0\right\}$$
is a basic open set and a neighbourhood of $f$. Furthermore, $O_{t,s}\subseteq 2^{\fbaire}\backslash \Tr$, meaning that the complement of $\Tr$ is open and  $\Tr$ is closed. (Note that any set of form $\left\{g\in2^{\fbaire}: \forall_{0\le i\le N}~ g(x_i)=y_i \right\}$ where $N\in\omega$ and all $x_i,y_i$ are fixed is a basic open set. )

Now let's consider the set of pruned trees, $\PTr$. We have:
\begin{align*}
s\in\PTr\iff \forall a\in\fbaire~~s(a)=1\implies
\begin{cases}
\exists n\in\omega~~s(a^\frown n)=1\text{ and}\\
\forall t\subseteq a ~~ s(t)=1.
\end{cases}
\end{align*}

Equivalently, $$\PTr=\bigcap_{a\in\fbaire}P_a,$$
where 
\begin{align*}
P_a=A_a\cup \bigcup_{n\in\omega} B_{a,n},\\
A_a=\left\{s\in 2^{\fbaire}:s(a)=0\right\},\\
B_{a,n}=\left\{s\in 2^{\fbaire}:s(a)=1,s(a^\frown n)=1, \forall t\subseteq a~~s(t)=1 \right\}.
\end{align*}

Now note, that every $A_a$ is a basic open set, every $B_{a,n}$ is a basic open set, hence every $P_a$ is open and $\PTr$ is $\barpi02$.

\end{example}

\paragraph{Borel subsets of $\real^n$}~

\noindent Since we know that for any $\real^n$ topological space, 
\begin{enumerate}
\item[*]the subsets with the BP form a $\sigma-$algebra,

\item[*]the Lebesgue measurable subsets form a $\sigma-$algebra and
\item[*]all of the above $\sigma-$algebras include open sets,
\end{enumerate}
it follows that all Borel subsets (of $\real^n$) have the BP and all are Lebesgue measurable.

It is not so easy to show that all Borel sets have the perfect set property. The difficulty arises from the fact that a set having the perfect set property does not imply that its complement does, and the sets with the perfect set property do not form a $\sigma-$algebra. This difficulty had been overcome by Pavel Aleksandrov, an early member of Luzin's seminar.

\begin{proposition} \label{borel_fact_1}
For any $n>0$, Borel subsets of $\real^n$ have all three regularity properties.
\end{proposition}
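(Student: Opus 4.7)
The first two regularity properties are essentially immediate from what we already have. Since the collection of subsets of $\real^n$ having the Baire property is a $\sigma$-algebra that contains the open sets, and likewise the collection of Lebesgue measurable subsets is a $\sigma$-algebra containing the open sets, in each case this $\sigma$-algebra contains $\mathcal B(\real^n)$. So the whole weight of the proof lies in establishing the perfect set property for uncountable Borel subsets of $\real^n$, which is exactly where, as the preceding paragraph points out, we cannot argue by a $\sigma$-algebra closure argument.

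My plan is to reduce the statement to the Cantor--Bendixson theorem for Polish spaces by using the topology-refinement result, Proposition \ref{borel_clopen}. Given an uncountable Borel set $B\subseteq\real^n$ with the usual topology $\mathcal T$, I would pick a finer Polish topology $\mathcal T_B\supseteq\mathcal T$ on $\real^n$ with $\mathcal B(\mathcal T_B)=\mathcal B(\mathcal T)$ in which $B$ is clopen. Then $B$ inherits a Polish topology from $(\real^n,\mathcal T_B)$ (closed subspaces of Polish spaces are Polish), and being uncountable it contains, by the Polish-space version of Cantor--Bendixson stated earlier, a nonempty perfect subset $P$ in $(\real^n,\mathcal T_B)$. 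As a nonempty perfect Polish space, $P$ contains a $\mathcal T_B$-homeomorphic copy $K$ of the Cantor space $\mathcal C$.

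The main obstacle is that a set which is perfect, or even just closed, in the refined topology $\mathcal T_B$ need not be perfect in the original topology $\mathcal T$, because $\mathcal T_B$ has more closed sets and different notions of accumulation. To bridge this gap I would use compactness: the copy $K$ of $\mathcal C$ is $\mathcal T_B$-compact, and since the identity map $(\real^n,\mathcal T_B)\to(\real^n,\mathcal T)$ is continuous (as $\mathcal T\subseteq\mathcal T_B$), $K$ is also $\mathcal T$-compact, hence $\mathcal T$-closed. Moreover the restriction of this identity is a continuous bijection from the compact Hausdorff space $(K,\mathcal T_B|_K)$ onto $(K,\mathcal T|_K)$, hence a homeomorphism. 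Therefore $K\subseteq B$ is a homeomorphic copy of $\mathcal C$ in the original topology of $\real^n$; in particular it is nonempty, closed, and has no isolated points, so it is a perfect subset of $B$ in $\real^n$. This yields the perfect set property and completes the proof.
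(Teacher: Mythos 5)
Your proposal is correct, and for the only nontrivial part of the statement it goes further than the paper does. The Baire-property and Lebesgue-measurability halves coincide exactly with the paper's own argument, namely the $\sigma$-algebra closure observation made in the paragraph immediately preceding the proposition. For the perfect set property, however, the paper offers no proof at all: it merely records that the $\sigma$-algebra argument breaks down and attributes the result to Aleksandrov. You supply an actual argument, and it is a standard and correct one: refine the topology via Proposition \ref{borel_clopen} so that the Borel set becomes clopen, apply the Polish-space Cantor--Bendixson theorem inside the refined topology, extract a copy of the Cantor space, and use compactness to transport it back to the original topology (a compact set for the finer topology is compact, hence closed, for the coarser one, and the identity restricted to it is a continuous bijection from a compact space onto a Hausdorff space, hence a homeomorphism between the two subspace topologies). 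The compactness step is exactly the right move; the weaker idea of transporting the perfect set $P$ itself would fail, since $P$ need not be closed in the original topology and its closure could leave $B$, and you correctly identified and circumvented this. The only point resting on something not explicitly stated in the paper is the fact that every nonempty perfect Polish space contains a homeomorphic copy of $\mathcal C$; this is standard (proved by a Cantor scheme of open sets, and it is what underlies the paper's own claim that perfect Polish spaces have cardinality $\mathfrak c$), but if you want the write-up to be self-contained relative to this paper you should state it explicitly or give a reference.
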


\noindent 

However, since we are working primarily in Polish spaces, we have the following results about Borel subsets of Polish spaces.

\begin{proposition}~ Let $A\subseteq X$ be Borel for some Polish space $X$. Then:
\begin{enumerate} 
\item $A$ has the perfect set property.
\item $A$ has the Baire property.
\item $A$ is universally measurable. 
\end{enumerate}
\end{proposition}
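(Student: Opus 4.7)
The plan is to address each regularity property in turn, with (1) the perfect set property being the only one requiring real work.

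For (2) and (3), both follow from $\sigma$-algebra arguments combined with the fact that $\mathcal B(X)$ is by definition the smallest $\sigma$-algebra containing all open sets. The collection of subsets of $X$ with the Baire property forms a $\sigma$-algebra containing the open sets (proved earlier in the paper), so it contains $\mathcal B(X)$. Similarly, for any $\sigma$-finite Borel measure $\mu$ on $X$, the $\mu$-measurable sets form a $\sigma$-algebra containing the open sets, hence contain $\mathcal B(X)$; since this holds for every such $\mu$, every Borel set is universally measurable.

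For (1), I would first apply Proposition \ref{borel_clopen} to refine $\mathcal T$ to a Polish topology $\mathcal T_A \supseteq \mathcal T$ with $\mathcal B(\mathcal T_A) = \mathcal B(\mathcal T)$ in which $A$ is clopen. If $A$ is countable, there is nothing to prove. Otherwise $A$, with the subspace topology inherited from $\mathcal T_A$, is an uncountable Polish space (being a closed subspace of a Polish space), so by the Polish-space version of the Cantor--Bendixson theorem it contains a nonempty perfect set $P$. Any nonempty perfect Polish space contains a topological copy of the Cantor space $\mathcal C$, so I obtain a continuous injection $\phi : \mathcal C \to (X, \mathcal T_A)$ whose image is contained in $A$.

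The final step is to transfer this back to the original topology. Since $\mathcal T_A \supseteq \mathcal T$, the identity map $(X, \mathcal T_A) \to (X, \mathcal T)$ is continuous, so $\phi$ remains continuous when viewed as a map into $(X, \mathcal T)$; as $\mathcal C$ is compact and $(X, \mathcal T)$ Hausdorff, $\phi$ is automatically a homeomorphism onto its image, which is therefore closed in $(X, \mathcal T)$ and has no isolated points. This yields the desired perfect subset of $A$ inside the original Polish space. The main obstacle I anticipate is exactly this transfer step: a perfect Polish subspace that is closed in $\mathcal T_A$ need not remain closed in the coarser topology $\mathcal T$, which is why I pass through an embedded copy of $\mathcal C$ rather than trying to transport $P$ directly --- the compactness of $\mathcal C$ is what makes the argument go through.
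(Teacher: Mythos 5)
Your proposal is correct. The paper states this proposition without proof, so there is no argument of the author's to measure you against directly; the closest it comes is the discussion preceding Proposition \ref{borel_fact_1}, where the Baire property and Lebesgue measurability of Borel subsets of $\real^n$ are obtained by exactly your $\sigma$-algebra argument, and the perfect set property is merely attributed historically to Aleksandrov. Your treatment of (2) and (3) therefore coincides with what the paper sketches. For (1) you supply the standard modern proof: refine the topology via Proposition \ref{borel_clopen} so that $A$ becomes clopen, apply Cantor--Bendixson in the refined Polish topology to extract a nonempty perfect set, and then embed a copy of $\mathcal C$ into it. You also correctly identify and resolve the one genuinely delicate point, namely that a set closed in the finer topology $\mathcal T_A$ need not be closed in $\mathcal T$; routing the construction through a compact copy of the Cantor space, whose image under the continuous identity map $(X,\mathcal T_A)\rightarrow(X,\mathcal T)$ is automatically closed and without isolated points in the original topology, is precisely the right fix. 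The argument is complete and is in fact more self-contained than anything the paper provides for this statement.
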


The following proposition introduces the concept of a \emph{universal $\barsigma{0}{\alpha}$ set} and asserts the existence of such sets for uncountable Polish spaces.

\begin{proposition}
Let $X$ be an uncountable Polish space. Then for every $\alpha>0$, there exists a set $U\subseteq X^2$ such that $U$ is $\barsigma{0}{\alpha}$ in $X^2$, and   for every $\barsigma{0}{\alpha}$ set $A$ in $X$ there exists some $a\in X$ such that $$A=\{x:(x,a)\in U\}.$$
$U$ is called a universal $\barsigma{0}{\alpha}$ set.
\end{proposition}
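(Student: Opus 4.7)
The plan is to prove this by transfinite induction on $\alpha$, first for the auxiliary statement that there is a $\barsigma0\alpha$ set $V \subseteq \mathcal C \times X$ with $\{V_y : y \in \mathcal C\} = \barsigma0\alpha(X)$, and then to transfer the parameter from $\mathcal C$ to $X$ using the fact that an uncountable Polish space contains a topological copy of the Cantor space.

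For $\alpha = 1$, fix a countable basis $(B_n)_{n \in \omega}$ of $X$ and put
\begin{align*}
V = \bigcup_{n \in \omega} \{y \in \mathcal C : y(n) = 1\} \times B_n;
\end{align*}
this is open, and every open set $\bigcup_{n \in S} B_n \subseteq X$ equals $V_{\chi_S}$. For $\alpha > 1$, enumerate $(\beta_n)_{n \in \omega}$ below $\alpha$ so that every ordinal $\beta < \alpha$ appears, and by the inductive hypothesis choose $W^n \subseteq \mathcal C \times X$ universal $\barpi0{\beta_n}$ (i.e., the complement of a universal $\barsigma0{\beta_n}$ set). Using a homeomorphism $\mathcal C \cong \mathcal C^\omega$ written $y \mapsto (y^n)_n$, set
\begin{align*}
V = \{(y,x) \in \mathcal C \times X : \exists n \, (y^n, x) \in W^n\}.
\end{align*}
Each summand is the continuous preimage of $W^n$ and is therefore $\barpi0{\beta_n}$ by the closure property noted after Proposition \ref{borel_prop_1}, so $V$ is $\barsigma0\alpha$. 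Given $A = \bigcup_n A_n \in \barsigma0\alpha(X)$ with $A_n \in \barpi0{\beta_n}$, pick $a_n \in \mathcal C$ with $A_n = W^n_{a_n}$ and any $y \in \mathcal C$ with $y^n = a_n$ for all $n$; then $V_y = A$.

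To pass from $\mathcal C \times X$ to $X \times X$, use that the uncountable Polish space $X$ admits a continuous injection $\phi : \mathcal C \to X$ (a standard consequence of Cantor--Bendixson and the topological characterisation of $\mathcal C$); its image $C := \phi(\mathcal C)$ is compact and hence closed in the Hausdorff space $X$. The homeomorphism $F : X \times C \to X \times \mathcal C$, $(x,a) \mapsto (x, \phi^{-1}(a))$, pulls $V$ back to a $\barsigma0\alpha$ set $S \subseteq X \times C$. The key technical step is the \emph{extension lemma}: every $\barsigma0\alpha$ set in a closed subspace $Z$ of a Polish space $Y$ is of the form $\tilde S \cap Z$ for some $\barsigma0\alpha$ set $\tilde S$ in $Y$. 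I would prove this by parallel transfinite induction: at $\alpha = 1$ it is exactly the definition of the subspace topology; for $\alpha > 1$, write the given set as $\bigcup_n(Z \setminus R_n)$ with $R_n \in \barsigma0{\beta_n}(Z)$, extend each $R_n$ to $\tilde R_n \in \barsigma0{\beta_n}(Y)$ using the inductive hypothesis, and take $\tilde S = \bigcup_n(Y \setminus \tilde R_n)$. Applied to $S$, the lemma yields the sought $U \subseteq X \times X$, and universality follows from $U_{\phi(y)} = S_{\phi(y)} = V_y$: every $A \in \barsigma0\alpha(X)$ is some $V_y$, hence some $U_{\phi(y)}$.

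The main obstacle is the extension lemma. The naive hope that every $\barsigma0\alpha$ subset of a closed subspace is $\barsigma0\alpha$ in the ambient space fails already at $\alpha = 1$: a relatively open set is $O \cap (X \times C)$, which lies in $\bardelta02$ but generally not in $\barsigma01$ of $X \times X$ when $C$ has empty interior. One therefore extends by passing from $O \cap (X \times C)$ back to the ambient $O$, and the induction has to be set up carefully so that this extension step iterates through all Borel levels without loss. Once this lemma is established, everything else is routine bookkeeping.
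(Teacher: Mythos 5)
The paper states this proposition without proof, so there is nothing to compare against; judged on its own, your argument is the standard one (a Cantor-space-parametrised universal set built by transfinite recursion, then transferred into $X$ via an embedded copy of $\mathcal C$ and a relativisation lemma) and it is essentially correct. The extension lemma you isolate is indeed the real content of the transfer step, and your inductive proof of it is right: relatively open sets are traces of open sets by definition of the subspace topology, and the recursion through complements and countable unions preserves the trace property level by level. (The coordinate flip between $\mathcal C\times X$ and $X\times\mathcal C$ is harmless but should be made explicit, since your $V$ lives in $\mathcal C\times X$ while $F$ lands in $X\times\mathcal C$.) The one place that needs tightening is the inductive step for $V$: it is not enough that every $\beta<\alpha$ merely \emph{appears} among the $\beta_n$. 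Given $A=\bigcup_n A_n$ with $A_n\in\barpi0{\gamma_n}$, $\gamma_n<\alpha$, you must re-index the decomposition so that the $n$-th piece lies in $\barpi0{\beta_n}$ for your fixed enumeration; this uses $\barpi0\gamma\subseteq\barpi0\beta$ for $\gamma\le\beta$ together with padding by $\emptyset$, and it requires that each $\gamma<\alpha$ be dominated by $\beta_n$ for infinitely many $n$. For limit $\alpha$ this is automatic once every ordinal appears (there are infinitely many ordinals between any $\gamma$ and $\alpha$), but for a successor $\alpha=\gamma+1$ an enumeration listing $\gamma$ only once would break the argument, since a decomposition with infinitely many pieces properly $\barpi0\gamma$ could not be matched. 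The simplest fix is to take $\beta_n=\gamma$ for all $n$ in the successor case, or in general to demand that each ordinal below $\alpha$ appear infinitely often. With that adjustment the proof goes through.
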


\subsection{Borel spaces}

Let $X$ be a topological space. The measurable space $(X, \mathcal B (X))$ is called the \emph{Borel space} of $X$. 

The following proposition characterizes the Borel spaces of separable metric spaces.

\begin{proposition}
Let $(X,\mathcal S)$ be a measurable space. Then the following are equivalent:
\begin{enumerate}
\item $(X,\mathcal S)$ is isomorphic to some $(Y,\mathcal B(Y))$, where $Y$ is a separable metric space;
\item $(X,\mathcal S)$ is isomorphic to some $(Y,\mathcal B(Y))$, where $Y\subseteq\mathcal C$ (and hence to some subspace of any uncountable Polish space - via \ref{universal_1}(3));
\item $(X,\mathcal S)$ is countably generated and \emph{separates points} (for all distinct points $x,y\in X$, there exists $A\in\mathcal S$ with $x\in A,y\not\in A$).
\end{enumerate}
\end{proposition}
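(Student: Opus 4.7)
The plan is to prove the chain $(2)\Rightarrow(1)\Rightarrow(3)\Rightarrow(2)$, with most of the work concentrated in the last implication. The implication $(2)\Rightarrow(1)$ is immediate, since any subspace of $\mathcal C$ is itself a separable metric space.

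For $(1)\Rightarrow(3)$, I would transfer the statement across the isomorphism and work in $(Y,\mathcal B(Y))$ where $Y$ is separable metric. Pick a countable basis $\{U_n\}_{n\in\omega}$ for $Y$; then $\mathcal B(Y)=\sigma(\{U_n\})$, so $\mathcal B(Y)$ is countably generated. Since $Y$ is Hausdorff, for any two distinct points $x,y\in Y$ there is a basic open $U_n$ containing exactly one of them, so $\mathcal B(Y)$ separates points.

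The main content is $(3)\Rightarrow(2)$. Let $\{A_n:n\in\omega\}$ generate $\mathcal S$ and define $f:X\to\mathcal C$ by $f(x)(n)=\chi_{A_n}(x)$. I would verify the following in order. First, $f$ is injective: introduce the equivalence relation $x\sim y$ iff $(\forall n)\,x\in A_n\Leftrightarrow y\in A_n$, and observe that $\{B\in\mathcal S:B\text{ is a union of }\sim\text{-classes}\}$ is a $\sigma$-algebra containing every $A_n$, hence equal to $\mathcal S$; thus if some $B\in\mathcal S$ separates $x$ and $y$, one of the generators already does, and point separation forces $f$ injective. Second, $f$ is $(\mathcal S,\mathcal B(\mathcal C))$-measurable: the subbasic clopen $\{s\in\mathcal C:s(n)=1\}$ pulls back to $A_n\in\mathcal S$, so preimages of all Borel sets lie in $\mathcal S$. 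Third, putting $Y=f(X)\subseteq\mathcal C$, the inverse $f^{-1}:Y\to X$ is measurable: since $f$ is injective, the collection $\mathcal D=\{A\in\mathcal S:f(A)\in\mathcal B(Y)\}$ is closed under complements and countable unions (images commute with these under injectivity), and it contains each generator because $f(A_n)=Y\cap\{s:s(n)=1\}$ is relatively clopen in $Y$; hence $\mathcal D=\mathcal S$. Combining these three facts, $f$ is a measurable isomorphism of $(X,\mathcal S)$ with $(Y,\mathcal B(Y))$.

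The step I expect to be the main obstacle is the last one: showing that $f$ sends $\mathcal S$ \emph{onto} $\mathcal B(Y)$, rather than merely into it. The care needed is that injectivity of $f$ is essential for the monotone-class style argument to push through (so that $f$ commutes with complementation on the \emph{image} side), and that one must work relative to the subspace $\sigma$-algebra $\mathcal B(Y)$, not $\mathcal B(\mathcal C)$, since $Y$ itself need not be Borel in $\mathcal C$. Once this is handled cleanly, the equivalence is complete.
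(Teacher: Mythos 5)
Your proof is correct, and it is the standard argument (the marker map $x\mapsto(\chi_{A_n}(x))_{n\in\omega}$, with the key observation that injectivity lets the image map commute with complements so that $f(\mathcal S)=\mathcal B(Y)$); note that the paper itself states this proposition without proof, deferring to its references. All three steps, including the delicate point you flag about working with the subspace $\sigma$-algebra $\mathcal B(Y)$ rather than $\mathcal B(\mathcal C)$, check out.
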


In this context all measurable functions are called \emph{Borel functions}. We call a function $f$ a \emph{Borel isomorphism} if it is a bijection and both $f,f^{-1}$ are Borel functions. \emph{Borel automorphisms} are defined similarly. 

The following important result concerning Borel real-valued functions is due to Lebesgue and Hausdorff.
\begin{proposition}
Let $X$ be a metrizable space. The class of Borel functions $f:X\rightarrow \real$ is the smallest class of functions from $X$ into $\real$ which contains all continuous functions and is closed under taking pointwise limits of sequences of functions.
\end{proposition}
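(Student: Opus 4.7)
The plan is to prove a double inclusion between the class of Borel real-valued functions on $X$ and the smallest class $\mathcal{L}$ of functions $X \to \mathbb{R}$ containing all continuous functions and closed under pointwise limits of sequences. The inclusion $\mathcal{L} \subseteq \{\text{Borel}\}$ is immediate: every continuous function is Borel, and Borel functions are closed under pointwise limits because
\[
\{x : \lim_n f_n(x) > a\} = \bigcup_k \bigcup_N \bigcap_{n \geq N} \{x : f_n(x) > a + 1/k\}
\]
is Borel whenever each $f_n$ is. Minimality of $\mathcal{L}$ delivers this direction.

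For the converse, I would first establish that $\mathcal{L}$ is closed under finite sums, scalar multiplication, and the $\max$/$\min$ of finitely many functions. Each such claim follows from a two-step transfinite argument: for a binary operation $\star$ preserving pointwise convergence in each slot, fixing a continuous $g$, the set $\{f : f \star g \in \mathcal{L}\}$ contains all continuous functions and is closed under pointwise limits, hence equals $\mathcal{L}$; one then varies $g$ over $\mathcal{L}$ by the same argument. Next, using metrizability of $X$, for any closed $F \subseteq X$ with a compatible metric $d$ the continuous functions $f_n(x) = \max(0, 1 - n \cdot d(x, F))$ converge pointwise to $\chi_F$, so $\chi_F \in \mathcal{L}$. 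By transfinite induction on the Borel hierarchy, $\chi_A \in \mathcal{L}$ for every Borel $A$: at level $\barsigma{0}{\alpha}$, if $A = \bigcup_n A_n$ with each $A_n$ in some $\barpi{0}{\beta}$ for $\beta < \alpha$, then $\chi_A = \lim_N \max_{n \leq N} \chi_{A_n}$, and the $\barpi{0}{\alpha}$ case is dual with $\min$ in place of $\max$.

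Finally, for a general Borel $f : X \to \mathbb{R}$, I would approximate pointwise by simple Borel functions. Setting
\[
s_n(x) = 2^{-n} \lfloor 2^n \max(-n, \min(n, f(x))) \rfloor
\]
yields a function taking only finitely many values; each level set $\{s_n = c\}$ is Borel, so $s_n$ is a finite linear combination of characteristic functions of Borel sets and hence lies in $\mathcal{L}$, and $s_n(x) \to f(x)$ for every $x \in X$. Closure of $\mathcal{L}$ under pointwise limits then gives $f \in \mathcal{L}$, completing the proof. The most delicate portion is the two-level transfinite induction establishing algebraic closure of $\mathcal{L}$; once that is in hand, the induction on the Borel hierarchy and the simple-function approximation are comparatively routine.
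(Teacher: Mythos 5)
The paper states this proposition without proof, citing it as a classical result of Lebesgue and Hausdorff, so there is no in-text argument to compare against; your proposal has to stand on its own, and it does. The easy inclusion is handled correctly (the displayed identity for $\{x:\lim_n f_n(x)>a\}$ is valid for everywhere-convergent sequences), and the converse follows the standard Lebesgue--Hausdorff strategy: the ``fix one argument, induct on the other, then swap'' device for showing the smallest such class $\mathcal L$ is closed under $+$, scalar multiples, $\max$ and $\min$ is exactly right (each of these operations preserves both continuity and pointwise convergence in each slot separately, which is all the argument needs); the functions $\max(0,1-n\,d(x,F))$ do converge pointwise to $\chi_F$ for closed $F$ precisely because $F$ closed forces $d(x,F)>0$ off $F$, which is where metrizability enters; the transfinite induction via $\chi_{\bigcup_{n\le N}A_n}=\max_{n\le N}\chi_{A_n}$ and its dual is sound; and the dyadic truncation $s_n$ is a correct pointwise approximation of an arbitrary Borel $f$ by finite linear combinations of Borel characteristic functions. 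One small point you should make explicit: your induction clause for $\barsigma0\alpha$ sets as countable unions of lower $\barpi0\beta$ sets does not literally cover the bottom level $\barsigma01$ (the open sets), since there is no $\beta<1$; this is harmless because in a metrizable space every open set is $\fsigma$, so $\chi_U$ is already a pointwise increasing limit of $\chi$'s of closed sets (alternatively, $\chi_U=1-\chi_{X\backslash U}$ and $\mathcal L$ is closed under affine maps). You should also note, if you want the argument fully self-contained, that the transfinite hierarchy $\bigcup_{\alpha<\omega_1}\barsigma0\alpha$ exhausts the Borel $\sigma$-algebra in an arbitrary metrizable space by the usual regularity-of-$\omega_1$ argument, since the paper only records that fact for Polish spaces. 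With those two remarks added, the proof is complete.
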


In fact, a more general result, related to the Baire hierarchy, is known. However, the Baire hierarchy is not covered in this paper.

\begin{definition}
A measurable space $(X,\mathcal S)$ is a \emph{standard Borel space} if it is isomorphic to the Borel space $(Y,\mathcal B(Y))$ for some Polish space $Y$.
\end{definition}

It is easy to see that products and sums of sequences of standard Borel spaces are also standard Borel spaces.

\begin{proposition}
Let $X$ be a standard Polish space and let $B\subseteq X$ be Borel. Then the subspace $B$ with the inherited Borel structure is standard Borel.
\end{proposition}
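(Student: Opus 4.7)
The plan is to reduce the problem to the already-established fact that a closed subspace of a Polish space is Polish (Proposition \ref{polish_basic}(2)), by using the topology-refinement theorem (Proposition \ref{borel_clopen}) to turn the Borel set $B$ into a clopen set without disturbing the Borel structure.

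First, I would fix a Polish topology $\mathcal{T}$ on $X$ realizing its standard Borel structure, so that $\mathcal{B}(X) = \mathcal{B}(\mathcal{T})$. Applying Proposition \ref{borel_clopen} to the Borel set $B$, I obtain a finer Polish topology $\mathcal{T}_B \supseteq \mathcal{T}$ on $X$ with the same Borel $\sigma$-algebra, $\mathcal{B}(\mathcal{T}) = \mathcal{B}(\mathcal{T}_B)$, and with the property that $B$ is clopen in $(X, \mathcal{T}_B)$. In particular, $B$ is closed in the Polish space $(X, \mathcal{T}_B)$, so by Proposition \ref{polish_basic}(2) the subspace $(B, \mathcal{T}_B \!\upharpoonright\! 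B)$ is itself a Polish space.

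Next I would verify that the Borel structure of this Polish space coincides with the inherited Borel structure on $B$. The inherited structure is $\mathcal{B}_B := \{A \cap B : A \in \mathcal{B}(X)\}$. On the other hand, the Borel $\sigma$-algebra of the Polish subspace $(B, \mathcal{T}_B\!\upharpoonright\! B)$ is
\begin{align*}
\mathcal{B}(\mathcal{T}_B\!\upharpoonright\! B) \;=\; \{A \cap B : A \in \mathcal{B}(\mathcal{T}_B)\} \;=\; \{A \cap B : A \in \mathcal{B}(\mathcal{T})\} \;=\; \mathcal{B}_B,
\end{align*}
where the middle equality uses $\mathcal{B}(\mathcal{T}_B) = \mathcal{B}(\mathcal{T}) = \mathcal{B}(X)$ from Proposition \ref{borel_clopen}. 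Hence the identity map exhibits $(B, \mathcal{B}_B)$ as the Borel space of a Polish space, which is precisely the definition of a standard Borel space.

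There is no real obstacle; the whole argument rests on Proposition \ref{borel_clopen}, which does the heavy lifting by making $B$ clopen (hence closed) in a Polish refinement. The only mild care needed is to check that the two $\sigma$-algebras on $B$ (the inherited one from $X$ and the intrinsic Borel one from the refined subspace topology) coincide, which is immediate from the equality of the ambient Borel $\sigma$-algebras.
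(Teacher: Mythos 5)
Your proof is correct, and it uses exactly the technique the paper itself relies on for this kind of statement (the paper states this proposition without proof, but carries out the identical argument in the example following Proposition \ref{luzin_scheme_1}: refine the topology via Proposition \ref{borel_clopen} so that $B$ becomes clopen, then invoke Proposition \ref{polish_basic}(2) to get a Polish subspace). Your additional check that the subspace Borel $\sigma$-algebra equals the trace $\sigma$-algebra $\{A\cap B : A\in\mathcal B(X)\}$ is the right point to verify and is handled correctly.
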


There is a particularly important example of a standard Borel space.
\begin{definition}[Effros Borel space]
Let $(X,\mathcal T)$ be a topological space. Endow the set of closed subsets of $X$, $F(X)$, with the $\sigma-$algebra generated by the collection $A$ of sets of form
\begin{align}\label{effros_generators}
\left\{F\in F(X) : F\cap U\neq\emptyset\right\}
\end{align}
where $U$ varies over open subsets of $X$.

The Borel space $(F(X),\sigma(A))$ is called the \emph{Effros Borel space} of $F(X)$.
\end{definition}

\begin{proposition}
If $X$ is Polish, the Effros Borel space of $F(X)$ is standard.
\end{proposition}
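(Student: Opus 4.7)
The plan is to embed $(F(X), \mathcal{E})$, where $\mathcal{E}$ denotes the Effros $\sigma$-algebra, injectively into a concrete standard Borel space in a way that pushes $\mathcal{E}$ exactly onto the Borel structure of a Borel subset. The singleton $\{\emptyset\}$ is the complement of the Effros generator $\{F : F \cap X \neq \emptyset\}$, so it lies in $\mathcal{E}$ and can be treated as an isolated atom at the end. Fix a complete compatible metric $d$ on $X$ with $d \leq 1$ and a countable dense subset $\{x_n\}_{n \in \omega}$, and define
$$\psi : F(X)\setminus\{\emptyset\} \to [0,1]^\omega, \qquad \psi(F)(n) = d(x_n, F).$$
The map $\psi$ is injective: if $x \in F_1 \setminus F_2$ then $d(x, F_2) > 0$ because $F_2$ is closed, so choosing $x_n$ sufficiently close to $x$ yields $\psi(F_1)(n) < \psi(F_2)(n)$. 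It is also Borel with respect to $\mathcal{E}$: the preimage of $\{(t_k) : t_n < r\}$ is the Effros generator $\{F : F \cap B(x_n, r) \neq \emptyset\}$, and the preimage of $\{(t_k) : t_n > r\}$ equals $\bigcup_{\varepsilon \in \mathbb{Q}^+}\{F : F \cap B(x_n, r+\varepsilon) = \emptyset\}$, a countable union of complements of generators.

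The main obstacle is to show that the image of $\psi$ is a Borel subset of $[0,1]^\omega$. The key characterization to aim for is
$$\operatorname{im}(\psi) = \left\{(r_n) \in [0,1]^\omega : (\forall n)\ r_n = \inf_{k \in \omega}\bigl(d(x_n, x_k) + r_k\bigr)\right\},$$
which is manifestly $\barpi02$. The inclusion $\subseteq$ follows from the triangle inequality together with the density of $\{x_k\}$: for any $F$ and $\varepsilon > 0$ one picks $y \in F$ with $d(x_n, y) < d(x_n, F) + \varepsilon$, then $x_k$ with $d(x_k, y) < \varepsilon$, so $d(x_n, x_k) + d(x_k, F) < d(x_n, F) + 3\varepsilon$. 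The reverse inclusion is the delicate direction: given $(r_n)$ satisfying the identity, iterate the infimum equation to extract, for each $n$, a sequence $(x_{k_j})_j$ with $r_{k_j} \to 0$ and $d(x_n, x_{k_j}) \to r_n$ that is Cauchy in $X$; completeness of $d$ produces a limit point $y_n \in X$, and the closed set $F := \overline{\{y_n : n \in \omega\}}$ then satisfies $\psi(F) = (r_n)$.

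With $\operatorname{im}(\psi)$ Borel, and hence standard Borel as a Borel subspace of the Polish space $[0,1]^\omega$, it remains to observe that $\psi$ is a Borel isomorphism onto its image. The subspace Borel structure on $\operatorname{im}(\psi)$ is generated by the traces of the sets $\{(t_k) : t_n < r\}$ and $\{(t_k) : t_n > r\}$ for $n \in \omega$ and $r \in \mathbb{Q}$, and by the computations above these pull back to sets in the generating family for $\mathcal{E}$ (using that $\{B(x_n, r) : n \in \omega,\, r \in \mathbb{Q}^+\}$ is a basis for $X$, so the corresponding Effros generators in turn generate $\mathcal{E}$). Thus $\psi$ is a Borel isomorphism between $(F(X) \setminus \{\emptyset\}, \mathcal{E})$ and a standard Borel space; adjoining the atom $\{\emptyset\}$ yields the standard Borel structure on $F(X)$ claimed by the proposition.
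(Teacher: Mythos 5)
Your overall strategy --- encode a nonempty closed set $F$ by its distance sequence $(d(x_n,F))_{n\in\omega}$, match the Effros generators with the sub-basic Borel sets of $[0,1]^\omega$, and show the image is Borel --- is a legitimate route to the proposition (the paper states it without proof; the usual textbook argument instead passes to a metrizable compactification of $X$). The injectivity and measurability computations, the reduction of the Effros $\sigma$-algebra to the countable family $\left\{F: F\cap B(x_n,r)\neq\emptyset\right\}$, and the handling of the atom $\{\emptyset\}$ are all fine.

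The gap is in the characterization of $\operatorname{im}(\psi)$, exactly at the point you flag as delicate. Since the term $k=n$ contributes $d(x_n,x_n)+r_n=r_n$ to the infimum, the identity $r_n=\inf_k\bigl(d(x_n,x_k)+r_k\bigr)$ is equivalent to the inequalities $r_n\le d(x_n,x_k)+r_k$ for all $n,k$, i.e.\ to $(r_n)$ being $1$-Lipschitz in $x_n$. That is necessary but far from sufficient: on $X=[0,1]$ with the usual metric and $\{x_n\}=\mathbb Q\cap[0,1]$, the constant sequence $r_n=\tfrac12$ satisfies your identity but equals no $d(\cdot,F)$, since a distance function to a nonempty set takes values arbitrarily close to $0$ on a dense set. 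Correspondingly, the iteration you sketch cannot in general produce $r_{k_j}\to 0$: a near-optimal choice only yields $r_{k_{j+1}}<r_{k_j}+\varepsilon_j-d(x_{k_j},x_{k_{j+1}})$, and nothing drives $r_{k_j}$ downward (in the example the optimal index is always $k_{j+1}=k_j$). The standard repair is to add the further condition
\[
(\forall n)(\forall \varepsilon\in\mathbb Q^{+})(\exists k)\ \bigl[\,d(x_n,x_k)<r_n+\varepsilon \ \text{ and } \ r_k<\varepsilon\,\bigr],
\]
which holds for genuine distance functions (take $y\in F$ nearly realizing $d(x_n,F)$ and then $x_k$ close to $y$) and is a countable intersection of open sets, so the corrected image is still Borel. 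With this condition available, your Cauchy-sequence construction does go through: one extracts $x_{k_1},x_{k_2},\dots$ with $r_{k_j}<\varepsilon 2^{-j}$ and $d(x_{k_j},x_{k_{j+1}})<r_{k_j}+\varepsilon 2^{-j-1}$, its limit lies in the closed set $F=\{z:\inf_k(d(z,x_k)+r_k)=0\}$, and $d(x_n,F)=r_n$ follows from the two inequalities. The remainder of your argument is then correct.
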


The following result shows a crucial link between Polish spaces and Borel structure of the continuum.  

\begin{proposition}[The Isomorphism Theorem] Let $X,Y$ be standard Borel spaces. Then $X$ and $Y$ are Borel isomorphic if and only if $|X|=|Y|.$
\end{proposition}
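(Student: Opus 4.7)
The forward direction is immediate: a Borel isomorphism is a bijection, so $|X|=|Y|$. For the reverse direction, it suffices by transitivity to pick a canonical representative in each cardinality and show every standard Borel space of that cardinality is Borel isomorphic to the representative. Since every standard Borel space is by definition Borel isomorphic to $(Z,\mathcal B(Z))$ for some Polish $Z$, the possible cardinalities are finite, $\aleph_0$, or $\mathfrak c$ (using Cantor--Bendixson together with the fact that every uncountable Polish space has cardinality $\mathfrak c$ and, as a subset of a standard Borel space, itself has cardinality $\mathfrak c$).

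The countable case is easy: in a standard Borel space $(X,\mathcal S)$, the $\sigma$-algebra separates points and every point is a Borel set, because we can realise $X$ as a subspace of a metrizable space in which singletons are closed. If $X$ is countable, then every subset is a countable union of singletons, so $\mathcal S = \mathcal P(X)$. Any bijection between two such spaces is automatically a Borel isomorphism, so the claim follows in this case.

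For the uncountable case, I would fix $\mathcal C$ as the representative and show every uncountable standard Borel space $X$ is Borel isomorphic to $\mathcal C$. The strategy is a Schr\"oder--Bernstein argument for Borel maps. I first produce two Borel injections:
\begin{enumerate}
\item An injection $\iota : X \hookrightarrow \mathcal C$, obtained from Proposition~\ref{universal_1}(3) and the characterisation of Borel spaces of separable metric spaces: realise $X$ as Borel isomorphic to $(Y,\mathcal B(Y))$ for some $Y\subseteq\mathcal C$, then take $\iota$ to be the composed inclusion.
\item An injection $\jmath : \mathcal C \hookrightarrow X$, obtained by first realising $X$ as a Borel subset of a Polish space and then using the perfect set property for Borel sets (Proposition~\ref{borel_fact_1} extended to Polish spaces, quoted earlier in the text): an uncountable Borel set in a Polish space contains a perfect subset, and every perfect Polish space contains a homeomorphic copy of $\mathcal C$.
\end{enumerate}

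The main obstacle is now the Schr\"oder--Bernstein step: turning two Borel injections into a Borel bijection. I would proceed exactly as in the set-theoretic proof, defining inductively $A_0 = X \setminus \jmath(\mathcal C)$, $A_{n+1} = \jmath(\iota(A_n))$, $A = \bigcup_n A_n$, and then setting $h = \iota$ on $A$ and $h = \jmath^{-1}$ on $X\setminus A$. The key point that makes $h$ a Borel isomorphism is the Luzin--Suslin theorem (Theorem~\ref{borel_injective}): since $\iota$ and $\jmath$ are Borel injections, the Borel images $\iota(A_n)$ and $\jmath(\iota(A_n))$ remain Borel, so $A$ is Borel in $X$ and $h$ is piecewise Borel and injective on each piece, with inverse also piecewise Borel by the same Luzin--Suslin argument applied to $\iota^{-1}$ on $\iota(A)$ and $\jmath$ on $X\setminus A$. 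Combining this bijection with the Borel isomorphism between $X$ and its realisation in $\mathcal C$ gives the desired isomorphism between $X$ and $\mathcal C$, and chaining two such isomorphisms finishes the uncountable case.
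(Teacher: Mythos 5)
The paper itself states the Isomorphism Theorem without proof, so there is nothing to compare against line by line; judged on its own, your argument is correct and is essentially the standard one (it is the proof of Theorem 15.6 in Kechris's book, which the paper cites as its main classical reference). The reduction of possible cardinalities, the observation that a countable standard Borel space carries the full power-set $\sigma$-algebra, and the Borel Schr\"oder--Bernstein argument powered by Luzin--Suslin (Theorem \ref{borel_injective}) are all sound; in particular you correctly identify that Luzin--Suslin is what makes each $A_n$, the set $A$, and the inverse $\jmath^{-1}$ Borel. Two small points you should make explicit if you write this up in full. First, to apply Luzin--Suslin you must work in a Polish realisation $Z$ of $X$ throughout, since the theorem as stated applies to Borel injections between Polish spaces; this is routine but worth a sentence. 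Second, the injection $\jmath:\mathcal C\hookrightarrow X$ needs more than the cardinality statement $|P|=\mathfrak c$ from Cantor--Bendixson: you need a topological (or at least Borel) embedding of $\mathcal C$ into a nonempty perfect subset of $Z$, which is obtained by a Cantor scheme of pairwise disjoint closed balls with vanishing diameter; that fact is standard but is not among the propositions quoted in the paper, so it should be stated or proved. Finally, when you define $h=\iota$ on $A$ and $h=\jmath^{-1}$ on $X\setminus A$ you should verify, as in the classical Schr\"oder--Bernstein proof, that $\iota(A)$ and $\jmath^{-1}(X\setminus A)$ are disjoint with union $\mathcal C$; this is the same bookkeeping as in the set-theoretic case and causes no difficulty.
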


In particular, any two uncountable standard Borel spaces are Borel isomorphic and any two perfect Polish spaces share the ``same'' Borel structure.

Despite the fact that perfect Polish spaces are Borel isomorphic, Borel hierarchies are relative and, as we seen in the examples \ref{borel_example1} and \ref{borel_example2}, can be different for different topologies. This can be somewhat inconvenient when working with different topologies simultaneously. However, the fact that all Borel pointclasses are closed under continuous preimages, simplifies this to a degree.

\begin{example}
As in example \ref{ex2}, let's identify trees on $\omega$ with elements of $2^{\fbaire}$. Recall that the set of pruned trees, $\PTr$, is $\barpi02$ in $2^{\fbaire}$, and thus it is a zero-dimensional Polish space (assuming the subspace topology). Let's show that the Effros Borel space of $F(\baire)$ is exactly the one induced by its identification with $\PTr$ via the map $ \mathscr{T} :F\mapsto T_F$.

We need to demonstrate two things: 
\begin{enumerate}
\item[$(\Rightarrow)$] The sets that generate the Effros space of $F(\baire)$ (i.e. the ones defined in (\ref{effros_generators})) are mapped by $ \mathscr{T}$ to some Borel sets in $\PTr$. And
\item[$(\Leftarrow)$] All open sets in $\PTr$ correspond to some Borel sets in the Effros space of $F(\baire)$.  
\end{enumerate}

\begin{proof}[$(\Rightarrow)$]
Let $$S_U=\left\{F\in F(\baire): F\cap U \not=\emptyset\right\}$$ where $U\subseteq\baire$ is open. Since $U$ is open, it is a countable union of some cylinders: $$U=\bigcup_{i\in\omega}\cylinder{u_i},$$
with all $u_i\in \fbaire$.

Let's show that $ \mathscr{T}\left(S_U\right)$ is Borel in $\PTr$. We have 
\begin{align*}
\mathscr{T}\left(S_U\right)=\left\{T\in\PTr: T(U)=1\right\}=\\
\bigcap_{i\in\omega}\bigcap_{u_i\subseteq t}\left\{T\in\PTr:T(t)=1\right\}.
\end{align*}

Since $\left\{T\in\PTr:T(t)=1\right\}$ is a basic open set, $\mathscr{T}\left(S_U\right)$  is $\barpi02$ and thus it is Borel.

\end{proof}

\begin{proof}[$(\Leftarrow)$]
Let $A\subseteq \PTr$ be open. Then $A$ is an intersection of some open set in $2^{\fbaire}$ and $\PTr$. Hence $$A=\bigcup_{i\in \omega}\left\{ T\in\PTr: T(a_i)=1 \right\},$$
where all $a_i\in \fbaire$. We get

\begin{align*}
\mathscr T^{-1}(A)=\bigcup_{i\in\omega}\left\{F\in\ F(\baire): F\cap \cylinder{a_i}\not=\emptyset\right\}.
\end{align*}

Since every $\left\{F\in\ F(\baire): F\cap \cylinder{a_i}\not=\emptyset\right\}$ belongs to the Effros Borel space of $F(\baire)$, $T^{-1}(A)$ belongs to it too.    

\end{proof}

\end{example}

\section{Projective sets}
\subsection{Analytic sets}
Borel sets are not closed under continuous images. Continuous images of Borel sets form a distinct pointclass.

\begin{definition}
Let $X$ be a Polish space. A set $A\subseteq X$ is called \emph{analytic} if there exists a Polish space $Y$ and a continuous function $f:Y\rightarrow X$ with $f(Y)=A$. 
\end{definition}

A simple consequence of the above definition and the Proposition \ref{universal_1}(4) is the following characterisation of analytic sets: a set is analytic if it is a continuous image of the Baire space.
The following proposition provides further basic characterisations of analytic sets.
\begin{proposition}
Let $A\subseteq X$ where $X$ is a Polish space. The following are equivalent:
\begin{enumerate}
\item $A$ is a continuous image of $\mathcal N.$
\item $A$ is a continuous image of a Borel set $B\subseteq Y$ where $Y$ is some Polish space.
\item $A$ is a projection of a Borel set in $X\times Y$, for some Polish space $Y$.
\item  $A$ is a projection of a closed set in $X\times \mathcal N.$
\end{enumerate}
\end{proposition}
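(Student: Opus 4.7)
The plan is to prove the cycle $(1) \Rightarrow (2) \Rightarrow (3) \Rightarrow (4) \Rightarrow (1)$, using Proposition \ref{universal_1}(4) and Proposition \ref{borel_clopen} at the key steps.

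The implication $(1) \Rightarrow (2)$ is immediate since $\mathcal{N}$ is itself a Polish space and is a (Borel) subset of itself. For $(2) \Rightarrow (3)$, given $A = f(B)$ with $f : Y \to X$ continuous and $B \subseteq Y$ Borel, I would consider the graph restricted to $B$, namely $G = \{(x,y) \in X \times Y : y \in B, \, x = f(y)\}$. Since $f$ is continuous, its graph is closed in $X \times Y$; intersecting with the Borel set $X \times B$ yields that $G$ is Borel, and $\pi_X(G) = f(B) = A$.

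The main obstacle is $(3) \Rightarrow (4)$, where one must replace an arbitrary Borel witness by a closed one. Given $A = \pi_X(B)$ with $B \subseteq X \times Y$ Borel, the idea is to pull back along a continuous surjection from the Baire space rather than to refine the topology of $X \times Y$ itself (which would destroy the projection to $X$). By Proposition \ref{borel_clopen} applied to $X \times Y$, there is a finer Polish topology on $X \times Y$ with the same Borel structure in which $B$ is clopen; in particular $B$, with the inherited refined topology, is a Polish space. Now Proposition \ref{universal_1}(4) provides a closed set $F \subseteq \mathcal{N}$ and a continuous bijection $g : F \to B$. Composing with the (still continuous) projection to $X$ gives a continuous map $h : F \to X$ with $h(F) = \pi_X(B) = A$. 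Its graph $C = \{(h(s), s) : s \in F\}$ is closed in $X \times F$ (continuous map, Hausdorff codomain), and since $F$ is closed in $\mathcal{N}$, $C$ is closed in $X \times \mathcal{N}$, with $\pi_X(C) = A$.

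Finally, for $(4) \Rightarrow (1)$, if $A = \pi_X(C)$ with $C \subseteq X \times \mathcal{N}$ closed, then $C$ is a closed subspace of a Polish space, hence Polish by Proposition \ref{polish_basic}(2). Assuming $A$ (and hence $C$) nonempty, Proposition \ref{universal_1}(4) furnishes a continuous surjection $\varphi : \mathcal{N} \to C$; composing with the continuous projection $\pi_X|_C$ exhibits $A$ as a continuous image of $\mathcal{N}$. (The empty case is trivial, as $\emptyset$ is the continuous image of any empty preimage in $\mathcal{N}$.) This closes the cycle and establishes the four-way equivalence.
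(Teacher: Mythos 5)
Your proof is correct; the paper states this proposition without proof, and your cycle $(1)\Rightarrow(2)\Rightarrow(3)\Rightarrow(4)\Rightarrow(1)$ is the standard argument, assembled from exactly the tools the paper supplies (Proposition \ref{borel_clopen} to refine the topology of $X\times Y$ so that the Borel witness becomes a Polish subspace, Proposition \ref{universal_1}(4), Proposition \ref{polish_basic}(2), and closedness of graphs of continuous maps into Hausdorff spaces). The only blemish is the degenerate case $A=\emptyset$, which satisfies (2)--(4) but can never literally be the image of the nonempty space $\mathcal N$; this is a defect of the statement as given rather than of your argument, and is conventionally resolved by restricting (1) to nonempty $A$, as the paper itself does implicitly in Proposition \ref{universal_1}(4).
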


\paragraph{Regularity properties of analytic subsets of Polish spaces}~

\noindent Luzin, Suslin and Sierpinski proved that analytic subsets of $\real^n$ have all three regularity properties. Virtually the same result holds for Polish spaces.
\begin{proposition} Let $X$ be a Polish space and let $A\subseteq X$ be analytic. Then
\begin{enumerate}
\item $A$ has the perfect set property,
\item $A$ has the BP,
\item $A$ is universally measurable.
\end{enumerate}
\end{proposition}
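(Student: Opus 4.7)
The plan is to handle the three parts via two different strategies. For (1), I would work directly with a continuous surjection $f:\mathcal N\to X$ having $f(\mathcal N)=A$, which exists by the previous characterisation (the case $A=\emptyset$ is trivial). For (2) and (3), I would invoke the characterisation of analytic sets as those obtainable by applying Suslin's $\mathcal A$-operation to closed sets, combined with the classical closure results stating that the $\sigma$-algebras of sets with the BP and of $\mu$-measurable sets are both closed under the $\mathcal A$-operation.

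For the perfect set property, assume $A$ is uncountable and fix $f:\mathcal N\to X$ continuous with image $A$. I would construct a Cantor scheme $\{U_s:s\in\fcantor\}$ of nonempty open subsets of $\mathcal N$ with the properties that $\overline{U_{s^\frown 0}}$ and $\overline{U_{s^\frown 1}}$ are disjoint and contained in $U_s$, that $\mathrm{diam}(U_s)\le 2^{-|s|}$ in a fixed complete metric on $\mathcal N$, that $f(\overline{U_{s^\frown 0}})\cap f(\overline{U_{s^\frown 1}})=\emptyset$, and that $f(U_s)$ is uncountable. The inductive engine is the splitting lemma: if $U\subseteq\mathcal N$ is open and $f(U)$ is uncountable, then there are disjoint open $U_0,U_1\subseteq U$ with $f(\overline{U_0})\cap f(\overline{U_1})=\emptyset$ and each $f(U_i)$ still uncountable. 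This follows from a condensation-point argument: in an uncountable separable metrizable space the set of condensation points is itself uncountable, so pick two distinct condensation points $y_0\ne y_1$ of $f(U)$ in $X$, separate them by open neighbourhoods with disjoint closures, and then take small open preimages inside $U$ whose images remain uncountable. The map $\varphi:\mathcal C\to\mathcal N$ sending $x$ to the unique element of $\bigcap_n \overline{U_{\prefix xn}}$ is continuous by the diameter condition, and $f\circ\varphi:\mathcal C\to A$ is a continuous injection by the image-disjointness. Since $\mathcal C$ is compact and $X$ is Hausdorff, this map is a topological embedding, so its image is a perfect subset of $A$.

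For (2) and (3), by the Suslin characterisation there exist closed subsets $F_s\subseteq X$ indexed by $s\in\fbaire$ such that
\begin{equation*}
A=\bigcup_{x\in\baire}\bigcap_{n\in\omega}F_{\prefix xn}.
\end{equation*}
Every closed set trivially has the Baire property and is universally measurable. The classical theorems of Nikodym (for the BP) and of Luzin--Sierpi\'nski (for any $\sigma$-finite measure $\mu$) state that the respective $\sigma$-algebras are closed under the Suslin $\mathcal A$-operation. Applying these closure theorems to the scheme $(F_s)_{s\in\fbaire}$ yields (2) and (3) at once.

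The hard part is the inductive splitting step in (1), where one must simultaneously preserve uncountability of the images, achieve disjointness of their closures in $X$, and control the diameters in $\mathcal N$; the condensation-point argument is what makes this go through. For (2) and (3) the real labour is hidden in the closure of the BP-sets and of the measurability $\sigma$-algebras under the Suslin operation, a substantial classical result that is typically quoted rather than reproved in this setting.
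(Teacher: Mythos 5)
The paper states this proposition without proof (it simply records that the result of Luzin, Suslin and Sierpi\'nski transfers to Polish spaces), so there is no in-paper argument to compare yours against; judged on its own, your proposal is correct and follows the standard classical route. For (1), the Cantor-scheme construction driven by the condensation-point splitting lemma is exactly the usual perfect-set argument: the one point worth stating more carefully is that in a second-countable space an uncountable set has all but countably many of \emph{its own} points as condensation points, which is what guarantees two distinct condensation points $y_0\neq y_1$ lying in $f(U)$ and hence that $V_i\cap f(U)$ is uncountable; taking the $U_{s^\frown i}$ to be basic clopen cylinders of small diameter inside $f^{-1}(V_i)\cap U_s$ makes the closure and image-disjointness conditions automatic, and the compactness-plus-Hausdorff step correctly upgrades the continuous injection $f\circ\varphi$ to an embedding whose image is perfect. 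For (2) and (3), reducing to the paper's characterisation of analytic sets as $\opa{\scheme Fs}$ for closed $F_s$ and then quoting the Nikodym and Luzin--Sierpi\'nski closure theorems for the Baire-property $\sigma$-algebra and for the $\mu$-measurable sets (applied to each $\sigma$-finite $\mu$ separately to get universal measurability) is legitimate, though you are right to flag that essentially all of the content of those two parts is outsourced to those closure theorems.
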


\subsection{The Suslin operation $\mathcal A$}

Historically, analytic sets were defined in terms of a special operation, the Suslin operation. This operation remains
an important tool in descriptive set theory.

\begin{definition}\label{suslin_op_def}
Let $\scheme As$ be a collection of sets indexed by elements of $\fbaire$. Define 
\begin{align}
\opa{\scheme As}=\bigcup_{a\in \baire}\bigcap_{n\in\omega}A_{\prefix an}
\end{align}

The operation $\mathcal A$ is called the \emph{Suslin operation}.
\end{definition}

\begin{proposition}
A set $A$ in a Polish space $X$ is analytic if and only if it is the result of the Suslin operation applied to a family of closed sets.
\end{proposition}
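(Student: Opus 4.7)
The plan is to prove both implications directly, using characterisation (4) of the previous proposition: a subset of a Polish space $X$ is analytic if and only if it is the projection of a closed subset of $X\times\mathcal N$. I also use that any nonempty analytic set is a continuous image of $\mathcal N$ (characterisation (1)).

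For the direction ``Suslin scheme of closed sets $\Rightarrow$ analytic'', suppose $A=\opa{\scheme As}$ with each $A_s$ closed in $X$. I would exhibit $A$ as $\pi_X(B)$, where
\[ B=\{(x,a)\in X\times\mathcal N:\forall n\in\omega,\ x\in A_{\prefix an}\}. \]
Writing $B=\bigcap_n B_n$ with $B_n=\{(x,a):x\in A_{\prefix an}\}$, it suffices to verify each $B_n$ is closed. This is a short convergence argument: if $(x_k,a_k)\to(x,a)$ in $X\times\mathcal N$ with $(x_k,a_k)\in B_n$, then for large $k$ the finite sequence $\prefix{a_k}{n}$ stabilises to $\prefix{a}{n}$, so $x_k\in A_{\prefix an}$ eventually, and closedness of $A_{\prefix an}$ yields $x\in A_{\prefix an}$. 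Hence $B$ is closed and $A=\pi_X(B)$ is analytic.

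For the converse, assume $A$ is analytic and nonempty (the empty case is trivial with $A_s=\emptyset$). Pick a continuous surjection $f:\mathcal N\to A$ and define, for each $s\in\fbaire$,
\[ A_s=\overline{f(\cylinder s)}. \]
I would then argue $A=\opa{\scheme As}$. The inclusion $A\subseteq\opa{\scheme As}$ is immediate: given $x\in A$, pick $a\in\mathcal N$ with $f(a)=x$, so $x\in f(\cylinder{\prefix an})\subseteq A_{\prefix an}$ for every $n$, witnessed by the branch $a$. For the reverse inclusion, let $x\in\bigcap_n A_{\prefix an}$ for some $a\in\mathcal N$. For each $n$, choose $y_n\in\cylinder{\prefix an}$ with $d_X(f(y_n),x)<1/n$. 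Since $y_n$ agrees with $a$ on its first $n$ coordinates, $y_n\to a$ in $\mathcal N$; continuity of $f$ gives $f(y_n)\to f(a)$, while by construction $f(y_n)\to x$, and Hausdorffness of $X$ forces $f(a)=x\in A$.

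The main conceptual step is the choice of the scheme $A_s=\overline{f(\cylinder s)}$ in the forward direction; once it is in place the verification is a short matter of convergence in the Baire space combined with Hausdorffness of $X$. The reverse direction is essentially a reformulation of the $\mathcal A$-operation applied to closed sets as a projection of a closed subset of $X\times\mathcal N$, so no serious obstacle arises there.
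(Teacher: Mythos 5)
Your proof is correct and is essentially the standard argument (the paper states this proposition without proof, deferring to Kechris): one direction realises $\opa{\scheme As}$ as the projection of the closed set $\bigcap_n B_n\subseteq X\times\mathcal N$, and the other uses the scheme $A_s=\overline{f(\cylinder s)}$ together with uniqueness of limits. The only cosmetic points are that the index should start at $n=1$ (or use $1/(n+1)$) in the choice of $y_n$, and that your closing remark swaps the labels of the two directions.
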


\begin{definition}[Luzin scheme]
A \emph{Luzin scheme} on a set $X$ is a family $\seqf As{\fbaire}$ of subsets of $X$ such that
\begin{enumerate}
\item $A_{s^\frown i}\cap A_{s^\frown j}=\emptyset $ for all $s\in\fbaire$ and all $i\neq j,$
\item $A_{s^\frown i}\subseteq A_s$ for all $s\in\fbaire, i\in \omega;$
\end{enumerate}
Furthermore, if $(X,d)$ is a metric space and $\seqf As{\fbaire}$ is a Luzin scheme on $X$, we say that $\seqf As{\fbaire}$ has \emph{vanishing diameter} if for all $x\in \baire$, \mbox{$\lim_n diam(A_{\prefix xn})=0.$}
In this case let $$D=\{x\in\baire: \cap_n A_{\prefix xn}\neq \emptyset \},$$
 and define $f:D\rightarrow X$ by taking $\{f(x)\}=\cap_n A_{\prefix xn}$. We call $f$ the \emph{associated map}.
\end{definition}

\begin{proposition}\label{luzin_scheme_1}
Let $\seqf As{\fbaire}$ be a Luzin scheme on a metric space $(X,d)$ that has a vanishing diameter and let $f:D\rightarrow X$ be the corresponding associate map. Then
\begin{enumerate}
\item $f$ is injective
 and continuous, 
\item if $(X,d)$ is complete and each $A_s$ is closed, then $D$ is closed, \item if each $A_s$ is open, then $f$ is an embedding.
\end{enumerate}
\end{proposition}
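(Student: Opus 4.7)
The three parts reduce to direct applications of the scheme axioms together with vanishing diameter, so the plan is largely bookkeeping. For part (1), injectivity comes from property (i): if $x \neq y$ in $D$ and $n$ is minimal with $x(n) \neq y(n)$, then $\prefix x{n+1}$ and $\prefix y{n+1}$ share the common prefix $\prefix xn$ and differ in the last coordinate, so $A_{\prefix x{n+1}} \cap A_{\prefix y{n+1}} = \emptyset$, forcing $f(x) \neq f(y)$. For continuity, given $\epsilon > 0$ and $x \in D$, vanishing diameter supplies $n$ with $\text{diam}(A_{\prefix xn}) < \epsilon$; any $y \in D \cap \cylinder{\prefix xn}$ then satisfies $f(y) \in A_{\prefix xn}$, so $d(f(x), f(y)) < \epsilon$.

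For part (2), I would show that $\baire \setminus D$ is open. When $X$ is complete and each $A_s$ is closed, Cantor's intersection theorem applied to the decreasing sequence $(A_{\prefix xn})_n$ of closed sets with vanishing diameter gives that $x \in D$ if and only if $A_{\prefix xn} \neq \emptyset$ for every $n$. Fix $x \notin D$ and pick $n$ with $A_{\prefix xn} = \emptyset$; then for every $y \in \cylinder{\prefix xn}$, property (ii) gives $A_{\prefix ym} \subseteq A_{\prefix xn} = \emptyset$ for all $m \geq n$, so $y \notin D$. Thus $\cylinder{\prefix xn}$ is a neighbourhood of $x$ disjoint from $D$.

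For part (3), combined with part (1) it remains only to show $f$ is open onto $f(D)$. The key preparatory step is the observation that $A_s \cap A_t = \emptyset$ whenever $s, t \in \fbaire$ are incomparable, which I would prove by taking their last common prefix $u$, writing $u^\frown i \subseteq s$ and $u^\frown j \subseteq t$ for some $i \neq j$, and combining the disjointness axiom (i) with monotonicity (ii); this short induction on $|s|+|t|$ is the only step requiring any genuine argument beyond the definitions. Granted the lemma, I claim $f(D \cap \cylinder{s}) = A_s \cap f(D)$ for every $s \in \fbaire$: the forward inclusion is immediate since $f(x) \in A_{\prefix x{|s|}} = A_s$ whenever $s \subseteq x$, and conversely if $y = f(x) \in A_s$ then $y \in A_s \cap A_{\prefix x{|s|}}$, so the incomparability lemma (both strings having length $|s|$) forces $\prefix x{|s|} = s$. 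Since each $A_s$ is open, this exhibits $f$ as an open map onto its image, so together with (1) it is an embedding. I anticipate no serious obstacle in the whole argument.
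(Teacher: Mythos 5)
The paper states this proposition without proof (it is imported from Kechris's book), so there is no in-paper argument to compare against; your proof is correct and is the standard one. All three parts check out: the minimal index of disagreement gives injectivity via disjointness of successor sets, Cantor's intersection theorem together with monotonicity gives that the complement of $D$ is a union of cylinders, and the identity $f(D\cap\cylinder s)=A_s\cap f(D)$, justified by disjointness of $A_s,A_t$ for incomparable $s,t$, shows $f$ is open onto its image.
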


\begin{example}
One of the characterisations of Borel sets is the following one: the Borel sets are exactly the continuous injective images of closed subsets of the Baire space. We know (\ref{borel_injective}) that all continuous injective images of Borel sets are Borel too. What is left to show is that for any Borel set $B$, we have $B=f(A)$ where $f$ is continuous, injective on $A$ and $A\subseteq\baire$ is closed.

Let $B\subseteq X$ be a Borel  subset of a Polish space$(X,\mathcal T)$. There exists (via \ref{borel_clopen}) a finer Polish space $(X, \mathcal T_B)$ where $B$ is a clopen. Then, via \ref{polish_basic}(2), $B$, as a subspace of $(X, \mathcal T_B)$, is a Polish space too. What is left is to show that every Polish space is a continuous injective image of a closed subset of $\baire$. This is exactly what \ref{universal_1}(4) states, but for the sake of our example we'll employ \ref{luzin_scheme_1}(1) to prove it.

\begin{proof}
Let $X$ be a Polish space. The idea is to construct a Luzin scheme $\seqf As{\fbaire}$ with a few additional properties, so that the associated map is continuous, bijective and its domain is a closed subset of the Baire space.

The construction of $\seqf As{\fbaire}$ proceeds as follows. Set $A_\emptyset=X$ and note that $A_\emptyset$ is $\barsigma02$. Given $s\in \fbaire$, assume $A_s$ is $\barsigma02$ and let $\delta=2^{-|s|}$. Then $A_s$ can be written as $A_s=\bigcup_{i\in\omega}K_i$ where all $K_i$ are closed. 

Let $C_i=\bigcup_{n=0}^i K_n$ so that we have $A_s=\bigcup_{i\in\omega}C_{i+1}\backslash C_i$. Fix $\seq Un$ - an open covering of $X$ such that $diam(U_n)<\delta$ for every $n\in\omega$. Define $\seq D{n,i}$ and $\seq E{n,i}$ as $$D_{n,i}=U_n\cap\left[C_{i+1}\backslash C_i\right],$$
$$E_{n,i}=D_{n,i}\backslash \bigcup_{0\le j\le n-1}D_{j,i}.$$

Note that $D_{n,i}$ are $\bardelta02$ and $E_{n,i}$ are $\barsigma02$ with $diam\left(E_{n,i}\right)<\delta$. Then we have $C_{i+1}\backslash C_i=\cup _{n\in\omega}E_{n,i}$ and therefore 
\begin{align}\label{l33}
A_s=\bigcup_{n,i\in\omega}E_{n,i}.
\end{align}
 Furthermore, 
\begin{align}\label{l32}
\overline{E_{n,i}}\subseteq \overline{C_{i+1}\backslash C_i}\subseteq C_{i+1}\subseteq A_s.
\end{align}

\noindent Finally, fix a bijection $g:\omega\times\omega\rightarrow \omega$ and for every $n,i\in\omega$, let 
\begin{align}\label{l31}
A_{s^\frown g(n,i)}=E_{n,i}.
\end{align} 

\noindent The Luzin scheme so defined has the following additional properties: 
\begin{enumerate}
\item $A_\emptyset=X,$
\item for all $s\in\fbaire,$ $A_s$ is $\barsigma02$ (due to \ref{l31}),
\item for all $s\in\fbaire,$ $diam(A_s)< 2^{-|s|}$ and so the scheme has a vanishing diameter, 
\item for all $s\in\fbaire,$ $A_s=\bigcup_{i\in\omega}A_{s^\frown i}=\bigcup_{i\in\omega}\overline{A_{s^\frown i}}$ (because of \ref{l31},\ref{l32} and \ref{l33}),
\item the associated map $f:D\rightarrow X$ is 
\begin{enumerate}
\item continuous and injective (since \ref{luzin_scheme_1}(1) is applicable) and
\item surjective (because of 1. and 4.).

\end{enumerate}
\end{enumerate}

The only thing left to demonstrate is that $D$, the domain of the associated map, is a closed subset of the Baire space. To see this, let $\seq an$ be a sequence of points in $D$ converging to some $a\in \baire$. Let $\epsilon>0$. We can choose $m'\in\omega$ such that $\epsilon>2^{-m'}$ and for all $ m>m'$, $\prefix {a_m}{m'}=\prefix a{m'}$ and $f(a_m)\in A_{\prefix a{m'}}$. Since $X$ is Polish and $diam\left(A_{\prefix a{m'}}\right)\le 2^{m'}<\epsilon$, the sequence  $\left(f(a_n)\right)_{n\in\omega}$
is Cauchy and hence converges to some point $x\in X$. 

We know that $x\in \bigcap_{i\in\omega}A_{\prefix a i}=\bigcap_{i\in\omega}\overline{A_{\prefix ai}}$, so it must that $a\in D$ and $f(a)=x$. This means that $D$ is closed.
\end{proof}
\end{example}

\subsection{The Projective Hierarchy}

Starting with analytic sets and using the operation of taking the complement and the projection operation, we get the following hierarchy.
\begin{definition}\label{projective_hd}
Let $X$ be a Polish space and let $n\ge 0$. We define the collections $\barsigma1n,\barpi1n$ and $\bardelta1n$ of subsets of $X$ as follows:
\begin{align*}
\barsigma11=&\text{the collection of all analytic sets,}\\
\barpi11=&\text{the complements of all analytic sets,}\\
\barsigma1{n+1}=&\text{the collection of the projections of all }\barpi1n\text{ sets in }X\times \mathcal N ,\\
\barpi1n=&\text{the complements of the }\barsigma1n \text{ sets in }X,\\
\bardelta1n=&\barsigma1n\cap\barpi1n.
\end{align*}

The sets belonging to one of the above pointclasses are called \emph{projective sets}.
\end{definition}

Suslin established the following basic fact relating the projective hierarchy and the Borel hierarchy.
\begin{proposition}
Let $X$ be a Polish space. $\mathcal B(X)=\barpi11(X)\cap \barsigma11(X)$.
\end{proposition}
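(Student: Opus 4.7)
The plan is to establish the two inclusions separately: the easy direction $\mathcal B(X)\subseteq \barpi11(X)\cap\barsigma11(X)$ by closure arguments, and the hard direction via the Luzin Separation Theorem.

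For the easy direction, I would first note that every closed subset of $X$ is analytic (a closed subspace of a Polish space is Polish by Proposition \ref{polish_basic}(2), hence a continuous image of $\mathcal N$ by Proposition \ref{universal_1}(4)). Next I would verify that $\barsigma11(X)$ is closed under countable unions and countable intersections. Closure under countable unions is obtained by combining continuous surjections $f_n:\mathcal N\to A_n$ into a single continuous surjection $\mathcal N\cong \omega\times\mathcal N\to\bigcup_n A_n$. Closure under countable intersections is best seen via the characterisation of analytic sets as projections of closed sets $F_n\subseteq X\times\mathcal N$: one takes $F=\{(x,(y_n)_n)\in X\times\mathcal N^\omega:\forall n~(x,y_n)\in F_n\}$, which is closed, and projects, using $\mathcal N^\omega\cong\mathcal N$. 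Since $\barsigma11(X)$ contains all closed sets and is closed under countable unions and intersections, it contains $\mathcal B(X)$. Dually, $\barpi11(X)$ contains all open sets (their complements are closed, hence analytic) and is also closed under countable unions and intersections, so it also contains $\mathcal B(X)$.

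The hard direction is Suslin's theorem and relies on the Luzin Separation Theorem, which I would prove as the main lemma: any two disjoint analytic sets $A,B\subseteq X$ can be separated by a Borel set (a Borel $C$ with $A\subseteq C$ and $B\cap C=\emptyset$). Call two sets \emph{Borel-separable} if such a $C$ exists. The crucial combinatorial step is that if $P=\bigcup_n P_n$ and $Q=\bigcup_m Q_m$ and each pair $(P_n,Q_m)$ is Borel-separable by $C_{n,m}$, then $(P,Q)$ is Borel-separable by $\bigcup_n \bigcap_m C_{n,m}$. Now write $A=f(\mathcal N)$, $B=g(\mathcal N)$ for continuous $f,g$, and suppose towards contradiction that $A,B$ are not Borel-separable. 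Decomposing $\mathcal N=\bigsqcup_{i\in\omega}\cylinder{i}$ and applying the above observation to $A=\bigcup_i f(\cylinder{i})$, $B=\bigcup_j g(\cylinder{j})$, one finds some pair $(i,j)$ with $f(\cylinder{i})$, $g(\cylinder{j})$ not Borel-separable. Iterating level by level yields $s,t\in\mathcal N$ such that for every $k$ the analytic sets $f(\cylinder{\prefix sk})$ and $g(\cylinder{\prefix tk})$ are not Borel-separable. Let $x=f(s)$ and $y=g(t)$. Since $A\cap B=\emptyset$ we have $x\neq y$; pick disjoint open $U\ni x$, $V\ni y$. By continuity of $f,g$ and the fact that cylinders $\cylinder{\prefix sk}$, $\cylinder{\prefix tk}$ shrink to $s,t$, for sufficiently large $k$ we get $f(\cylinder{\prefix sk})\subseteq U$ and $g(\cylinder{\prefix tk})\subseteq V$, so $U$ itself is a Borel (open) separator, contradicting the choice of $s,t$.

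Granted the separation theorem, the reverse inclusion is immediate: if $A$ and $X\setminus A$ are both analytic, applying separation to these disjoint sets yields a Borel $C$ with $A\subseteq C$ and $C\cap(X\setminus A)=\emptyset$, giving $A=C\in\mathcal B(X)$.

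The main obstacle is the separation theorem, specifically the inductive construction of $s,t\in\mathcal N$ witnessing non-separability at every finite level. The book-keeping must ensure both that the decomposition lemma really provides an offending pair at each step and that the resulting branches $s,t$ actually land in $A$ and $B$ respectively so the final neighbourhood-separation argument can be triggered. The remaining closure-under-operations arguments are routine once the correct representations of analytic sets are chosen.
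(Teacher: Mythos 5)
Your proposal is correct. The paper states this result (Suslin's theorem) without giving any proof, so there is nothing to compare against; your argument is the standard and complete one — the easy inclusion via closure of $\barsigma11(X)$ and $\barpi11(X)$ under countable unions and intersections (together with transfinite induction up the Borel hierarchy), and the hard inclusion via the Luzin separation theorem, proved by the union--intersection separator lemma, the tree of non-separable pairs of cylinders, and the final contradiction from continuity and disjoint neighbourhoods of $f(s)$ and $g(t)$.
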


\begin{proposition}[Basic properties of the projective hierarchy]\label{projective_prop_1} Let $X$ be a Polish space. Then
\begin{enumerate}
\item for $\alpha<\beta$, 
\begin{align*}
\barsigma1\alpha\subseteq\barsigma1\beta&,
& \barsigma1\alpha\subseteq\barpi1\beta,
\\ \barpi1\alpha\subseteq\barsigma1\beta&,
& \barpi1\alpha\subseteq\barpi1\beta;
\end{align*}
\item for $\alpha>1$, $$\barsigma1\alpha\not\subseteq\barpi1\alpha$$ and $$\barpi1\alpha\not\subseteq\barsigma1\alpha,$$
thus the projective hierarchy is proper;
\item All projective pointclasses are closed under Borel pre-images.
\end{enumerate}
\end{proposition}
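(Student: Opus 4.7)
The plan is to prove the three parts in the order (1), (3), (2), since part (3) is needed to justify some of the inclusions in (1), and both (1) and (3) feed into the diagonal argument for (2).

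For part (1), I would carry out a simultaneous induction on $n$, proving at each stage that $\barsigma1n\subseteq\barsigma1{n+1}$, $\barpi1n\subseteq\barpi1{n+1}$, $\barsigma1n\subseteq\barpi1{n+1}$, and $\barpi1n\subseteq\barsigma1{n+1}$. The key step is showing $\barpi1n\subseteq\barsigma1{n+1}$: given $A\in\barpi1n(X)$, the set $A\times\mathcal N$ is the preimage of $A$ under the continuous projection $X\times\mathcal N\to X$, hence belongs to $\barpi1n(X\times\mathcal N)$ by closure under continuous preimages (a consequence of part (3)), and its projection back to $X$ is exactly $A$, so $A\in\barsigma1{n+1}(X)$. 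The inclusion $\barsigma1n\subseteq\barpi1{n+1}$ follows by taking complements of the previous inclusion applied to $A^c$, and the other two inclusions then follow by one more complementation step. The base case $\barpi11\subseteq\barsigma12$ goes through because closed sets (being $\barsigma11\cap\barpi11$) can play the role in the $\barpi1n$ hypothesis.

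For part (3), I would induct on $n$. For the base case $n=1$, let $f:X\to Y$ be Borel and $A\in\barsigma11(Y)$, so $A=\pi_Y(C)$ for some closed $C\subseteq Y\times\mathcal N$. Then
\begin{align*}
f^{-1}(A)=\pi_X\bigl(\{(x,\nu)\in X\times\mathcal N:(f(x),\nu)\in C\}\bigr),
\end{align*}
and the set in braces is the preimage of the closed set $C$ under the Borel map $(f\times\mathrm{id}):X\times\mathcal N\to Y\times\mathcal N$, which is Borel, hence $\barsigma11$ (since every Borel set is analytic), hence its projection to $X$ is $\barsigma11$. For the inductive step, use that Borel preimages commute with complementation (handling $\barpi1n$) and with the projection definition of $\barsigma1{n+1}$ applied to the Borel map $f\times\mathrm{id}$.

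For part (2), the plan is the standard diagonal argument, for which I would first need a $\barsigma1n$-universal subset $U\subseteq X\times X$: a $\barsigma1n$ set such that every $\barsigma1n$ subset of $X$ arises as a section $U_a=\{x:(x,a)\in U\}$. Granting this, define $D=\{x\in X:(x,x)\in U\}$; this is the preimage of $U$ under the continuous diagonal map, so $D\in\barsigma1n$ by part (3). If we had $\barpi1n\subseteq\barsigma1n$, then $X\setminus D$ would be $\barsigma1n$, hence $X\setminus D=U_a$ for some $a$, giving the contradiction $a\in D\iff (a,a)\in U\iff a\in U_a\iff a\notin D$. The symmetric argument handles $\barsigma1n\not\subseteq\barpi1n$, and monotonicity from (1) upgrades non-self-duality at level $n$ to properness between consecutive levels. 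The main obstacle is the construction of the universal $\barsigma1n$ set: one would build it inductively, starting from a universal analytic set (obtained from a universal closed set in $X\times X\times\mathcal N$ via projection) and lifting through the projective hierarchy by alternately projecting and taking complements, using the fact that $X$ is uncountable Polish so that coding parameters into a single element of $X$ is available.
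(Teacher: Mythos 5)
The paper states this proposition without proof, so there is nothing to compare against directly; your proposal is the standard textbook argument (closure under Borel preimages by induction on the level, the $A\mapsto A\times\mathcal N$ trick for the cross-level inclusions, and universal sets plus diagonalization for properness), and it is essentially sound. Two points need repair, though. First, a small internal inconsistency: you announce the order $(1),(3),(2)$ while saying $(3)$ is needed for $(1)$; the intended order is clearly $(3),(1),(2)$ (or one proves closure under \emph{continuous} preimages separately first, which is all that $(1)$ needs). Second, and more substantively, in part $(1)$ the claim that the same-letter inclusions $\barsigma1n\subseteq\barsigma1{n+1}$ and $\barpi1n\subseteq\barpi1{n+1}$ ``follow by one more complementation step'' is wrong: complementation converts $\barpi1n\subseteq\barsigma1{n+1}$ into $\barsigma1n\subseteq\barpi1{n+1}$ and back again, and can never produce an inclusion with the same letter on both sides. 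The correct argument is to use the induction hypothesis one level down: if $A\in\barsigma1n(X)$ then $A$ is the projection of some $B\in\barpi1{n-1}(X\times\mathcal N)$, and since $\barpi1{n-1}\subseteq\barpi1n$ by the induction hypothesis, $A$ is also the projection of a $\barpi1n$ set, i.e.\ $A\in\barsigma1{n+1}$; complementing then gives $\barpi1n\subseteq\barpi1{n+1}$. This is a genuine (if easily fixed) gap in the simultaneous induction as you have set it up.

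One further caveat on part $(2)$: the diagonal argument requires an $X$-parametrized universal $\barsigma1n$ set, which exists only when $X$ is an \emph{uncountable} Polish space (for countable $X$ every subset is Borel and the hierarchy collapses, so the statement as printed is actually false without that hypothesis). You correctly invoke uncountability in the universal-set construction, but you should make it an explicit hypothesis, mirroring what the paper does for the analogous Borel-hierarchy statement in Proposition \ref{borel_prop_1}. With these repairs the proof goes through.
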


\subsection{Regularity properties of the projective pointclasses}

By this point we've established that all three regularity properties hold for all $\bardelta11$ and for all $\barsigma11$ pointsets. It follows, trivially, that all $\barpi11$ pointsets are universally measurable. Finally, the following fact is known.
\begin{proposition}
All $\barpi11$ pointsets have the Baire property.
\end{proposition}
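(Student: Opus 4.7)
The plan is essentially to observe that this proposition is an immediate consequence of results already established in the paper, and in particular should be viewed in parallel with the remark made just above it that universal measurability of $\barpi11$ pointsets follows trivially from universal measurability of $\barsigma11$ pointsets. Concretely, the previous proposition in the excerpt tells us that every analytic (that is, $\barsigma11$) subset of a Polish space has the Baire property. Earlier in the paper it was recorded that for any topological space $X$ the collection of subsets of $X$ with the BP forms a $\sigma$-algebra (in fact the smallest $\sigma$-algebra containing the open sets and the meager sets). In particular, this class is closed under complementation.

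Given this, the argument is a single step: let $A \subseteq X$ be $\barpi11$. By the definition of the projective hierarchy, $X \setminus A$ is $\barsigma11$, hence analytic, hence has the BP. Since the class of BP sets is closed under complements, $A = X \setminus (X \setminus A)$ also has the BP.

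If one wished to avoid invoking the $\sigma$-algebra property directly and instead argue from the definition, one could unpack as follows. Because $X\setminus A$ has the BP, there is an open set $O \subseteq X$ such that $(X\setminus A) \bigtriangleup O$ is meager. Taking complements and using the identity $X\setminus(B \bigtriangleup C) = (X\setminus B)\bigtriangleup C$, one obtains $A \bigtriangleup O$ meager as well; but $A$ is an open-symmetric-difference-meager set exactly when $A\bigtriangleup U$ is meager for some open $U$, so writing $A\bigtriangleup (X\setminus O) = (A\bigtriangleup O) \bigtriangleup (O \bigtriangleup (X\setminus O))$ and noting that $O \bigtriangleup (X\setminus O) = X \setminus \partial O$ up to the nowhere-dense boundary, one replaces $X\setminus O$ by $\mathrm{int}(X\setminus O)$ at the cost of a nowhere dense (hence meager) error; the resulting symmetric difference $A \bigtriangleup \mathrm{int}(X\setminus O)$ is meager, witnessing the BP for $A$.

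There is no genuine obstacle here: the substance of the result lies entirely in the earlier proposition asserting that analytic sets have the BP (which is the nontrivial theorem of Luzin), together with the elementary fact that the BP is preserved under complementation. This is precisely why the author presents it as a short ``it is known'' corollary rather than as a theorem requiring independent proof.
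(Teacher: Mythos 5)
Your main argument is correct and is clearly the intended derivation: the paper states this proposition without proof, but it follows exactly as you say from the earlier proposition that analytic sets have the BP together with the already-recorded fact that the sets with the BP form a $\sigma$-algebra (hence are closed under complements), in perfect parallel with the way universal measurability of $\barpi11$ sets was dispatched a line earlier.

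One caveat about your optional ``unpacking'' in the third paragraph: it is garbled. From the hypothesis that $(X\setminus A)\bigtriangleup O$ is meager one does \emph{not} obtain that $A\bigtriangleup O$ is meager --- that set is the complement of $(X\setminus A)\bigtriangleup O$, hence comeager rather than meager in general. The correct identity is $(X\setminus A)\bigtriangleup O = A\bigtriangleup(X\setminus O)$; likewise $O\bigtriangleup(X\setminus O)$ is all of $X$, not $X\setminus\partial O$. The standard repair is then to observe that the closed set $X\setminus O$ differs from its interior by the nowhere dense set $\partial O$, so $A\bigtriangleup \mathrm{int}(X\setminus O)$ is meager and the open set $\mathrm{int}(X\setminus O)$ witnesses the BP for $A$. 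Since you offer that paragraph only as an alternative to invoking the $\sigma$-algebra property, and that property is already established in the paper, your proof stands; just delete or fix the aside.
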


\noindent As it turns out, nothing else can be proven (in ZFC) for the higher projective pointclasses. In 1938 (\cite{God38}), G\"odel announced the following result.
\begin{proposition}
If $V=L$, then:
\begin{enumerate}
\item there is a $\bardelta12$ set of reals which is not Lebesgue measurable, and
\item there is a $\barpi11$ set of reals which does not have the perfect set property.
\end{enumerate}
\end{proposition}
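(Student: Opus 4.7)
The strategy rests on two principles established by Gödel's analysis of the constructible universe: (a) under $V = L$ there is a canonical well-ordering $<_L$ of $\real$ which is $\barsigma12$ (and, by totality on $\real$, $\bardelta12$) and has order type $\omega_1$; (b) $V = L$ implies the Continuum Hypothesis, so $\aleph_1 = \mathfrak{c}$. I would take both as black boxes, citing \cite{Jec00}, and build both parts on top of them.

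For part (1), I would carry out the Vitali construction but use $<_L$ to make the choice function definable. Set
\[
V_L = \{x \in \real : \forall q \in \mathbb{Q} \setminus \{0\} \,\, x <_L x + q\},
\]
the collection of $<_L$-minimal representatives of the equivalence classes $x \sim y \iff x - y \in \mathbb{Q}$. Since $\mathbb{Q}$ is countable and both $\barsigma12$ and $\barpi12$ are closed under countable intersections (a routine consequence of the definitions), the defining condition --- a countable conjunction of $\bardelta12$ relations --- is again $\bardelta12$, so $V_L \in \bardelta12$. The argument already given in the earlier Vitali-set example --- translation-invariance of Lebesgue measure combined with $\real = \bigcup_{q \in \mathbb{Q}}(q \oplus V_L)$ and pairwise disjointness of the translates --- then shows that $V_L$ cannot be Lebesgue measurable.

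For part (2), I would invoke the classical result that there is a largest thin $\barpi11$ set $C_1 \subseteq \real$, and in ZFC this set has cardinality at most $\aleph_1$; under the hypothesis $\omega_1 = \omega_1^L$, which is automatic under $V = L$, the cardinality is exactly $\aleph_1$. Since $V = L$ additionally yields CH, we have $|C_1| = \aleph_1 = \mathfrak{c}$ and in particular $C_1$ is uncountable, while by the very meaning of ``thin'' it contains no perfect subset. A concrete realization takes $C_1$ to be the set of reals which code, via the Shoenfield tree, minimal constructibility ranks of countable ordinals.

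The principal obstacle throughout is the complexity accounting: verifying that $<_L$ is genuinely $\bardelta12$, and that the witnessing $\barpi11$ set has cardinality exactly $\aleph_1$, both rest on the fine-structure theory of $L$ (condensation, $\Sigma_1$-uniformization in $L_\alpha$, Shoenfield absoluteness), which is not developed in this paper. I would therefore present the proposition essentially as a citation of Gödel's original theorem \cite{God38}, carrying out the Vitali-style construction in part (1) in full once $<_L$ is available, and delegating the verification for part (2) to \cite{Jec00} or \cite{Mos94}.
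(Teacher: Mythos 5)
The paper gives no proof of this proposition at all: it is stated as a result announced by G\"odel and left as a citation to \cite{God38}, so your closing recommendation --- to present it essentially as a citation, delegating the fine structure of $L$ to \cite{Jec00} or \cite{Mos94} --- coincides exactly with the paper's treatment. The sketch you supply on top of that is a correct outline of the standard modern argument and goes beyond anything in the paper. For part (1): the passage from a $\barsigma12$ well-ordering to a $\bardelta12$ one via totality, the closure of $\barsigma12$ and $\barpi12$ under countable intersections, and the observation that the set of $<_L$-least coset representatives is a definable Vitali set are all sound; the paper's own non-measurability argument (the null case via $\real=\bigcup_{q\in\mathbb Q}(q\oplus V_L)$, the positive-measure case via Proposition \ref{diff_set_1}) then applies with only the trivial adjustment from $[0,1]$ to $\real$. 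For part (2): invoking the largest thin $\barpi11$ set is an anachronism relative to G\"odel --- that object belongs to the Guaspari--Kechris--Sacks theory of the 1970s --- and the hypothesis it needs is $\omega_1^L=\omega_1$ rather than full $V=L$; the route closer to G\"odel's own builds an uncountable thin $\barpi11$ set directly from the $\barsigma12$ well-ordering, but your version does yield the stated conclusion. You correctly identify the genuinely hard content, namely that $<_L$ is $\barsigma12$ (condensation, Shoenfield absoluteness), as lying outside the tools developed in this paper, which is precisely why the paper states the result without proof.
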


\noindent Regarding the Baire property, the following result is known (according to Kanamori \cite{Kan_HI}, the first known explicit proof of this result appeared in Mycielski \cite{Myc64}).

\begin{proposition}
If $V=L$, then there is a $\bardelta12$ set of reals without the Baire property.
\end{proposition}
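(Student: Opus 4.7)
The plan is to construct a Bernstein-like set $B$ whose definition is $\bardelta12$ under $V=L$, and then invoke the example proved earlier in this paper that no Bernstein set has the BP. The whole argument uses only machinery already developed: a definable well-ordering of the reals in $L$, the Cantor-Bendixson analysis of uncountable closed sets (Proposition \ref{card_of_perfect_sets}), and the complexity calculus of the projective hierarchy.

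First I would fix the canonical constructibility well-ordering $<_L$ restricted to reals. Under $V=L$ this well-ordering has order type $\omega_1$ and admits a $\barsigma12$ definition of the shape ``$x <_L y$ iff there exists a countable well-founded model (coded as a real) of a sufficient fragment of $ZF+V=L$ in which $x$ appears before $y$''. Since $<_L$ is a total well-ordering, $x <_L y$ is equivalent to $x \neq y \land \neg (y <_L x)$, which is $\barpi12$, so $<_L$ is in fact $\bardelta12$. Using $<_L$ I would enumerate the codes (pruned trees, via Proposition \ref{tf_map}) for the uncountable closed subsets of $\real$ as $(F_\alpha)_{\alpha<\omega_1}$, and then define sequences $(x_\alpha)_{\alpha<\omega_1}, (y_\alpha)_{\alpha<\omega_1}$ by transfinite recursion: at stage $\alpha$ let $x_\alpha, y_\alpha$ be the $<_L$-least pair of distinct elements of $F_\alpha \setminus \{x_\beta, y_\beta : \beta < \alpha\}$. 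This is possible since $|F_\alpha|=\mathfrak{c}=\aleph_1$ by Proposition \ref{card_of_perfect_sets} while only countably many points have been removed so far. Setting $B=\{x_\alpha:\alpha<\omega_1\}$, the set $B$ meets every uncountable closed subset of $\real$ in both itself and its complement, so it is a Bernstein set.

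The main obstacle is the complexity analysis: verifying $B\in\bardelta12$. The idea is to express ``$x\in B$'' as: there exists a countable transitive well-founded $L$-model $M$ (coded by a real), containing $x$ and a real coding some countable ordinal $\alpha$, such that the restriction of $<_L$ to $M$ is correctly computed and, when the above recursion is run inside $M$ for $\alpha$ many stages, the $\alpha$th choice $x_\alpha$ equals $x$. The existence of such an $M$ is a function quantifier followed by an arithmetical matrix in $<_L$, hence $\barsigma12$; absoluteness of the construction between $M$ and $L$ (which holds because $\omega_1^L=\omega_1$ under $V=L$) guarantees correctness. A symmetric $\barsigma12$ definition for $\real\setminus B$ is obtained by either witnessing $x=y_\alpha$ or witnessing that $x$ is enumerated into some $F_\alpha$ but is bypassed by the recursion. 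Thus both $B$ and its complement are $\barsigma12$, so $B\in\bardelta12$.

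The final step is immediate: the example earlier in the paper showing that Bernstein sets do not have the BP applies verbatim to $B$, so $B$ is a $\bardelta12$ set of reals without the Baire property. The delicate point on which I expect to spend the most care is the second paragraph's claim that the recursion can be encoded without ever using a quantifier stronger than $\exists^{\mathcal N}$ over a $\barpi11$ matrix; this is where the fine structure of $L$, together with the $\bardelta12$-ness of $<_L$ itself, really does the work.
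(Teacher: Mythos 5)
The paper does not actually prove this proposition; it is stated as a known result with a citation to Mycielski, so there is no in-text argument to compare yours against. On its own terms, your overall strategy (build a definable Bernstein set from the $\bardelta12$ well-ordering $<_L$ and then quote the paper's earlier example that Bernstein sets lack the BP) is a legitimate and standard route, and your first two paragraphs --- the $\barsigma12$, hence $\bardelta12$, definition of $<_L$ and the transfinite construction of $B$ --- are fine.

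The gap is in the complexity analysis of the complement, and it is a genuine mathematical obstacle, not a matter of careful coding. Your proposed certificate for $x\notin B$, namely that ``$x$ is enumerated into some $F_\alpha$ but is bypassed by the recursion,'' certifies nothing: every real lies in continuum many uncountable closed sets, so a real bypassed at stage $\alpha$ may still be selected as $x_{\alpha'}$ at a later stage $\alpha'$ by a closed set whose code is constructed far above any countable model containing $x$. For the same reason the dual $\barpi12$ description of $B$ fails: a suitable countable $L_\gamma\ni x$ reproduces the recursion only up to the stages whose closed sets are coded in $L_\gamma$, so $L_\gamma\models x\notin B$ is compatible with $x\in B$. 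As written, your construction yields only $B\in\barsigma12$, whereas the proposition demands $\bardelta12$. The standard repair is to interleave the $<_L$-enumeration of all reals into the recursion: at stage $\alpha$, besides choosing $x_\alpha,y_\alpha\in F_\alpha$ avoiding everything previously decided, you also declare the $\alpha$-th real to be outside $B$ if its membership has not yet been settled; then each real's fate is fixed by a countable stage computable inside any sufficiently closed countable $L_\gamma$ containing it, and both $B$ and its complement become $\barsigma12$ by the usual condensation argument (which, incidentally, rests on condensation and on the reals of $L_\gamma$ forming a $<_L$-initial segment, not on $\omega_1^L=\omega_1$). Alternatively, you could sidestep the difficulty entirely by taking the set of $<_L$-least representatives of the Vitali classes: each class is countable and recursive in any of its members, so minimality is decided inside any countable $L$-level containing $x$, the resulting set is $\bardelta12$ with no interleaving needed, and the paper already shows that Vitali sets lack the BP.
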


\noindent A simple consequence of the above results is that it is not possible to prove in ZFC either of the following statements:
\begin{enumerate}
\item all $\bardelta12$ pointsets are Lebesgue measurable;
\item all $\bardelta12$ pointsets have the Baire property;
\item all $\barpi11$ pointsets have 
he perfect set property.
\end{enumerate}

\noindent It is still possible to prove results concerning the regularity properties of the higher pointclasses of the projective hierarchy, however, in Dana Scott's words, ``if you want more you have to assume more''. That is, other, stronger assumptions are needed and thus most such results have a strong metamathematical flavour.

\section{The classical-effective correspondence}\label{effective_subsection}

The subject of effective descriptive set theory is vast and lies outside of scope of the present paper. However, the correspondence between classical and effective theories, discovered by Addison, is too important to be left out. In this section we define effective hierarchies and present Addison's result. For this section alone we assume some rudimentary familiarity with logical and computability notions.

\emph{Second-order arithmetic} is assumed be the following two-sorted structure:
$$\mathcal A^2=\langle \omega,\baire,ap,+,\times, <,0,1 \rangle,$$
where $\omega$ and $\baire$ are two separate domains and $ap:\baire\times\omega\rightarrow \omega$ is a binary operation of \emph{application}:
$$ap(x,n)=x(n).$$

$+,\times, <,0,1$ have the usual meaning. The underlying language features two sorts of variables:
\begin{enumerate}
 \item $v_1^0, v_2^0,\dots$. ranging over $\omega$ and 
 \item $v_1^1, v_2^1,\dots$. ranging over $\baire$, 
\end{enumerate}
 and two corresponding types of quantifiers: \emph{number quantifiers} $\exists^0,\forall^0,$  and \emph{function quantifiers} $\exists^1,\forall^1$. 
 
\emph{Bounded quantifiers} are those of form $\left(\exists^0v_i^0<n\right)~ \phi$, a shortcut notation for $\exists^0v_i^0~ \left(v_i^0<n ~\land~ \phi\right)$, and $\left(\forall^0v_i^0<n\right)~ \phi$, similarly, a shortcut for $\forall^0v_i^0~ \left(v_i^0<n \implies \phi\right)$.

In this section we identify sets with relations in the usual way, thus for some $k,n\in\omega$ and $A\subseteq \omega^k\times {\left(\baire\right)}^n$ we have $$(m_1,\dots,m_k,f_1,\dots,f_n)\in A \text{~~iff~~} A(m_1,\dots,m_k,f_1,\dots,f_n).$$

We are interested in definability notions related to this structure and all such notions are for subsets of sets of form $\omega^k\times {\left(\baire\right)}^n$. 
\begin{definition}
A set/relation $A\subseteq \omega^k\times {\left(\baire\right)}^n$ is \emph{definable} in $\mathcal A^2$ if there is a formula $\phi[m_1,\dots,m_k,f_1,\dots,f_n]$ such that for any $m_1,\dots, m_k\in\omega$ and any $f_1,\dots,f_n\in\baire,$ $$A(m_1,\dots,m_k,f_1,\dots,f_n)\text{~~iff~~} \mathcal A^2 \models \phi[m_1,\dots,m_k,f_1,\dots,f_n].$$
We say that $A$ is \emph{arithmetical} iff $A$ is definable in $\mathcal A^2$
by a formula without function quantifiers.
A is \emph{analytical} iff $A$ is definable in $\mathcal A^2$.
\end{definition}

Now we can introduce two definability hierarchies that classify relations/subsets according to their quantifier complexity.

\begin{definition}[Arithmetical hierarchy]\label{arithmetical_hd} We define recursively $\ardelta0n, \arsigma0n$ and $\arpi0n$ - three countable sequences of pointclasses.

Let $k,n\in\omega$ and $A\subseteq \omega^k\times {\left(\baire\right)}^n$. Then
\begin{enumerate}
\item  $A\in \ardelta00$ iff $A$ is definable in $\mathcal A^2$
by a formula $\phi[\mathbf w]$ whose only quantifiers are bounded,
\item $A\in \arsigma0n$ iff it is definable by $\exists^0 m_1\forall^0 m_2\dots \mathcal Q^0m_n~\phi[m_1,\dots,m_n,\mathbf w]$ where $\phi$ has only bounded quantifiers and $\mathcal Q^0$ is $\exists^0$ if $n$ is odd and $\forall^0$ otherwise,
\item $ A\in \arpi0n$ iff it is definable by $\forall^0 m_1\exists^0 m_2\dots \mathcal Q^0m_n~\phi[m_1,\dots,m_n,\mathbf w]$ where $\phi$  has only bounded quantifiers and $\mathcal Q^0$ is $\exists^0$ if $n$ is even and $\forall^0$ otherwise, 
\item $\ardelta0n=\arsigma0n~\cap~\arpi0n$.

\end{enumerate}
\end{definition}

Note that existential number quantifiers correspond to countable unions and universal number quantifiers correspond to countable intersections. 

It is easy to see that the above hierarchy classifies all arithmetical sets. Similarly, all analytical sets can be classified with the use of the following (lightface) hierarchy.

\begin{definition}[Analytical hierarchy]\label{analytical_hd} We define recursively $\ardelta1n, \arsigma 1n$ and $\arpi 1n$ - three countable sequences of pointclasses.

Let $k,n\in\omega$ and $A\subseteq \omega^k\times {\left(\baire\right)}^n$. Then
\begin{enumerate}
\item  $\arsigma10=\arsigma01$ and $\arpi10=\arpi01$, 
\item $A\in \arsigma1n$ iff it is definable by $\exists^1 f_1\forall^1 f_2\dots \mathcal Q^1f_n~\phi[f_1,\dots,f_n,\mathbf w]$ where $\phi$ has only number quantifiers and $\mathcal Q^1$ is $\exists^1$ if $n$ is odd and $\forall^1$ otherwise,
\item $ A\in \arpi 1n$ iff it is definable by $\forall^1 f_1\exists^1 f_2\dots \mathcal Q^1f_n~\phi[f_1,\dots,f_n,\mathbf w]$ where $\phi$  has only number quantifiers and $\mathcal Q^1$ is $\exists^1$ if $n$ is even and $\forall^1$ otherwise,

\item $\ardelta1n=\arsigma1n~\cap~\arpi1n$.

\end{enumerate}
\end{definition}

\begin{remark}
\noindent The arithmetical and the analytical hierarchies are called \emph{lightface}, while the Borel and the projective hierarchies are called \emph{boldface}. The lightface notation was the original one used by Kleene to define both lightface hierarchies. The boldface notation for the classical hierarchies was introduced by Addison after he discovered the correspondence between classical and effective notions.
\end{remark}

Both lightface hierarchies were introduced with questions of definability in mind. However, the following simple fact shows the connection to computability theory.

\begin{proposition}
$\ardelta01$ is the set of all computable sets of natural numbers.
\end{proposition}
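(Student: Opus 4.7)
The plan is to establish the classical equivalence $\arsigma01 = \{\text{computably enumerable sets of naturals}\}$, from which the proposition follows by taking complements and intersecting. The argument splits naturally into three steps: ($\Delta^0_0$ formulas are decidable; $\arsigma01$ matches c.e.; then $\arsigma01 \cap \arpi01$ matches computable).

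First, I would verify that every $\ardelta00$ set is computable. Since the symbols $+,\times,<,0,1$ are computable, atomic formulas are decidable; Boolean connectives and bounded quantifiers $(\exists^0 m < n),(\forall^0 m < n)$ preserve decidability because they require only a finite search. Next, for ``$\arsigma01 \subseteq$ c.e.'', if $A = \{n : \exists^0 m\,\phi(m,n)\}$ with $\phi$ bounded, one enumerates $A$ by running through all pairs $(m,n)$ in an effective order and outputting $n$ whenever the decidable predicate $\phi(m,n)$ holds. For the reverse ``c.e.\ $\subseteq \arsigma01$'', I would invoke Kleene's normal form theorem: every c.e.\ set is of the form $\{n : \exists^0 s\,T(e,n,s)\}$, where $e$ is an index of an enumerating Turing machine and $T$ is the primitive recursive Kleene T-predicate (``$s$ codes a halting computation of machine $e$ on input coding $n$''). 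Since $T$ is primitive recursive, it is $\ardelta00$-definable in $\mathcal A^2$.

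Taking complements then gives $\arpi01 = $ co-c.e., so $\ardelta01 = \arsigma01 \cap \arpi01$ consists of sets that are simultaneously c.e.\ and co-c.e. The final step is the standard observation that such sets are exactly the computable ones: if $A$ and $\omega\setminus A$ are enumerated by machines $M_A$ and $M_{A^c}$ respectively, then to decide ``$n \in A$?'', dovetail $M_A$ and $M_{A^c}$ until one of them outputs $n$ -- exactly one eventually will. The converse is immediate: any computable set is both c.e.\ and co-c.e.

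The main obstacle is the Kleene normal form step, specifically verifying that the T-predicate is expressible by a $\ardelta00$ formula in the language of $\mathcal A^2$. This requires a coding of finite sequences (computation histories) by natural numbers such that the basic sequence operations are definable using only $+,\times,<$ and bounded quantifiers. The standard route is via G\"odel's $\beta$-function based on the Chinese Remainder Theorem, after which the step-by-step simulation of a Turing machine becomes a bounded-quantifier assertion about the code of the computation history. The remaining bookkeeping -- that the resulting formula really has only bounded quantifiers -- is routine but tedious, and this is where the bulk of the technical labour lies.
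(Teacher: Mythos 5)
The paper states this proposition without proof, treating it as a known fact, so there is no in-paper argument to compare against. Your argument is the standard one and its overall structure is sound: identify $\arsigma01$ with the computably enumerable sets, dualize to get $\arpi01 = {}$co-c.e., and conclude via the complementation theorem that $\ardelta01$ is exactly the class of computable sets; the $\ardelta00\Rightarrow{}$decidable step and the dovetailing arguments are all correct. One inference deserves correction: ``since $T$ is primitive recursive, it is $\ardelta00$-definable'' is not valid as a general principle, because the bounded-quantifier-definable relations over $+,\times,<$ form a proper subclass of the primitive recursive relations. What is true, and what you in fact go on to outline, is that the Kleene $T$-predicate specifically admits a $\ardelta00$ definition once computation histories are coded arithmetically via G\"odel's $\beta$-function; it is that direct verification, not the appeal to primitive recursiveness, that carries the ``c.e.\ $\subseteq\arsigma01$'' direction. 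With that one sentence deleted or replaced, the proof is complete modulo the routine arithmetization you already identify as the main technical burden.
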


This fact also partially explains the use of ``effective'' in \emph{effective descriptive set theory}.

Before we state the correspondence discovered by Addison, we need one final notion, \emph{relativization}. 

For $a\in\baire$, \emph{second-order arithmetic in $a$ }is $$\mathcal A^2(a)=\langle \omega,\baire,ap,+,\times, <,0,1,a \rangle,$$
where $a$ is regarded as a binary relation on $\omega$ and all other elements are exactly as in $\mathcal A^2$. Replacing $\mathcal A^2$ by $\mathcal A^2(a)$ in the preceding definitions we get the corresponding relativized notions: $\ardelta0n(a),\arsigma0n(a),\arpi0n(a),\ardelta1n(a),\arsigma1n(a),\arpi1n(a)$ et cetera.

\begin{remark}
The notion of relativization comes from computability theory (the modern name for recursion theory) where the concept of relative computations plays a prominent role. In particular, $\ardelta01(a)$ is the collection of all sets computable in $a$, that is computable with $a$ acting as an \emph{oracle}.
\end{remark}

Finally, here is the correspondence discovered by Addison:
\begin{theorem}
Let $A\subseteq \left(\baire\right)^k$ and let $n>0$. Then:
\begin{enumerate}
\item $A\in \barsigma0n$ iff $A\in\arsigma0n(a)$ for some $a\in\baire$, and similarly for $\barpi0n$.
\item $A\in \barsigma1n$ iff $A\in\arsigma1n(a)$ for some $a\in\baire$, and similarly for $\barpi1n$.
\end{enumerate}
\end{theorem}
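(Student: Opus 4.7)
The plan is to prove both parts by induction on $n$, handling each direction separately at each level, with the $\barpi$/$\arpi$ cases following the corresponding $\barsigma$/$\arsigma$ cases by complementation. The essential idea is that a parameter $a\in\baire$ can encode an arbitrary real's worth of classical information — an enumeration of basic open sets, a system of Borel codes, or a tree coding a closed set — which is precisely what lets a lightface formula (which by itself only sees computable data) recover a general boldface set.

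For part (1), base case $n=1$: fix a computable bijection $e:\omega\to\fbaire$. Any open $U\subseteq\baire$ has the form $U=\bigcup_{n\in S}\cylinder{e(n)}$ for some $S\subseteq\omega$; letting $a\in\ifstr\subseteq\baire$ be the characteristic function of $S$, we obtain
\[
x\in U \iff \exists^0 n\,\bigl(a(n)=1 \wedge e(n)\subseteq x\bigr),
\]
which is $\arsigma01(a)$ since the inner predicate has only bounded quantifiers. Conversely, any $\arsigma01(a)$ formula $\exists^0 n\,\phi(n,x,a)$ with bounded $\phi$ defines, for each fixed $n$, a clopen set depending on only finitely many coordinates of $x$ and of $a$; the union over $n$ is therefore open. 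Routine adjustment handles $(\baire)^k$, and the $\barpi01$ direction is obtained by taking complements.

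For the inductive step of (1), assume the correspondence holds at level $n$. If $A\in\barsigma0{n+1}$, write $A=\bigcup_{i\in\omega}B_i$ with each $B_i\in\barpi0n$; by induction each $B_i$ is $\arpi0n(a_i)$ for some $a_i\in\baire$. Using a computable pairing $\langle\cdot,\cdot\rangle$, amalgamate the $a_i$ into one oracle $a$ by $a(\langle i,m\rangle)=a_i(m)$. Since each $a_i$ is uniformly recoverable from $a$, the formula defining $B_i$ can be rewritten using $a$, giving $x\in A \iff \exists^0 i\,(x\in B_i)$, an $\arsigma0{n+1}(a)$ condition. Conversely, an $\arsigma0{n+1}(a)$ set has the form $\{x:\exists^0 m\,R(m,x)\}$ with $R\in\arpi0n(a)$, hence $\barpi0n$ by induction; the set is therefore a countable union of $\barpi0n$ sets, i.e.\ $\barsigma0{n+1}$. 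The $\barpi0{n+1}$ case is again dual.

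Part (2) follows a parallel pattern with function quantifiers in place of number quantifiers. For the base case, use the characterisation of analytic sets as projections of closed sets: given $A\in\barsigma11$, write $A$ as the projection of a closed $F\subseteq(\baire)^k\times\baire$, encode the pruned tree $T_F\subseteq\fbaire$ by its characteristic function as a real $a$, and observe
\[
x\in A \iff \exists^1 f\,\forall^0 n\,(\prefix xn,\prefix fn)\in T_a,
\]
which is $\arsigma11(a)$. Conversely, any $\arsigma11(a)$ set has form $\{x:\exists^1 f\,P(f,x)\}$ with $P$ arithmetical in $a$, hence $\barpi01$ by part (1); the set is then a projection of a closed set, so $\barsigma11$. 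The inductive step mirrors that of (1): $\barsigma1{n+1}$ sets are projections of $\barpi1n$ sets, which by induction are $\arpi1n(a)$, and the projection prepends $\exists^1$ to yield $\arsigma1{n+1}(a)$; the opposite direction, and the $\barpi$ cases, follow symmetrically. The main obstacle I anticipate is not a conceptual one but bookkeeping at the base case — specifically, checking that the encoded oracle allows a bounded-quantifier (respectively arithmetical-in-$a$) formula to genuinely re-express membership in the original set, and that the oracle-amalgamation in the inductive step preserves the declared quantifier structure uniformly in the oracle.
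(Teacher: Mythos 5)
The paper itself states this theorem without proof --- it is presented as a summary of material from Kanamori and Moschovakis --- so there is no in-paper argument to compare against; I am assessing your proposal on its own terms. The outline is the standard one and the base case of part (1), the base case of part (2), and the inductive step of part (2) are all essentially sound (in part (2) the inductive step is safe precisely because a $\barsigma1{n+1}$ set is the projection of a \emph{single} $\barpi1n$ set, so only one oracle and one formula are involved).

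The genuine gap is in the inductive step of part (1). You write $A=\bigcup_i B_i$ with each $B_i\in\arpi0n(a_i)$, amalgamate the oracles into $a$, and conclude that $x\in A\iff\exists^0 i\,(x\in B_i)$ is an $\arsigma0{n+1}(a)$ condition. But the induction hypothesis hands you a possibly \emph{different} formula $\phi_i$ for each $i$, and ``$\exists^0 i\,(x\in B_i)$'' is then an infinite disjunction of distinct formulas, which is not a formula of second-order arithmetic; encoding the $a_i$ into one oracle does not by itself produce a single matrix uniform in $i$. You need one of two standard repairs: either strengthen the induction hypothesis to a normal form in which the matrix is a \emph{fixed} decoding formula and only the oracle varies (e.g.\ carry along the representation of $\barpi0n$ sets via oracle-coded trees/cylinder data, as you implicitly do at $n=1$), or invoke the enumeration theorem for the relativized lightface classes, replacing each $\phi_i$ by a single universal $\arpi0n$ formula evaluated at a G\"odel number $e_i$ that you also fold into $a$. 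Your closing remark about amalgamation ``preserving the declared quantifier structure uniformly in the oracle'' gestures at this, but the issue is uniformity of the \emph{formula}, not of the quantifier prefix, and as written the step does not go through. A smaller slip: in the converse direction of the base case of part (2), a matrix that is merely ``arithmetical in $a$'' is Borel by part (1), not $\barpi01$; the conclusion that the projection is $\barsigma11$ still holds, since analytic sets are exactly projections of Borel sets, but the justification should say that.
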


\noindent Arguably, this result marked the beginning of what is now called \emph{effective descriptive set theory}.
Effective theory can be seen as a refinement of the classical theory. Apart from clarifying greatly the definability considerations, this development made it possible to use the whole range of effective methods in descriptive set theory. Many classical results have effective proofs that are considerably simpler than known classical proofs, and quite a few important results in descriptive set theory have only effective proofs known.

\section{Invariant Descriptive Set Theory}

Invariant  descriptive set theory is a relatively new area of research within descriptive set theory. This new area is mainly concerned with complexity of equivalence relations and equivalence classes. Historically the subject of equivalence relations entered descriptive set theory through a conjecture of Vaught.

\subsection{Polish groups}

In the process of development of invariant descriptive set theory it became apparent that a particular class of equivalence relations, orbit equivalence relations, is the single most important class of equivalence relations. This is why we start our overview with a subsection on Polish groups, the notion through which orbit equivalence relations are introduced.

\begin{definition}[Polish groups]
  A \emph{topological group} is a group $\left(G,\cdot, 1_G \right)$ with a topology on $G$ such that $(x,y)\mapsto xy^{-1}$ is continuous.

A topological group is \emph{Polish} if its underlying topology is Polish.

A metric $d$ on $G$ is \emph{left-invariant} if $d(gh, gk)=d(h, k)$ for all $g, h, k\in G$.

\end{definition}
  
\noindent The following result characterises Polish subgroups of Polish groups.

\begin{proposition}
Let $G$ be a Polish group and let $H$ be a subgroup of $G$ with the subspace topology.
Then the following are equivalent:
\begin{enumerate}
\item $H$ is Polish,
\item $H$ is $\gdelta$ in $G$, and
\item $H$ is closed in $G$.
\end{enumerate}
\end{proposition}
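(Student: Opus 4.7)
The equivalences break into easy and hard halves. The implications $(3)\Rightarrow(1)$ and $(3)\Rightarrow(2)$ are immediate: a closed subspace of a Polish space is Polish by Proposition \ref{polish_basic}(2), and every closed subset of a metrizable space is $G_\delta$ (write it as the intersection of the open $1/n$-neighbourhoods of itself). For $(1)\Rightarrow(2)$, I would simply invoke Proposition \ref{chouqet_polish}: $H$ is dense in its closure $\overline{H}$, which is a closed subspace of the Polish space $G$ and therefore Polish, so if $H$ is Polish then $H$ is $\gdelta$ in $\overline{H}$, and since $\overline{H}$ itself is closed (hence $\gdelta$) in $G$, we obtain that $H$ is $\gdelta$ in $G$.

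The substance of the proposition is $(2)\Rightarrow(3)$. My plan is to pass to the closure $\overline{H}$ and show that $H$ must already equal $\overline{H}$. Note first that $\overline{H}$ is a subgroup of $G$, since continuity of the group operations implies that the closure of a subgroup is a subgroup. Then $\overline{H}$ is a closed subspace of the Polish group $G$, hence itself a Polish group, and $H$ is a $\gdelta$ subgroup of $\overline{H}$ which is dense in $\overline{H}$.

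The key step is then a Baire category argument: a dense $\gdelta$ subset of a Polish space is comeager. Fix any $g\in\overline{H}$. Since left multiplication by $g$ is a homeomorphism of $\overline{H}$, the translate $gH$ is also a dense $\gdelta$ subset of $\overline{H}$, hence also comeager. Two comeager sets in a nonempty Baire space must meet, so there exist $h_1,h_2\in H$ with $h_1=gh_2$, giving $g=h_1h_2^{-1}\in H$. Thus $g\in H$ for every $g\in\overline{H}$, so $H=\overline{H}$ and $H$ is closed in $G$.

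The main obstacle is the $(2)\Rightarrow(3)$ implication, and within it the crucial observation that the subgroup structure lets one translate the dense $\gdelta$ set $H$ to cover every point of $\overline{H}$; without the algebraic hypothesis, a dense $\gdelta$ subset of a Polish space need not be the whole space (indeed, the irrationals are a dense $\gdelta$ subset of $\real$). The rest is a clean application of Baire category together with the elementary fact that continuous group operations propagate to the closure.
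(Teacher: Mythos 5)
Your argument is correct and complete; the paper itself states this proposition without proof, so there is nothing to compare against line by line. What you give is the standard argument: the easy implications from closedness, the use of Proposition \ref{chouqet_polish} to get $(1)\Rightarrow(2)$ via the closure, and for the essential implication $(2)\Rightarrow(3)$ the Baire-category translation argument (a dense $\gdelta$ subgroup $H$ of the Polish group $\overline{H}$ is comeager, every coset $gH$ with $g\in\overline{H}$ is likewise comeager, and two comeager sets in a nonempty Baire space meet, forcing $g\in H$). The one step worth making explicit is that $H$ being $\gdelta$ in $G$ yields $H$ $\gdelta$ in $\overline{H}$ by intersecting the witnessing open sets with $\overline{H}$; with that said, the proof is clean and correctly isolates where the group structure is indispensable.
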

  
\begin{definition}[Group actions]
  Let $G$ be a group and let $X$ be a set. An action of $G$ on $X$ is a map $$a:G\times X\rightarrow X$$ such that for all $x\in X$ and $g,h\in G$,
\begin{enumerate}
\item $a(1_G,x)=x$,
\item $a(g, a(h, x)) =a(gh, x)$.
\end{enumerate} 
\end{definition}

\noindent Since actions are often canonical, they are not mentioned and $g\cdot x$ is used as a shortcut for $a(g,x).$

\begin{definition}[Orbits and stabilizers]\label{orbits_and_stabilizers_d}
Let a group $G$ act on a set $X$. For each $x\in X$, the \emph{$G-$orbit of $x$}, denoted $[x]_G$ or $G\cdot x$, is the set $\{g\cdot x : g\in G\}$. A subset $A$ of $X$ is \emph{$G-$invariant} if $G\cdot x\subseteq A$ for every $x\in A$.

Furthermore, for every $x\in X$, the \emph{stabilizer of $x$}, denoted $G_x$, is the set $\{g\in G: g\cdot x=x\}$.
\end{definition}

\begin{definition}
Let $G$ be a Polish group, $X$ a standard Borel space, and $a: G\times X\rightarrow X$ an action of $G$ on $X$. If $a$ is a Borel function then we say that $X$ is a (standard) Borel $G-$space.
\end{definition}

The following result characterises stabilizers and orbits induced by Borel actions of Polish groups on standard Borel spaces.

\begin{theorem}
Let $G$ be a Polish group and $X$ a Borel $G-$space. Then for any $x\in X$, $G_x$ is closed and $G\cdot x$ is Borel.
\end{theorem}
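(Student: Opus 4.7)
The plan is to reduce to the case of a jointly continuous action via a change-of-topology argument, after which both conclusions become routine applications of Luzin--Suslin plus elementary facts about Polish groups.

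\emph{Step 1 (reduction to a continuous action).} I would first invoke the Becker--Kechris theorem (see \cite{Bec96}): given any Borel action of a Polish group $G$ on a standard Borel space $X$, there is a Polish topology on $X$ compatible with the given Borel structure such that the action $a : G\times X\to X$ becomes jointly continuous. This reduction leaves the topology of $G$, the Borel structure of $X$, and the stabilizers and orbits (as sets) unchanged, so I may assume from now on that the action is continuous.

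\emph{Step 2 (closedness of $G_x$).} With the action jointly continuous, the map $\phi_x : G\to X$ defined by $\phi_x(g)=g\cdot x$ is continuous, since it factors as $g\mapsto (g,x)\mapsto a(g,x)$. In a Polish space singletons are closed, so $G_x=\phi_x^{-1}(\{x\})$ is closed in $G$.

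\emph{Step 3 (Borelness of $G\cdot x$).} Since $G_x$ is a closed subgroup of a Polish group, the coset space $G/G_x$ with its quotient topology is Polish and the projection $\pi:G\to G/G_x$ is a continuous open surjection. The map $\phi_x$ factors through $\pi$, giving a well-defined map $\bar\phi_x : G/G_x\to X$ with $\bar\phi_x(gG_x)=g\cdot x$. It is injective because $g_1\cdot x=g_2\cdot x$ forces $g_2^{-1}g_1\in G_x$, and it is continuous by the universal property of the quotient topology. Thus $\bar\phi_x$ is a continuous, hence Borel, injection between standard Borel spaces, and Theorem \ref{borel_injective} (Luzin--Suslin) gives that its image $G\cdot x=\bar\phi_x(G/G_x)$ is Borel in $X$.

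The main obstacle is Step 1: the Becker--Kechris change-of-topology theorem is a substantial result (its proof uses Vaught transforms and an iterative refinement of the topology) and it is what bridges the gap between the merely Borel hypothesis on $a$ and the continuity needed to get a \emph{closed} stabilizer rather than just a Borel one. Once continuity is available, Step 2 is immediate, and Step 3 is the standard trick of replacing $G\to G\cdot x$ by an injection out of $G/G_x$ so as to bring Luzin--Suslin to bear.
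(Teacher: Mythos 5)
Your argument is correct; note, though, that the paper states this result (it is Miller's theorem) without giving any proof, so there is nothing in the text to compare against directly. Your Step 3 is the standard argument: once $G_x$ is known to be closed, $G/G_x$ is Polish, the orbit map descends to a continuous injection $\bar\phi_x\colon G/G_x\to X$, and Theorem \ref{borel_injective} gives that $G\cdot x=\bar\phi_x(G/G_x)$ is Borel; you also correctly record the two small checks this needs (continuity of $\bar\phi_x$ via the open quotient map, and that the change of topology on $X$ alters neither its Borel structure nor the orbits and stabilizers as sets). The only point worth flagging is that Step 1 invokes the Becker--Kechris topological realization theorem, which is a legitimate and non-circular citation (its proof via Vaught transforms does not presuppose closedness of stabilizers) but is considerably heavier than what the stabilizer claim requires: Miller's original argument derives closedness of $G_x$ directly from the Borel action by a Baire-category/Pettis-type argument applied to the sets $\{g\in G: g\cdot x\in B\}$ for $B$ ranging over a countable point-separating family of Borel sets, each such set having the Baire property and being invariant under right translation by elements of $G_x$. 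Either route is acceptable; yours trades elementarity for a clean two-line deduction once the big theorem is in hand.
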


\begin{definition}
Let $G$ be a Polish group. A Borel $G-$space $X$ is universal if for any other Borel $G-$space $Y$, there is a Borel embedding of $Y$ into $X$.
\end{definition}

The following useful result has been proved in \cite{Bec96} (Theorem 2.6.6):
\begin{proposition}\label{compact_universal}
For any Polish group $G$, there is a universal Borel $G-$space which is moreover a compact Polish $G-$ space.
\end{proposition}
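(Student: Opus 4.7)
The plan is to produce the desired space in two stages: first construct a (Polish, not yet compact) universal Borel $G$-space, and then enlarge it to a compact Polish $G$-space while preserving universality. Throughout I would freely use the Becker-Kechris topological refinement theorem --- an equivariant analogue of Proposition \ref{borel_to_continuous} --- which asserts that every Borel $G$-space admits a finer Polish topology with the same Borel $\sigma$-algebra in which the action becomes continuous.

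For the first stage I would use the Effros Borel space $F(G)$ equipped with the left-translation action $g \cdot F = gF$. This action is Borel because each generator $\{F : F \cap U \neq \emptyset\}$ of the Effros $\sigma$-algebra is mapped, under left translation by $g^{-1}$, to $\{F : F \cap gU \neq \emptyset\}$, which is a generator of the same form. To see that a countable power of $F(G)$ is universal, let $Y$ be an arbitrary Borel $G$-space. Since $(Y, \mathcal{B}(Y))$ is countably generated and separates points, choose a countable family $\{B_n\}$ of Borel subsets of $Y$ separating points, and define a map $Y \to F(G)^\omega$ by sending $y$ to the sequence of closures $\overline{\{g \in G : g^{-1} \cdot y \in B_n\}}$, where $F(G)^\omega$ carries the diagonal action. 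Choosing the $B_n$ to be closed under $G$-translates by elements of a countable dense subgroup of $G$ allows one to arrange injectivity and equivariance simultaneously.

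Then I would promote this universal Polish $G$-space to a compact Polish one. Applying the topological refinement theorem to $F(G)^\omega$ produces a continuous Polish $G$-space $Z$ with the same Borel sets. Embedding $Z$ into the Hilbert cube $\mathbb{I}^\omega$ via a countable family of bounded, left-uniformly-continuous $[0,1]$-valued functions separating points of $Z$ gives a continuous injection $Z \hookrightarrow \mathbb{I}^\omega$, and the closure of its image is compact Polish. Composition of the universality of $F(G)^\omega$ with the topological refinement and this Hilbert-cube embedding shows the compact closure inherits universality from $F(G)^\omega$, provided one can equip the closure with a compatible $G$-action.

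The main obstacle is precisely this last point: extending the $G$-action continuously from the image of $Z$ to its closure inside $\mathbb{I}^\omega$. For each fixed $g$, the action should correspond to a continuous coordinate rearrangement on $\mathbb{I}^\omega$, which then extends uniquely to the closure. To arrange this I would close the family of embedding functions under composition with the action of a countable dense subgroup $\{g_k\} \subseteq G$, so that each $g_k$ acts on the image as a uniformly continuous coordinate shift; uniform continuity with respect to the Hilbert-cube metric, together with a compatible left-invariant metric on $G$ supplied by the Birkhoff-Kakutani theorem, then allows the action to be extended from the dense subgroup $\{g_k\}$ to all of $G$ by completion. Verifying that the resulting compact Polish $G$-space is indeed a Borel $G$-space and that the composite map $Y \to F(G)^\omega \to Z \to \mathbb{I}^\omega$ remains an equivariant Borel embedding is then a matter of routine checking.
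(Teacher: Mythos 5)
The paper does not prove this proposition; it is quoted from Becker--Kechris \cite{Bec96} (Theorem 2.6.6), so your attempt can only be measured against the argument there. Your overall architecture is in fact the right one --- stage one is essentially the Becker--Kechris universal Borel $G$-space theorem for $F(G)^\omega$ with the translation action, and the idea of using a left-invariant metric and uniform continuity to control the compactification is the germ of their Theorem 2.6.6 --- but both stages have genuine gaps at exactly the points where the real work lives. In stage one, closing the family $\{B_n\}$ under translates by a countable dense subgroup does not give injectivity, and, worse, the map $y\mapsto \overline{\{g: g^{-1}\cdot y\in B_n\}}$ is not obviously Borel: the condition $\overline{A}\cap U\neq\emptyset$ unwinds to an existential quantifier over $g\in U$, which a priori only yields an analytic set. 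The actual proof first refines the topology of $Y$ so that the action is continuous and the $B_n$ are open (so that $\{y:\exists g\in U\ g^{-1}\cdot y\in B_n\}$ becomes open), and injectivity is obtained by taking $\{B_n\}$ to be a countable Boolean algebra closed under the Vaught transforms $B^{\Delta U}$ over a countable basis of $G$ --- a Baire-category argument, not a routine separation-of-points argument.

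The more serious gap is in stage two. Your claim that ``for each fixed $g$, the action corresponds to a continuous coordinate rearrangement'' holds only for $g$ in the countable subgroup under which you closed the family of embedding functions; for a general $g\in G$ the map $f_n\mapsto f_n\circ g$ leaves the countable family, so the action on the image is not a rearrangement at all. Extending the action from the dense subgroup $\{g_k\}$ to all of $G$ on the closure $K$ requires an estimate of the form $\rho\bigl(g\cdot x, h\cdot x\bigr)\le \omega\bigl(d_G(g,h)\bigr)$ with modulus $\omega$ independent of $x\in K$, i.e.\ $|f_n(g\cdot z)-f_n(h\cdot z)|$ must be controlled by $d_G(g,h)$ uniformly in $z$ and $n$. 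Choosing functions on $Z$ that are merely ``left-uniformly-continuous'' does not produce this; such functions must be manufactured, e.g.\ by infimal convolution of bounded continuous functions on $Z$ with a left-invariant metric on $G$. This is precisely why Becker and Kechris do not take a closure in the Hilbert cube at all: they build the compact universal space directly as a space of $1$-Lipschitz maps from $(G,d)$ into the Hilbert cube with the topology of pointwise convergence, which is compact metrizable by Tychonoff plus the Lipschitz condition, and on which $G$ visibly acts continuously by translation in the argument. Your plan is salvageable, but the equicontinuity estimate you defer to ``routine checking'' is the theorem's actual content.
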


\subsection{Orbit equivalence relations}

\begin{definition}
Let $X$ be a Polish space. An equivalence relation $E$ is an \emph{orbit equivalence relation} if for some Polish group $G$ acting on $X$, we have $$xEy\iff \exists g\in G~ [g\cdot x=y].$$

If $X$ is a Borel $G-$space, by $E_G^X$ we will denote the orbit equivalence relation induced on $X$ by $G$.
\end{definition}  

\noindent Let's summarize a few relevant facts about the complexity of orbit equivalence relations.

\begin{fact}~
\begin{enumerate}
\item All orbit  equivalence relations are analytic,
\item Not all analytic equivalence relations are orbit equivalence relations,
\item Equivalence classes of orbit equivalence relations are Borel. The converse is not true.
\end{enumerate}
\end{fact}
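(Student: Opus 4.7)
For part (1) the plan is to realise the orbit equivalence relation $E=E_G^X$ as the projection of a Borel set and invoke the characterisation of analytic sets. Specifically, since the action $a\colon G\times X\to X$ is Borel by the definition of a Borel $G$-space, I would form
\begin{align*}
B=\left\{(g,x,y)\in G\times X\times X : a(g,x)=y\right\},
\end{align*}
observe that $B$ is the preimage of the diagonal of $X\times X$ under the Borel map $(g,x,y)\mapsto (a(g,x),y)$ and therefore Borel in the Polish space $G\times X\times X$, and then note that $E$ is exactly the projection of $B$ onto the last two coordinates. Since projections of Borel subsets of Polish spaces along a Polish factor are $\barsigma11$ by definition, this yields (1).

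For (3) I would first dispatch the forward direction, which is immediate from the theorem on stabilizers and orbits stated just above the \textbf{Fact}: the $E_G^X$-class of a point $x$ is the orbit $G\cdot x$, and this is Borel. Having (3)$\Rightarrow$, claim (2) reduces to producing one analytic equivalence relation with a non-Borel class, and the plan is to build one by hand. Pick any analytic non-Borel set $A$ in some Polish space $X$ (which exists by Suslin's theorem, mentioned in the historical section) and define
\begin{align*}
xEy \iff x=y \ \lor\ (x\in A\ \land\ y\in A).
\end{align*}
Then $E$ is a union of two analytic sets and hence analytic, while $A$ itself is an $E$-class that fails to be Borel, so by the forward direction of (3) already proved, $E$ cannot be an orbit equivalence relation. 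I would also mention, as a sharper statement, the theorem of Kechris--Louveau that $E_1$ on $\real^\omega$ (eventual equality) is a $\barsigma02$ equivalence relation with Borel classes that is not Borel reducible to any orbit equivalence relation.

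For the converse of (3) I need an equivalence relation whose classes are all Borel but which is not an orbit equivalence relation of any Polish group action. The cheapest route exploits (1): any non-analytic equivalence relation with Borel classes cannot be an orbit equivalence relation, and such objects can be manufactured by coding a $\barpi11$ non-Borel set into the graph of a relation whose classes are arranged to be singletons or finite. The hard part will be (a) producing a clean, verifiable such example rather than a hand-wave, and (b) if one wants the sharper statement that even some Borel equivalence relations with Borel classes fail to be orbit equivalence relations, this requires the Becker--Kechris structure theory of Polish group actions, since in that regime one must rule out realisability by every Polish group simultaneously rather than merely use the analyticity obstruction.
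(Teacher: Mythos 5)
The paper states this Fact without any proof, so there is no argument of the author's to compare yours against; what you propose is the standard justification and it is essentially correct. For (1), realising $E_G^X$ as the projection onto $X\times X$ of the Borel set $\left\{(g,x,y): a(g,x)=y\right\}$ (Borel because it is the preimage of the closed diagonal under the Borel map $(g,x,y)\mapsto(a(g,x),y)$) and invoking the characterisation of analytic sets as projections of Borel sets along a Polish factor is exactly right; note that you are implicitly reading the paper's definition of orbit equivalence relation as requiring a \emph{Borel} action, which the paper's definition does not literally say but which is clearly intended given the surrounding discussion of Borel $G$-spaces. For the forward half of (3) you correctly reduce to the theorem stated immediately above the Fact, and your derivation of (2) from it --- gluing an analytic non-Borel set $A$ into a single class of an otherwise trivial relation, so that $E=\Delta_X\cup(A\times A)$ is analytic but has the non-Borel class $A$ --- is clean and complete.

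The only place you stop short is the converse direction of (3), which you leave as a plan. The completion is routine and worth writing down: fix a $\barpi11$ non-Borel set $C\subseteq\mathcal C$ and on $X=\mathcal C\times\{0,1\}$ declare $(x,0)$ and $(x,1)$ equivalent exactly when $x\in C$, all other points being alone in their classes. Every class has at most two elements, hence is closed and in particular Borel; but the intersection of this relation with the clopen rectangle $(\mathcal C\times\{0\})\times(\mathcal C\times\{1\})$ is a homeomorphic copy of $C$, hence not analytic (by Suslin's theorem a coanalytic set that is analytic would be Borel), so the relation is not analytic and by (1) cannot be an orbit equivalence relation. Your remarks about $E_1$ and about Borel counterexamples requiring the Becker--Kechris theory are accurate but go beyond what the stated Fact asks for.
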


\begin{example}
$id_X$ is induced by the trivial group acting on $X$.
\end{example}

\begin{example}
Let's define $E_0$, the relation of eventual agreement on $\twon$: $$x~E_0~y\iff \exists m\forall n>m~[x(n)=y(n)].$$ 
This is an orbit equivalence relation,
induced by $\mathbb Z_2^{<\omega}=\oplus_{n\in\omega}\mathbb Z_2^{n}$.
\end{example}

\subsection{Borel reducibility}

Equivalence relations can naturally be viewed as sets, hence the usual machinery of descriptive set theory is readily applicable. However, it turned out that to measure and compare complexities of equivalence relations in a satisfactory way new notions are required. Arguably, the most important of those is Borel reducibility.

\begin{definition}\label{borel_reducibility_d}
Let $E_X$ be an equivalence relation on a Polish space $X$ and let $E_Y$ be an equivalence relation on a Polish space $Y$. 

We say that $E_X$ is \emph{Borel reducible to} $E_Y$ (denoted by $E_X\le_B E_Y$) if $$x~E_X~y\iff f(x)~E_Y~f(y)$$ for some Borel map $f:X\rightarrow Y$.

We write $E\sim_B F$ if $E \le_B F$ and $F \le_B E$.
\end{definition}

The notion of Borel reducibility offers a way of comparing the complexity of equivalence relations: if $E \le_B F$ then $E$ is at most as ``complex'' as $F$ with respect to Borel structures of the underlying Polish spaces. This complexity is often determined by being the most complex object in some natural class. 
\begin{definition}
 Let $\mathcal\ C$ be a class of equivalence relations. We say that $E\in\mathcal C$ is \emph{$\mathcal C-$universal} if $$\forall F\in\mathcal C~~F\le_B\ E.$$
\end{definition}

\begin{theorem}
There is a universal equivalence relation for all orbit equivalence relations
induced by Borel actions of Polish groups.
\end{theorem}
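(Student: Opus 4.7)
The plan is to combine two ingredients: the existence of a \emph{universal Polish group}, i.e.\ a Polish group into which every Polish group embeds as a closed subgroup, together with Proposition \ref{compact_universal} applied to that universal group. Candidates for a universal Polish group include the isometry group $\text{Iso}(\mathbb{U})$ of the Urysohn space, the homeomorphism group of the Hilbert cube, and the unitary group $\mathcal{U}(\ell^2)$; any of these would do. Fix such a group $U$ and let $Z$ be a universal Borel $U$-space given by Proposition \ref{compact_universal}. The candidate for the universal relation is then $E^{*} = E_U^Z$.

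To verify universality, let $E_G^X$ be an arbitrary orbit equivalence relation, where $G$ is a Polish group acting in a Borel way on a standard Borel space $X$. First I would fix a closed embedding $\iota : G \hookrightarrow U$. Next, form the induced Borel $U$-space $\tilde X = (U \times X)/G$, where $G$ acts on $U \times X$ by $g\cdot(u,x) = (u\,\iota(g)^{-1}, g\cdot x)$ and $U$ acts on the quotient by $h\cdot [u,x] = [hu, x]$. Because $\iota(G)$ is closed in $U$, the quotient $U/\iota(G)$ is a standard Borel space, and a Borel selector for cosets lifts to give $\tilde X$ the structure of a standard Borel $U$-space with Borel action. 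The map $\psi: X \to \tilde X$ defined by $\psi(x) = [1_U, x]$ is Borel, and a direct computation shows
\begin{align*}
x\,E_G^X\,y \iff \exists g\in G~[g\cdot x = y] \iff \exists h\in U~[h\cdot \psi(x) = \psi(y)] \iff \psi(x)\,E_U^{\tilde X}\,\psi(y),
\end{align*}
the nontrivial direction being that if $[u, x]$ and $[1_U, y]$ are $U$-related then $[u,x]=[1_U, y]$ forces $u\in \iota(G)$, hence $x$ and $y$ lie in the same $G$-orbit. Thus $E_G^X \le_B E_U^{\tilde X}$.

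Finally, by the universality of $Z$ among Borel $U$-spaces, there is a Borel embedding $\varphi : \tilde X \hookrightarrow Z$ that is $U$-equivariant, and consequently $E_U^{\tilde X} \le_B E_U^{Z} = E^{*}$. Composing $\varphi \circ \psi$ yields the required Borel reduction $E_G^X \le_B E^{*}$, establishing the theorem.

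The main obstacle is step (b): verifying that $\tilde X = (U\times X)/G$ carries a standard Borel structure making the $U$-action Borel. This is essentially an application of the existence of Borel transversals for coset spaces of closed subgroups of Polish groups (a standard fact about Polish group actions) together with the fact that Borel images of standard Borel spaces under countable-to-one maps behave well. Once that bookkeeping is discharged, the rest of the proof is a routine verification. The appeal to a universal Polish group is the key structural input which reduces the problem from a family of pairs $(G, X)$ to a single pair $(U, Z)$.
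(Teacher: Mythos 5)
Your argument is essentially correct, but it is a genuinely different route from the one the paper takes. The paper states this theorem without proof at the point where it appears and then discharges it later, in the Complexity half of Theorem \ref{t_full}, by exhibiting a \emph{concrete} universal relation: the isometry relation $E_I$ on $F(\mathbb U)$ induced by $\mathrm{Iso}(\mathbb U)$. That proof is hard-analytic: given a Polish group $G$ acting on a compact Polish $G$-space $X$ (compactness supplied by Proposition \ref{compact_universal}), it uses a left-invariant metric on $G$ (Lemma \ref{lemma228}) to build, for each $x$, a Polish metric space $M_x$ out of $G\times\mathbb Z\times\{0,1\}$ with a metric that encodes the orbit of $x$, and then verifies $x\,E_G^X\,y\iff M_x\cong_i M_y$. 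Your proof is the standard ``soft'' argument (in the style of Becker--Kechris): embed $G$ as a closed subgroup of a universal Polish group $U$, pass to the induced $U$-space $(U\times X)/G$, and land in the universal Borel $U$-space from Proposition \ref{compact_universal}. What your approach buys is brevity and modularity, at the price of two nontrivial black boxes (Uspenskij's universality theorem and the Borel-transversal argument for coset spaces of closed subgroups, which you correctly flag as the main technical point); what the paper's approach buys is a \emph{natural} universal relation, namely isometry of Polish metric spaces, which is the actual point of its Section on classification. Your verification of the reduction $x\,E_G^X\,y\iff\psi(x)\,E_U^{\tilde X}\,\psi(y)$ is correct, including the key observation that $[u,x]=[1_U,y]$ forces $u\in\iota(G)$.

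One genuine error in your list of candidates: the unitary group $\mathcal U(\ell^2)$ is \emph{not} a universal Polish group. Closed subgroups of $\mathcal U(\ell^2)$ must admit a topologically faithful unitary representation, and not every Polish group does (Megrelishvili showed, for instance, that $\mathrm{Homeo}_+([0,1])$ has no nontrivial strongly continuous unitary representations). Since you only need one universal group and $\mathrm{Iso}(\mathbb U)$ (or the homeomorphism group of the Hilbert cube) works, this does not break the proof, but the claim ``any of these would do'' should be corrected. Also, your aside that the quotient behaves well because of ``countable-to-one maps'' is misplaced --- the quotient map is not countable-to-one when $G$ is uncountable; the correct justification is the one you also give, namely the existence of a Borel transversal for the right-coset relation of the closed subgroup $\iota(G)$ in $U$, which identifies $(U\times X)/G$ Borel-isomorphically with $T\times X$ for a Borel transversal $T$.
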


We will see an important example of such a universal orbit equivalence relation in the latter part of the paper.

\begin{definition}
Let $E$ be an equivalence relation on a standard Borel space $X$. We call $E$ \emph{smooth} if $E\le_B id(\twon)$.
\end{definition}

Smoothness is one of notions of simplicity in invariant descriptive set theory. The following result fully characterises smooth relations.
\begin{theorem} Let $E$ be a smooth equivalence relation. Then exactly one of the following holds:
\begin{enumerate}
\item $E\sim_B id(\twon)$;
\item $E\sim_B id(w);$
\item $E\sim_B id(n), $ for some $n\in \omega.$
\end{enumerate} 
\end{theorem}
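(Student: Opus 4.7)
The plan is to show that the number of $E$-classes determines the Borel isomorphism type of $E$, giving exactly the three options listed. Fix a Borel reduction $f:X\to\twon$ witnessing smoothness, so $xEy \iff f(x)=f(y)$; then $E$ is Borel, and each class $[x]_E=f^{-1}(\{f(x)\})$ is Borel as well. The three cases are mutually exclusive because $E\sim_B id(A)$ forces the number of $E$-classes to equal $|A|$ (the two reductions yield matching injections of class-counts in each direction), and the cardinalities $n<\omega$, $\omega$, $\mathfrak c$ are distinct.

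If $E$ has only finitely many classes $C_0,\dots,C_{n-1}$, then $g(x)=i$ for $x\in C_i$ is Borel and witnesses $E\le_B id(n)$, while picking one representative from each class gives $id(n)\le_B E$. The countably infinite case is analogous: the collection of classes, viewed as the countable subset $f(X)\subseteq\twon$, inherits the discrete standard Borel structure, so any bijection with $\omega$ combined with a choice of representatives produces $E\sim_B id(\omega)$.

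The key step is the uncountable case, and I would handle it by invoking Silver's dichotomy: a Borel equivalence relation on a Polish space either has at most countably many classes or admits a perfect set of pairwise inequivalent elements. With the countable alternatives already excluded, there is a perfect $P\subseteq X$ of pairwise $E$-inequivalent points. By the Isomorphism Theorem for standard Borel spaces, $P$ is Borel isomorphic to $\twon$; composing such an isomorphism with the inclusion $P\hookrightarrow X$ yields $id(\twon)\le_B E$, and the reverse reduction is just smoothness.

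The main obstacle is justifying the perfect set of inequivalent elements, i.e.\ Silver's theorem itself, which is a nontrivial dichotomy and does not follow from the tools explicitly developed in the preceding sections. If one wishes to avoid invoking it, an alternative route is to observe that $f(X)$ is analytic in $\twon$, hence by the perfect set property for analytic sets it contains a perfect subset $Q$; then a Borel section $s:Q\to X$ of $f$ exists by Jankov–von Neumann uniformization applied to the Borel relation $\{(q,x)\in Q\times X : f(x)=q\}$ (whose $Q$-sections are the nonempty Borel $E$-classes), and composing $s$ with a Borel isomorphism $\twon\to Q$ yields the desired reduction. Either way, the real content is the passage from ``uncountably many Borel classes'' to a perfect Borel transversal.
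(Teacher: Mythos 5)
The paper states this theorem without proof, so there is no internal argument to compare against; judged on its own, your proposal follows the standard line and is essentially correct. The reduction $f$ makes each class Borel, mutual exclusivity follows by Cantor--Schr\"oder--Bernstein applied to the class-counting injections induced by the two reductions, and the finite and countably infinite cases are handled correctly. For the uncountable case, the route through Silver's dichotomy works: $E=(f\times f)^{-1}(\Delta)$ is Borel, Silver yields a perfect set $P$ of pairwise inequivalent points, and $P$, being an uncountable Polish subspace, is Borel isomorphic to $2^\omega$ by the Isomorphism Theorem, giving $id(2^\omega)\le_B E$. This is a heavier tool than the situation requires, but it is a valid one.

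The one genuine flaw is in your proposed alternative: the Jankov--von Neumann uniformization produces a selector that is only $\sigma(\Sigma^1_1)$-measurable (hence Baire and universally measurable), not Borel, so composing it with a Borel isomorphism $2^\omega\to Q$ does not directly yield a Borel reduction $id(2^\omega)\le_B E$. This is repairable --- the selector is continuous on a comeager $G_\delta$ subset of $Q$, which in turn contains a perfect set, and restricting there gives a continuous injective-on-classes map --- but as written the step is a gap. A cleaner elementary route, better matched to the tools the paper actually develops (the perfect set property for analytic sets and the Isomorphism Theorem), is to run the perfect-set construction on the analytic set $f(X)$ while tracking preimages: one builds a Cantor scheme of points $x_s\in X$, $s\in 2^{<\omega}$, whose $f$-values are forced to separate along distinct branches, so that the resulting continuous map $2^\omega\to X$ already selects pairwise $E$-inequivalent points, with no uniformization and no appeal to Silver.
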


\subsection{The Urysohn space}\label{urysohn}

\begin{definition}A Polish metric space $X$ is \emph{universal} if for any Polish metric space $Y$ there is an isometric embedding of $Y$ into $X$.

A metric space $X$ is \emph{ultrahomogeneous} if any isometry between finite subspaces of $X$ can be extended to an isometry of $X$. 
\end{definition}

\begin{proposition}[Characterisation of $\mathbb U$]
The universal Urysohn space $\mathbb U$ is the unique, up to an isometry, ultrahomogeneous, universal Polish metric space.
\end{proposition}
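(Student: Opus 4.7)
The plan is to prove existence and uniqueness separately, using a Fraïssé-style construction for the former and a back-and-forth argument for the latter. The core tool in both halves is a one-point extension lemma: given a finite subspace $F$ of a metric space $M$ and a function $f:F\to\mathbb{R}_{\ge 0}$ satisfying $|f(x)-f(y)|\le d(x,y)\le f(x)+f(y)$ for all $x,y\in F$ (an ``admissible'' Katětov function), one should be able to find a point $z\in M$ realizing $d(z,x)=f(x)$ for every $x\in F$.

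For existence, first I would build a countable, ultrahomogeneous, $\mathbb{Q}$-valued Urysohn space $\mathbb U_0$. This is done by a Fraïssé-type back-and-forth: enumerate all pairs $(F,f)$ with $F$ a finite subset of the partial construction and $f$ an admissible $\mathbb{Q}_{\ge 0}$-valued function on $F$, and at each stage add a fresh point realizing the next such prescription. The resulting space is countable, has rational distances, contains every finite rational metric space isometrically (since any such space is itself obtained by successive admissible one-point extensions from the empty set), and is ultrahomogeneous for finite subspaces by construction. Then set $\mathbb U$ to be the metric completion of $\mathbb U_0$, which is automatically separable and complete, hence Polish. Universality of $\mathbb U$ follows by picking a countable dense subset $\{y_n\}$ of an arbitrary Polish metric space $Y$, perturbing distances to rationals converging to the true distances, and iteratively embedding the $y_n$ using the real-valued extension property (which one gets by approximating any admissible real-valued $f$ on a finite $F\subset\mathbb U$ by admissible rational-valued functions on a finite $F'\subset\mathbb U_0$ close to $F$, realizing each approximation in $\mathbb U_0$, and passing to the limit in the complete space $\mathbb U$). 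Ultrahomogeneity of $\mathbb U$ is then proved by a similar back-and-forth on two countable dense subsets, extending any given isometry between finite subsets one point at a time using the real-valued extension property in both directions.

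For uniqueness, let $X$ be another ultrahomogeneous universal Polish metric space. Fix countable dense subsets $D=\{x_n\}\subseteq X$ and $E=\{u_n\}\subseteq\mathbb U$. I build an isometric bijection $\varphi:D'\to E'$ between countable dense subsets $D'\supseteq D$ of $X$ and $E'\supseteq E$ of $\mathbb U$ by back-and-forth. At the forth-step, given a finite partial isometry $\varphi_n:A_n\to B_n$, I want to extend so that $x_n\in\mathrm{dom}(\varphi_{n+1})$: universality of $\mathbb U$ provides an isometric embedding of $A_n\cup\{x_n\}$ into $\mathbb U$, and ultrahomogeneity of $\mathbb U$ lets me precompose this embedding with an isometry of $\mathbb U$ that aligns the image of $A_n$ with $B_n$, producing the required extension. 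The back-step is symmetric, using universality and ultrahomogeneity of $X$. Because $\varphi$ is a bijective isometry between dense subsets of complete metric spaces, it extends uniquely to an isometry $X\to\mathbb U$.

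The main obstacle is the one-point extension lemma in the completion, because Katětov functions with real values need not be realized by any particular point of $\mathbb U_0$; they must be recovered in $\mathbb U$ by a limit construction. Concretely, given admissible $f:F\to\mathbb{R}_{\ge 0}$ with $F\subset\mathbb U$ finite, one has to produce a Cauchy sequence $(z_k)$ in $\mathbb U_0$ whose limit satisfies $d(\lim z_k,x)=f(x)$ for every $x\in F$, and this requires simultaneously approximating $F$ by finite rational configurations in $\mathbb U_0$ and perturbing $f$ to admissible rational-valued functions on those configurations so that the triangle constraints remain compatible at every stage. Once this extension property is secured, both the homogeneity of $\mathbb U$ and the back-and-forth uniqueness argument become routine.
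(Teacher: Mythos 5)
Your proposal is correct, but it takes a genuinely different route from the paper. The paper never proves this proposition head-on: it states it, and then develops Kat\v etov's construction, in which one forms the space $E(X,\omega)$ of finitely supported admissible functions on $X$, iterates to obtain a tower $X_0=X$, $X_{i+1}=E(X_i,\omega)$, shows that the union $X_\omega$ has the Urysohn property (every admissible function on a finite subset is realized by an actual point), and passes to the completion; a separately stated proposition identifies the Urysohn property with the conjunction of universality and ultrahomogeneity, and uniqueness is not argued at all. Your path --- a Fra\"iss\'e-style construction of the countable rational Urysohn space, completion, a one-point extension lemma for real-valued admissible functions in the completion, and back-and-forth for both ultrahomogeneity and uniqueness --- is the classical one. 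It is more elementary, it actually supplies the uniqueness argument the paper omits, and you have correctly isolated the one genuine technical obstacle: realizing a real-valued admissible function over a finite subset of the completion as the limit of a Cauchy sequence of points realizing rational approximations (the standard device being that two admissible functions $f,g$ on the same finite set can be realized by points at distance $\sup|f-g|$ from one another, which makes the approximating sequence Cauchy). What the Kat\v etov route buys, and the reason the paper adopts it, is canonicity: the assignment $X\mapsto X_\omega\mapsto\overline{X_\omega}\cong\mathbb U$ is functorial and choice-free, every isometry of $X$ extends canonically along it, and this is precisely what the later sections need to define the Borel embedding of $\mathbb X$ into $F(\mathbb U)$ and to push isometries of Polish spaces out to isometries of $\mathbb U$ in the proof of Theorem \ref{t_full}. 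Your construction produces $\mathbb U$ in isolation and would need additional work to recover that functoriality, but as a proof of the characterisation statement itself it is complete in outline and sound.
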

  
  In a paper published posthumously in 1927, Urysohn defined a Polish metric space that contained all other Polish spaces as closed subspaces. Its universality and some other nice properties made this space particularly important for DST. The recent surge of interest in the Urysohn space can be traced to 1986 when Kat\v etov in \cite{Kat86} demonstrated a new construction of $\mathbb U$, which allows the extension of every separable metric space to an isometric copy of $\mathbb U$. In this subsection we summarize Kat\v etov's construction.

Another characterization of the Urysohn space relies on the Urysohn property defined below.

\begin{definition}
Let $(X,d)$ be a separable space. We say that a function $f:X\rightarrow \real$ is \emph{admissible} if the following inequality holds for all $x,y\in X:$
\begin{align*}
|f(x)-f(y)|\le d(x,y)\le f(x)+f(y).
\end{align*}
\end{definition}
Every admissible function correspond to a one-point extension of $X$.

\begin{definition}
A metric space $(X, d)$ has the \emph{Urysohn property} if for any finite subset $F$ of $X$ and any admissible function $f$ on $F$, there is $x\in X$ such that $f(p)=d(x, p)$
for any $p\in F$.

\end{definition}

\begin{proposition}
Let $(X, d)$ be a Polish metric space. Then the following statements are equivalent:
\begin{enumerate}
\item $X$ has the Urysohn property,
\item $X$ is universal and ultrahomogeneous.
\end{enumerate}
\end{proposition}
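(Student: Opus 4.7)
The plan is to prove the equivalence by showing each direction separately, with both arguments exploiting the fact that an admissible function $f$ on a finite subset $F \subseteq X$ encodes precisely the data of a one-point metric extension of $F$ (with the distance from the new point to $p \in F$ being $f(p)$), and the Urysohn property says exactly that every such abstract one-point extension is realized inside $X$.

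For $(2) \Rightarrow (1)$, I would take a finite $F \subseteq X$ and an admissible $f : F \to \real$, form the finite metric space $F \cup \{*\}$ with $d(*, p) := f(p)$ (admissibility being exactly what is needed for this to satisfy the triangle inequality), and use universality to obtain an isometric embedding $\phi : F \cup \{*\} \to X$. The map $\phi|_F$ is an isometry of finite subspaces of $X$, and so is its inverse $\phi^{-1} : \phi(F) \to F$; by ultrahomogeneity this extends to a global isometry $\psi$ of $X$. Then $x := \psi(\phi(*))$ realizes $f$, since for every $p \in F$ one computes $d(x, p) = d(\psi(\phi(*)), \psi(\phi(p))) = d(\phi(*), \phi(p)) = f(p)$.

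For $(1) \Rightarrow (2)$, universality is the easier half: given a Polish $(Y, d_Y)$ with countable dense $\{y_n\}$, I build an isometry $\phi : \{y_n\} \to X$ inductively by picking $\phi(y_0)$ arbitrarily and, at stage $n$, setting $f(\phi(y_i)) := d_Y(y_n, y_i)$ for $i < n$; this $f$ is admissible (the two inequalities are just the triangle inequality for $d_Y$ combined with the fact that $\phi$ is already an isometry on $\{y_0, \dots, y_{n-1}\}$), so the Urysohn property yields a value for $\phi(y_n)$. By completeness of $X$, the resulting isometry on a dense subset extends uniquely to an isometric embedding $Y \hookrightarrow X$.

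Ultrahomogeneity will be the main obstacle and requires a back-and-forth construction. Starting from an isometry $\phi_0 : A_0 \to B_0$ between finite subsets and fixing a countable dense $D \subseteq X$, I would build an increasing chain of finite partial isometries $\phi_n : A_n \to B_n$: at even stages, add the least unused point $x \in D$ to the domain by defining the admissible function $f(p) := d(x, \phi_n^{-1}(p))$ on $B_n$ and using the Urysohn property to find its image in $X$; at odd stages, do the symmetric back step on the range side. The admissibility of $f$ reduces to the triangle inequality and the fact that $\phi_n$ is already an isometry, so the Urysohn property supplies the extension at every step. The union of the $\phi_n$ is an isometry between two dense subsets of $X$, which by completeness extends uniquely to an isometry of $X$ extending $\phi_0$. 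The delicate bookkeeping is ensuring both the domain and range sides end up dense, which is handled in the standard way by alternating the two directions of the back-and-forth.
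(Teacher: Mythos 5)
Your proposal is correct: both directions are the standard argument (realizing an admissible function as a one-point extension via universality plus ultrahomogeneity in one direction, and a back-and-forth construction over a countable dense set, followed by extension by completeness, in the other), and the paper itself states this proposition without proof, so there is nothing to compare against. The only point your writeup should acknowledge is the degenerate case where the admissible $f$ vanishes at some $p\in F$, so that $F\cup\{*\}$ carries only a pseudometric; there admissibility forces $f(q)=d(p,q)$ for all $q\in F$ and $x=p$ realizes $f$ directly, so the argument goes through.
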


\begin{proposition}
Let $(X, d)$ be a Polish metric space. Then its completion has the Urysohn property too.
\end{proposition}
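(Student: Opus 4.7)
The plan is to approximate elements of the completion $\hat X$ from inside $X$, invoke the Urysohn property of $X$ to get candidate realizers, and arrange matters so that these candidates form a Cauchy sequence whose limit solves the extension problem. Assuming $X$ itself has the Urysohn property, I would fix a finite $F=\{p_1,\dots,p_k\}\subseteq\hat X$ and an admissible $f\colon F\to\real$, and for each $n\in\omega$ pick $q_i^n\in X$ with $d(q_i^n,p_i)<\epsilon_n$, where $\epsilon_n=2^{-n}$.

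The naive pullback $g_n(q_i^n)=f(p_i)$ need not be admissible on $Q_n=\{q_1^n,\dots,q_k^n\}$, since $d(q_i^n,q_j^n)$ can differ from $d(p_i,p_j)$ by up to $2\epsilon_n$. To fix this I would use a McShane-style formula,
\begin{align*}
g_n(q_i^n)=\min_{1\le j\le k}\bigl(f(p_j)+3\epsilon_n+d(q_i^n,q_j^n)\bigr),
\end{align*}
which is automatically $1$-Lipschitz and hence satisfies $|g_n(q_i^n)-g_n(q_j^n)|\le d(q_i^n,q_j^n)$. Substituting $j=i$ gives $g_n(q_i^n)\le f(p_i)+3\epsilon_n$, while the Lipschitz part of admissibility of $f$ together with the triangle inequality gives $g_n(q_i^n)\ge f(p_i)+\epsilon_n$. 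The remaining admissibility condition $d(q_i^n,q_j^n)\le g_n(q_i^n)+g_n(q_j^n)$ follows from $d(q_i^n,q_j^n)\le d(p_i,p_j)+2\epsilon_n\le f(p_i)+f(p_j)+2\epsilon_n$, so the Urysohn property of $X$ produces $x_n\in X$ with $d(x_n,q_i^n)=g_n(q_i^n)$.

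The main obstacle is that the Urysohn property yields existence but no uniqueness, so arbitrarily chosen $x_n$ need not be Cauchy. I would remedy this by performing the construction recursively: at stage $n$, apply the Urysohn property to the enlarged set $Q_n\cup\{x_{n-1}\}$, extending $g_n$ by assigning a small value $c_n$ of order $\epsilon_{n-1}$ to $x_{n-1}$. Since $d(x_{n-1},q_i^n)$ approximates $g_{n-1}(q_i^{n-1})$, and hence $f(p_i)$, to within $O(\epsilon_{n-1})$, while $g_n(q_i^n)$ approximates $f(p_i)$ to within $O(\epsilon_n)$, the slack built into $g_n$ leaves enough room to keep the extended function admissible for $c_n=O(\epsilon_{n-1})$. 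The resulting $x_n$ then satisfies $d(x_n,x_{n-1})\le c_n$, so the gaps are summable and $(x_n)_{n\in\omega}$ is Cauchy.

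The limit $x=\lim_n x_n\in\hat X$ then satisfies $d(x,p_i)=\lim_n d(x_n,q_i^n)=\lim_n g_n(q_i^n)=f(p_i)$ by continuity of the metric, witnessing the Urysohn property of $\hat X$ at $(F,f)$. The delicate bookkeeping sits entirely in the recursive step: one must verify that the tightness in $|f(p_i)-f(p_j)|\le d(p_i,p_j)\le f(p_i)+f(p_j)$ is never worse than the $\epsilon_n$-slack that the McShane-type minimum engineers into $g_n$.
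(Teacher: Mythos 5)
The paper states this proposition without proof (and with the hypothesis that $X$ itself has the Urysohn property left implicit, which you correctly restored), so there is no argument of the paper's to compare against; yours is the standard successive-approximation proof that exact realizability of admissible functions on finite sets survives completion. The core verifications are correct: the McShane minimum is $1$-Lipschitz, the bounds $f(p_i)+\epsilon_n\le g_n(q_i^n)\le f(p_i)+3\epsilon_n$ follow exactly as you say, the sum condition for admissibility of $g_n$ on $Q_n$ follows from the lower bound, and the closing limit computation works since $|d(x_n,p_i)-f(p_i)|\le d(p_i,q_i^n)+|g_n(q_i^n)-f(p_i)|<4\epsilon_n$.

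There is, however, a genuine gap in the recursive step as written. Admissibility of the extension of $g_n$ to $Q_n\cup\{x_{n-1}\}$ requires both $d(q_i^n,x_{n-1})\le g_n(q_i^n)+c_n$ and $|g_n(q_i^n)-c_n|\le d(q_i^n,x_{n-1})$. The first forces $c_n\ge 4\epsilon_{n-1}$ in the worst case, because $d(q_i^n,x_{n-1})$ can be as large as $f(p_i)+4\epsilon_{n-1}+\epsilon_n$ while $g_n(q_i^n)$ can be as small as $f(p_i)+\epsilon_n$. But the second requires $c_n\le g_n(q_i^n)+d(q_i^n,x_{n-1})$, and the only a priori lower bound on $d(q_i^n,x_{n-1})$ is $\max\left(0,f(p_i)-\epsilon_n\right)$; when some $f(p_i)$ is small compared with $\epsilon_{n-1}$ the two demands are incompatible, so the extended function need not be admissible and the Urysohn property of $X$ cannot be invoked. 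The patch is routine but must be stated: dispose of the degenerate case $\min_i f(p_i)=0$ separately (then $f(\cdot)=d(p_{i_0},\cdot)$ and $x=p_{i_0}$ already realizes $f$ in $\hat X$), and otherwise begin the recursion only at a stage $N$ with $\epsilon_{N-1}\le\frac14\min_i f(p_i)$. Then $d(q_i^n,x_{n-1})\ge f(p_i)-\epsilon_n$, so $g_n(q_i^n)+d(q_i^n,x_{n-1})\ge 2f(p_i)\ge 4\epsilon_{n-1}$ and both conditions hold with $c_n=4\epsilon_{n-1}$; the gaps $d(x_n,x_{n-1})=c_n$ are summable, $(x_n)$ is Cauchy, and your conclusion follows.
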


\noindent The above results show that the completion of any separable metric space that has the Urysohn property is isometric to $\mathbb U$.
The very basic idea of Kat\v etov's construction is to start with a separable metric space $X$, extend it to a space $X_\omega$ with the Urysohn property and then the completion of $X_\omega$ is an isometric copy of $\mathbb U$.

Define $E(X)$ to be the space of all admissible
functions on $X$ endowed with the metric structure induced by the usual sup-metric:
\begin{align*}
d_E(f,g)=\sup\left\{ |f(x)-g(x)|: x\in X \right\}.
\end{align*}

The function $K=x\mapsto d(x,\cdot)$, called \emph{the Kuratowski map}, is an isometric embedding of $X$\ into $E(X)$. $E(X)$ can be seen as an extension of $X.$ From now on we identify $X$\ with its canonical (via the Kuratowski map) embedding into $E(X)$. An important fact about the Kuratowski map is that 

\begin{align}\label{kuratowski_fact_1}
\forall f\in E(X)~~\forall x\in X~~d(f,K(x))=f(x).
\end{align}

\begin{example} Let's prove the above fact.
\begin{proof}
Let $f\in E(X)$ and let $x\in X$. First note that for all $x,y\in X$, \mbox{$f|(x)-f(y)|\le d(x,y)$}, and therefore $|f(y)-d(x,y)|\le f(x)$. Then 
\begin{align*}
d(f,K(x))=\sup\{|f(y)-d(x,y)|:y\in X\}=|f(x)-d(x,x)|=f(x).
\end{align*}
\end{proof}
\end{example}

If $Y\subseteq X$ and $f\in E(Y)$, define $k_X(f):X\rightarrow \real$ (the \emph{Kat\v etov extension} of $f$) by $$k_X(f)(x)=\inf\{f(y)+d(x,y):y\in Y\}.$$ 

The map $f\mapsto k_X(f)$ is an isometric embedding of $E(Y)$ into $E(X)$.

We want to define $X_\omega=\bigcup_{i\in\omega} X_i $ where $X_0=X$ and $X_{i+1}=E(X_i)$. Then $X_\omega$ has the Urysohn property. The problem with this idea is that $E(X)$ is not necessarily separable.
The next example demonstrates this point.

\begin{example}
Consider $\omega$ equipped with the trivial metric 
\begin{align*}
d(x,y)=\begin{cases} 1,& \text{ if } x\neq y.\\
0, & \text{ otherwise.} \end{cases}
\end{align*}
Then $E(\omega)$\ is not separable. 
\begin{proof}
To show this, we will exhibit an uncountable discrete subspace of  $E(\omega)$. Fix a bijection $f:\omega\rightarrow\omega^2.$ For every $r\in\baire$ define 
\begin{align*}
f_r(x)=\begin{cases}
2,& \text{ if }f(x)(1)=r(f(x)(2)),\\
1,& \text{ otherwise.}
\end{cases}
\end{align*}

It is easy to see that all such functions are admissible. Moreover, for any distinct $r_1,r_2\in\baire$, $d_E(f_{r_1},f_{r_2})=1$, hence the set of all such functions forms an uncountable discrete subspace of $E(\omega)$.
\end{proof}
\end{example}

To get separability we need to introduce the concept of support for admissible functions.
\begin{definition}
Let $f$ be an admissible function on $(X,d)$ and let $S\subseteq X$. We say that $S$\ is a \emph{support} for $f$ if for all $x\in X$,
$$f(x)=\inf \{f(y)+d(x,y): y\in S\}.$$
$f$ is called \emph{finitely supported} if there is a finite support for $f$.
\end{definition}

\noindent Define $$E(X,\omega)=\{f\in E(X):f \text{ is finitely supported}\}.$$

Then if $X$\ is separable, $E(X,\omega)$ is separable too. Now we can define inductively 
\begin{align*}
X_0=X,\\
X_{i+1}=E(X_i,\omega),\\
X_\omega=\bigcup_{i\in\omega}X_i,
\end{align*}

\noindent and let $d_{X_\omega}$ be the canonical extension of $d$ on $X_\omega$. Then if $X$ is separable, $X_\omega$ is separable too. Let's show $X_\omega$ has the Urysohn property.
\begin{proof}
Let $f\in E(X_\omega)$ and let $F\subseteq X_\omega$ be finite. Since $F$ is finite, it must be that $F\subseteq X_m$ for some $m\in\omega$. 

Define $f_F=f|_F$. Then $f_F\in E(F)\subseteq E(X_m,\omega)=X_{m+1}\subseteq X_\omega$ and using (\ref{kuratowski_fact_1}) we get that $$\forall x\in X_m~~d(f_F,K(x))=d(f_F,x)=f_F(x).$$ And, finally,
$$\forall x\in F~~d(f_F,x)=f(x).$$
\end{proof}

To finish the construction, note that since $X_\omega$ has the Urysohn property,  the completion of it is an isometric copy of the Urysohn space. 

\begin{definition}
From now on we fix the Urysohn space $\mathbb U$ as the completion of $(\real_\omega,d_{\real_\omega})$.
\end{definition}

The following proposition states an important property of the Urysohn space that will be used by us later on.

\begin{proposition}\label{compact_isometries} If $K,L\subset \mathbb U$ are compact and $\phi:K\rightarrow L$ is an isometry, then there is an isometry $\hat \phi:\mathbb U\rightarrow \mathbb U$ with $\hat\phi|_K=\phi$.
\end{proposition}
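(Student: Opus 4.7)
The plan is to extend $\phi$ to a global isometry of $\mathbb{U}$ by a back-and-forth argument, but the usual back-and-forth needs a preliminary strengthening of the Urysohn property. My first task is to establish a \emph{compact Urysohn property}: if $C \subset \mathbb{U}$ is compact and $f : C \to \mathbb{R}$ is admissible, then there exists $w \in \mathbb{U}$ with $d(w,c) = f(c)$ for every $c \in C$.

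To establish this, fix an enumeration $(c_n)$ of a countable dense subset of $C$ chosen so that $\{c_1, \dots, c_n\}$ is $\varepsilon_n$-dense in $C$ with $\sum_n \varepsilon_n < \infty$. The degenerate case $\min_C f = 0$ collapses at once, since admissibility then forces $f(c) = d(c, c^*)$ at the minimizer $c^*$, so $w = c^*$ works. Otherwise, build $w_n \in \mathbb{U}$ inductively using the (finite) Urysohn property so that $d(w_n, c_i) = f(c_i)$ for all $i \le n$. Produce $w_{n+1}$ by applying the Urysohn property to the function on $\{c_1, \dots, c_{n+1}, w_n\}$ that agrees with $f$ on the $c_i$ and sends $w_n$ to $\delta_{n+1} = |d(c_{n+1}, w_n) - f(c_{n+1})|$; a short triangle-inequality argument using the $\varepsilon_n$-density hypothesis and the admissibility of $f$ gives $\delta_{n+1} \le 2 \varepsilon_n$, so $(w_n)$ is Cauchy. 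Completeness of $\mathbb{U}$ furnishes a limit $w$ satisfying $d(w, c_n) = f(c_n)$ for every $n$, and continuity extends this to all of $C$.

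With the compact Urysohn property in hand, fix countable dense sequences $(x_n)$ in $K$ and $(u_n)$ in $\mathbb{U}$, and set $y_n = \phi(x_n)$, dense in $L$. Construct inductively finite sets $F_n, G_n \subset \mathbb{U}$ with $\{u_1, \dots, u_n\} \subseteq F_n \cap G_n$ and isometries $\psi_n : K \cup F_n \to L \cup G_n$ extending $\phi$, adjoining one new point to domain or range at each stage in back-and-forth fashion. To add a new $u \in \mathbb{U}$ to the domain of $\psi_n$, define $f$ on the compact set $L \cup G_n$ by $f(\phi(p)) = d(u, p)$ for $p \in K$ and $f(\psi_n(q)) = d(u, q)$ for $q \in F_n$; the isometric property of $\psi_n$, together with admissibility of $p \mapsto d(u,p)$ on $K \cup F_n$, makes $f$ admissible, so the compact Urysohn property supplies the required image $\psi_{n+1}(u) \in \mathbb{U}$. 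The reverse direction is symmetric, applied to $\phi^{-1}$ on $L$.

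Finally, $\Psi = \bigcup_n \psi_n$ is an isometry from a dense subset of $\mathbb{U}$ containing $K$ onto a dense subset containing $L$. Completeness gives a unique extension to an isometry $\hat\phi : \mathbb{U} \to \mathbb{U}$, which is surjective because its image is closed and dense. Since $\hat\phi$ and $\phi$ agree on the dense subset $(x_n) \subseteq K$ and both are continuous, $\hat\phi|_K = \phi$. The principal obstacle is the compact Urysohn property: once it is available the back-and-forth is essentially standard, but controlling the increments $\delta_n$ so that the approximating sequence converges requires careful triangle-inequality bookkeeping.
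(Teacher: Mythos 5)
The paper states this proposition without proof (it is quoted as a known property of $\mathbb U$, its compact homogeneity), so there is no internal argument to compare against; your proposal supplies the standard proof and it is essentially correct. The two-step structure --- first upgrading the finite Urysohn property to realization of admissible (Kat\v etov) functions on compact sets via finite $\varepsilon_n$-nets and a Cauchy sequence of approximate realizations, then running a back-and-forth over a countable dense subset of $\mathbb U$ relative to the compact ``anchor'' $K$ --- is exactly how this result is proved in the literature (e.g.\ in Gao's book and in Melleray's paper, both cited here). The back-and-forth step is clean: the function $z\mapsto d(u,z)$ is admissible on $K\cup F_n$, its pushforward under the isometry $\psi_n$ is admissible on the compact set $L\cup G_n$, and your compact Urysohn property realizes it.

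One point to tighten in the first step: when you check that the auxiliary function $g$ on $\{c_1,\dots,c_{n+1},w_n\}$ (with $g(c_i)=f(c_i)$ and $g(w_n)=\delta_{n+1}$) is admissible, the pair $(c_i,w_n)$ with $i\le n$ requires $|f(c_i)-\delta_{n+1}|\le d(c_i,w_n)=f(c_i)$, i.e.\ $\delta_{n+1}\le 2f(c_i)$. Your bound $\delta_{n+1}\le 2\varepsilon_n$ only guarantees this once $\varepsilon_n\le \min_C f$, which is why you rightly split off the case $\min_C f=0$; but you should say explicitly that in the remaining case $m=\min_C f>0$ and that the $\varepsilon_n$ are chosen with $\varepsilon_n\le m$ from the outset (or that the induction is started at a stage where this holds). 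The pairs $(c_i,c_j)$ and $(c_{n+1},w_n)$ check out as you indicate. With that one sentence added, the argument is complete.
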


\begin{remark}
Note that $F(\mathbb U)$ can naturally be seen as a Borel space of all Polish metric spaces.
\end{remark}

\begin{example}
Let's show that if $D$ is dense in $X$, then $k_X\left(E(D,\omega)\right)$ is dense in $E(X,\omega)$.
\begin{proof}

Let $g\in E(D,\omega)$ and let $\{x_1,\dots,x_k\}\subseteq X$ be a finite support for $g$. For $1\le i\le k$, let $\seq{d_{i.}}n$ be such sequences in $D$ that $d_{i,n}\rightarrow x_i$. Define $$g_n=k_D\left( \prefix g{\{d_{1,n},\dots, d_{k,n}\}}\right).$$

\noindent Then $g_n\in k_D\left(E(\{d_{1,n},\dots, d_{k,n}\})\right)$, and since $k_D\left(E(\{d_{1,n},\dots, d_{k,n}\})\right)\subseteq E(D,\omega),$ $g_n\in E(D,\omega)$. Finally, note that $k_X(g_n)\rightarrow g$.
\end{proof}
\end{example}

\begin{remark}
The question of identifying the Polish metric spaces $X$ such that $E(X)$ is separable is of independent interest. Jullien Melleray (\cite{Mel07}) proved the following results:
\begin{enumerate}
\item If $X$ is Polish and $E(X)$ is separable, then $X$ is Heine-Borel.
\item Each of the following two conditions is equivalent to $E(X)$\ being separable:
\begin{enumerate}
\item $E(X)=\overline{E(X,\omega)}$,
\item $X$ has the collinearity property.
\end{enumerate}
\end{enumerate}

The collinearity property is defined in the following way. Let $(X,d)$ be a metric space and let $\epsilon>0$. An ordered triple of points $(x_1,x_2,x_3)$ are said to be \emph{$\epsilon-$collinear} if $d(x_1,x_2)\le d(x_1,x_2)+d(x_2,x_3)-\epsilon.$ Then we say $X$ has the \emph{collinearity property} if for every infinite $A\subseteq X$ and every $\epsilon>0$ there is a triple of distinct points belonging to $A$ that are $\epsilon-$collinear. 

\end{remark}

\section{An application of invariant DST to the problem of classification of Polish metric spaces}

This section summarizes and explains the main result of the section 14 of \mbox{Su Gao's} book (\cite{Gao09}), that the isometry classification problem of Polish metric spaces is a universal orbit equivalence relation.

\subsection{The space of all Polish metric spaces}\label{space_of_polish}

We are interested in studying complexities of equivalence relations defined either on the class of all Polish metric spaces or on some proper subclass of it. Examples of such equivalence relations include:
\begin{enumerate}
\item homeomorphic classification of Polish metric spaces.
\item isometric classification of Polish metric spaces, denoted by $\cong_i$,
\item isometric classification of compact Polish metric spaces, etc.
\end{enumerate}

\noindent From now on, we denote the collection of all Polish metric spaces by $\mathcal X$. 
\noindent To study an equivalence relation using invariant descriptive set theory and its methods, the equivalence relation must be defined on a space with a Borel structure, but there is no natural Borel structure defined on $\mathcal X$. To overcome this difficulty, we need to encode elements of $\mathcal X$ as elements of some known space with a Borel structure.
We will use two such encodings:

\begin{enumerate}
\item
The idea behind the first encoding is that the metric structure of a Polish metric space $X$\ can be ``recovered'' from distances between any fixed countable dense subset of $X$. Then a Polish metric space can be encoded as an element of $\real^{\omega\times \omega}$, a space of double sequences of real numbers. 
\item We know that the Effros Borel space of $F(\mathbb U)$ can be seen as a Borel space of all Polish metric spaces.
In this approach, Polish metric spaces are encoded as closed subspaces of $\mathbb U$.\end{enumerate}

Now we will discuss both of these encodings in detail and then will show that both are equivalent in some important way.

\subsection{Encoding Polish metric spaces as elements in $\real^{\omega\times\omega}$}\label{encoding1}

A metric structure of a Polish metric space can be recovered from the metric structure of any of its countable dense subspaces. And a metric structure of a countable space can be encoded as a double sequence of real numbers. However, not every double sequence of real numbers correspond to some countable metric space.

Define $\mathbb X$ to be the subspace of $\real^{\omega\times\omega}$ consisting of elements $\seq{r}{i,j}$ such that for all $i,j,k\in \omega$:
\begin{enumerate}
\item $r_{i,j}\ge 0$ and $r_{i,j}=0$ iff $i=j;$
\item $r_{i,j}=r_{j,i};$ 
\item $r_{i,j}\le r_{i,k}+r_{k,j}.$
\end{enumerate}

With the three above restrictions, 
every element of $\mathbb X$ correspond to some dense subset in some Polish metric space, and to every enumeration of every dense subset of every Polish metric space there is a corresponding element of $\mathbb X$. To get a satisfying correspondence between $\mathcal X$ and $\mathbb X$ we need to fix a particular enumeration of a particular countable dense subset for every Polish metric space. This is possible with the use of the Axiom of Choice.
Thus, for every Polish space $X$ we fix $r_X$, an element of $\mathbb X$, in such a way that $r_X$ represents a metric structure of some dense subset of $X$. In the other direction. for any $r\in\mathbb X$, we define a metric space $X_r$ to be the completion of the metric space $(\omega, d_{X,\omega})$ where $d_{X,\omega}(i,j)=r_{i,j}$.

\noindent Note that $\mathbb X$ is Polish (via \ref{polish_basic}(2)), as it is a closed subset of $\real^{\omega\times\omega}$ which is Polish too.

\subsection{Encoding Polish metric spaces as elements of $F(\mathbb U)$}\label{encoding2}

We already mentioned that, in an informal way, $F(\mathbb U)$ can be seen as a standard Borel space of all Polish metric spaces.  We identify Polish metric spaces with closed subspaces of $\mathbb U$ in the following way. Given a Polish metric space $X$, we construct a space with the Urysohn property $X_\omega$ and its completion - $\overline{X_\omega}$. Via the Kuratowski map, $X$ is isometric to a closed subspace of $\overline{X_\omega}$, and $\overline{X_\omega}$ itself is isometric to $\mathbb U$. Combining those two isometries, gives us an isometry between $X$ and a closed subspace of $\mathbb U$. Thus, we have a mapping between $\mathcal X$ and $F(\mathbb U)$.

Moreover, this construction can be done in a canonical, that is choice-free way and the following results can be established  with respect to the discussed correspondences between $\mathcal X, \mathbb X$ and $F(\mathbb U)$.

\begin{proposition}\label{prop221}
There is a Borel embedding $J$ from $\mathbb X$ into $F(\mathbb U)$ such that for any $r\in \mathbb X$, $X_r$ is isometric with $J(r)$. Furthermore, an isometry between $X_r$ and $J(r)$ is extensible to an isometry between $\overline{\left(X_r\right)_w}$ and $\mathbb U$. 
\end{proposition}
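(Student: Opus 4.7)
The plan is to construct $J(r)$ as the closed image of $X_r$ inside $\mathbb U$ under a canonically built isometry that arises from the iterated Kat\v etov construction, performed uniformly and choice-freely in $r$.

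First, for $r \in \mathbb X$, carry out the Kat\v etov construction on $(\omega, d_r)$ without invoking choice: set $X_0(r) = (\omega, d_r)$ and $X_{i+1}(r) = E(X_i(r), \omega)$, with $Y(r) := \bigcup_i X_i(r)$. Parametrize each $X_i(r)$ by codes built from finite subsets together with tuples of rational admissible values; this yields a canonical countable dense subset $D_r \subseteq Y(r)$ indexed by a fixed $r$-independent countable set $\Omega$, whose pairwise distances are Borel functions of $r$. The completion $\overline{Y(r)}$ is a Polish metric space with the Urysohn property, and $X_r = \overline{X_0(r)}$ sits inside it as a canonical closed subset.

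Second, transport $\overline{Y(r)}$ isometrically into the fixed $\mathbb U$ by a canonical back-and-forth. Fix once and for all an enumeration $\{u_n\}_{n \in \omega}$ of a countable dense subset of $\mathbb U$ with analogously rich structure (rational admissible codes inside $\real_\omega$). Inductively define a partial isometry $\varphi_r \colon D_r \to \{u_n\}$ by alternately matching the lowest-indexed unpaired element on each side with its lowest-indexed partner on the other side satisfying the required distance constraints to within tolerance $2^{-k}$ at stage $k$. The Urysohn property of both completions guarantees that each step succeeds, and the resulting partial isometry extends to a full isometry $\Phi_r \colon \overline{Y(r)} \to \mathbb U$. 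Put $J(r) := \Phi_r(X_r)$, a closed subset of $\mathbb U$ isometric to $X_r$, with $\Phi_r$ itself serving as the isometry extension demanded by the ``furthermore'' clause.

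Third, verify that $J$ is a Borel embedding. Using the generators (\ref{effros_generators}) of $F(\mathbb U)$, it suffices to check that $\{r : J(r) \cap V \neq \emptyset\}$ is Borel for every basic open $V \subseteq \mathbb U$; this unfolds into a countable Boolean combination of Borel conditions on the distances between points of $V$ and elements of $\Phi_r(D_r \cap \overline{X_0(r)})$, all of which are Borel in $r$ by construction. Injectivity follows since $\Phi_r$ sends the canonically enumerated points $0, 1, 2, \ldots \in X_0(r)$ to a canonically identifiable countable subset of $J(r)$ whose matrix of pairwise distances reconstructs $r$.

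The main obstacle will be making the back-and-forth genuinely canonical and Borel in $r$: tolerances must be arranged so that the Urysohn property consistently supplies partners inside the fixed dense subsets $D_r$ and $\{u_n\}$, and the sequence of partial isometries must be shown to converge to a true isometry of the completions. This is the standard coding-of-Polish-metric-spaces technique in invariant descriptive set theory, but it constitutes the bulk of the technical work.
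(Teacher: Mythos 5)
The paper itself offers no proof of Proposition \ref{prop221} --- it is imported from Gao's book and only the informal sketch in subsection \ref{encoding2} precedes it --- so your proposal can only be measured against that sketch, and your overall architecture (a choice-free Kat\v etov tower over $(\omega,d_r)$, an identification of its completion with the fixed copy of $\mathbb U$, and $J(r)$ defined as the image of $X_r$ under that identification) is indeed the intended one. The problem is that the step you yourself identify as ``the bulk of the technical work'' would, as you describe it, fail. If at stage $k$ you \emph{permanently} pair the lowest-indexed unpaired point with a partner in the opposite dense set whose distances to the previously paired points are only correct to within $2^{-k}$, then for any fixed pair of points the distance error is frozen at the stage at which the later of the two was paired and never improves afterwards. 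The limit map on the dense sets is therefore only an approximate isometry with a pair-dependent positive distortion; it does not extend to an isometry of the completions, and the ``furthermore'' clause collapses with it. The standard repair (this is what Gao's lemma actually does) is not to pair points once and for all: at every stage one re-approximates \emph{all} previously treated points of $D_r$ by new elements of $\{u_n\}$, with errors arranged to be summable, so that each point's successive approximants form a Cauchy sequence in $\mathbb U$; the map sending each point to the limit of its approximants is then an exact isometry, and Borelness of $r\mapsto\Phi_r(n)$ follows because the indices of the approximants are chosen by a fixed Borel-in-$r$ recursion. Without this re-approximation scheme the construction does not produce an isometry at all, so the gap is substantive rather than routine.

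A secondary weakness is the injectivity argument. From the bare closed set $J(r)$ there is no canonical way to recover which of its points are $\Phi_r(0),\Phi_r(1),\dots$, so ``the matrix of pairwise distances reconstructs $r$'' does not follow; and for $J$ to be an embedding (which the later propositions about $J(\mathbb X)$ and the Borel isomorphism $\Theta$ genuinely use) you need at least $J(r)\neq J(r')$ for $r\neq r'$, which your construction does not visibly guarantee --- for instance, two enumerations of the same dense set could in principle yield the same closed image. This needs either an argument specific to the recursion or an explicit modification of the coding that forces distinct images.
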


\begin{proposition}
There is a Borel embedding $j$ from $F(\mathbb U)$ into $J(\mathbb X)$ such that for any $F\in F(\mathbb U)$, $F$ is isometric with $j(F)$.
\end{proposition}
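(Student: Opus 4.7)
The plan is to construct $j$ as the composition $j = J \circ \phi$, where $\phi : F(\mathbb{U}) \to \mathbb{X}$ extracts the distance matrix of a Borel-selected dense enumeration of $F$, and $J$ is the map from Proposition \ref{prop221}. Granted such a $\phi$, Proposition \ref{prop221} immediately places $j(F)$ in $J(\mathbb{X})$ with $j(F) \cong_i X_{\phi(F)} \cong_i F$.

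First I would apply the Kuratowski--Ryll-Nardzewski selection theorem to the Effros Borel space $F(\mathbb{U})$ to obtain a sequence of Borel maps $d_n : F(\mathbb{U}) \to \mathbb{U}$ such that for every nonempty $F$, $d_n(F) \in F$ and $\{d_n(F) : n \in \omega\}$ is dense in $F$. A standard Borel thinning---inductively replacing the original selector by the first later term avoiding all previous selections---makes the enumeration injective whenever $F$ is infinite; all relevant tests (membership in $F$, distance, equality) are Borel on $F(\mathbb{U}) \times \mathbb{U}$, so the procedure itself is Borel. Finite and empty $F$ can be assigned values of $j$ by a separate Borel case analysis (e.g.\ by padding a one-to-one enumeration of the finite metric space with suitably chosen distinct points of $\mathbb{U}$ near $F$).

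Next define
$$
\phi(F)_{i,j} \;=\; d_{\mathbb{U}}\bigl(d_i(F),\, d_j(F)\bigr).
$$
Borelness of $\phi : F(\mathbb{U}) \to \real^{\omega\times\omega}$ is immediate from composing Borel selectors with the continuous metric of $\mathbb{U}$; the three defining conditions of $\mathbb{X}$ (nonnegativity with $r_{i,j}=0$ iff $i=j$, symmetry, triangle inequality) follow from the metric axioms on $\mathbb{U}$ together with the injectivity arranged above, so $\phi$ lands in $\mathbb{X}$. Setting $j = J \circ \phi$, the abstract space $(\omega, d_{X,\omega})$ coded by $\phi(F)$ is, by construction, the metric subspace $\{d_n(F) : n \in \omega\}$ of $\mathbb{U}$, which is dense in the complete space $F$. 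Hence its completion $X_{\phi(F)}$ is isometric to $F$, and Proposition \ref{prop221} gives $j(F) = J(\phi(F))$ isometric to $X_{\phi(F)}$, and therefore to $F$.

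The main obstacle is the Borel selection step, since a priori there is no measurable way to pick points from an arbitrary closed subset of a Polish space. This is handled precisely by the Kuratowski--Ryll-Nardzewski theorem and its refinement producing a dense enumeration; once that is available, the post-processing for injectivity and the verification of the axioms of $\mathbb{X}$ are routine. The embedding property---that $j$ is a Borel reduction of $\cong_i$ on $F(\mathbb{U})$ into $\cong_i$ on $J(\mathbb{X})$---then follows formally from $F \cong_i j(F)$: if $F_1 \cong_i F_2$ the images are isometric, and conversely $j(F_1) \cong_i j(F_2)$ transports back through $F_k \cong_i j(F_k)$.
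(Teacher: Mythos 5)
Your core construction --- Kuratowski--Ryll-Nardzewski selectors on the Effros Borel space, the induced distance matrix $\phi(F)_{i,j}=d_{\mathbb U}\left(d_i(F),d_j(F)\right)$, and composition with $J$ --- is the standard route, and it does yield a Borel map $j=J\circ\phi$ with $j(F)$ isometric to $F$ for every infinite closed $F$; the paper states this proposition without proof, so there is nothing to compare against line by line. But one step genuinely fails. The statement asks for an \emph{embedding}, which (as the subsequent proposition producing the Borel isomorphism $\Theta$ via a Borel Schr\"oder--Bernstein argument makes clear) must mean an \emph{injective} Borel map; your closing paragraph instead verifies that $j$ is a reduction of $\cong_i$, which is a different and weaker property. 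In fact $J\circ\phi$ is provably not injective: let $D_1\neq D_2$ be two distinct closed subsets of $\mathbb U$, each isometric to the countably infinite $1$-separated discrete space (these exist by universality together with the extension property \ref{compact_isometries}). Such a space is discrete, so its only dense subset is itself, and every injective enumeration of it produces the same matrix $r_{i,j}=1$ for $i\neq j$; hence $\phi(D_1)=\phi(D_2)$ no matter how the selectors are chosen, and $j(D_1)=j(D_2)$. Worse, since that $r$ is the \emph{only} element of $\mathbb X$ whose completion has this isometry type, $J(\mathbb X)$ contains exactly one set isometric to $D_1$, so \emph{no} injective $j$ into $J(\mathbb X)$ satisfying $j(F)\cong_i F$ can exist at all. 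The proposition is only salvageable if ``embedding'' is read as ``Borel map'', and your write-up should confront this rather than substitute the reduction property for injectivity.

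The treatment of finite $F$ is also wrong as proposed. Padding an enumeration of a finite $F$ with extra points of $\mathbb U$ near $F$ changes the isometry type: $X_{\phi(F)}$ is the completion of the \emph{entire} enumerated set, padding included, so it is no longer isometric to $F$. With the paper's definition of $\mathbb X$, which demands $r_{i,j}=0$ if and only if $i=j$, every $X_r$ is infinite, so finite nonempty $F$ (and $F=\emptyset$) admit no code whatsoever; they must either be excluded or handled by relaxing the first condition on $\mathbb X$ to a pseudometric condition ($r_{i,i}=0$, repetitions permitted), in which case a finite space is coded by repeating its points. The remainder of your argument --- Borelness of the selectors and of $\phi$, the thinning to an injective dense enumeration for infinite $F$, membership of $\phi(F)$ in $\mathbb X$, and $X_{\phi(F)}\cong_i F$ because a closed subset of $\mathbb U$ is complete --- is correct and is where the real content of the (non-injective) statement lies.
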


\begin{proposition} 
There is a Borel isomorphism $\Theta$ between $J(\mathbb X)$ and $F(\mathbb U)$ such that for any $r\in \mathbb X$, $X_r$ is isometric with $\Theta(r)$.
\end{proposition}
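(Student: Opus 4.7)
The plan is to apply the Borel Schr\"oder--Bernstein theorem to the embeddings already constructed. From the two preceding propositions we have a Borel embedding $J:\mathbb X\to F(\mathbb U)$ with $J(r)$ isometric to $X_r$, and a Borel embedding $j:F(\mathbb U)\to J(\mathbb X)$ with $j(F)$ isometric to $F$. Together with the tautological inclusion $\iota:J(\mathbb X)\hookrightarrow F(\mathbb U)$, these give Borel injections in both directions between two standard Borel spaces. The content of the statement is not the mere existence of a Borel bijection, but that one can be built out of $\iota$ and $j$ alone so that the isometry-type preservation of both is inherited by $\Theta$.

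Concretely, I would carry out the usual Cantor--Bernstein bookkeeping: put $B_0=J(\mathbb X)\setminus j(F(\mathbb U))$, inductively $B_{n+1}=j(B_n)$, and $B=\bigcup_{n\in\omega}B_n$. Define $\Theta:J(\mathbb X)\to F(\mathbb U)$ by $\Theta(x)=x$ if $x\in B$ and $\Theta(x)=j^{-1}(x)$ otherwise. Since $j$ is a Borel embedding, its image $j(F(\mathbb U))$ is Borel in $J(\mathbb X)$ (via the Luzin--Suslin Theorem \ref{borel_injective}), each $B_n$ is Borel, and so is $B$; moreover $j^{-1}$ is Borel on the Borel set $j(F(\mathbb U))$. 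The classical Cantor--Bernstein argument then shows $\Theta$ is a bijection, and being piecewise Borel on a Borel partition, it is a Borel isomorphism.

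For the isometry clause, fix $r\in\mathbb X$ and put $x=J(r)$, so that $x$ is isometric to $X_r$. If $x\in B$ then $\Theta(x)=x$, already isometric to $X_r$. Otherwise $x\in j(F(\mathbb U))$, so there is a unique $F\in F(\mathbb U)$ with $j(F)=x$ and $\Theta(x)=F$; the second preceding proposition makes $F$ isometric to $j(F)=x$, hence isometric to $X_r$. I expect the main obstacle to be the Borelness bookkeeping for the sets $B_n$ and for the partial inverse $j^{-1}$, both of which reduce to the Luzin--Suslin theorem applied to the injective Borel map $j$; once that is settled, the isometry clause is essentially forced, since every value of $\Theta$ is either $\iota(x)=x$ or $j^{-1}(x)$, and both $\iota$ and $j^{-1}$ preserve the isometry type of the underlying closed subspace of $\mathbb U$.
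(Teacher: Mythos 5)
Your argument is correct: the Borel Cantor--Schr\"oder--Bernstein construction applied to the inclusion $\iota:J(\mathbb X)\hookrightarrow F(\mathbb U)$ and the embedding $j$ is exactly the canonical way to obtain $\Theta$, the Borelness of the sets $B_n$ and of the partial inverse $j^{-1}$ does reduce to Luzin--Suslin (Theorem \ref{borel_injective}) applied to the injective Borel maps $J$ and $j$, and since every value of $\Theta$ is either $x$ itself or $j^{-1}(x)$, the isometry clause is inherited from the two preceding propositions as you say. The paper states this proposition without proof, but your route is the standard one (it is how the source text of Gao handles this step), so there is nothing to correct.
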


\subsection{Classification problems and invariant descriptive set theory}

A \emph{classification problem} is associated with a class of mathematical structures and a notion of equivalence and is usually formulated as ``What are the objects of a given type, up to some equivalence?''. Generally, in mathematics a \emph{classification theorem} answers the classification problem.
Examples of such results include:

\begin{enumerate}
\item Classification of Euclidean plane isometries,
\item Classification of finite simple groups, 
\item  The Artin-Wedderburn theorem is a classification theorem for semisimple rings and semisimple algebras.
\end{enumerate}

\noindent In the context of invariant descriptive set theory, classification theorems are concerned with the complexity of respective classification problems seen as equivalence relations on some suitable standard Borel spaces. Such results usually require two parts:
\begin{enumerate}
\item[(1)] determination of complete invariants for the objects in question,
\begin{itemize}\item[-] this corresponds to a Borel
reduction to a known equivalence relation,\end{itemize}
\item[(2)] should leave no room for significant improvement, \begin{itemize}\item[-] this can be interpreted as a natural equivalence relation that is Borel bireducible
with the classification problem.\end{itemize}
\end{enumerate}

Part $(1)$ is called a \emph{completeness} result, part $(2)$ can be seen as a \emph{complexity determination} result.

We are interested in classification results for Polish metric spaces. In this case a classification problem is an equivalence relation defined either on $\mathcal X$, equipped with a Borel structure through one of the mentioned encodings, or on a subspace of $\mathcal X$ (for instance the space of compact Polish metric spaces). Our main focus is to examine the isometric classification of Polish metric spaces, but we will start with a discussion of simpler results concerning Polish metric spaces. 

Let $\mathcal X_c$ denote the hyperspace of all compact Polish metric spaces. If $\mathcal X_c$, as a subset of $\mathcal X$, is Borel, then $\mathcal X_c$, as a subspace of $\mathcal X$, is a standard Borel space. Let's show this.
\begin{example}\label{compact_example} Define $$\mathbb X_c=\{r\in \mathbb X: X_r \text{ is compact}\}.$$
Let's show that $\mathbb X_c$ is a Borel subset of $\mathbb X$.
\begin{proof}
We know that a complete metric space is compact if and only if it is totally bounded. Then 
\begin{align*}
\mathbb X_c=\bigcap_{\epsilon\in\mathbb Q}\bigcup_{k\in \omega}\bigcap_{j\in\omega}\bigcup_{n\le k}\{x\in\mathbb X^{\omega\times\omega}:x_{j,n}\le \epsilon\}.
\end{align*}
It follows that $\mathbb X_c$ is Borel.
\end{proof}
\end{example}

The following classification theorem characterises the isometric classification problem of Polish metric spaces. 

\begin{theorem}\label{t_smooth}
The isometric classification problem for compact Polish metric spaces is smooth.
\end{theorem}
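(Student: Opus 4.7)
The plan is to exhibit an explicit Borel map $\Psi : \mathbb{X}_c \to F(\real^{\omega\times\omega})$ that sends each $r$ to a complete isometry invariant of $X_r$, and then to compose with a Borel injection $F(\real^{\omega\times\omega}) \hookrightarrow \twon$ supplied by the Isomorphism Theorem to obtain a Borel reduction of isometric classification on $\mathbb{X}_c$ to $id(\twon)$. For $r \in \mathbb{X}_c$ I set
\[
\Psi(r) \;=\; \overline{\left\{\,(r_{f(i),f(j)})_{i,j\in\omega} : f \in \baire\,\right\}},
\]
with the closure taken in the product topology on $\real^{\omega\times\omega}$. Since $\omega$ is dense in $X_r$ and $X_r$ is compact, $\Psi(r)$ coincides with the compact image of the continuous distance-matrix map $\sigma_r : X_r^\omega \to \real^{\omega\times\omega}$, $(x_n) \mapsto (d_{X_r}(x_i,x_j))_{i,j}$: the inclusion $\subseteq$ is immediate because $\baire \subseteq X_r^\omega$ and $\sigma_r(X_r^\omega)$ is closed, while the reverse follows by approximating each coordinate of an arbitrary sequence in $X_r$ by elements of $\omega$.

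The central step is to show $\Psi(r) = \Psi(r')$ iff $X_r \cong_i X_{r'}$. The forward direction is obvious. For the converse, the matrix $(r_{i,j}) = \sigma_r(\mathrm{id}_\omega)$ lies in $\Psi(r) = \Psi(r')$, so there exists $(y_n) \in X_{r'}^\omega$ with $d_{X_{r'}}(y_i,y_j) = r_{i,j}$ for all $i,j$. The map $i \mapsto y_i$ is then an isometric embedding of the dense subset $\omega \subseteq X_r$ into $X_{r'}$, which extends by completeness of $X_{r'}$ to an isometric embedding $\phi : X_r \to X_{r'}$. Symmetrically one obtains $\psi : X_{r'} \to X_r$. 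Both $\phi \circ \psi$ and $\psi \circ \phi$ are then isometric self-embeddings of compact metric spaces; the classical observation that any point $y$ outside the image of such a self-embedding $F$ would satisfy $d(y, F(X_r)) = \varepsilon > 0$ and produce the $\varepsilon$-separated forward orbit $y, F(y), F^2(y), \dots$, contradicting sequential compactness, forces each composition to be surjective, whence $\phi$ itself is an isometry.

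For Borel measurability of $\Psi$, recall that the Effros Borel structure on $F(\real^{\omega\times\omega})$ is generated by the sets $S_U = \{F : F \cap U \neq \emptyset\}$ for open $U \subseteq \real^{\omega\times\omega}$. Because $\Psi(r)$ is a closure and $U$ is open, $\Psi(r) \cap U \neq \emptyset$ iff $(r_{f(i),f(j)})_{i,j} \in U$ for some $f \in \baire$; for $U$ a basic open set restricting coordinates $(i_l, j_l)$, $l \leq N$, to open intervals, this condition involves only the finitely many values of $f$ at indices in $\{i_l, j_l : l \leq N\}$ and unfolds into a countable union of open conditions on $r$. Hence $\Psi^{-1}(S_U)$ is open in $\mathbb{X}_c$, and $\Psi$ is Borel. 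The main obstacle in the whole argument is the completeness step: assembling the mutual embeddings $\phi, \psi$ into a genuine isometry requires both the extension lemma for isometries from a dense subset and the self-embedding-surjectivity fact for compact metric spaces. The Borelness verification and the final reduction to $id(\twon)$ via the Isomorphism Theorem are then fairly direct.
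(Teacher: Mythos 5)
The paper states Theorem \ref{t_smooth} without proof, so there is nothing internal to compare against; judged on its own terms, your argument is correct and is in fact the standard Gromov-style ``distance matrix'' proof of this result (the one used in Gao's book, the paper's source for this section). All three load-bearing steps check out: $\Psi(r)$ really is the compact set $\sigma_r(X_r^\omega)$ (Tychonoff plus continuity of the distance-matrix map, with density of $\omega^\omega$ in $X_r^\omega$ giving the reverse inclusion); the completeness of the invariant correctly combines the extension of a distance-preserving map from a dense subset (injectivity of $i\mapsto y_i$ follows from the requirement $r_{i,j}=0\iff i=j$ in the definition of $\mathbb X$, which you might make explicit) with the fact that an isometric self-embedding of a compact metric space is surjective, proved via the $\varepsilon$-separated forward orbit; and the Borelness computation for $\Psi^{-1}(S_U)$ with $U$ basic open is exactly right, which suffices since such $S_U$ generate the Effros $\sigma$-algebra. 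The final reduction to $id(\twon)$ via standardness of the Effros Borel space of $F(\real^{\omega\times\omega})$ and the Isomorphism Theorem is also correct. The only place where your write-up leans on compactness beyond the surjectivity lemma is in identifying $\Psi(r)$ with a closed (because compact) image; it is worth noting that this is precisely where the argument breaks for general Polish spaces, consistent with Theorem \ref{t_full}.
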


\begin{remark}
Note that smoothness of an equivalence relation implies that the relation is simple enough so that its invariants can be represented as elements of a standard Borel space: if an equivalence relation $E$ on $X$ is smooth, then there is Borel map $c:X\rightarrow Y$ where $Y$ is a standard Borel space such that $x E y\iff c(x)=c(y).$
\end{remark}

The previous theorem shows that the isometric classification of 

\subsection{Isometric classification of Polish metric spaces}

Define $E_I$ as the orbit equivalence relation on $F(\mathbb U)$ induced by $\text{Iso}(\mathbb U)$, the group of isometries on the Urysohn space.
So, for $\boldsymbol c_1,\boldsymbol c_2 \in F(\mathbb U)$,
$$\boldsymbol c_1~E_I~\boldsymbol c_2\implies \boldsymbol c_1\text{ is isometric to } \boldsymbol c_2.$$
The implication in the other direction is not true in general, however, via \ref{compact_isometries}, it holds for compact subsets of the Urysohn space.

This subsection is devoted to proving the following result, proven independently by Clemens (\cite{Cle01}) and by Gao and Kechris, that fully classifies Polish metric spaces up to an isometry.

\begin{theorem}\label{t_full}~
\begin{enumerate}
\item (Completeness) $\cong_i~\le_B E_I,$
\item (Complexity) $\cong_i~\sim_B E_I$ and $E_I$ is a universal orbit equivalence relation.

\end{enumerate}
\end{theorem}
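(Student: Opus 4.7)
The plan is to split the theorem into three sub-claims: (a) the Borel reduction $\cong_i\le_B E_I$, (b) the reverse reduction $E_I\le_B\cong_i$ (which together with (a) yields $\cong_i\sim_B E_I$), and (c) universality of $E_I$ among orbit equivalence relations induced by Borel actions of Polish groups. Throughout, $\mathcal X$ is identified with $\mathbb X$ via the encoding of Subsection~\ref{encoding1}.

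For (a), the natural candidate reduction is the Borel embedding $J:\mathbb X\to F(\mathbb U)$ from Proposition~\ref{prop221}. The nontrivial direction is that $X_{r_1}\cong_i X_{r_2}$ implies $J(r_1)\,E_I\,J(r_2)$. Given an isometry $\phi:X_{r_1}\to X_{r_2}$, I would propagate $\phi$ up the Kat\v etov tower level by level via the functorial assignment $f\mapsto f\circ\phi_k^{-1}$ on admissible finitely supported functions at each stage $k$, producing isometries $\phi_k:(X_{r_1})_k\to(X_{r_2})_k$ whose union extends to an isometry $\overline{(X_{r_1})_\omega}\to\overline{(X_{r_2})_\omega}$. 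Composing with the isometries with $\mathbb U$ furnished by Proposition~\ref{prop221} yields an element of $\text{Iso}(\mathbb U)$ carrying $J(r_1)$ onto $J(r_2)$. The converse is routine: any $\psi\in\text{Iso}(\mathbb U)$ with $\psi(J(r_1))=J(r_2)$ restricts to an abstract isometry.

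For (b), I would construct a Borel map $g:F(\mathbb U)\to\mathbb X$ by fixing a Borel dense-sequence selector $(d_n(\boldsymbol c))_n$ on the Effros Borel space and setting $g(\boldsymbol c)_{i,j}=d_{\mathbb U}(d_i(\boldsymbol c),d_j(\boldsymbol c))$. The forward direction of the reduction is immediate. For the reverse, the key geometric lemma is that every abstract isometry $\phi:\boldsymbol c_1\to\boldsymbol c_2$ between closed subsets of $\mathbb U$ extends to some $\psi\in\text{Iso}(\mathbb U)$. I would prove this by back-and-forth: enumerate countable dense sets $\{a_n\}\subseteq\boldsymbol c_1$, $\{b_n=\phi(a_n)\}\subseteq\boldsymbol c_2$, together with dense enumerations $\{x_n\}$, $\{y_n\}$ of $\mathbb U$; at each stage, extending the current partial isometry by a new point reduces to realizing a prescribed admissible function on a finite subset of $\mathbb U$, which is possible by the Urysohn property. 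The uniform closure extends $\phi$ to the required $\psi$.

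The most substantial step is (c). The plan rests on two ingredients: Uspenskij's theorem that $\text{Iso}(\mathbb U)$ is a universal Polish group (every Polish group embeds as a closed subgroup), and Proposition~\ref{compact_universal} yielding a universal compact Borel $G$-space for each Polish group $G$. Given an arbitrary Polish group $H$ and a Borel $H$-space $Y$, I would embed $H\hookrightarrow G=\text{Iso}(\mathbb U)$ as a closed subgroup, form the induced $G$-space $G\times_H Y$ whose orbit equivalence relation is Borel bireducible with $E_H^Y$, and then $G$-equivariantly Borel-embed a universal compact Borel $G$-space into $F(\mathbb U)$ with respect to the canonical action $\psi\cdot\boldsymbol c=\psi(\boldsymbol c)$. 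This final equivariant realization is the main obstacle: one must encode points of the universal $G$-space as closed subsets of $\mathbb U$ so that the $G$-action is intertwined with the restriction of the $\text{Iso}(\mathbb U)$-action on $F(\mathbb U)$, which is where the specific geometry of $\mathbb U$ (and the flexibility afforded by its Urysohn property) is essential. Once this $G$-equivariant embedding is in place, the orbit equivalence relation on the universal $G$-space Borel-reduces to $E_I$ by restriction, completing the universality claim.
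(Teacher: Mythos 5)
Your part (a) matches the paper's Completeness argument (the paper likewise uses $J$ from Proposition \ref{prop221} and extends an isometry $X\to Y$ to one of $\overline{X_\omega}\to\overline{Y_\omega}$), so that part is fine. The serious problem is part (b). Your ``key geometric lemma'' --- that every abstract isometry between arbitrary \emph{closed} subsets $\boldsymbol c_1,\boldsymbol c_2\subseteq\mathbb U$ extends to an element of $\text{Iso}(\mathbb U)$ --- is false, and the paper says so explicitly just before the theorem: two closed subsets of $\mathbb U$ can be isometric without being $E_I$-equivalent, and extendability of isometries holds only for compact subsets (Proposition \ref{compact_isometries}). Your back-and-forth breaks down because the Urysohn property only lets you realize admissible functions over \emph{finite} subsets; to extend a partial isometry already defined on an infinite (dense) subset of $\boldsymbol c_1$ by one new point you must realize a prescribed distance function over an infinite set, and no form of countable homogeneity of $\mathbb U$ guarantees this. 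Consequently your map $g$ is not a reduction of $E_I$ to $\cong_i$: it identifies isometric closed sets that lie in different $\text{Iso}(\mathbb U)$-orbits. Note also that even a completed version of your (c) would not repair this, since universality of $E_I$ yields reductions \emph{into} $E_I$, not a reduction of $E_I$ into $\cong_i$.

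The paper's route avoids both of your problematic steps with a single construction: for an arbitrary Polish group $G$ and (without loss of generality, by Proposition \ref{compact_universal}) compact Polish $G$-space $X$, it builds an explicit Borel assignment $x\mapsto M_x$ of Polish metric spaces --- countably many copies of the completion $\hat G$ glued together with distances encoding the orbit of $x$, using the left-invariant metric supplied by Lemma \ref{lemma228} --- such that $x\,E_G^X\,y\iff M_x\cong_i M_y$. This shows that \emph{every} orbit equivalence relation Borel-reduces to $\cong_i$; applied to $G=\text{Iso}(\mathbb U)$ acting on $F(\mathbb U)$ it gives $E_I\le_B\,\cong_i$, and combined with Completeness it delivers the bireducibility and the universality of $E_I$ simultaneously. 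Your (c), by contrast, routes through Uspenskij's theorem and a $G$-equivariant Borel embedding of a universal compact $G$-space into $F(\mathbb U)$; that is a legitimate alternative strategy found in the literature, but the equivariant embedding is precisely the hard content, and since you leave it unproved the proposal as written establishes neither half of the Complexity claim.
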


\begin{proof}[Proof of (Completeness)]

We identify $\mathcal X$ with $\mathbb X$ is this proof. We need to find a Borel function \mbox{$f:\mathbb X\rightarrow \mathbb U$} such that for all $X,Y\in\mathcal X,$ $$X\cong_i Y\iff f(r_X)~E_I~ f(r_Y).$$ Let's check that $J$, defined in Proposition \ref{prop221}, satisfies this condition:
\begin{enumerate}
\item[($\Leftarrow$)] Suppose $J(r_X)~E_I~ J(r_Y)$. Then $J(r_X)\cong_i J(r_Y),$ and by \ref{prop221} and from the transitivity of $\cong_i$ we get $X\cong_i J(r_X)\cong_i J(r_Y)\cong_i Y.$
\item[($\Rightarrow$)]  Suppose $X\cong_i Y$. From \ref{prop221} we have two isometries, $\phi_X:\overline{X_\omega}\rightarrow \mathbb U$ and $\phi_Y:\overline{Y_\omega}\rightarrow \mathbb U$, such that $\phi_X(X)=J(r_X)$ and $\phi_Y(Y)=J(r_Y)$. If $\pi$ is an isometry between $X$ and $Y$, it can be extended to $\pi^*$, an isometry between $\overline{X_\omega}$ and $\overline{Y_\omega}$.  Then $\phi_X \circ \pi^*\circ \phi_Y^{-1}$ is an isometry on $\mathbb U$. Moreover, $$\phi_X \circ \pi^*\circ \phi_Y^{-1}(J(r_Y))=J(r_X).$$ It follows that $J(r_X)~E_I~ J(r_Y)$.
\end{enumerate}
\end{proof}

\begin{proof}[Proof of (Complexity)]
Again, we identify $\mathcal X$ with $\mathbb X$ is this proof.
For this part of the theorem, it is sufficient to show that $E_I$ is a universal orbit equivalence relation and the required bireducibility will follow as a simple consequence. 

Let $G$ be a Polish group and let $X$ be a Borel $G$-space. To show the universality of $E_I$,  we will construct a Borel function $x\mapsto M_x$ where $x\in X$ and $M_x$ is a Polish metric space (identified with some element $r_{M_x}\in\mathbb X$), such that $x~E_G^X~y\iff M_x\cong_i M_y.$ We may assume that $X$ is compact and the reason is following. Due to $\ref{compact_universal}$, we know that there is a compact universal Borel $G-$space, let's call it $U_c$. It is easy to see that $E_G^X\le_B E_G^{U_c}$, hence it is sufficient to show that $E_G^{U_c}\le_B E_I$. 

First, we need the following lemma.
\begin{lemma}\label{lemma228}
Let $G$ be a Polish group and $X$ a compact Polish $G-$space. Let $d_X\le1$ be a compatible metric on $X$. Then there is a left-invariant compatible metric $d_G\le1$ on $G$ such that, for any $x\in X$ and $g, h\in G$,
$$d_G(g,h)\ge \frac12 d_X(g^{-1}\cdot x, h^{-1}\cdot x).$$
\end{lemma}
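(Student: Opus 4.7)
The plan is to build $d_G$ as a sum of two pieces: a ``generic'' left-invariant compatible metric on $G$ that handles separation of points, plus a supremum term that encodes the action on $X$ and forces the desired inequality. Concretely, by the Birkhoff--Kakutani theorem every Polish group admits a left-invariant compatible metric, and by scaling and capping I can fix one, call it $\rho$, with $\rho\le 1/2$. Then I would define
\begin{align*}
d_G(g,h)\;=\;\rho(g,h)\;+\;\sup_{x\in X}\tfrac12\,d_X(g^{-1}\cdot x,\,h^{-1}\cdot x).
\end{align*}
Since $d_X\le 1$, the supremum is $\le 1/2$, so $d_G\le 1$; the required inequality $d_G(g,h)\ge \tfrac12 d_X(g^{-1}\cdot x,h^{-1}\cdot x)$ is immediate from bounding the sup below by a single term.

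Next I would verify that $d_G$ is actually a left-invariant metric. The triangle inequality and symmetry follow from the same properties for $\rho$ and $d_X$ (the sup of a family of pseudometrics is a pseudometric); positive definiteness is inherited from $\rho$. For left-invariance of the sup term I would change variable $y=k^{-1}\cdot x$, using that the action of $k^{-1}$ is a bijection on $X$:
\begin{align*}
\sup_{x\in X}\tfrac12\,d_X((kg)^{-1}\cdot x,(kh)^{-1}\cdot x)
=\sup_{y\in X}\tfrac12\,d_X(g^{-1}\cdot y,h^{-1}\cdot y),
\end{align*}
and $\rho$ is left-invariant by construction, so $d_G(kg,kh)=d_G(g,h)$.

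The main obstacle, and the only nontrivial point, is to show that $d_G$ induces the given Polish topology on $G$. Since $d_G\ge\rho$, the $d_G$-topology is at least as fine as the original one, so I only need: if $g_n\to g$ in $G$, then $\sup_{x\in X} d_X(g_n^{-1}\cdot x,g^{-1}\cdot x)\to 0$. This is exactly where compactness of $X$ enters. I would argue it by the standard ``tube lemma'' style trick using joint continuity of the action $(g,x)\mapsto g^{-1}\cdot x$: given $\varepsilon>0$, for each $x\in X$ pick open neighborhoods $U_x\ni g$ and $V_x\ni x$ such that $d_X(g'^{-1}\cdot y,g^{-1}\cdot x)<\varepsilon/2$ whenever $(g',y)\in U_x\times V_x$; cover $X$ by finitely many $V_{x_1},\dots,V_{x_n}$ and let $U=\bigcap_i U_{x_i}$. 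For $g'\in U$ and arbitrary $y\in X$, picking $i$ with $y\in V_{x_i}$ and applying the condition twice (once for $g'$, once for $g$ itself) gives $d_X(g'^{-1}\cdot y,g^{-1}\cdot y)<\varepsilon$ uniformly in $y$. Hence for all sufficiently large $n$ the supremum is below $\varepsilon$, which together with $\rho(g_n,g)\to 0$ yields $d_G(g_n,g)\to 0$. This compactness-driven uniformization is the whole content of the lemma; everything else is bookkeeping.
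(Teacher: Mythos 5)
The paper itself states Lemma \ref{lemma228} without proof (it is imported from Gao's book, where the argument is essentially the construction you give: a capped left-invariant Birkhoff--Kakutani metric plus the supremum term $\sup_{x}\tfrac12 d_X(g^{-1}\cdot x,h^{-1}\cdot x)$, with compatibility coming from compactness of $X$ and joint continuity of the action). Your proposal is correct and takes that same standard route; the only point worth making explicit is that you are using that a ``Polish $G$-space'' carries a \emph{continuous} (not merely Borel) action, which is indeed the convention in force here and is what licenses the tube-lemma step.
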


Let $d_X$ be a compatible metric on $X$ with $d\le 1.$ By Lemma \ref{lemma228}, there is a compatible metric $d_G$ on $G$ with $d_G\le1$. Without loss of generality we assume that there are exist $x,y\in X$ with $d_X(x,y)=1$ and that $$\sup\{d_G(g,h): g,h\in G\}=1.$$

By left invariance of $d_G$, it follows that $\sup\{d_G(g,h): g,h\in G\}=1$ for any $g\in G$.

Let $x\in X$ and let's define $M_x$. First, fix a countable dense subset $D$ of $X$ with $\seq xn$ being its enumeration. Fix a bijection $\pi: \mathbb Z\rightarrow \omega$ defined as $$\pi(n)=\begin{cases}2n,&\text{if } n\ge0,\\-2n-1,&\text{otherwise.}\end{cases}$$ Let $H=G\times \mathbb Z\times \{0,1\}$ and define the following metric on $H$:

\begin{align*}
d_x\left((g_1,n_1,i_1),(g_2,n_2,i_2)\right)=
\begin{cases}
d_G(g_1,g_2),& \text{if $n_1=n_2$ and $i_1=i_2,$}\\
\frac32+ \frac{1+d_G(g_1,g_2)}{4^{|n_1-n_2|}},& \text{if $n_1\neq n_2$ and $i_1=i_2$,}\\
1+\frac{1+d_X(x_{\pi(n_1-n_2)}, g_2^{-1}\cdot x)}{4^{\pi(n_1-n_2)+1}},& \text{if $i_1=0$ and $i_2=1,$}\\
1+\frac{1+d_X(x_{\pi(n_2-n_1)}, g_1^{-1}\cdot x)}{4^{\pi(n_2-n_1)+1}},& \text{if $i_1=1$ and $i_2=0.$}
\end{cases}
\end{align*}

Let $\hat G$ be the completion of $G$ with $d_G$ and let $\hat H$ be the completion of $H$ with $d_X$. $\hat H$ can be seen as  a countable union of copies of $\hat G$ with the completed metric $d_X$ on each copy coinciding with the completed metric $d_G$. Define $M_x$ to be $\hat H$. It is easy to see that $x\mapsto M_x$ is a Borel function, since a countable dense subset of $M_x$ can be obtained canonically from a canonical countable dense subset of $G$, and the distances between elements of this subset are defined in terms of $d_X$, which is itself a Borel function.

What is left to prove is that for all $x,y\in X,~~ xE_G^Xy\iff M_x\cong_i M_y:$

\begin{enumerate}
\item[($\Rightarrow$)] Let $x,y\in X$ and suppose $xE_G^Xy$ with $y=h\cdot x$. Then the map \mbox{$(g,n,i)\mapsto (hg,n,i)$} is an isometry between $(H,d_x)$ and $(H,d_y)$. This map extends uniquely to an isometry between $M_x$ and $M_y$ and thus we have $M_x\cong_i M_y$. The map is clearly a bijection, hence the only thing to check is whether it preserves the metric. The definition of $d_x$ has four cases. The metric preservation for the first two cases easily follows from the left-invariance of $d_G$. The other two cases are symmetrical, hence it is sufficient to verify only one of them. Let $(g_1,n_1,i_1),(g_2,n_2,i_2)\in H$ with $i_1=0$ and $i_2=1$. Then

\begin{align*}
d_x\left((g_1,n_1,i_1),(g_2,n_2,i_2)\right)=1+\frac{1+d_X(x_{\pi(n_1-n_2)}, g_2^{-1}\cdot x)}{4^{\pi(n_1-n_2)+1}}=\\
1+\frac{1+d_X(x_{\pi(n_1-n_2)}, (hg_2)^{-1}\cdot x)}{4^{\pi(n_1-n_2)+1}}=d_x\left((hg_1,n_1,i_1),(hg_2,n_2,i_2)\right).
\end{align*}
 
\item[($\Leftarrow$)] Let $x,y\in X$ and suppose $M_x\cong_i M_y.$ Let $\phi$ be an isomorphism between $M_x$ and $M_y$. We need to show that $y=h\cdot x$ for some $h\in G$.

Note that $M_x$ is such a countable union of copies of $\hat G$ that each copy has the diameter $\le1$ and distances between elements of distinct copies are greater than $1$. This means that $\phi$ sends distinct copies of $\hat G$ in $M_x$ into distinct copies of $\hat G$ in $M_y$. For all $n\in\mathbb Z,i\in\{0,1\}$ define \mbox{$\hat H_{n,i}=\hat G\times \{n\}\times\{i\}$}. Then  $\phi$ induces a bijection \mbox{$f:\mathbb Z\times\{0,1\}\rightarrow \mathbb Z\times\{0,1\}$}, such that for all $n\in\mathbb Z,i\in\{0,1\}$,
$$\phi\left(\hat H_{n,i}\right)=\hat H_{f(n,i)}.$$

We can think of copies of $\hat G$\ as organized in two countable sequences, $\hat H_{n,0}$ and $\hat H_{n,1}$. Note, that distances between elements from different sequences are greater than $\frac32$, while distances between elements from the same sequence are at most $\frac32$.  Thus, $\phi$ sends copies from one sequence to the same sequence. Finally, note that for all $i\in\{0,1\}$, $n_1,n_2\in\mathbb Z,h_1,h_2\in \hat G$ with $n_1\neq n_2,$ we have
$$d_x\left((h_1,n_1,i),(h_2,n_2,i) \right)\le\frac32 +\frac2{4^{|n_1-n_2|}}.$$

The above inequality implies that for any $i\in\{0,1\}$ and $m\in\mathbb Z$, if \mbox{$f(0,i)=(n_0,j)$} and $f(m,i)=(n_0+m_0,j)$, then $|m_0|\le |m|$. By induction, from the bijectivity of $f$, we have a stronger relation:\ $|m_0|=|m|$.
\begin{enumerate}
\item Suppose $f(0,0)=(n_0,0)$ for some $n_0\in \mathbb Z$. Then $f(0,1)=(m_0,1)$ for some $m_0\in \mathbb Z$. Let's show $n_0=m_0$. For suppose otherwise and let $g_1,g_2\in G$, then we will get a contradiction:

\begin{align*}
d_x\left((g_1,0,0),(g_2,0,1)\right)=1+\frac{1+d_X(x_{\pi(0)}, g_2^{-1}\cdot x)}{4^{\pi(0)+1}}\ge \frac 54,
\end{align*}

but 

\begin{align*}
d_y\left((g_1,n_0,0),(g_2,m_0,1)\right)=1+\frac{1+d_Y(x_{\pi(n_1-n_2)}, g_2^{-1}\cdot y)}{4^{\pi(n_1-n_2)+1}}\le 1+\frac2{4^2} =\frac 98.
\end{align*}

Let $m\in \mathbb Z$. We know that $f(m,0)=(n_0\pm m,0), $ let's show that $f(m,0)=(n_0+ m,0)$. For $g_1,g_2\in G$ we have 
\begin{align*}
d_x\left((g_1,m,0),(g_2,0,1)\right)\in\left[1+\frac1{4^{\pi(m)+1}}, 1+\frac2{4^{\pi(m)+1}}\right],\\
d_y\left((g_1,n_0+m,0),(g_2,n_0,1)\right)\in\left[1+\frac1{4^{\pi(m)+1}}, 1+\frac2{4^{\pi(m)+1}}\right],\\
d_y\left((g_1,n_0-m,0),(g_2,n_0,1)\right)\in\left[1+\frac1{4^{\pi(-m)+1}}, 1+\frac2{4^{\pi(-m)+1}}\right].
\end{align*}

Note that the third interval is disjoint from the first two, which are the same. It follows that for all $m\in\mathbb Z,$ $f(m,0)=(n_0+ m,0)$. 

By $\ref{chouqet_polish}$ we know that $G$ is comeager in $\hat G$. Then there is a comeager subset $C$ of $G$ such that for all $g\in G$ and $m\in \mathbb Z$, $\phi(g,m,0)\in H$ and $\phi(g,0,1)\in H$.
Let $g_1,g_2\in C$ and let $m \in \mathbb Z$, then
\begin{align}\label{lll1}
d_x\left((g_1,m,0),(g_2,0,1)\right)=1+\frac{1+d_X(x_{\pi(m)}, g_2^{-1}\cdot x)}{4^{\pi(m)+1}}.
\end{align}

Let $h_{1,m}\in G$ be such that $\phi(g_1,m,0)=(h_{1,m},n_0+m,0)$ and let $h_{2}\in G$ be such that $\phi(g_2,0,1)=(h_{2},n_0,1)$. We have 
\begin{align}\label{lll2}
\begin{split}
d_x\left((g_1,m,0),(g_2,0,1)\right)=d_y\left(\phi(g_1,m,0),\phi(g_2,0,1)\right)=\\
d_y\left((h_{1,m},n_0+m,0),(h_{2},n_0,1)\right)=1+\frac{1+d_X(x_{\pi(m)}, h_2^{-1}\cdot x)}{4^{\pi(m)+1}}.
\end{split}
\end{align}

Comparing $(\ref{lll1})$ with $(\ref{lll2})$ we get that for all $m\in\mathbb Z$ we have $d_X\left(x_{\pi(m)}, g_2^{-1}\cdot x\right)=d_X\left(x_{\pi(m)}, h_2^{-1}\cdot y\right)$. And finally, $y = h_2g_2^{-1}\cdot x$.

\item Suppose $f(0,0)=(n_0,1)$ for some $n_0\in \mathbb Z$. An analogous argument shows that $f(0,1)=(n_0,0)$ and $f(m,1)=(n_0-m,0)$ for all $m\in \mathbb Z$. The rest of the argument is analogous, with the necessary modifications.
\end{enumerate}
\end{enumerate}

\end{proof}

\begin{remark} While Proposition \ref{t_smooth} shows that the isometric classification of compact Polish metric spaces is very simple, the above result states that the same classification for Polish metric spaces is the most complex one (in the natural class of all orbit equivalence relations).
\end{remark}

\subsection{Other classification problems for Polish metric spaces}

In this subsection we will describe some other results and examples concerning complexities of equivalence relations on the space of all Polish metric spaces. 
\begin{definition}
Let  $X$ and $Y$ be metric spaces.

We say that $X$ and $Y$ are \emph{uniformly homeomorphic} if there is a bijection $f:X\rightarrow Y$ such that both $f$ and $f^{-1}$ are uniformly continuous.

We say that  $X$ and $Y$ are \emph{Lipschitz isomorphic} if there is a bijection \mbox{$f: X\rightarrow Y$} such that both $f$ and $f^{-1}$ are Lipschitz.

We say that  $X$ and $Y$ are \emph{isometrically biembeddable } if there are isometric embeddings $f: X\rightarrow Y$ and $g: Y\rightarrow X$.

\end{definition}

\begin{example} Let's show that uniform homeomorphism is a $\barsigma11$ relation on $\mathcal X$.

\begin{proof}
We know that an analytic set is a continuous image of a Polish space. Combining this with the Proposition \ref{borel_to_continuous}, we have another characterisation of analytic sets:\ a set is analytic if it is a Borel image of a Polish space. This means that a set $B$ is analytic if there is a Polish space $X$ and a Borel function $f$ with 
$$y\in B\iff \exists x\in X~~y=f(x),$$ or, equivalently, 
$$y\in B\iff \exists x\in X~~\text{some Borel conditions}.$$

We identify $\mathcal X$ with $\mathbb X$. Let $\mathcal A\subseteq \mathbb X^2$ be the uniform homeomorphism relation on Polish metric spaces. Then $(r_X,r_Y)\in \mathcal A$ if 
\begin{enumerate}
\item [(*)] there exists $r_Z$ with $r_X\cong_i r_Z$ and
\item[(**)] $f:(\omega, d_{Z,\omega})\rightarrow (\omega, d_{Y,\omega})$ defined as the identity on $\omega$, is a uniformly continuous bijection with $f^{-1}$ also uniformly continuous.
\end{enumerate}

To show that $\mathcal A$ is analytic, it is sufficient to demonstrate that both (*) and (**) are Borel conditions. (*) is Borel since $\cong_i$ is an orbit equivalence relation and all of its equivalence classes are Borel. Example \ref{compact_example} shows that compactness of Polish metric spaces is a Borel condition. By a similar argument, uniform continuity is a Borel condition and hence (**) is also a Borel condition. Thus $\mathcal A$ is analytic.
\end{proof}
\end{example}

\begin{fact} Lipschitz isomorphism and isometric biembeddability are analytic equivalence relations.
\end{fact}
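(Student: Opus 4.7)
The plan is to mirror the preceding uniform-homeomorphism argument: for each relation I will express membership in the form $\exists y \in Y~(\text{Borel condition})$ with $Y$ a Polish space, and invoke the characterisation (recalled in that example) that a set is $\barsigma11$ iff it is a Borel image of a Polish space. Throughout I identify $\mathcal X$ with $\mathbb X$ and work with the canonical dense copy $(\omega, d_{Z,\omega})$ inside each $X_{r_Z}$.

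For the Lipschitz isomorphism relation $\mathcal L \subseteq \mathbb X^2$, I would show that $(r_X, r_Y) \in \mathcal L$ iff there exists $r_Z \in \mathbb X$ such that (i) $r_X \cong_i r_Z$ and (ii) the identity on $\omega$ is a bi-Lipschitz map from $(\omega, d_{Z,\omega})$ to $(\omega, d_{Y,\omega})$ --- equivalently, some positive rational $K$ satisfies $K^{-1} d_{Z,\omega}(i,j) \le d_{Y,\omega}(i,j) \le K\, d_{Z,\omega}(i,j)$ for all $i,j \in \omega$. Any such bi-Lipschitz bijection between the dense copies of $\omega$ extends uniquely to a bi-Lipschitz bijection of the completions with the same constant. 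Condition (i) is Borel because orbits of the Polish-group action inducing $\cong_i$ are Borel, and (ii) is a countable Boolean combination of simple conditions on $(r_Z, r_Y)$, so $\mathcal L$ is the projection of a Borel subset of $\mathbb X^3$ onto $\mathbb X^2$, hence analytic.

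For isometric biembeddability I would first reduce to one-directional isometric embeddability, since biembeddability is the intersection of this relation with its flip and $\barsigma11$ is closed under finite intersections. The key observation for embeddability is that an isometric embedding $\iota: X_{r_X} \to X_{r_Y}$ is determined by its restriction to $\omega \subset X_{r_X}$, and each value $\iota(n)$ is a limit of some sequence from the dense set $\omega \subset X_{r_Y}$. Concretely, I would prove that $X_{r_X}$ embeds isometrically in $X_{r_Y}$ iff there exists a double sequence $a \in \omega^{\omega \times \omega}$ such that for all $n,m \in \omega$ the sequence $(a_{n,k})_k$ is $d_{Y,\omega}$-Cauchy and $\lim_k d_{Y,\omega}(a_{n,k}, a_{m,k}) = d_{X,\omega}(n,m)$. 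Both clauses unfold into countable Boolean combinations of inequalities on $r_X, r_Y$ and $a$, so they are Borel, placing isometric embeddability in $\barsigma11$ and biembeddability there as well.

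The main (mild) obstacle is verifying the equivalence underpinning the last paragraph. The forward direction uses density of $\omega$ in the completion to pick $a_{n,k} \in \omega$ with $d_{Y,\omega}(a_{n,k}, \iota(n)) < 2^{-k}$, which forces the Cauchy property and the correct limit of distances. The reverse direction defines $\iota(n) := \lim_k a_{n,k}$ inside the complete space $X_{r_Y}$, observes by continuity of the metric that $\iota$ restricted to $\omega$ is an isometry, and then extends it uniquely to an isometric embedding of the whole completion $X_{r_X}$. Once this correspondence is in place, analyticity of both relations follows directly from the $\exists$-over-Borel criterion recalled at the outset.
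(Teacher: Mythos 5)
The paper records this only as a Fact without proof, the evident intention being ``argue as in the preceding example,'' so your task was to supply the argument; what you give is correct. For Lipschitz isomorphism you follow the paper's uniform-homeomorphism template exactly: quantify over a re-metrization $r_Z$ of $r_X$ and require the identity on $\omega$ to be bi-Lipschitz from $d_{Z,\omega}$ to $d_{Y,\omega}$, which becomes a countable Boolean combination of inequalities once the Lipschitz constant ranges over positive rationals; the extension of a bi-Lipschitz bijection of dense subsets to the completions (surjective because the image is complete and dense) closes the equivalence. For isometric biembeddability the template does not transfer, since an embedding need not be surjective and so cannot be normalized to ``the identity on $\omega$''; your substitute --- witnessing an embedding by $a\in\omega^{\omega\times\omega}$ whose rows are $d_{Y,\omega}$-Cauchy and whose limiting mutual distances reproduce $d_{X,\omega}$ --- is the right device, and intersecting embeddability with its flip handles biembeddability because $\barsigma11$ is closed under finite intersections. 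One small point, inherited from the paper's own example rather than introduced by you: justifying the clause $r_X\cong_i r_Z$ as \emph{Borel} on the grounds that $\cong_i$-classes are Borel is not quite right, since Borelness of each class does not give Borelness of the relation on $\mathbb X^2$; what is actually available is that $\cong_i$ is analytic (it Borel-reduces to the orbit equivalence relation $E_I$), and since analytic sets are closed under intersection with Borel sets and under projection along a Polish factor, this weaker fact already yields the stated conclusion.
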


Another important equivalence relation on Polish metric spaces is \emph{homeomorphism} relation. Let $\cong_h$ denote the homeomorphism relation on Polish metric spaces and let $\cong_{ch}$ denote the restriction of $\cong_h$ to compact spaces. The following facts are known with respect to complexity of these relations.
\begin{fact}\label{ff1} ~
\begin{enumerate}
\item $\cong_h$ is $\barsigma12$;
\item $\cong_{ch}$ is analytic.
\end{enumerate} 
\end{fact}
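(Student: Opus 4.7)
The plan is to express each relation as an explicit formula over $\mathbb X$ (respectively $\mathbb X_c$, which is Borel in $\mathbb X$ by Example \ref{compact_example}) with carefully tracked quantifier alternation, and then read off the complexity from the prenex form. Throughout, identify $\mathcal X$ with $\mathbb X$ as in Subsection \ref{encoding1}, so that points of $X_r$ are realized as equivalence classes of Cauchy sequences in $(\omega,d_{X,\omega})$, i.e.\ as elements of $\baire$ modulo a Borel equivalence, and hence ``$x\in X_r$'' becomes a Borel condition on a $\baire$-variable.

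For part (1) I would code a continuous map $f:X_r\to X_s$ by an element of $\baire$: $f$ is determined by the relation recording, for each basic open ball $B(n,q)$ ($n\in\omega$, $q\in\mathbb Q_{>0}$) around a point of the canonical dense subset of $X_r$, which basic open balls of $X_s$ contain $f(B(n,q))$. The property ``this $\baire$-code defines a continuous function $X_r\to X_s$'' is Borel in $(r,s,\text{code})$. Then
\[
X_r\cong_h X_s \iff \exists f,g\in\baire\,\bigl[f,g\text{ code continuous maps}\bigr]\wedge\forall x\in X_r\,\forall y\in X_s\,\bigl(g(f(x))=x\wedge f(g(y))=y\bigr).
\]
The universal quantifiers over points of $X_r,X_s$ are genuine $\forall^1$ over $\baire$ with a Borel matrix, so the prenex form is $\exists^1\forall^1\,(\text{Borel})$, which places $\cong_h$ in $\barsigma12$ by the Addison correspondence of Section \ref{effective_subsection}.

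For part (2) the key extra input is that when both $X_r$ and $X_s$ are compact, every continuous $X_r\to X_s$ is uniformly continuous and hence determined by its restriction to the canonical dense $\omega\subseteq X_r$. Such a restriction is an element of $\baire$ (a sequence of Cauchy-sequence codes in $X_s$), and ``this restriction is uniformly continuous, hence extends to a continuous map $X_r\to X_s$'' is a Borel condition in $(r,s,\text{restriction})$. For two such restrictions $\phi,\psi$, the condition $\psi\circ\phi=\mathrm{id}_{X_r}$ need only be checked on the dense $\omega$ (continuity then does the rest), so it reduces to a countable conjunction of Borel conditions. Hence
\[
X_r\cong_{ch}X_s \iff \exists \phi,\psi\in\baire\,\bigl[\phi,\psi\text{ code continuous maps and }\psi\circ\phi,\,\phi\circ\psi\text{ restrict to the identity on }\omega\bigr],
\]
which has the form $\exists^1\,(\text{Borel})$, i.e.\ $\barsigma11$. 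The one-sided inverse conditions on a dense subset force $\phi$ to be a continuous bijection, and on a compact Hausdorff space such a map is automatically a homeomorphism, so the formula genuinely captures $\cong_{ch}$.

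The main obstacle is the book-keeping needed to verify that the ``$\baire$-code validly represents a (uniformly) continuous function'' conditions are indeed Borel uniformly in the parameters $r,s$; this is a routine but tedious unpacking of the basis $\{B(n,q):n\in\omega,q\in\mathbb Q_{>0}\}$ and of the $\varepsilon$–$\delta$ definition of (uniform) continuity. The substantive difference between the two parts is that compactness turns ``bijection'' into ``admits a two-sided inverse'' (an $\exists^1$ statement, because by uniform continuity the inverse is itself coded in $\baire$), whereas for arbitrary Polish metric spaces bijectivity of $f$ must be asserted via a genuine universal quantifier over the uncountable point-set of $X_r$. This extra $\forall^1$ is precisely what lifts the complexity from $\barsigma11$ up to $\barsigma12$.
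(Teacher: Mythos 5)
Your part (2) is essentially the paper's own argument: the paper's remark reduces $\cong_{ch}$ to the existence of a continuous bijection (a continuous bijection from a compact space onto a Hausdorff space is automatically a homeomorphism) and observes that by compactness such a map is uniformly continuous, hence determined by Borel data on the canonical dense set, giving the $\exists^1(\text{Borel})=\barsigma11$ form. Carrying an explicit two-sided inverse as you do is a sensible refinement, since ``bijectivity'' of a map recorded only on a dense subset is not obviously a Borel condition on its own; your version makes the $\exists^1(\text{Borel})$ shape unambiguous.

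For part (1) the paper supplies no proof at all, and your overall quantifier count is the right one, but one intermediate claim is false as stated: for non-compact $X_r$, the property ``this neighbourhood-code defines a (total) continuous function $X_r\to X_s$'' is not Borel. A neighbourhood-code naturally determines a partial continuous function whose domain is $\gdelta$ in $X_r$; totality is the assertion that every point of $X_r$ lies in that domain, which is a genuine $\forall^1$ over an uncountable set and is only $\barpi11$ (in general properly so: already for $X_r=X_s=\mathcal N$, totality of coded continuous functions is $\barpi11$-complete). This does not break your bound: a $\barpi11$ condition sitting inside the scope of the outer $\exists^1$, conjoined with your $\forall^1(\text{Borel})$ inverse conditions, still yields $\exists^1\left(\barpi11\right)=\barsigma12$. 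So the sentence claiming Borelness should be replaced by the observation that $\barpi11$ suffices. This is in fact the same phenomenon as your own closing diagnosis: the unavoidable universal quantification over points of the space, absent in the compact case, is exactly what lifts part (1) to $\barsigma12$ while part (2) stays analytic.
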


\begin{remark} \ref{ff1}(1) is an upper bound; it is an open question whether $\cong_h$ is simpler than $\barsigma12$.

\ref{ff1}(2) can be easily proven by an argument similar to the one used in the previous example. Let $X,Y$ be compact Polish metric spaces. Then $X\cong_{ch} Y$ if there exists a continuous bijection $f:X\rightarrow Y$. Bijectivity and uniform continuity on Polish metric spaces are Borel conditions, hence $\cong_{ch}$ is analytic. 
\end{remark}



\end{document}